\newlist{hypothenum}{enumerate}{3}
\setlist[hypothenum,1]{label=(\roman*)}
\numberwithin{equation}{section}
\theoremstyle{plain}
\newtheorem*{mainthm}{Main Theorem}
\newtheorem{theorem}{Theorem}[section]
\newtheorem{proposition}[theorem]{Proposition}
\newtheorem{lemma}[theorem]{Lemma}
\newtheorem{corollary}[theorem]{Corollary}
\newtheorem{assumption}[theorem]{Assumption}
\theoremstyle{definition}
\newtheorem{definition}[theorem]{Definition}
\newtheorem{example}[theorem]{Example}
\theoremstyle{remark}
\newtheorem{remark}[theorem]{Remark}
\newcommand{\eps}{\varepsilon}
\newcommand{\setsuch}[2]{\left\{ #1 \; \middle| \; #2 \right\}}
\newcommand{\restr}[2]{{\left. #1 \right|}_{#2}}
\newcommand{\subl}{{\mathsmaller{<}}}
\newcommand{\subg}{{\mathsmaller{>}}}
\newcommand{\sube}{{\mathsmaller{=}}}
\newcommand{\suble}{{\mathsmaller{\leq}}}
\newcommand{\subge}{{\mathsmaller{\geq}}}
\newcommand{\transl}{t}
\newcommand{\rotat}{r}
\newcommand{\ext}{\mathsf{\Lambda}}
\newcommand{\bigo}{\mathcal{O}}
\DeclareMathOperator{\Conv}{Conv}
\DeclareMathOperator{\Aff}{Aff}
\DeclareMathOperator{\Id}{Id}
\DeclareMathOperator{\Ad}{Ad}
\DeclareMathOperator{\Stab}{Stab}
\DeclareMathOperator{\GL}{GL}
\DeclareMathOperator{\SL}{SL}
\DeclareMathOperator{\PSL}{PSL}
\DeclareMathOperator{\SO}{SO}
\DeclareMathOperator{\PSO}{PSO}
\DeclareMathOperator{\Orth}{O}
\newcommand{\ie}{i.e.\ }
\newcommand{\eg}{e.g.\ }
\newcommand{\longlongrightarrow}{\xrightarrow{\hspace*{1cm}}}
\newcommand{\jordan}{\operatorname{Jd}}
\newcommand{\cartan}{\operatorname{Ct}}
\begin{document}

\title{Proper affine actions in non-swinging representations}
\author{Ilia Smilga}
       
\maketitle

\begin{abstract}
For a semisimple real Lie group $G$ with an irreducible representation $\rho$ on a finite-dimensional real vector space $V$, we give a sufficient criterion on $\rho$ for existence of a group of affine transformations of $V$ whose linear part is Zariski-dense in $\rho(G)$ and that is free, nonabelian and acts properly discontinuously on~$V$.
\end{abstract}

\noindent {\bfseries Mathematics Subject Classification (2010):} 20G20, 20G05, 22E40, 20H15.

\noindent {\bfseries Keywords:} Discrete subgroups of Lie groups, Affine groups, Auslander conjecture, Milnor conjecture, Flat affine manifolds, Margulis invariant, Quasi-trans\-la\-tion, Free group, Schottky group.

\section{Introduction}

\subsection{Background and motivation}
\label{sec:background}

The present paper is part of a larger effort to understand discrete groups $\Gamma$ of affine transformations (subgroups of the affine group $\GL_n(\mathbb{R}) \ltimes \mathbb{R}^n$) acting properly discontinuously on the affine space $\mathbb{R}^n$. The case where $\Gamma$ consists of isometries (in other words, $\Gamma \subset \Orth_n(\mathbb{R}) \ltimes \mathbb{R}^n$) is well-understood: a classical theorem by Bieberbach says that such a group always has an abelian subgroup of finite index.

We say that a group $G$ acts \emph{properly discontinuously} on a topological space~$X$ if for every compact $K \subset X$, the set $\setsuch{g \in G}{g K \cap K \neq \emptyset}$ is finite. We define a \emph{crystallographic} group to be a discrete group $\Gamma \subset \GL_n(\mathbb{R}) \ltimes \mathbb{R}^n$ acting properly discontinuously and such that the quotient space $\mathbb{R}^n / \Gamma$ is compact. In \cite{Aus64}, Auslander conjectured that any crystallographic group is virtually solvable, that is, contains a solvable subgroup of finite index. Later, Milnor \cite{Mil77} asked whether this statement is actually true for any affine group acting properly discontinuously. The answer turned out to be negative: Margulis \cite{Mar83, Mar87} gave a nonabelian free group of affine transformations with linear part Zariski-dense in $\SO(2, 1)$, acting properly discontinuously on $\mathbb{R}^3$. On the other hand, Fried and Goldman \cite{FG83} proved the Auslander conjecture in dimension 3 (the cases $n=1$ and $2$ are easy). Recently, Abels, Margulis and Soifer \cite{AMS12} proved it in dimension $n \leq 6$. See \cite{AbSur} for a survey of already known results.

Margulis's breakthrough was soon followed by the construction of other counterexamples to Milnor's conjecture. The first advance was made by Abels, Margulis and Soifer~\cite{AMS02}: they generalized Margulis's construction to subgroups of the affine group
\[\SO(2n+2,2n+1) \ltimes \mathbb{R}^{4n+3},\]
for all values of~$n$. The author further generalized this in his previous paper~\cite{Smi14}, by finding such subgroups in the affine group~$G \ltimes \mathfrak{g}$, where $G$~is any noncompact semisimple real Lie group, acting on its Lie algebra~$\mathfrak{g}$ by the adjoint representation. Recently Danciger, Gu{\'e}ritaud and Kassel~\cite{DGKCox} found examples of affine groups acting properly discontinuously that were neither virtually solvable nor virtually free.

Proliferation of these counterexamples leads to the following question. Consider a semisimple real Lie group~$G$; for every representation~$\rho$ of~$G$ on a finite-dimensional real vector space~$V$, we may consider the affine group~$G \ltimes V$. Which of those affine groups contain a nonabelian free subgroup with linear part Zariski-dense in~$G$ and acting properly discontinuously on~$V$?

In this paper, we give a fairly general sufficient condition on the representation~$\rho$ for existence of such subgroups. Before stating this condition, we need to introduce a few classical notations.

\subsection{Basic notations}
\label{sec:lie}

For the remainder of the paper, we fix a semisimple real Lie group~$G$; let $\mathfrak{g}$~be its Lie algebra. Let us introduce a few classical objects related to~$\mathfrak{g}$ and~$G$ (defined for instance in Knapp's book \cite{Kna96}, though our terminology and notation differ slightly from his).

We choose in $\mathfrak{g}$:
\begin{itemize}
\item a Cartan involution $\theta$. Then we have the corresponding Cartan decomposition $\mathfrak{g} = \mathfrak{k} \oplus \mathfrak{q}$, where we call $\mathfrak{k}$ the space of fixed points of $\theta$ and $\mathfrak{q}$ the space of fixed points of $-\theta$. We call $K$ the maximal compact subgroup with Lie algebra $\mathfrak{k}$.
\item a \emph{Cartan subspace} $\mathfrak{a}$ compatible with $\theta$ (that is, a maximal abelian subalgebra of $\mathfrak{g}$ among those contained in $\mathfrak{q}$). We set $A := \exp \mathfrak{a}$.
\item a system $\Sigma^+$ of positive restricted roots in $\mathfrak{a}^*$. Recall that a \emph{restricted root} is a nonzero element $\alpha \in \mathfrak{a}^*$ such that the restricted root space
\[\mathfrak{g}^\alpha := \setsuch{Y \in \mathfrak{g}}{\forall X \in \mathfrak{a},\; [X, Y] = \alpha(X)Y}\]
is nontrivial. They form a root system $\Sigma$; a system of positive roots $\Sigma^+$ is a subset of $\Sigma$ contained in a half-space and such that $\Sigma = \Sigma^+ \sqcup -\Sigma^+$.

We call~$\Pi$ be the set of simple restricted roots in~$\Sigma^+$. We call
\[\mathfrak{a}^{++} := \setsuch{X \in \mathfrak{a}}{\forall \alpha \in \Sigma^+,\; \alpha(X) > 0}\]
the (open) dominant Weyl chamber of $\mathfrak{a}$ corresponding to~$\Sigma^+$, and
\[\mathfrak{a}^{+} := \setsuch{X \in \mathfrak{a}}{\forall \alpha \in \Sigma^+,\; \alpha(X) \geq 0} = \overline{\mathfrak{a}^{++}}\]
the closed dominant Weyl chamber.
\end{itemize}
Then we denote
\begin{itemize}
\item $M$ the centralizer of $\mathfrak{a}$ in $K$, $\mathfrak{m}$ its Lie algebra.
\item $L$ the centralizer of $\mathfrak{a}$ in $G$, $\mathfrak{l}$ its Lie algebra. It is clear that $\mathfrak{l} = \mathfrak{m} \oplus \mathfrak{a}$, and well known (see \eg \cite{Kna96}, Proposition 7.82a) that $L = MA$.
\item $\mathfrak{n}^+$ (resp. $\mathfrak{n}^-$) the sum of the restricted root spaces $\mathfrak{g}^\alpha$ for $\alpha$ in~$\Sigma^+$ (resp. in $-\Sigma^+$), and $N^+ := \exp(\mathfrak{n}^+)$ and $N^- := \exp(\mathfrak{n}^-)$ the corresponding Lie groups.
\item $\mathfrak{p}^+ := \mathfrak{l} \oplus \mathfrak{n}^+$ and $\mathfrak{p}^- := \mathfrak{l} \oplus \mathfrak{n}^-$ the corresponding minimal parabolic subalgebras, $P^+ := LN^+$ and $P^- := LN^-$ the corresponding minimal parabolic subgroups.
\item $W := N_G(A)/Z_G(A)$ the restricted Weyl group.
\item $w_0$ the \emph{longest element} of the Weyl group, that is, the unique element such that $w_0(\Sigma^+) = \Sigma^-$.
\end{itemize}

See Examples 2.3 and 2.4 in the author's previous paper~\cite{Smi14} for working through these definitions in the cases $G = \PSL_n(\mathbb{R})$ and~$G = \PSO^+(n, 1)$.

Finally, if $\rho$~is a representation of~$G$ on a finite-dimensional real vector space~$V$, we call:
\begin{itemize}
\item the \emph{restricted weight space} in~$V$ corresponding to a form~$\lambda \in \mathfrak{a}^*$ the space
\[V^\lambda := \setsuch{v \in V}{\forall X \in \mathfrak{a},\; \rho(X) \cdot v = \lambda(X)v};\]
\item a \emph{restricted weight} of the representation~$\rho$ any form~$\lambda \in \mathfrak{a}^*$ such that the corresponding weight space is nonzero.
\end{itemize}

\begin{remark}
The reader who is unfamiliar with the theory of noncompact semisimple real Lie groups may focus on the case where $G$~is \emph{split}, \ie its Cartan subspace~$\mathfrak{a}$ is actually a Cartan subalgebra (just a maximal abelian subalgebra, without any additional hypotheses). In that case the restricted roots are just roots, the restricted weights are just weights, and the restricted Weyl group is just the usual Weyl group. Also the algebra~$\mathfrak{m}$ vanishes and $M$~is a discrete group.

However, the case where $G$~is split does not actually require the full strength of this paper, in particular because quasi-translations (see Section~\ref{sec:quasi-translations}) then reduce to ordinary translations.
\end{remark}

\subsection{Statement of main result}

Let $\rho$~be an irreducible representation of~$G$ on a finite-dimensional real vector space~$V$. Without loss of generality, we may assume that $G$~is connected and acts faithfully. We may then identify the abstract group~$G$ with the linear group~$\rho(G) \subset \GL(V)$. Let~$V_{\Aff}$~be the affine space corresponding to~$V$. The group of affine transformations of~$V_{\Aff}$ whose linear part lies in~$G$ may then be written~$G \ltimes_\rho V$ or simply~$G \ltimes V$ (where $V$~stands for the group of translations). Here is the main result of this paper.
\begin{mainthm}
Let $G$ be a semisimple real Lie group, and let $\rho$ be an irreducible representation of~$G$ on a finite-dimensional real vector space~$V$ that satisfies the following conditions:
\begin{hypothenum}
\item \label{itm:main_condition} there exists a vector $v \in V$ such that:
\begin{enumerate}[label=(\alph*)]
\item \label{itm:fixed_by_l} $\forall l \in L,\; l(v) = v$, and
\item \label{itm:not_fixed_by_w0} $\tilde{w}_0(v) \neq v$, where $\tilde{w}_0$ is any representative in~$G$ of~$w_0 \in N_G(A)/Z_G(A)$;
\end{enumerate}
\item \label{itm:technical_condition} there exists an element~$X_0 \in \mathfrak{a}$ such that~$-w_0(X_0) = X_0$ and for every nonzero restricted weight~$\lambda$ of~$\rho$, we have $\lambda(X_0) \neq 0$.
\end{hypothenum}
Then there exists a subgroup~$\Gamma$ in the affine group~$G \ltimes_\rho V$ whose linear part is Zariski-dense in~$G$ and that is free, nonabelian and acts properly discontinuously on~$V_{\Aff}$.
\end{mainthm}
\begin{remark}
\label{w0_action_makes_sense}
Note that the choice of the representative~$\tilde{w}_0$ in~\ref{itm:main_condition}\ref{itm:not_fixed_by_w0} does not matter, precisely because by~\ref{itm:main_condition}\ref{itm:fixed_by_l} the vector~$v$ is fixed by~$L = Z_G(A)$.
\end{remark}

We call representations satisfying condition~\ref{itm:technical_condition} ``\emph{non-swinging}'' representations (see Section~\ref{sec:symmetry_pos_neg} to understand why). This is only a technical assumption: if we remove it, the theorem remains true. This more general result is proved in the author's forthcoming paper~\cite{Smi16b}.

Note that the previously-known examples do fall under the scope of this theorem:
\begin{example}~
\begin{enumerate}
\item For $G = \SO^+(2n+2, 2n+1)$, the standard representation (acting on~$V = \mathbb{R}^{4n+3}$) satisfies these conditions (see Remark~\ref{remark_no_swinging} and Examples~\ref{transl_space_example}.1.b and~\ref{inverse_ok_example}.1 for details). So Theorem~A from~\cite{AMS02} is a particular case of this theorem.
\item If the real semisimple Lie group~$G$ is noncompact, the adjoint representation satisfies these conditions (see Remark~\ref{remark_no_swinging} and Examples~\ref{transl_space_example}.3 and~\ref{inverse_ok_example}.2 for details). So the main theorem of~\cite{Smi14} is a particular case of this theorem.
\end{enumerate}
\end{example}
\begin{remark}
When $G$~is compact, no representation can satisfy these conditions: indeed in that case $L$ is the whole group~$G$ and condition~\ref{itm:main_condition}\ref{itm:fixed_by_l} fails. So for us, only noncompact groups are interesting. This is not surprising: indeed, any compact group acting on a vector space preserves a positive-definite quadratic form, and so falls under the scope of Bieberbach's theorem.
\end{remark}

\subsection{Strategy of the proof}
The proof has a lot in common with the author's previous paper~\cite{Smi14}. The main idea (which comes back to Margulis's seminal paper~\cite{Mar83}) is to introduce, for some affine maps~$g$, an invariant that measures the translation part of~$g$ along a particular affine subspace of~$V$. The key part of the argument (just as in~\cite{Mar83} and in~\cite{Smi14}) is then to show that, under some conditions, the invariant of the product of two maps is roughly equal to the sum of their invariants (Proposition~\ref{invariant_additivity}). Here are the two main difficulties that were not present in~\cite{Smi14}.
\begin{itemize}
\item The first one is that~\cite{Smi14} crucially relies on the following fact: if two maps are $\mathbb{R}$-regular (\ie the dimension of their centralizer is the lowest possible), in general position with respect to each other and strongly contracting (when acting on~$\mathfrak{g}$), their product is still $\mathbb{R}$-regular. The natural generalization of the notion of an $\mathbb{R}$-regular map that is adapted to an arbitrary representation is that of a ``generic'' map, \ie a map that has as few eigenvalues of modulus~$1$ (counted with multiplicity) as possible. Unfortunately, the corresponding statement is then no longer true in an arbitrary representation. If the representation is ``too large'', \ie if it contains restricted weights that are not multiples of restricted roots (see Example~\ref{equivalence_class_examples}.2), there are several different ``types'' of generic maps, depending on the region where their Jordan projection (see Definition~\ref{Jd_Ct_definition}) falls. 

In order to ensure that the product of two generic maps $g$ and $h$ (that are in general position and strongly contracting) is still generic, we need to control the Jordan projection $\jordan(gh)$ of the product based on the Jordan projections $\jordan(g), \jordan(h)$ of the factors. To do this, we use ideas developed by Benoist in~\cite{Ben96, Ben97}: when $g$ and $h$ are in general position and sufficiently contracting, he showed that $\jordan(gh)$~is approximately equal to~$\jordan(g) + \jordan(h)$. So if we restrict all maps to have the same ``type'', our argument works.

\item Here is where the second difficulty comes: the argument of~\cite{Ben96} only works for maps that are actually $\mathbb{R}$-regular in addition to being generic. In most representations this is automatically true: if every restricted root occurs as a restricted weight, then every generic map is in particular $\mathbb{R}$-regular. But when the representation is ``too small'', this is not the case. (A surprising fact is that a handful of representations are actually ``too large'' and ``too small'' at the same time: see Example~\ref{equivalence_class_examples}.4!)

As an example, consider the subgroup~$G$ of~$\GL_5(\mathbb{R})$ consisting of transformations preserving the quadratic form
\[x_1 x_3 + x_2 x_4 + x_5^2.\]
This is a form of signature~$(3, 2)$, so $G \simeq \SO(3, 2)$. Now take any real number $\lambda > 1$ and any $x \in \mathbb{R}$; then the element
\[g = \begin{pmatrix}
\lambda & \lambda x & 0 & 0 & 0 \\
0 & \lambda & 0 & 0 & 0 \\
0 & 0 & \lambda^{-1} & 0 & 0 \\
0 & 0 & - \lambda^{-1} x & \lambda^{-1} & 0 \\
0 & 0 & 0 & 0 & 1
\end{pmatrix} \in G\]
is generic in the standard representation (``pseudohyperbolic'' in the terminology of~\cite{AMS02} and~\cite{Smi13}), but not $\mathbb{R}$-regular (when $x \neq 0$ it is not even semisimple!).

Just as there are two different notions of being ``generic'' (the notion of $\mathbb{R}$-regularity, which is adapted to the adjoint representation, and the notion of being generic in~$\rho$), there are also two different notions of being ``in general position'', two different notions of being ``strongly contracting'' and so on. The results of~\cite{Ben96} rely on the stronger version of every property.

If we had used them as such, our Proposition~\ref{regular_product_qualitative} about products of maps ``of given type'' (and the subsequent propositions that rely on it) would no longer include, as a particular case, the corresponding result for~$G = \SO^+(n+1, n)$ (namely Lemma~5.6, point~(1) in~\cite{AMS02}). Instead, we would need to duplicate all definitions, and to always require that the maps we deal with satisfy both versions of the constraints. (In particular, we would probably lose the benefit of the unified treatment of the linear part and translation part, as outlined in Remark~\ref{unified_treatment}).

This weaker version is in theory sufficient for us, because it is known that ``almost all'' elements are $\mathbb{R}$-regular. So it is actually possible to construct the group~$\Gamma$ in such a way that its elements all have this additional property, and thus provide a working proof of the Main Theorem. But we felt that the simpler, stronger version of Proposition~\ref{regular_product_qualitative} was interesting in its own right.

To prove it, we needed a generalization of the results of~\cite{Ben96}. Benoist's subsequent paper~\cite{Ben97} does seem to provide such a generalization, by proving similar theorems with the hypothesis of $\mathbb{R}$-regularity replaced by what he calls ``$\theta$-proximality'', for~$\theta$ some subset of~$\Pi$. This is quite close to what we are looking for; but unfortunately, the results of~\cite{Ben97} rely on the assumption that the Jordan projections of the maps lie in a vector subspace of~$\mathfrak{a}$ (see Remark~\ref{comparison_with_Benoist} for details), which is unacceptably restrictive for us. So in Section~\ref{sec:jordan_additivity} of this paper, we redeveloped this theory in a suitably general way. We did reuse some basic results from~\cite{Ben97}; for example one of the key steps of our proof, Proposition~\ref{proximal_product} (about products of proximal maps), is very similar to Lemma~2.2.2 from~\cite{Ben97}.
\end{itemize}

\subsection{Plan of the paper}
In Section~\ref{sec:algebraic_prelim}, we give some background from representation theory.

In Section~\ref{sec:choice}, we study the dynamics of elements of~$A$. We choose one particular element~$X_0 \in \mathfrak{a}$ with some nice properties, with the goal of eventually ``modeling'', in some sense, generators of the group~$\Gamma$ on~$\exp(X_0)$.

In Section~\ref{sec:dynamics_X_0}, we study the dynamics of elements~$g$ of the affine group~$G \ltimes V$ that are of type~$X_0$ (see Definition~\ref{type_X0_definition}). This section culminates in the definition of the Margulis invariant of~$g$, which measures the translation part of~$g$ along its ``axis''.

In Section~\ref{sec:quantitative}, we study some quantitative properties of such elements~$g$. In particular we define a quantitative measure of being ``in general position'', and a quantitative measure of being ``strongly contracting''; both of these notions are tailored to the choice of~$\rho$ and of~$X_0$. We also define analogous notions for proximal maps, and prove a theorem (Proposition~\ref{proximal_product}) about products of proximal maps.

Section~\ref{sec:jordan_additivity} is where most of the new ideas of this paper are exploited. Here we apply the theory of products of proximal maps to a selection of ``fundamental representations''~$\rho_i$ (defined in Proposition~\ref{fundamental_real_representation}). The goal is to show that the product of two strongly contracting maps of type~$X_0$ in general position is still of type~$X_0$.

In Section~\ref{sec:quantitative_properties_of_products}, we now apply the theory of products of proximal maps to suitable exterior powers of the maps~$g$, in order to study the quantitative properties of products of elements of type~$X_0$. This section follows Section~3.2 of~\cite{Smi14} very closely.

Section~\ref{sec:additivity} contains the key part of the proof. We prove that if we take two strongly contracting maps of type~$X_0$ in general position, the Margulis invariant of their product is close to the sum of their Margulis invariants. This section follows Section~4 of~\cite{Smi14} very closely.

The very short Section~\ref{sec:induction} uses induction to extend the results of the two previous sections to products of an arbitrary number of elements. We omit the proof, as it is a straightforward generalization of Section~5 in~\cite{Smi14}.

Section~\ref{sec:construction} contains the proof of the Main Theorem. It follows Section~6 of~\cite{Smi14} quite closely, but there are a couple of additions.

\subsection{Acknowledgements}
I am very grateful to my Ph.D. advisor, Yves Benoist, whose help while I was working on this paper has been invaluable to me.

I would also like to thank Bruno Le Floch for some interesting discussions, which in particular helped me gain more insight about weights of representations.

\section{Algebraic preliminaries}
\label{sec:algebraic_prelim}
In this section, we give some background about real finite-dimensional representations of semisimple real Lie groups.

In Subsection~\ref{sec:eigenvalues_in_different_representations}, for any element $g \in G$, we relate the eigenvalues and singular values of~$\rho(g)$ (where $\rho$~is some representation) to some ``absolute'' properties of~$g$.

In Subsection~\ref{sec:restricted_weights}, we enumerate some properties of restricted weights of a real finite-dimensional representation of a real semisimple Lie group.

\subsection{Eigenvalues in different representations}
\label{sec:eigenvalues_in_different_representations}

The goal of this subsection is to prove Proposition~\ref{eigenvalues_and_singular_values_characterization}, which expresses the eigenvalues and singular values of a given element $g \in G$ acting in a given representation~$\rho$, exclusively in terms of the structure of~$g$ in the abstract group~$G$ (respectively its Jordan decomposition and its Cartan decomposition).

\begin{proposition}[Jordan decomposition]
Let $g \in G$. There exists a unique decomposition of~$g$ as a product $g = g_h g_e g_u$, where:
\begin{itemize}
\item $g_h$~is conjugate in~$G$ to an element of~$A$ (\emph{hyperbolic});
\item $g_e$~is conjugate in~$G$ to an element of~$K$ (\emph{elliptic});
\item $g_u$~is conjugate in~$G$ to an element of~$N^+$ (\emph{unipotent});
\item these three maps commute with each other.
\end{itemize}
\end{proposition}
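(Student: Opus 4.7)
The plan is to reduce to the multiplicative Jordan decomposition in $\GL(V)$ via a faithful linear representation (for instance, the adjoint representation on $\mathfrak{g}$), then transfer the resulting pieces back into $G$ and conjugate each into the designated subgroups.

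First, I would fix a faithful representation $\rho_0 : G \to \GL(V)$ and apply the real multiplicative Jordan decomposition in $\GL(V)$: every $T \in \GL(V)$ factors uniquely as $T = T_h T_e T_u$, with $T_h, T_e$ commuting semisimple pieces whose (complex) eigenvalues are respectively positive real and of modulus~$1$, and $T_u$ unipotent, all three commuting pairwise. Applied to $\rho_0(g)$, this produces the candidate pieces. To ensure they lie in $\rho_0(G)$, I would invoke the classical fact that a real semisimple Lie group has algebraic image under any linear representation: the real Jordan pieces of an element of such a group remain in the group (equivalently, $\rho_0(\mathfrak{g}) \subset \mathfrak{gl}(V)$ is preserved by the additive Jordan decomposition of $\mathfrak{gl}(V)$, and one exponentiates back up). Pulling back by~$\rho_0$ yields $g_h, g_e, g_u \in G$ with $g = g_h g_e g_u$, pairwise commuting.

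Next, for the conjugacy claims: (a)~Since $g_h$ is semisimple with positive real eigenvalues in~$\rho_0$, one writes $g_h = \exp(X)$ with $X \in \mathfrak{g}$ hyperbolic, meaning $\operatorname{ad}(X)$ has only real eigenvalues. Any hyperbolic element of~$\mathfrak{g}$ is contained in some Cartan subspace, and all Cartan subspaces are $G$-conjugate, so $X$ can be conjugated into~$\mathfrak{a}$, hence $g_h$ into~$A$. (b)~Since $g_e$ is semisimple with eigenvalues of modulus~$1$, the closure of~$\langle g_e \rangle$ in~$G$ is compact, hence contained in some maximal compact subgroup; as all such are $G$-conjugate to~$K$, we obtain $g_e$ conjugate into~$K$. (c)~Writing $g_u = \exp(Y)$ with $Y \in \mathfrak{g}$ nilpotent, Jacobson--Morozov embeds $Y$ in an $\mathfrak{sl}_2$-triple, and a standard argument using this triple places $Y$ into~$\mathfrak{n}^+$ after conjugation, so $g_u$ is conjugate into~$N^+$.

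Uniqueness is inherited: any other factorization $g = g_h' g_e' g_u'$ satisfying the four listed conditions maps under~$\rho_0$ to a factorization of~$\rho_0(g)$ with the same characterizing properties, which must coincide with the one above by uniqueness of the multiplicative Jordan decomposition in~$\GL(V)$; faithfulness of~$\rho_0$ then forces equality in~$G$. The main obstacle is the step in the first paragraph: going from the abstract Jordan pieces in~$\GL(V)$ back to~$\rho_0(G)$ requires that $\rho_0(G)$ be closed and that $\rho_0(\mathfrak{g})$ be stable under the additive Jordan decomposition of~$\mathfrak{gl}(V)$. This is classical for semisimple $G$ but does merit care, and I would cite it (from Knapp~\cite{Kna96} or Helgason) rather than rederive it.
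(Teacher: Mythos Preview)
Your outline is correct and in fact considerably more detailed than the paper's own treatment: the paper simply cites the result as well-known, referring to Eberlein~\cite{Ebe96}, Theorem~2.19.24, and noting only that one must translate between representation-dependent and representation-agnostic definitions of hyperbolic/elliptic/unipotent via Theorems~2.19.18 and~2.19.16 of the same reference. Your sketch essentially reconstructs what lies behind such a citation.

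One minor caveat: the adjoint representation is faithful only when $G$ has trivial center, so your parenthetical ``for instance, the adjoint representation'' is not quite right in general. In the context of this paper $G$ is eventually identified with $\rho(G) \subset \GL(V)$, so a faithful linear realization is available anyway; but if you want the argument to stand independently you should either take $\rho_0$ to be any faithful linear representation (assuming $G$ linear), or work with $\Ad$ and handle the finite center separately.
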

\begin{proof}
This was proved by Kostant: see~\cite{Kos73}, Proposition~2.1. Alternatively, see~\cite{Ebe96}, Theorem~2.19.24. Note however that technically, Kostant, Eberlein and our paper use three different sets of definitions of a hyperbolic, elliptic or unipotent element. That our definitions are equivalent to Kostant's (which are the ones used most commonly) is shown in \cite{Ebe96}, Theorem~2.19.16. That Eberlein's definitions are equivalent to Kostant's is shown in \cite{Ebe96}, Proposition~2.19.18.
\end{proof}

\begin{proposition}[Cartan decomposition]
Let $g \in G$. Then there exists a decomposition of~$g$ as a product~$g = k_1 a k_2$, with~$k_1, k_2 \in K$ and~$a = \exp(X)$ with~$X \in \mathfrak{a}^{+}$. Moreover, the element~$X$ is uniquely determined by~$g$.
\end{proposition}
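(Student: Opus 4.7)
The plan is to deduce the group-level Cartan decomposition from the Lie-algebra Cartan decomposition $\mathfrak{g} = \mathfrak{k} \oplus \mathfrak{q}$ in two stages: first a "polar" decomposition $G = K \cdot \exp(\mathfrak{q})$, then a further normalisation of the $\mathfrak{q}$-part using the $K$-action on $\mathfrak{q}$.

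For existence, I would first invoke the global Cartan decomposition of $G$: every $g \in G$ can be written uniquely as $g = k \exp(Y)$ with $k \in K$ and $Y \in \mathfrak{q}$. (This is the standard fact that the map $K \times \mathfrak{q} \to G,\ (k, Y) \mapsto k \exp(Y)$ is a diffeomorphism, proved via the Killing form and the $KAK$ argument in Knapp.) Next I would use the classical result that $\mathfrak{a}$ is a maximal abelian subalgebra of $\mathfrak{q}$ and that the adjoint action of $K$ on $\mathfrak{q}$ has the property that every orbit meets the closed Weyl chamber $\mathfrak{a}^+$ in exactly one point. Applying this to $Y$, we obtain $k' \in K$ and a (uniquely determined) element $X \in \mathfrak{a}^+$ with $\mathrm{Ad}(k') Y = X$. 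Since $\exp$ intertwines $\mathrm{Ad}$ with inner conjugation, this gives $\exp(Y) = k'^{-1} \exp(X)\, k'$, and hence
\[
g \;=\; (k k'^{-1})\, \exp(X)\, k' \;=\; k_1\, a\, k_2
\]
with $k_1 := k k'^{-1} \in K$, $a := \exp(X)$, $k_2 := k' \in K$, as required.

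For uniqueness of $X$, I would argue as follows. Suppose $g = k_1 \exp(X) k_2 = k_1' \exp(X') k_2'$ with $X, X' \in \mathfrak{a}^+$. Applying the Cartan involution $\theta$ (which fixes $K$ pointwise and satisfies $\theta \circ \exp|_\mathfrak{q} = \exp \circ (-\mathrm{id})$), one computes
\[
g\, \theta(g)^{-1} \;=\; k_1\, \exp(2X)\, k_1^{-1} \;=\; k_1'\, \exp(2X')\, (k_1')^{-1},
\]
so $\exp(2X)$ and $\exp(2X')$ are conjugate under $K$. By the injectivity of $\exp$ on $\mathfrak{q}$ (another part of the global Cartan decomposition), this forces $2X$ and $2X'$ to lie in the same $\mathrm{Ad}(K)$-orbit in $\mathfrak{q}$, and the uniqueness clause of the orbit-meets-$\mathfrak{a}^+$ statement used above then gives $X = X'$.

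The only delicate ingredient is the fact that every $\mathrm{Ad}(K)$-orbit in $\mathfrak{q}$ meets the closed Weyl chamber $\mathfrak{a}^+$ in exactly one point; this is the main obstacle and is the genuinely non-trivial piece of the argument (it relies on the analysis of the $K$-action on $\mathfrak{q}$ via the restricted root system and the Weyl group $W = N_K(\mathfrak{a})/Z_K(\mathfrak{a})$ acting simply transitively on Weyl chambers). Everything else is bookkeeping. Rather than reprove this, I would simply cite it from Knapp, Chapter~VI, together with the global Cartan decomposition $G = K \exp(\mathfrak{q})$, and assemble the two facts as above.
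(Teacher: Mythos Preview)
Your argument is correct and is essentially the standard proof; the paper itself simply cites this as a classical result (Knapp, Theorem~7.39) without giving any details, so your sketch is if anything more complete than what the paper provides.
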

\begin{proof}
This is a classical result; see \eg Theorem~7.39 in~\cite{Kna96}.
\end{proof}

\begin{definition}
\label{Jd_Ct_definition}
For every element~$g \in G$, we define:
\begin{itemize}
\item the \emph{Jordan projection} of~$g$, written~$\jordan(g)$, to be the unique element of the closed dominant Weyl chamber~$\mathfrak{a}^{+}$ such that the hyperbolic part $g_h$ (from the Jordan decomposition $g = g_h g_e g_u$ given above) is conjugate to~$\exp(\jordan(g))$;
\item the \emph{Cartan projection} of~$g$, written~$\cartan(g)$, to be the element~$X$ from the Cartan decomposition given above.
\end{itemize}
\end{definition}

To talk about singular values, we need to introduce a Euclidean structure. We are going to use a special one. 
\begin{lemma}
\label{K-invariant}
Let $\rho_*$ be some real representation of~$G$ on some space~$V_*$. There exists a $K$-invariant positive-definite quadratic form~$B_*$ on~$V_*$ such that all the restricted weight spaces are pairwise $B_*$-orthogonal.
\end{lemma}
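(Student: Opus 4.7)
The plan is first to reduce the lemma to the existence of a $B_*$ that is \emph{compatible with the Cartan decomposition} of $\mathfrak{g}$, meaning that $B_*$ is $K$-invariant and, additionally, every element of $\mathfrak{q}$ acts on $V_*$ by a $B_*$-symmetric operator. Once such a form is in hand, the orthogonality of the weight spaces is automatic: since $\mathfrak{a} \subset \mathfrak{q}$, the operators $\{\rho_*(X) : X \in \mathfrak{a}\}$ form a commuting family of $B_*$-symmetric (hence $\mathbb{R}$-diagonalizable) operators, and they admit a simultaneous diagonalization in a $B_*$-orthonormal basis. Their joint eigenspaces are by definition the restricted weight spaces $V_*^\lambda$, and distinct joint eigenspaces of a commuting family of symmetric operators are automatically pairwise $B_*$-orthogonal.

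The second step is the construction of such a Cartan-compatible $B_*$, which I would carry out by the unitary trick. Extend $\rho_*$ linearly to the complexification $V_*^{\mathbb{C}} := V_* \otimes_{\mathbb{R}} \mathbb{C}$ and to the complexified Lie algebra $\mathfrak{g}_{\mathbb{C}}$. The compact real form $\mathfrak{g}_u := \mathfrak{k} \oplus i\mathfrak{q} \subset \mathfrak{g}_{\mathbb{C}}$ is the Lie algebra of a compact connected Lie group $G_u$, and after passing to a simply connected cover the representation integrates to $G_u$. Averaging an arbitrary Hermitian inner product against the Haar measure of $G_u$ yields a $G_u$-invariant Hermitian form $H$ on $V_*^{\mathbb{C}}$. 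Taking the real part of $H$ and restricting to $V_* \subset V_*^{\mathbb{C}}$ produces a positive-definite real bilinear form $B_*$ on $V_*$. Its $K$-invariance follows from the skew-Hermiticity of $\mathfrak{k}$ under $H$; and the $B_*$-symmetry of $\rho_*(X)$ for $X \in \mathfrak{q}$ follows from the skew-Hermiticity of $iX \in i\mathfrak{q}$ under $H$, which translates, upon multiplication by $i$, into Hermiticity of $X$ and hence symmetry on the real part.

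The main subtlety of this plan is the integration step: the Lie algebra representation of $\mathfrak{g}_u$ must really lift to a representation of a compact group $G_u$, which requires care with coverings since $G$ itself need not be simply connected. An alternative, entirely algebraic route that avoids this issue is to decompose $V_*$ as a direct sum of $\mathfrak{g}$-irreducible subrepresentations, construct a compatible $B_*$ on each irreducible summand separately (which is a classical fact for each individual irreducible), and then declare the different summands to be mutually $B_*$-orthogonal. In either route, once compatibility is established, Step~1 above completes the proof in one line.
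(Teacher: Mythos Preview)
Your proposal is correct and follows essentially the same route as the paper: complexify, use the compact real form $\mathfrak{k}\oplus i\mathfrak{q}$, average to get an invariant Hermitian form, take its real part on $V_*$, and then observe that $\mathfrak{a}\subset\mathfrak{q}$ acts by $B_*$-symmetric operators so that the restricted weight spaces are automatically $B_*$-orthogonal. The only minor difference is that the paper sidesteps the integration subtlety you flag by working directly with the connected subgroup of $\GL(V_*^{\mathbb{C}})$ whose Lie algebra is $\rho_*^{\mathbb{C}}(\mathfrak{k}\oplus i\mathfrak{q})$ and noting that this subgroup is compact, rather than passing through a simply connected cover of an abstract $G_u$.
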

We want to reserve the plain notation~$\rho$ for the ``default'' representation, to be fixed once and for all at the beginning of Section~\ref{sec:choice}. We use the notation~$\rho_*$ so as to encompass both this representation~$\rho$ and the representations~$\rho_i$ defined in Proposition~\ref{fundamental_real_representation}.

Such quadratic forms have already been considered previously: see for example Lemma 5.33.a) in~\cite{BenQui}.
\begin{example}
If $\rho_* = \Ad$~is the adjoint representation, then $B_*$~is the form~$B_\theta$ given by
\[\forall X, Y \in \mathfrak{g},\quad B_{\theta}(X, Y) = -B(X, \theta Y)\]
(see~(6.13) in~\cite{Kna96}), where $B$~is the Killing form and $\theta$~is the Cartan involution.
\end{example}
\begin{proof}
This follows from the well-known fact that for any morphism $G \to H$ of reductive Lie groups (here we take $H = \GL(V_*)$), one can always find a Cartan involution of~$H$ that is compatible with a given Cartan involution of~$G$. Alternatively, the form~$B_*$ can be constructed as a restriction of a positive-definite Hermitian form on $V_*^\mathbb{C}$ that is invariant by a suitable maximal compact subgroup of~$G^\mathbb{C}$ (and such a Hermitian form can be found by the usual trick of averaging over the action of that compact group).
\end{proof}

Recall that the \emph{singular values} of a map~$g$ in a Euclidean space are defined as the square roots of the eigenvalues of $g^* g$, where $g^*$ is the adjoint map. The largest and smallest singular values of~$g$ then give respectively the operator norm of~$g$ and the reciprocal of the operator norm of~$g^{-1}$.

\begin{proposition}
\label{eigenvalues_and_singular_values_characterization}
Let $\rho_*: G \to \GL(V_*)$ be any representation of~$G$ on some vector space~$V_*$; let $\lambda_*^1, \ldots, \lambda_*^{d_*}$ be the list of all the restricted weights of~$\rho_*$, repeated according to their multiplicities. Let $g \in G$; then:
\begin{hypothenum}
\item The list of the moduli of the eigenvalues of~$\rho_*(g)$ is given by
\[\left( e^{\lambda_*^i(\jordan(g))} \right)_{1 \leq i \leq d_*}.\]
\item The list of the singular values of~$\rho_*(g)$, with respect to a $K$-invariant Euclidean norm~$B_*$ on~$V_*$ that makes the restricted weight spaces of~$V_*$ pairwise orthogonal (such a norm exists by Lemma~\ref{K-invariant}), is given by
\[\left( e^{\lambda_*^i(\cartan(g))} \right)_{1 \leq i \leq d_*}.\]
\end{hypothenum}
\end{proposition}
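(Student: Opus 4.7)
The plan is to handle both claims by the same mechanism: reduce the computation to a single element of $A$, whose $\rho_*$-image stabilizes each restricted weight space $V_*^\lambda$ and acts there as multiplication by $e^{\lambda(\cdot)}$, and use the remaining factors of the relevant decomposition only to control their (trivial) contribution to moduli of eigenvalues, in case~(i), or to singular values, in case~(ii).

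For~(i), I would start from the Jordan decomposition $g = g_h g_e g_u$. Since the three factors commute, so do $\rho_*(g_h), \rho_*(g_e), \rho_*(g_u)$ in $\GL(V_*^{\mathbb{C}})$; standard linear algebra then puts them in simultaneous upper-triangular form, so the eigenvalues of $\rho_*(g)$ are products (in some ordering) of the eigenvalues of the three factors. I would then treat the three factors separately: $\rho_*(g_e)$ is conjugate to an element of the compact subgroup $\rho_*(K) \subset \GL(V_*)$, hence all its eigenvalues have modulus~$1$; $\rho_*(g_u)$ is conjugate to $\exp(\rho_*(Y))$ for some $Y \in \mathfrak{n}^+$, and because each $\rho_*(\mathfrak{g}^\alpha)$ shifts the (finitely many) complexified weight spaces by $+\alpha$, the operator $\rho_*(Y)$ is nilpotent and $\rho_*(g_u)$ is unipotent with all eigenvalues equal to~$1$; and $\rho_*(g_h)$ is conjugate to $\rho_*(\exp(\jordan(g)))$, which is diagonalizable with eigenvalue $e^{\lambda(\jordan(g))}$ of multiplicity $\dim V_*^\lambda$ on each weight space. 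Taking moduli in the products of the eigenvalues leaves exactly the positive scalars $e^{\lambda_*^i(\jordan(g))}$, with the correct multiplicities.

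For~(ii), I would start from the Cartan decomposition $g = k_1 \exp(\cartan(g)) k_2$. Since $B_*$ is $K$-invariant by Lemma~\ref{K-invariant}, the maps $\rho_*(k_1)$ and $\rho_*(k_2)$ are $B_*$-orthogonal, and the singular values of $\rho_*(g)$ coincide with those of $\rho_*(\exp(\cartan(g)))$. This last operator preserves each $V_*^\lambda$ and acts there as multiplication by the positive real scalar $e^{\lambda(\cartan(g))}$; since the weight spaces are pairwise $B_*$-orthogonal, also by Lemma~\ref{K-invariant}, this operator is $B_*$-self-adjoint and positive-definite, so its singular values equal its eigenvalues and produce exactly the claimed list.

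Neither part presents a serious obstacle. The only step that deserves a careful verification is the unipotency of $\rho_*(g_u)$, which relies on the finite decomposition of $V_*$ into restricted weight spaces together with the weight-raising property of $\mathfrak{n}^+$; everything else is a direct combination of the Jordan/Cartan decomposition of $g$ with the defining properties of $A$ and of the form~$B_*$.
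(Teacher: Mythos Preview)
Your proposal is correct and follows essentially the same approach as the paper: both parts reduce to the action of an element of~$A$ via the Jordan (resp.\ Cartan) decomposition, use that $\rho_*(g_e)$ and $\rho_*(g_u)$ have eigenvalues of modulus~$1$ (resp.\ that $\rho_*(k_1), \rho_*(k_2)$ are $B_*$-orthogonal), and read off the weight-space eigenvalues of $\rho_*(\exp X)$. Your explicit invocation of simultaneous triangularization over~$\mathbb{C}$ is slightly more detailed than the paper's one-line appeal to commutativity, but the argument is the same.
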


\begin{proof}~
\begin{hypothenum}
\item Let $g = g_h g_e g_u$ be the Jordan decomposition of~$g$.

It is then well-known that $\rho_*(g_e)$ and $\rho_*(g_u)$ are still respectively elliptic and unipotent in $\GL(V_*)$, and in particular have eigenvalues of modulus~$1$. Since $g_h$, $g_e$ and~$g_u$ all commute with each other, we deduce that the eigenvalues of~$\rho_*(g)$ are equal, in modulus, to those of~$\rho_*(g_h)$.


On the other hand, $g_h$ is by definition conjugate to~$\exp(\jordan(g))$, so $\rho_*(g_h)$~has the same eigenvalues as~$\rho_*(\exp(\jordan(g))$.

Finally, since $\exp(\jordan(g))$ is in~$A$ (the group corresponding to the Cartan subspace), the list of the eigenvalues of~$\rho_*(\exp(\jordan(g)))$ is by definition given by
\[\left( e^{\lambda_*^i(\jordan(g))} \right)_{1 \leq i \leq d_*}.\]

\item Let $g = k_1 \exp(\cartan(g)) k_2$ be the Cartan decomposition of~$g$. Since~$\rho_*(k_1)$ and~$\rho_*(k_2)$ are $B_*$-orthogonal maps, the $B_*$-singular values of~$\rho_*(g)$ coincide with those of the map~$\exp(\cartan(g))$; since~$\exp(\cartan(g))$, being an element of~$A$, is self-adjoint, its singular values coincide with its eigenvalues. We conclude as in the previous point. \qedhere
\end{hypothenum}
\end{proof}

\subsection{Properties of restricted weights}
\label{sec:restricted_weights}

In this subsection, we introduce a few properties of restricted weights of real finite-dimensional representations. (Proposition~\ref{convex_hull_and_inequalities} is actually a general result about Coxeter groups.) The corresponding theory for ordinary weights is well-known: see for example Chapter~V in~\cite{Kna96}.

Let $\alpha_1, \ldots, \alpha_r$ be an enumeration of the set~$\Pi$ of simple restricted roots generating~$\Sigma^+$. For every~$i$, we set
\begin{equation}
\alpha'_i := \begin{cases}
2\alpha_i &\text{ if $2\alpha_i$ is a restricted root} \\
\alpha_i &\text{ otherwise.}
\end{cases}
\end{equation}
For every index~$i$ such that $1 \leq i \leq r$, we define the $i$-th \emph{fundamental restricted weight}~$\varpi_i$ by the relationship
\begin{equation}
2\frac{\langle \varpi_i, \alpha'_j \rangle}{\| \alpha'_j \|^2} = \delta_{ij}
\end{equation}
for every $j$ such that $1 \leq j \leq r$.

By abuse of notation, we will often allow ourselves to write things such as ``for all~$i$ in some subset $\Pi' \subset \Pi$, $\varpi_i$ satisfies...'' (tacitly identifying the set~$\Pi'$ with the set of indices of the simple restricted roots that are inside).

In the following proposition, for any subset $\Pi' \subset \Pi$, we denote:
\begin{itemize}
\item by~$W_{\Pi'}$ the Weyl subgroup of type~$\Pi'$:
\begin{equation}
\label{eq:WPi'_definition}
W_{\Pi'} := \langle s_\alpha \rangle_{\alpha \in \Pi'};
\end{equation}
\item by~$\mathfrak{a}^{+}_{\Pi'}$ the fundamental domain for the action of~$W_{\Pi'}$ on~$\mathfrak{a}$:
\begin{equation}
\label{eq:levi_weyl_chamber_definition}
\mathfrak{a}^{+}_{\Pi'} := \setsuch{X \in \mathfrak{a}}
{\forall \alpha \in \Pi',\; \alpha(X) \geq 0},
\end{equation}
which is a kind of prism whose base is the dominant Weyl chamber of~$W_{\Pi'}$.
\end{itemize}
\begin{proposition}
\label{convex_hull_and_inequalities}
Take any $\Pi' \subset \Pi$, and let us fix $X \in \mathfrak{a}^{+}_{\Pi'}$. Let $Y \in \mathfrak{a}$. Then the following two conditions are equivalent:
\begin{hypothenum}
\item the vector~$Y$ is in $\mathfrak{a}^{+}_{\Pi'}$ and satisfies the system of linear inequalities
\[\begin{cases}
\forall i \in \mathrlap{\Pi',}\qquad\qquad \varpi_i(Y) \leq \varpi_i(X), \\
\forall i \in \mathrlap{\Pi \setminus \Pi',}\qquad\qquad \varpi_i(Y) = \varpi_i(X);
\end{cases}\]
\item the vector~$Y$ is in $\mathfrak{a}^{+}_{\Pi'}$ and also in the convex hull of the orbit of~$X$ by~$W_{\Pi'}$.
\end{hypothenum}
\end{proposition}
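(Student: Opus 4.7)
The proposition is an equivalence between a system of linear (in)equalities in the fundamental weights and the geometric condition that $Y$ lies in the convex hull of the $W_{\Pi'}$-orbit of $X$. I would prove each implication separately.

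\emph{For (ii) $\Rightarrow$ (i):} Write $Y = \sum_{w \in W_{\Pi'}} c_w\,(wX)$ with $c_w \geq 0$ and $\sum c_w = 1$. The defining relation $2\langle \varpi_i, \alpha'_j\rangle/\|\alpha'_j\|^2 = \delta_{ij}$ yields $\varpi_i(\alpha_j^\vee) = 0$ whenever $i \neq j$ and $\varpi_i(\alpha_i^\vee) > 0$. In particular, for $i \notin \Pi'$ every generator $s_{\alpha_j}$ ($j \in \Pi'$) of $W_{\Pi'}$ fixes $\varpi_i$, so $\varpi_i$ is $W_{\Pi'}$-invariant and $\varpi_i(Y) = \varpi_i(X)$. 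For $i \in \Pi'$, an induction on the length of $w$ in $W_{\Pi'}$---combining the reflection formula $s_\alpha(Z) = Z - \alpha(Z)\,\alpha^\vee$ with the standard criterion $\ell(s_\alpha w') = \ell(w') + 1 \iff (w')^{-1}\alpha \in \Sigma^+_{\Pi'}$---shows that $X - wX$ lies in the nonnegative cone generated by $\{\alpha_j^\vee : j \in \Pi'\}$. Pairing with $\varpi_i$ gives $\varpi_i(wX) \leq \varpi_i(X)$ for every $w$, and averaging yields the desired inequality for $Y$.

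\emph{For (i) $\Rightarrow$ (ii):} I would use Hahn--Banach separation: $Y \in P := \Conv(W_{\Pi'}\cdot X)$ if and only if $\ell(Y) \leq \max_{w \in W_{\Pi'}} \ell(wX)$ for every $\ell \in \mathfrak{a}^*$. The main step is to reduce this to verifying the inequality only for $W_{\Pi'}$-dominant $\ell$. Given an arbitrary $\ell$, pick $v \in W_{\Pi'}$ with $\ell^+ := v \cdot \ell$ dominant. Then $\max_{w} \ell(wX) = \max_{w} \ell^+(wX) = \ell^+(X)$ (the first equality because $w \mapsto v^{-1}w$ permutes $W_{\Pi'}$; the second by the same extremality computation as above, now applied to $\ell^+$), and similarly $\ell(Y) = \ell^+(vY) \leq \ell^+(Y)$ using that $Y$ is dominant. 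So the inequality for $\ell$ reduces to $\ell^+(Y) \leq \ell^+(X)$ for every dominant $\ell^+$.

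Finally, expand a dominant functional as $\ell = \sum_{i \in \Pi} a_i \varpi_i$---dominance in $W_{\Pi'}$ being equivalent to $a_i \geq 0$ for $i \in \Pi'$, with no constraint for $i \notin \Pi'$. Then
\[
\ell(Y) - \ell(X) = \sum_{i \in \Pi'} a_i\,\bigl(\varpi_i(Y) - \varpi_i(X)\bigr) + \sum_{i \notin \Pi'} a_i\,\bigl(\varpi_i(Y) - \varpi_i(X)\bigr),
\]
and hypothesis (i) makes the first sum nonpositive (nonnegative $a_i$ times nonpositive differences) while the second sum vanishes termwise by the equalities.

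\emph{Main obstacle.} The technical heart is the reduction to dominant functionals, which crucially uses \emph{both} dominances of $X$ and of $Y$. This also clarifies the shape of hypothesis (i): since the coefficients $a_i$ for $i \notin \Pi'$ in a dominant $\ell^+$ are unconstrained in sign, only \emph{equalities} on those $\varpi_i(Y)$ (not inequalities) are strong enough to guarantee the cancellation in the second sum.
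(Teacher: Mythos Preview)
Your argument is correct, and genuinely different from the paper's. The paper does not prove either implication directly: instead it observes that the case $\Pi' = \Pi$ is a standard fact (citing Hall, Proposition~8.44), and then reduces the general case to this one by translating both $X$ and $Y$ by the $W_{\Pi'}$-fixed vector $\sum_{i \notin \Pi'} \varpi_i(X)\,H_i$ so as to land in the subspace $\mathfrak{a}_{\Pi'}$ spanned by the coroots of~$\Pi'$; there, $\Sigma \cap \mathfrak{a}_{\Pi'}$ is a root system with simple system~$\Pi'$, Weyl group~$W_{\Pi'}$, and dominant chamber~$\mathfrak{a}^+_{\Pi'} \cap \mathfrak{a}_{\Pi'}$, so the cited result applies verbatim. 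Your route is more self-contained---you essentially \emph{reprove} the $\Pi' = \Pi$ case via the length induction on $X - wX$ and the Hahn--Banach reduction to dominant functionals---while the paper's reduction is shorter but outsources the core inequality. Your explicit justification that $W_{\Pi'}$-dominance of $\ell^+ = \sum a_i \varpi_i$ constrains only the coefficients $a_i$ with $i \in \Pi'$, and hence that equalities (not inequalities) are needed on $\varpi_i(Y)$ for $i \notin \Pi'$, is a nice explanation of why the hypothesis has the shape it does; this is implicit in the paper's translation trick but not spelled out.
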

\begin{proof}
For $\Pi' = \Pi$, this is well known: see \eg \cite{Hall15}, Proposition~8.44.

Now let~$\Pi'$ be an arbitrary subset of~$\Pi$. We may translate everything by the vector
\[\sum_{i \in \Pi \setminus \Pi'} \varpi_i(X) H_i\]
(where $(H_i)_{i \in \Pi}$ is the basis of~$\mathfrak{a}$ dual to the basis~$(\varpi_i)_{i \in \Pi}$ of~$\mathfrak{a}^*$), which is obviously fixed by~$W_{\Pi'}$. Thus we reduce the problem to the case where
\begin{equation}
\label{eq:vector_subspace_piprime}
\forall i \in \Pi \setminus \Pi', \quad \varpi_i(X) = 0.
\end{equation}
Now let~$\Sigma'$ be the intersection of~$\Sigma$ with the vector space~$\mathfrak{a}_{\Pi'}$ determined by this system of equations, which is also the linear span of $(\alpha_i)_{i \in \Pi'}$. Then $\Sigma'$ is a root system that has:
\begin{itemize}
\item $\Pi'$ as a simple root system;
\item $W_{\Pi'}$ as the Weyl group;
\item $\mathfrak{a}^{+}_{\Pi'} \cap \mathfrak{a}_{\Pi'}$ as the dominant Weyl chamber.
\end{itemize}
This reduces the problem to the case $\Pi' = \Pi$.
\end{proof}

\begin{proposition}
\label{restr_weight_lattice}
Every restricted weight of every representation of $\mathfrak{g}$ is a linear combination of fundamental restricted weights with integer coefficients.
\end{proposition}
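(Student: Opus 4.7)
The plan is to reduce the statement to classical $\mathfrak{sl}_2$-theory. The key observation is that the elements $H_i \in \mathfrak{a}$ defined by $\mu(H_i) = 2\langle \mu, \alpha'_i\rangle/\|\alpha'_i\|^2$ for all $\mu \in \mathfrak{a}^*$ form, by the very definition of the fundamental restricted weights, a basis of $\mathfrak{a}$ dual to $(\varpi_i)$. Thus a linear functional $\lambda \in \mathfrak{a}^*$ is an integer combination of the $\varpi_i$ if and only if $\lambda(H_i) \in \mathbb{Z}$ for every $i$. So the problem reduces to proving this integrality for every restricted weight $\lambda$.

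To prove integrality, I would exhibit, for each simple restricted root $\alpha_i$, an $\mathfrak{sl}_2(\mathbb{R})$-subalgebra of $\mathfrak{g}$ whose standard semisimple generator equals exactly $H_i$. Since $\alpha'_i$ is a restricted root by construction (either $\alpha_i$, or $2\alpha_i$ in the non-reduced case), the root space $\mathfrak{g}^{\alpha'_i}$ is nonzero; pick $E_i \in \mathfrak{g}^{\alpha'_i}$ and $F_i \in \mathfrak{g}^{-\alpha'_i}$ such that $[E_i, F_i] = H_i$. A standard computation, using that $[\mathfrak{g}^{\alpha'_i}, \mathfrak{g}^{-\alpha'_i}]$ is the real line through $H_i$ (apply the Killing form pairing between $\mathfrak{g}^{\alpha'_i}$ and $\mathfrak{g}^{-\alpha'_i}$), shows that $F_i$ exists; taking $F_i$ proportional to $\theta E_i$ is the cleanest choice. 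Since $\alpha'_i(H_i) = 2$, the relations $[H_i, E_i] = 2E_i$, $[H_i, F_i] = -2F_i$, $[E_i, F_i] = H_i$ hold, giving the desired $\mathfrak{sl}_2$-triple.

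Now, given any finite-dimensional representation $V$ of $\mathfrak{g}$ and any restricted weight vector $v \in V^\lambda$, restrict $V$ to this $\mathfrak{sl}_2$-triple. By the classical theorem that eigenvalues of the standard semisimple element on any finite-dimensional $\mathfrak{sl}_2$-module are integers, $\lambda(H_i) \in \mathbb{Z}$. Varying $i$, we obtain
\[ \lambda = \sum_{i=1}^r \lambda(H_i)\, \varpi_i \in \bigoplus_i \mathbb{Z}\varpi_i. \]

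The main subtlety is the non-reduced case, where both $\alpha_i$ and $2\alpha_i$ are restricted roots. Here one must be careful to build the triple using $\alpha'_i = 2\alpha_i$, not $\alpha_i$: the $\mathfrak{sl}_2$-triple associated to $\alpha_i$ would have semisimple element $H_{\alpha_i} = 2H_i$ and would only yield $\lambda(H_{\alpha_i}) \in \mathbb{Z}$, i.e., $\lambda(H_i) \in \frac{1}{2}\mathbb{Z}$, which is strictly weaker than what is claimed. Using the $\mathfrak{sl}_2$-triple built from the doubled root $2\alpha_i$ is exactly the reason the definition of $\varpi_i$ uses $\alpha'_i$ rather than $\alpha_i$, and it is what delivers the sharper integrality required by the proposition.
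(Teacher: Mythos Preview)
Your argument is correct and takes a genuinely different route from the paper's. The paper does not argue at all: it simply invokes Proposition~5.8 of Borel--Tits \cite{BT65} (with the correction noted in \cite{BT72compl}), treating the statement as a known structural fact about reductive groups. Your approach is more elementary and self-contained: you build, for each~$i$, an explicit $\mathfrak{sl}_2(\mathbb{R})$-triple inside~$\mathfrak{g}$ whose semisimple element is the coroot~$H_i$ of~$\alpha'_i$, and then appeal only to the integrality of $H$-eigenvalues in finite-dimensional $\mathfrak{sl}_2$-modules. You correctly isolate the non-reduced case as the one subtlety and handle it by insisting on $\alpha'_i = 2\alpha_i$ rather than~$\alpha_i$; this is exactly where a careless reader would go wrong.

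One small imprecision worth flagging: the full bracket $[\mathfrak{g}^{\alpha'_i}, \mathfrak{g}^{-\alpha'_i}]$ need not be a single real line when the restricted root space has dimension greater than one --- it lies in $\mathfrak{g}^0 = \mathfrak{m} \oplus \mathfrak{a}$ and can have a nonzero $\mathfrak{m}$-component. But your parenthetical ``take $F_i$ proportional to $\theta E_i$'' is precisely the remedy, since $\theta[E_i, \theta E_i] = -[E_i, \theta E_i]$ forces that particular bracket into $\mathfrak{q} \cap \mathfrak{g}^0 = \mathfrak{a}$, where it is indeed a nonzero multiple of~$H_i$. What the paper's citation buys is brevity and a link to the algebraic-group framework; what your argument buys is transparency and independence from the Borel--Tits machinery.
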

\begin{proof}
This is a particular case of Proposition~5.8 in~\cite{BT65}. For a correction concerning the proof, see also Remark~5.2 in~\cite{BT72compl}.
\end{proof}

\begin{proposition}
\label{highest_restr_weight}
If $\rho_*$ is an irreducible representation of $\mathfrak{g}$, there is a unique restricted weight~$\lambda_*$ of~$\rho_*$, called its \emph{highest restricted weight}, such that no element of the form $\lambda_* + \alpha_i$ with $1 \leq i \leq r$ is a restricted weight of~$\rho_*$.
\end{proposition}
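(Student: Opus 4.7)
The plan is to handle existence and uniqueness separately, adapting the classical highest-weight argument to the restricted setting.

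For existence, the set of restricted weights of $\rho_*$ is finite; I would pick any $X \in \mathfrak{a}^{++}$ and let $\lambda_*$ be a maximizer of $\lambda \mapsto \lambda(X)$ over this set. Since $\alpha_i(X) > 0$ for every simple restricted root $\alpha_i$, we have $(\lambda_* + \alpha_i)(X) > \lambda_*(X)$, which forces $\lambda_* + \alpha_i$ not to be a restricted weight.

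For uniqueness, let $\lambda_*$ be any restricted weight satisfying the stated property and fix a nonzero $v \in V_*^{\lambda_*}$. The crux is to show $\rho_*(\mathfrak{n}^+) v = 0$. Using the standard structural fact that $\mathfrak{n}^+$ is generated as a Lie algebra by the simple restricted root spaces $\bigoplus_i \mathfrak{g}^{\alpha_i}$, every element of $\mathfrak{n}^+$ can be written as a linear combination of iterated commutators of elements $X_1, \ldots, X_k$, each lying in some $\mathfrak{g}^{\alpha_{j_\ell}}$. Expanding such a commutator $[X_1, [X_2, \ldots, [X_{k-1}, X_k] \ldots]]$ and applying the result to $v$ yields a sum of monomials $\pm \rho_*(X_{\sigma(1)}) \cdots \rho_*(X_{\sigma(k)}) v$; in each monomial the rightmost operator $\rho_*(X_{\sigma(k)})$ sends $v$ into $V_*^{\lambda_* + \alpha_{j_{\sigma(k)}}} = 0$, so every monomial vanishes. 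Hence $\rho_*(\mathfrak{n}^+) v = 0$. I would then apply the Poincar\'e--Birkhoff--Witt theorem to the triangular decomposition $\mathfrak{g} = \mathfrak{n}^- \oplus \mathfrak{l} \oplus \mathfrak{n}^+$ to obtain $U(\mathfrak{g}) v = U(\mathfrak{n}^-) U(\mathfrak{l}) v$. Since $\mathfrak{l} = Z_\mathfrak{g}(\mathfrak{a})$ preserves each restricted weight space, $U(\mathfrak{l}) v \subseteq V_*^{\lambda_*}$, so every restricted weight occurring in $U(\mathfrak{g}) v$ is of the form $\lambda_* - \sum_j c_j \alpha_j$ with $c_j \geq 0$. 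Irreducibility of $\rho_*$ forces $U(\mathfrak{g}) v = V_*$, so every restricted weight of $\rho_*$ satisfies $\mu \leq \lambda_*$ in the partial order generated by the simple restricted roots. Applying the same argument symmetrically to any second candidate $\lambda_*'$ gives $\lambda_* \leq \lambda_*' \leq \lambda_*$, hence $\lambda_* = \lambda_*'$.

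The main obstacle is the generation statement for $\mathfrak{n}^+$. In a possibly non-reduced restricted root system this requires a careful induction on the height of $\alpha \in \Sigma^+$, showing $\mathfrak{g}^\alpha$ lies in the Lie subalgebra generated by the $\mathfrak{g}^{\alpha_i}$'s; this in turn uses the fact that every non-simple $\alpha \in \Sigma^+$ decomposes as $\alpha_i + (\alpha - \alpha_i)$ with $\alpha - \alpha_i \in \Sigma^+$ for a suitable $i$. Once this is granted, the remainder of the proof is routine weight-space arithmetic combined with PBW.
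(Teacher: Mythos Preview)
Your argument is correct, but the paper takes a substantially shorter route. Instead of re-proving a highest-weight theorem directly in the restricted setting, the paper fixes a Cartan subalgebra $\mathfrak{h} \supset \mathfrak{a}$, chooses a positive system $\Delta^+$ compatible with $\Sigma^+$, invokes the classical (non-restricted) highest-weight theorem for $\mathfrak{h}$, and lets $\lambda_*$ be the restriction of the highest weight to $\mathfrak{a}$. The required property then drops out because each simple restricted root $\alpha_i$ is the restriction of some simple root in $\Delta^+$, so $\lambda_* + \alpha_i$ being a restricted weight would force some $\tilde\lambda_* + \beta$ to be a weight with $\beta$ simple. Uniqueness is handled by the same reduction: given a second candidate $\mu$, lift it to a non-restricted weight maximal among those restricting to $\mu$ and check that this lift satisfies the highest-weight criterion upstairs.

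What your approach buys is self-containment: you never leave the restricted world and never need a compatible Cartan subalgebra. The price is exactly the obstacle you flag --- the generation of $\mathfrak{n}^+$ by the simple restricted root spaces. Your height-induction sketch is the right idea, but note that the inductive step needs more than the combinatorial decomposition $\alpha = \alpha_i + (\alpha - \alpha_i)$: you need the equality $[\mathfrak{g}^{\alpha_i}, \mathfrak{g}^{\alpha - \alpha_i}] = \mathfrak{g}^\alpha$, not merely the inclusion, and in the non-reduced case you also need $[\mathfrak{g}^{\alpha_i}, \mathfrak{g}^{\alpha_i}] = \mathfrak{g}^{2\alpha_i}$. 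These are standard structural facts (see \eg Knapp, Proposition~6.40 and surrounding material), but they are the real content of the step, so you should cite or prove them rather than leave them implicit. The paper's route sidesteps this entirely by pushing the work into the well-trodden complex theory.
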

\begin{remark}
In contrast to the situation with non-restricted weights, the highest restricted weight is not always of multiplicity~$1$; nor is a representation uniquely determined by its highest restricted weight.
\end{remark}
\begin{proof}
This easily follows from the existence and uniqueness of the ordinary (non-restricted) highest weight, given for example in \cite{Kna96}, Theorem~5.5~(d).
%
\end{proof}

\begin{proposition}
\label{convex_hull}
Let $\rho_*$ be an irreducible representation of $\mathfrak{g}$; let $\lambda_*$ be its highest restricted weight. Let $\Lambda_{\lambda_*}$ be the restricted root lattice shifted by~$\lambda_*$:
\[\Lambda_{\lambda_*} := \setsuch{\lambda_* + c_1 \alpha_1 + \cdots + c_r \alpha_r}{c_1, \ldots, c_r \in \mathbb{Z}}.\]
Then the set of restricted weights of~$\rho_*$ is exactly the intersection of the lattice~$\Lambda_{\lambda_*}$ with the convex hull of the orbit $\setsuch{w(\lambda_*)}{w \in W}$ of~$\lambda_*$ by the restricted Weyl group.
\end{proposition}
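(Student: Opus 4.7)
The strategy is to reduce the statement to the classical theorem describing the weights of an irreducible representation of a complex semisimple Lie algebra by its highest weight, and transfer the conclusion via restriction to~$\mathfrak{a}$. First observe that all three sets in play are stable under the restricted Weyl group~$W$: the convex hull trivially, the lattice~$\Lambda_{\lambda_*}$ because $W\lambda_* \subset \Lambda_{\lambda_*}$ (which follows from Proposition~\ref{restr_weight_lattice} combined with the biorthogonality relations between the~$\varpi_i$ and the~$\alpha_j$), and the set of restricted weights because $\rho_*(\tilde w)\, V_*^\lambda = V_*^{w\lambda}$ for any representative $\tilde w \in N_G(A)$ of $w \in W$. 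It therefore suffices to prove the claimed equality after intersecting everything with~$\mathfrak{a}^{+}$.

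For $\lambda \in \mathfrak{a}^{+}$, Proposition~\ref{convex_hull_and_inequalities} with $\Pi' = \Pi$ rewrites the condition $\lambda \in \Conv(W\lambda_*)$ as the system of inequalities $\varpi_i(\lambda) \le \varpi_i(\lambda_*)$ for all~$i$. Unpacking the definition of~$\varpi_i$, and treating separately the cases $\alpha'_j = \alpha_j$ and $\alpha'_j = 2\alpha_j$, one checks that $\langle \varpi_i, \alpha_j \rangle$ equals $\delta_{ij}$ times a positive constant; hence $(\varpi_i)_i$ and $(\alpha_j)_j$ are dual bases up to positive scalars, and the above inequalities translate into the condition that $\lambda_* - \lambda$ be a nonnegative real combination of the~$\alpha_i$. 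Intersecting further with the lattice $\Lambda_{\lambda_*}$ promotes these coefficients to nonnegative integers, so the proposition reduces to showing that a dominant $\lambda$ is a restricted weight of~$\rho_*$ if and only if $\lambda_* - \lambda = \sum_i b_i \alpha_i$ for some $b_i \in \mathbb{Z}_{\ge 0}$.

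The forward direction of this reduced statement is routine. Fix a $\theta$-stable Cartan subalgebra $\mathfrak{h} \supset \mathfrak{a}$ together with a positive root system $\Delta^+ \subset \Delta$ compatible with~$\Sigma^+$, as in the proof of Proposition~\ref{highest_restr_weight}. If $V_*^\lambda \neq 0$, then after complexification some non-restricted weight $\mu$ of $V_*^{\mathbb{C}}$ restricts to~$\lambda$; the classical theorem writes $\mu = \tilde{\lambda}_* - \sum_{\alpha \in \Delta^+} n_\alpha \alpha$ with $n_\alpha \in \mathbb{Z}_{\ge 0}$ and $\tilde{\lambda}_*|_\mathfrak{a} = \lambda_*$, and restricting to~$\mathfrak{a}$ yields the desired expression, since each $\alpha|_\mathfrak{a}$ is either zero or a positive restricted root (a nonnegative integer combination of the simple restricted roots).

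The backward direction is the main obstacle, and is where a genuinely ``restricted'' argument is required. The naive lift $\tilde\lambda := \tilde{\lambda}_* - \sum b_i \tilde{\alpha}_i$ for positive roots $\tilde{\alpha}_i$ restricting to~$\alpha_i$ typically falls outside the convex hull of the non-restricted Weyl orbit of~$\tilde{\lambda}_*$, so one cannot directly invoke the classical theorem. I would instead argue by descending induction starting from~$\lambda_*$, using the following adaptation of the standard $\mathfrak{sl}_2$-string argument to the restricted setting: the finite-dimensional real subalgebra generated by $\mathfrak{g}^{\pm\alpha_i}$, $[\mathfrak{g}^{\alpha_i}, \mathfrak{g}^{-\alpha_i}]$, and (when nonzero) $\mathfrak{g}^{\pm 2\alpha_i}$ acts semisimply on~$V_*$, so along each $\alpha_i$-string the restricted weights of~$\rho_*$ form an unbroken interval whose length is controlled by the value of $\langle \cdot, \alpha_i^\vee\rangle$. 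Combined with the $W$-invariance reduction, this makes it possible to reach every element of the target region from~$\lambda_*$ by a finite sequence of $\alpha_i$-drops, establishing the claim.
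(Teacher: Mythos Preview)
Your approach is substantially more explicit than the paper's, which consists essentially of two citations: the classical description of the weight set of a complex irreducible representation (\cite{Hall15}, Theorem~10.1), the assertion that the restricted statement follows ``by passing to the restriction,'' and a pointer to \cite{Hel08}, Proposition~4.22 for one of the inclusions. The paper does not engage with the difficulty you correctly isolate --- that the backward inclusion does \emph{not} follow from a naive lift of~$\lambda$ to a non-restricted weight.

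Your reduction to dominant~$\lambda$, the rewriting via Proposition~\ref{convex_hull_and_inequalities}, and the forward inclusion are all correct; the last of these is precisely what ``passing to restriction'' actually buys. Your backward inclusion via an adapted $\mathfrak{sl}_2$-string argument is the natural self-contained route and is sound in outline. Two points would need to be spelled out before it counts as a full proof: the unbroken-string property for restricted weights along each~$\alpha_i$ (your rank-one subalgebra does yield this, but when $2\alpha_i\in\Sigma$ one should check that $\langle\lambda,\alpha_i^\vee\rangle$ is still an integer --- it is, since Proposition~\ref{restr_weight_lattice} gives $\langle\lambda,(\alpha'_i)^\vee\rangle\in\mathbb{Z}$ and $(\alpha'_i)^\vee\in\{\alpha_i^\vee,\tfrac12\alpha_i^\vee\}$), and the saturation step that every dominant $\lambda\neq\lambda_*$ in the target region has some~$i$ with $\lambda+\alpha_i$ still in that region. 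Both are routine; the paper simply outsources them to Helgason.
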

\begin{proof}
Once again, this follows from the corresponding result for non restricted weights (see \eg \cite{Hall15}, Theorem~10.1) by passing to the restriction. In the case of restricted weights, one of the inclusions is stated in~\cite{Hel08}, Proposition~4.22.
\end{proof}


Theorem~7.2 in~\cite{Tits71} yields as a special case the following result:

\begin{proposition}
\label{fundamental_real_representation}
For every index~$i$ such that $1 \leq i \leq r$, there exists an irreducible representation~$\rho_i$ of~$G$ on a space~$V_i$ whose highest restricted weight is equal to~$n_i \varpi_i$ (for some positive integer~$n_i$) and has multiplicity~$1$.
\end{proposition}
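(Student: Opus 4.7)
Plan. I would argue via the complexification and then descend to $\mathbb{R}$. Extend $\mathfrak{a}$ to a Cartan subalgebra $\mathfrak{h} = \mathfrak{a} \oplus \mathfrak{t}$ of $\mathfrak{g}$ with $\mathfrak{t} := \mathfrak{h} \cap \mathfrak{k}$, and lift $\Sigma^+$ to a positive system $\Delta^+$ of absolute roots in $(\mathfrak{h}^\mathbb{C})^*$, exactly as in the proof of Proposition~\ref{highest_restr_weight}; write $\Pi_{\mathrm{abs}}$ for the associated set of absolute simple roots. These split into \emph{compact} simple roots (those vanishing on $\mathfrak{a}$, which form the Dynkin sub-diagram of $\mathfrak{m}^\mathbb{C}$) and \emph{non-compact} ones. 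The goal is to exhibit an absolute dominant integral weight $\mu \in (\mathfrak{h}^\mathbb{C})^*$ with $\mu|_{\mathfrak{t}} = 0$ and $\mu|_{\mathfrak{a}} = n_i \varpi_i$ for some positive integer $n_i$, and then take $V_i^\mathbb{C}$ to be the irreducible $\mathfrak{g}^\mathbb{C}$-representation of highest weight $\mu$. The condition $\mu|_{\mathfrak{t}} = 0$ forces $\mu$ to be orthogonal (for the Killing form on $\mathfrak{h}^\mathbb{C}$) to every compact root, and Proposition~\ref{restr_weight_lattice} ensures that a large enough positive integer multiple of the linear functional on $\mathfrak{h}$ extending $\varpi_i$ by zero on $\mathfrak{t}$ will sit inside the absolute weight lattice and be dominant.

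The main verification is that the restricted weight $\mu|_{\mathfrak{a}} = n_i\varpi_i$ has multiplicity $1$ in $V_i^\mathbb{C}$. Every absolute weight $\nu$ of $V_i^\mathbb{C}$ satisfies $\mu - \nu = \sum_{\beta \in \Pi_{\mathrm{abs}}} c_\beta \beta$ with $c_\beta \geq 0$; the condition $\nu|_{\mathfrak{a}} = \mu|_{\mathfrak{a}}$ gives $\sum c_\beta (\beta|_{\mathfrak{a}}) = 0$, and since each non-compact $\beta|_{\mathfrak{a}}$ is a nonzero non-negative combination of elements of $\Pi$, this forces $c_\beta = 0$ for every non-compact $\beta$. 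Hence $\mu - \nu$ is a non-negative sum of compact simple roots only, so any weight vector at $\nu$ lies in the $\mathfrak{m}^\mathbb{C}$-submodule $U(\mathfrak{m}^\mathbb{C}) \cdot v_\mu$ generated by the highest weight vector $v_\mu$. But $\mu$ is orthogonal to every compact simple $\beta$, so $\langle \mu, \beta^\vee \rangle = 0$ and the $\mathfrak{sl}_2$-theory gives $f_\beta v_\mu = 0$; thus $v_\mu$ is a highest weight vector of $\mathfrak{m}^\mathbb{C}$ of weight $0$ generating the trivial one-dimensional module, forcing $\nu = \mu$.

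To produce a genuine real irreducible $V_i$ with the same multiplicity-one property, I invoke the Frobenius--Schur trichotomy (real, quaternionic, or complex type) applied to $V_i^\mathbb{C}$ viewed as a real $G$-module. In the real case, $V_i$ is the real form of $V_i^\mathbb{C}$ and inherits multiplicity $1$ at the restricted highest weight. In the quaternionic and complex cases, I replace $\mu$ by a suitable positive integer multiple (e.g.\ $2\mu$, which appears in $V_i^\mathbb{C} \otimes V_i^\mathbb{C}$ and becomes real type in the quaternionic case) or by a self-dual variant such as $\mu - w_{0,\mathrm{abs}}\mu$, rerun the multiplicity-one argument (which only uses that $\mu$ is supported on $\mathfrak{a}$ and orthogonal to compact roots), and absorb the extra integer factor into $n_i$. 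The chief conceptual obstacle is the multiplicity-one verification: the key trick is arranging $\mu$ to be supported on $\mathfrak{a}$ \emph{and} orthogonal to every compact simple root, so that the compact subalgebra $\mathfrak{m}^\mathbb{C}$ acts trivially on the highest weight line; the descent to $\mathbb{R}$ requires some case-by-case work but is absorbed into the freedom to enlarge $n_i$.
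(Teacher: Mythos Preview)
The paper does not prove this from scratch; it simply cites Theorem~7.2 of Tits~\cite{Tits71}. Your proposal is essentially a sketch of (part of) the argument behind that theorem, and the two key ideas --- choosing an absolute highest weight $\mu$ that vanishes on $\mathfrak{t}$ and is orthogonal to every compact simple root, and deducing multiplicity one from the fact that $\mathfrak{m}^\mathbb{C}$ then acts trivially on the highest-weight line --- are exactly the right ones.

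One small gap in the multiplicity-one argument: the assertion that every weight vector at a weight $\nu$ with $\nu|_{\mathfrak{a}} = \mu|_{\mathfrak{a}}$ lies in $U(\mathfrak{m}^\mathbb{C})\cdot v_\mu$ does not follow merely from $\mu - \nu$ being a non-negative sum of compact simple roots. What you need is that the highest restricted weight space is an \emph{irreducible} $\mathfrak{m}^\mathbb{C}$-module; this holds because any $\mathfrak{m}^\mathbb{C}$-highest-weight vector in that space is annihilated by all positive root vectors of $\mathfrak{g}^\mathbb{C}$ (the non-compact ones strictly raise the restricted weight), hence is a scalar multiple of $v_\mu$. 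With that sentence added, your multiplicity-one argument closes.

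The descent to $\mathbb{R}$ is where the sketch is genuinely thin. The quaternionic case is handled by the $2\mu$ trick, as you say. But in the complex-type case, viewing $V_i^\mathbb{C}$ as a real vector space produces a real irreducible whose highest restricted weight space has \emph{real} dimension $2$, which violates the multiplicity-one conclusion. Your suggested replacement ``$\mu - w_{0,\mathrm{abs}}\mu$'' conflates self-duality with self-conjugacy: the obstruction to a real form is governed by the Satake involution $\mu \mapsto \mu^{*}$ on the absolute Dynkin diagram (coming from complex conjugation with respect to the real form $\mathfrak{g}$), not by $-w_{0,\mathrm{abs}}$. One must then check that $\mu + \mu^{*}$, or a suitable multiple, still restricts to a positive multiple of $\varpi_i$ and still vanishes on $\mathfrak{t}$; this is true, but it relies on how $*$ interacts with the black nodes of the Satake diagram and is not automatic. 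So ``absorbed into the freedom to enlarge $n_i$'' understates the remaining work --- which is precisely the content of the Tits reference the paper invokes.
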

Here is a result describing the restricted weights of these representations.

\begin{lemma}
\label{fund_repr_other_weights}
Fix an index~$i$ such that $1 \leq i \leq r$. Then:
\begin{hypothenum}
\item $\rho_i$ has $n_i \varpi_i - \alpha_i$ as a restricted weight;
\item all restricted weights of~$\rho_i$ other than $n_i \varpi_i$ have the form
\[n_i \varpi_i - \alpha_i - \sum_{j=1}^r c_j \alpha_j,\]
with $c_j \geq 0$ for every $j$.
\end{hypothenum}
\end{lemma}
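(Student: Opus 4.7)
The plan is to combine Proposition~\ref{convex_hull} with an elementary norm computation. By Proposition~\ref{convex_hull}, any restricted weight $\mu$ of $\rho_i$ lies both in the shifted lattice $\Lambda_{n_i\varpi_i}$ and in the convex hull $\Conv(W \cdot n_i\varpi_i)$. Writing $\mu = n_i\varpi_i - \sum_{j=1}^r d_j\alpha_j$ with $d_j \in \mathbb{Z}$, it suffices to show that $d_j \geq 0$ for all $j$ and moreover $d_i \geq 1$ whenever $\mu \neq n_i\varpi_i$; then splitting off one copy of $\alpha_i$ yields the announced form with $c_i := d_i - 1 \geq 0$ and $c_j := d_j \geq 0$ for $j \neq i$.

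For the non-negativity of all $d_j$, I would invoke the classical fact that, for any dominant weight $\lambda$ and any $w \in W$, the difference $\lambda - w\lambda$ is a non-negative integer combination of simple restricted roots (a standard induction on the length of $w$; see e.g.\ \cite{Kna96}). Applied to $\lambda = n_i\varpi_i$, this places every vertex of $\Conv(W \cdot n_i\varpi_i)$, hence every convex combination, in $n_i\varpi_i - \sum_j \mathbb{R}_{\geq 0}\,\alpha_j$. Combined with the integrality in $\Lambda_{n_i\varpi_i}$, this forces $d_j \geq 0$ for every $j$.

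The refinement $d_i \geq 1$ then drops out of a short norm estimate. Fix any $W$-invariant positive definite inner product $\langle\cdot,\cdot\rangle$ on $\mathfrak{a}^*$. Since the orbit $W \cdot n_i\varpi_i$ lies on a single sphere centered at the origin, its convex hull lies in the corresponding closed ball, giving $\|\mu\|^2 \leq \|n_i\varpi_i\|^2$. The defining relation of the fundamental restricted weights yields $\langle \varpi_i, \alpha_j\rangle = 0$ for $j \neq i$ and $\kappa := \langle\varpi_i,\alpha_i\rangle > 0$, so expanding gives
\[
\|\mu\|^2 = \|n_i\varpi_i\|^2 - 2 n_i d_i \kappa + \Bigl\|\sum_j d_j \alpha_j\Bigr\|^2,
\]
whence $\|\sum_j d_j\alpha_j\|^2 \leq 2 n_i d_i \kappa$. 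If $\mu \neq n_i\varpi_i$ then $\sum_j d_j\alpha_j \neq 0$ by linear independence of the simple restricted roots, so the left-hand side is strictly positive; hence $d_i > 0$, and being an integer, $d_i \geq 1$.

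The only delicate point is really bookkeeping: the non-negativity step relies on the Weyl-group inequality stated above, which is entirely classical but is not spelled out in the paper. Everything else is a direct unpacking of Proposition~\ref{convex_hull} together with the orthogonality already built into the definition of the $\varpi_i$.
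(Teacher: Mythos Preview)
Your proof is correct. The first step (non-negativity of all the~$d_j$) is essentially the same as in the paper, which obtains it by combining Propositions~\ref{convex_hull} and~\ref{convex_hull_and_inequalities}; you instead invoke the equivalent classical statement that $\lambda - w\lambda$ is a non-negative combination of simple roots for dominant~$\lambda$.

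The second step is where the two arguments diverge. You deduce $d_i \geq 1$ from a direct inner-product computation: the convex hull of the Weyl orbit lies in the ball of radius~$\|n_i\varpi_i\|$, and expanding $\|\mu\|^2$ using the orthogonality $\langle\varpi_i,\alpha_j\rangle = 0$ for~$j\neq i$ forces $d_i > 0$. The paper instead argues by contradiction: assuming $d_i = 0$, it reduces to the case where $\mu$~is dominant (citing Proposition~8.42 in~\cite{Hall15}), then applies Proposition~\ref{convex_hull_and_inequalities} with $\Pi' = \Pi \setminus \{\alpha_i\}$ to conclude that $\mu$ lies in the convex hull of the $W_{\Pi'}$-orbit of~$n_i\varpi_i$; but $W_{\Pi'}$~fixes~$\varpi_i$, so this orbit is a single point and $\mu = n_i\varpi_i$. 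Your route is more self-contained and avoids both the reduction to the dominant case and the second appeal to Proposition~\ref{convex_hull_and_inequalities}; the paper's route, on the other hand, stays entirely within the combinatorial framework already set up and needs no metric input.
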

\begin{proof}~
\begin{hypothenum}
\item We have
\begin{align}
s_{\alpha_i}(n_i \varpi_i)
  &= s_{\alpha'_i}(n_i \varpi_i) \nonumber \\
  &= n_i \varpi_i - 2 n_i \frac{\langle \varpi_i, \alpha'_i \rangle}{\langle \alpha'_i, \alpha'_i \rangle}\alpha'_i \nonumber \\
  &= n_i \varpi_i - n_i \alpha'_i
\end{align}
(recall that~$\alpha'_i$ is equal to~$2\alpha_i$ if~$2\alpha_i$ is a restricted root and to~$\alpha_i$ otherwise). By Proposition~\ref{convex_hull}, $s_{\alpha_i}(n_i \varpi_i)$~is a restricted weight of~$\rho_i$ (because it is the image of a restricted weight of~$\rho_i$ by an element of the Weyl group) and then $n_i \varpi_i - \alpha_i$ is also a restricted weight of~$\rho_i$ (as a convex combination of two restricted weights of~$\rho_i$, that belongs to the restricted root lattice shifted by~$n_i \varpi_i$).
\item Let $\lambda$ be some restricted weight of~$\rho_i$. By Proposition~\ref{convex_hull} taken together with Proposition~\ref{convex_hull_and_inequalities}, we already know that it can be written as
\[\lambda = n_i \varpi_i - \sum_{j=1}^r c'_j \alpha_j,\]
where all coefficients $c'_j$ are nonnegative integers. It remains to show that if $\lambda \neq n_i \varpi_i$, then necessarily $c'_i > 0$.

Assume that $c'_i = 0$. By Proposition~8.42 in~\cite{Hall15}, we lose no generality in assuming that $\lambda$~is dominant. Let $\Pi_i := \Pi \setminus \{i\}$; by Proposition~\ref{convex_hull_and_inequalities}, it follows that $\lambda$~is then in the convex hull of the orbit of~$n_i \varpi_i$ by~$W_{\Pi_i}$. But clearly $W_{\Pi_i}$~fixes~$\varpi_i$, hence also~$n_i \varpi_i$. The conclusion follows. \qedhere
\end{hypothenum}
\end{proof}

\section{Choice of a reference Jordan projection}
\label{sec:choice}

For the remainder of the paper, we fix $\rho$ an irreducible representation of~$G$ on a finite-dimensional real vector space~$V$. For the moment, $\rho$~may be any representation; but in the course of the paper, we shall gradually introduce several assumptions on~$\rho$ (namely Assumptions \ref{zero_is_a_weight}, \ref{no_swinging}, \ref{transl_space} and~\ref{inverse_ok}) that will ensure that $\rho$~satisfies the hypotheses of the Main Theorem.

We denote by $\Omega$ the set of restricted weights of~$\rho$. For any $X \in \mathfrak{a}$, we define $\Omega^\subg_X$ (resp. $\Omega^\subl_X$, $\Omega^\sube_X$, $\Omega^\subge_X$, $\Omega^\suble_X$) to be the set of all restricted weights of~$\rho$ that take a positive (resp. negative, zero, nonnegative, nonpositive) value on~$X$:
\begin{align*}
\Omega^\subge_X &:= \setsuch{\lambda \in \Omega}{\lambda(X) \geq 0}
   & \Omega^\subg_X &:= \setsuch{\lambda \in \Omega}{\lambda(X) > 0} \\
\Omega^\suble_X &:= \setsuch{\lambda \in \Omega}{\lambda(X) \leq 0}
   & \Omega^\subl_X &:= \setsuch{\lambda \in \Omega}{\lambda(X) < 0} \\
\Omega^\sube_X &:= \setsuch{\lambda \in \Omega}{\lambda(X) = 0}. & &
\end{align*}
The goal of this section is to study these sets, and to choose a vector~$X_0 \in \mathfrak{a}^+$ for which the corresponding sets have some nice properties. The motivation for their study is that they parametrize the dynamical spaces (defined in Subsection~\ref{sec:dynamical_spaces_definition}) of~$\exp(X_0)$ (obviously), and actually of any element $g \in G$ whose Jordan projection ``has the same type'' as~$X_0$ (see Proposition~\ref{Asubge_is_ampa}).

In Subsection~\ref{sec:generic}, we introduce the notion of a generic vector $X \in \mathfrak{a}$, and impose a first constraint on~$\rho$: that $0$~be a restricted weight.

In Subsection~\ref{sec:types}, we introduce an equivalence relation on the set of generic vectors that identifies elements with the same dynamics, and give several examples.

In Subsection~\ref{sec:symmetry_pos_neg}, we introduce the notion of a symmetric vector $X \in \mathfrak{a}$, and ensure that~$\rho$ does not exclude generic vectors from being symmetric.

In Subsection~\ref{sec:parabolics}, we define parabolic subgroups and subalgebras of type~$X$; we also associate to every~$X \in \mathfrak{a}^+$ a set~$\Pi_X$ of simple restricted roots and a subgroup~$W_X$ of the restricted Weyl group.

In Subsection~\ref{sec:extreme}, we prove Proposition~\ref{change_of_X_0}, which shows that every equivalence class of generic vectors has a representative that has ``as much symmetry'' as the whole equivalence class, called an ``extreme'' representative.

At the end of this section, we shall fix once and for all an extreme, symmetric, generic vector $X_0 \in \mathfrak{a}^+$, which will serve as a reference Jordan projection (see the definition at the beginning of Subsection~\ref{sec:algebraic}).

\subsection{Generic elements}
\label{sec:generic}
We say that an element $X \in \mathfrak{a}$ is \emph{generic} if
\[\Omega^\sube_X \subset \{0\}.\]
\begin{remark}
This is indeed the generic case: it happens as soon as~$X$ avoids a finite collection of hyperplanes, namely the kernels of all nonzero restricted weights of~$\rho$.
\end{remark}

\begin{assumption}
\label{zero_is_a_weight}
From now on, we assume that $0$~is a restricted weight of~$\rho$:
\[0 \in \Omega,
\quad\text{ or equivalently }\quad
\dim V^0 > 0.\]
\end{assumption}
\begin{remark}
By Proposition~\ref{convex_hull}, this is the case if and only if the highest restricted weight of~$\rho$ is a $\mathbb{Z}$-linear combination of restricted roots.
\end{remark}

\begin{remark}
\label{zero_weight_follows_from_hypotheses}
We lose no generality in assuming this property, because it comes as a consequence of condition~\ref{itm:main_condition}\ref{itm:fixed_by_l} of the Main Theorem (which is also Assumption~\ref{transl_space}, see below). Indeed, any nonzero vector fixed by~$L$ is in particular fixed by~$A \subset L$, which means that it belongs to the zero restricted weight space.
\end{remark}

\begin{remark}
Conversely, this assumption provides the bare minimum without which the conclusion of the Main Theorem is certain to fail. In fact without this assumption, the group $G \ltimes V$ cannot even have \emph{any} infinite Zariski-dense subgroup acting properly. Indeed, let $\Gamma$~be such a subgroup; using a lemma due to Selberg, we lose no generality in assuming $\Gamma$ to be torsion-free. On the other hand, the linear part of a generic element~$g$ of such a group does not have~$1$ as an eigenvalue. This means that $g$ has a fixed point, which is a contradiction.
\end{remark}

In that case, for generic~$X$ we actually have
\[\Omega^\sube_X = \{0\}.\]

\subsection{Types of elements of~$\mathfrak{a}$}
\label{sec:types}

For two vectors~$X, Y \in \mathfrak{a}$, we say that $Y$ \emph{has the same type} as~$X$ if
\begin{equation}
\begin{cases}
\Omega^\subg_Y = \Omega^\subg_X; \\
\Omega^\subl_Y = \Omega^\subl_X,
\end{cases}
\end{equation}
\ie if every restricted weight takes the same sign on both of them. This implies that all five sets $\Omega^\subge$, $\Omega^\suble$, $\Omega^\sube$, $\Omega^\subg$ and~$\Omega^\subl$ coincide for~$X$ and~$Y$, hence that $\exp(X)$ and~$\exp(Y)$ have the same dynamical spaces (see Subsection~\ref{sec:dynamical_spaces_definition}).

This is an equivalence relation on~$\mathfrak{a}$, which partitions~$\mathfrak{a}$ into finitely many equivalence classes. We are only interested in generic equivalence classes. Some generic $X \in \mathfrak{a}$ being fixed, we call
\begin{equation}
\mathfrak{a}_{\rho, X} := \setsuch{Y \in \mathfrak{a}}{
\begin{cases}
\forall \lambda \in \Omega^\subg_X,\; \lambda(Y) > 0; \\
\forall \lambda \in \Omega^\subl_X,\; \lambda(Y) < 0
\end{cases}}
\end{equation}
its equivalence class in~$\mathfrak{a}$. If $X$~is dominant, we additionally call
\begin{equation}
\mathfrak{a}^{+}_{\rho, X} := \mathfrak{a}_{\rho, X} \cap \mathfrak{a}^{+}
\end{equation}
its equivalence class in the closed dominant Weyl chamber~$\mathfrak{a}^{+}$.
\begin{remark}
Every equivalence class is a convex cone. Also, these equivalence classes actually coincide  with connected components of the set of generic vectors.
\end{remark}
\begin{example}~
\label{equivalence_class_examples}
\begin{enumerate}
\item If $G$ is any noncompact semisimple real Lie group and $\rho = \Ad$ is its adjoint representation (so that $V = \mathfrak{g}$):
\begin{itemize}
\item A vector~$X \in \mathfrak{a}$ is generic if and only if it lies in one of the open Weyl chambers. In particular a vector~$X$ in~$\mathfrak{a}^+$ is generic if and only if it lies in~$\mathfrak{a}^{++}$.
\item All elements of~$\mathfrak{a}^{++}$ have the same type; so there is only one generic equivalence class in~$\mathfrak{a}^+$. Specifically, for any vector $X \in \mathfrak{a}^{++}$, we have ${\mathfrak{a}_{\Ad, X} = \mathfrak{a}^+_{\Ad, X} = \mathfrak{a}^{++}}$.
\end{itemize}
\item Take $G = \SO^+(3, 2)$. The root system is then~$B_2$:
\[\begin{tikzpicture}
\def\roots{
(-1,1),  (0,1),  (1,1),
(-1,0),          (1,0),
(-1,-1), (0,-1), (1,-1)
}
\foreach \lambda in \roots{
  \draw[->] (0, 0) -- \lambda;
}
\node[above] at (0,1) {$e_2$};
\node[above right] at (1,1) {$e_1 + e_2$};
\node[right] at (1,0) {$e_1$};
\node[below right] at (1,-1) {$e_1 - e_2$};
\node[below] at (0,-1) {$-e_2$};
\node[below left] at (-1,-1) {$-e_1 - e_2$};
\node[left] at (-1,0) {$-e_1$};
\node[above left] at (-1,1) {$-e_1 + e_2$};
\end{tikzpicture}\]

As this group is split, the roots are also the restricted roots. Let $\rho$~be the representation with highest weight~$2e_1 + e_2$ (in the notations of \cite{Kna96}, Appendix~C). This is a representation of dimension~$35$, whose weights (also restricted weights) are as follows:
\[\begin{tikzpicture}
\def\simpleweights{
         (-1,2),  (0,2),  (1,2),
(-2,1),                           (2,1),
(-2,0),                           (2,0),
(-2,-1),                          (2,-1),
         (-1,-2), (0,-2), (1,-2)
}
\def\doubleweights{
(-1,1),          (1,1),
(-1,-1),         (1,-1)
}
\def\tripleweights{
         (0,1),
(-1,0),  (0,0),  (1,0),
         (0,-1)
}
\foreach \lambda in \simpleweights{
  \node[circle, fill, inner sep=1pt] at \lambda {};
}
\foreach \lambda in \doubleweights{
  \node[right, circle, fill, inner sep=1pt] at \lambda {};
  \node[left, circle, fill, inner sep=1pt] at \lambda {};
}
\foreach \lambda in \tripleweights{
  \node[below right, circle, fill, inner sep=1pt] at \lambda {};
  \node[above, circle, fill, inner sep=1pt] at \lambda {};
  \node[below left, circle, fill, inner sep=1pt] at \lambda {};
}
\end{tikzpicture}\]
(the number of dots at each node represents multiplicity.) Then every equivalence class in~$\mathfrak{a}$ is contained in some Weyl chamber: see Figure~\ref{fig:large_repr}. The dominant Weyl chamber~$\mathfrak{a}^+$ is split into two equivalence classes by the line of slope~$\frac{1}{2}$, kernel of the weight~$-e_1 + 2e_2$.

\begin{figure}[b]
\centering
\begin{subfigure}{0.45\textwidth}
\[\begin{tikzpicture}[scale=0.9]
\def\roots{
(-1,1),  (0,1),  (1,1),
(-1,0),          (1,0),
(-1,-1), (0,-1), (1,-1)
}
\def\someroots{
(-1,1),  (0,1),  (1,1),
(-1,0)
}
\def\weights{
         (-1,2),  (0,2),  (1,2),
(-2,1),  (-1,1),  (0,1),  (1,1),  (2,1),
(-2,0),  (-1,0),  (0,0),  (1,0),  (2,0),
(-2,-1), (-1,-1), (0,-1), (1,-1), (2,-1),
         (-1,-2), (0,-2), (1,-2)
}
\begin{scope}
  \clip (-3, -3) rectangle (3, 3);
  \fill[black!10!white] (0,0) -- (4,0) -- (4,2) -- (0,0);
  \fill[black!20!white] (0,0) -- (4,2) -- (4,4) -- (0,0);
  \foreach \lambda in \weights{
    \draw[black!30!white, line width = 1.2pt] ($ (0,0)!5!90:\lambda $) -- ($ (0,0)!5!-90:\lambda $);
  }
  \foreach \lambda in \someroots{
    \draw[loosely dashed] ($ (0,0)!5!90:\lambda $) -- ($ (0,0)!5!-90:\lambda $);
  }
  \begin{scope}
    \clip (0,0) -- (4,0) -- (4,4) -- (0,0);
    \foreach \x in {0,...,100}
      \draw (0.2*\x,0) -- (0,0.2*\x);
  \end{scope}
\end{scope}
\end{tikzpicture}\]
\caption{$\rho$ of highest weight~$2e_1 + e_2$}
\label{fig:large_repr}
\end{subfigure}
\quad
\begin{subfigure}{0.45\textwidth}
\[\begin{tikzpicture}[scale=0.9]
\def\roots{
(-1,1),  (0,1),  (1,1),
(-1,0),          (1,0),
(-1,-1), (0,-1), (1,-1)
}
\def\someroots{
(-1,1),  (0,1),  (1,1),
(-1,0)
}
\def\weights{
         (0,1),
(-1,0),  (0,0),  (1,0),
         (0,-1)
}
\begin{scope}
  \clip (-3, -3) rectangle (3, 3);
  \fill[black!7!white] (0,0) rectangle (4,4);
  \foreach \lambda in \weights{
    \draw[black!30!white, line width = 1.2pt] ($ (0,0)!5!90:\lambda $) -- ($ (0,0)!5!-90:\lambda $);
  }
  \foreach \lambda in \someroots{
    \draw[loosely dashed] ($ (0,0)!5!90:\lambda $) -- ($ (0,0)!5!-90:\lambda $);
  }
  \begin{scope}
    \clip (0,0) -- (4,0) -- (4,4) -- (0,0);
    \foreach \x in {0,...,100}
      \draw (0.2*\x,0) -- (0,0.2*\x);
  \end{scope}
\end{scope}
\end{tikzpicture}\]
\caption{$\rho$ of highest weight~$e_1$}
\label{fig:small_repr}
\end{subfigure}
\caption{Equivalence classes and Weyl chambers for two different representations of~$G = \SO^+(3,2)$. Dashed lines represent walls of Weyl chambers. Thick gray lines represent kernels of nonzero weights, which separate the different equivalence classes. The dominant Weyl chamber $\mathfrak{a}^+$ is hatched. All equivalence classes in~$\mathfrak{a}$ that intersect~$\mathfrak{a}^+$ are shaded, with different shades if there are more than one.}
\label{fig:equivalence_classes}
\end{figure}
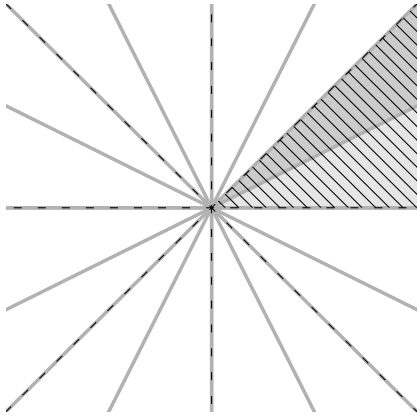
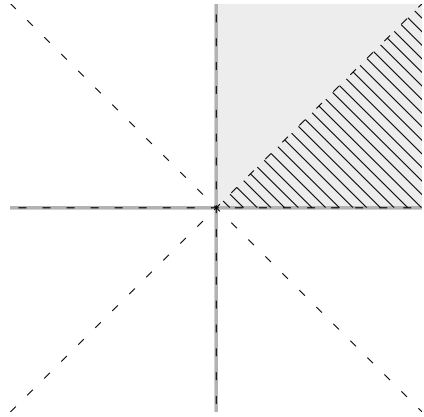
 
\item Take $G = \SO^+(3, 2)$ and $\rho$ the standard representation on $V = \mathbb{R}^{5}$. Using once again the notations of \cite{Kna96}, Appendix~C, its highest weight is~$e_1$ and its weights are $\pm e_1$, $\pm e_2$ and~$0$ (of course all with multiplicity~$1$). Then (see Figure~\ref{fig:small_repr}):
\begin{itemize}
\item A vector~$X \in \mathfrak{a}^+$ is generic if and only if it avoids the ``horizontal'' wall of the dominant Weyl chamber (the one normal to~$e_2$).
\item All such vectors have the same type. So for a generic~$X \in \mathfrak{a}^+$, the equivalence class~$\mathfrak{a}^+_{\rho, X}$ is the half-open dominant Weyl chamber, with the diagonal wall included and the horizontal wall excluded.
\item The whole equivalence class~$\mathfrak{a}_{\rho, X}$ is then an open quadrant of the plane~$\mathfrak{a}$, consisting of two half-open Weyl chambers glued back-to-back along their shared diagonal wall.
\end{itemize}

\item Suppose that $G$ and $\rho$~are such that the set of the restricted weights of~$\rho$ neither contains all restricted roots of~$G$, nor is contained in the set of restricted roots of~$G$ and their multiples. Then both phenomena occur at the same time: equivalence classes in~$\mathfrak{a}$ neither contain nor are contained in the Weyl chambers.

Examples are not immediate to come up with: the author even mistakenly believed for some time that no such representations existed. However, here is one such example:
\begin{itemize}
\item Take~$G = \operatorname{PSp}_4(\mathbb{R})$ (which is a split form), following the notation convention of~\cite{Kna96}: this is a group of rank~$4$ with a standard representation of dimension~$8$ (most people would call it $\operatorname{PSp}_8(\mathbb{R})$ instead). In the notations of~\cite{Kna96}, Appendix~C, its roots are all the possible expressions of the form~$\pm e_j \pm e_i$ or~$\pm 2 e_i$. 
\item Take~$\rho$ to be the representation with highest weight~$e_1 + e_2 + e_3 + e_4$. It has:
\begin{itemize}
\item the $16$~weights of the form $\pm e_1 \pm e_2 \pm e_3 \pm e_4$, with multiplicity~$1$;
\item the $24$~weights of the form $\pm e_i \pm e_j$, with multiplicity~$1$;
\item the zero weight with multiplicity~$2$,
\end{itemize}
for a total dimension of~$42$.
\end{itemize}
The reader may check that there are then three different ``types'' of generic vectors in the dominant Weyl chamber~$\mathfrak{a}^+$, with \eg the following representatives:
\begin{enumerate}[label=(\alph*)]
\item $X = (4, 2, 1, 0)$;
\item $X = (5, 3, 2, 1)$;
\item $X = (4, 3, 2, 0)$.
\end{enumerate}
In cases (a) and (c), we notice that $X$ lies on the wall normal to~$2e_4$; its equivalence class then contains a whole slice of that wall.
\end{enumerate}
\end{example}

\subsection{Swinging}
\label{sec:symmetry_pos_neg}
We start this subsection with the following observation: if the Jordan projection of~$g$ is~$X$, then the Jordan projection of~$g^{-1}$ is $-w_0(X)$, where $w_0$ is the ``longest element'' of the Weyl group that interchanges positive and negative restricted roots (see Section~\ref{sec:lie}).

We would like to ensure that for every element~$g$ of the group~$\Gamma$ we are trying to construct, the element~$g$ itself and its inverse~$g^{-1}$ have similar dynamics. To do that, we would like~$X$ and~$-w_0(X)$ to be of the same type. Replacing if necessary $X$ and~$-w_0(X)$ by their midpoint, we lose no generality in assuming they are actually equal.

\begin{definition}
We say that an element $X \in \mathfrak{a}$ is \emph{symmetric} if it is invariant by~$-w_0$:
\[-w_0(X) = X.\]
\end{definition}

Unfortunately, it is not always possible to find a vector~$X$ that is both symmetric and generic, as shown by the following example:
\begin{example}
Take $G = \SL_3(\mathbb{R})$. It is a split form, so its restricted root system is the same as its root system, namely $A_2$:
\[\begin{tikzpicture}[scale=1.5]
\foreach \theta in {30,90,...,330}
{
  \draw[->] (0, 0) -- (\theta:1);
}
\node[above] at (90:1) {$e_1 - e_3$};
\node[left] at (150:1) {$e_2 - e_3$};
\node[left] at (210:1) {$e_2 - e_1$};
\node[below] at (270:1) {$e_3 - e_1$};
\node[right] at (330:1) {$e_3 - e_2$};
\node[right] at (30:1) {$e_1 - e_2$};
\end{tikzpicture}\]
For this group, $-w_0$ is the map that exchanges the two simple positive roots $e_1 - e_2$ and $e_2 - e_3$ (we use the notations of~\cite{Kna96}, Appendix~C); in the picture above, it corresponds to the reflection about the vertical axis. So a vector $X \in \mathfrak{a}^+$ is symmetric if and only if it lies on that vertical axis (which bisects the dominant Weyl chamber~$\mathfrak{a}^+$).

Now consider the representation~$\rho$ of~$G$ with highest weight~$2e_1 - e_2 - e_3$. Note that this is three times the first fundamental weight, so $\rho$~is actually the third symmetric product~$S^3 \mathbb{R}^3$ of the standard representation. Here are its weights:
\[\begin{tikzpicture}[scale=1.5]
\coordinate (e13) at (90:1);
\coordinate (e12) at (30:1);
\node[circle, fill, inner sep=1pt, label=left:$-e_1 + 2e_2 - e_3$] at ($ (e13) - 2*(e12) $) {};
\node[circle, fill, inner sep=1pt, label=above left:$e_2 - e_3$] at ($ (e13) - (e12) $) {};
\node[circle, fill, inner sep=1pt, label=above:$e_1 - e_3$] at (e13) {};
\node[circle, fill, inner sep=1pt, label=right:$2e_1 - e_2 - e_3$] at ($ (e13) + (e12) $) {};
\node[circle, fill, inner sep=1pt, label=below left:$-e_1 + e_2$] at ($ (0,0) - (e12) $) {};
\node[circle, fill, inner sep=1pt, label=right:$0$] at (0,0) {};
\node[circle, fill, inner sep=1pt, label=right:$e_1 - e_2$] at (e12) {};
\node[circle, fill, inner sep=1pt, label=below:$-e_1 + e_3$] at ($ (0,0) - (e13) $) {};
\node[circle, fill, inner sep=1pt, label=right:$-e_2 + e_3$] at ($ (0,0) - (e13) + (e12) $) {};
\node[circle, fill, inner sep=1pt, label=right:$-e_1 - e_2 + 2e_3$] at ($ (0,0) - 2*(e13) + (e12) $) {};
\end{tikzpicture}\]
We see that any symmetric vector necessarily annihilates the weight~$-e_1 + 2e_2 - e_3$, hence it cannot be generic.
\end{example}

We call this phenomenon ``swinging''. Here is the picture to have in mind: when we apply the involution~$-w_0$ to some generic~$X$, the annihilator of~$X$ (\ie the hyperplane of~$\mathfrak{a}^*$ consisting of linear forms that vanish on~$X$) ``swings'' past the weight~$-e_1 + 2e_2 - e_3$, thus switching it from the set~$\Omega^\subg$ to the set~$\Omega^\subl$.

From now on, we assume that this issue does not arise:

\begin{assumption}[``No swinging'']
\label{no_swinging}
From now on, we assume that $\rho$~is such that there exists a symmetric generic element of~$\mathfrak{a}$.
\end{assumption}
This is precisely condition~\ref{itm:technical_condition} from the Main Theorem.
\begin{remark}~
\label{remark_no_swinging}
\begin{itemize}
\item It is well-known that when the restricted root system of~$G$ has any type other than $A_n$~(with~$n \geq 2$), $D_{2n+1}$ or~$E_6$, we actually have $w_0 = -\Id$. For those groups, \emph{every} vector $X \in \mathfrak{a}$ is symmetric, and so every representation satisfies this condition.
\item For the remaining groups, a straightforward linear algebra manipulation shows that this condition is equivalent to the following: no nonzero restricted weight of~$\rho$ must fall into the linear subspace
\begin{equation}
\setsuch{\lambda \in \mathfrak{a}^*}{w_0 \lambda = \lambda}
\end{equation}
(the ``axis of symmetry'' of~$w_0$ in~$\mathfrak{a}^*$). For example, this is always true for the adjoint representation (any restricted root fixed by~$w_0$ would need to be positive and negative at the same time). Heuristically, this seems to hold when the highest restricted weight is ``small'', but to quickly fail when it gets ``large enough''.
\end{itemize}
\end{remark}

\subsection{Parabolic subgroups and subalgebras}
\label{sec:parabolics}

A parabolic subgroup (or subalgebra) is usually defined in terms of a subset~$\Pi'$ of the set~$\Pi$ of simple restricted roots. We find it more convenient however to use a slightly different language. To every such subset corresponds a facet of the Weyl chamber, given by intersecting the walls corresponding to elements of~$\Pi'$. We may exemplify this facet by picking some element~$X$ in it that does not belong to any subfacet. Conversely, for every~$X \in \mathfrak{a}^+$, we define the corresponding subset
\begin{equation}
\Pi_{X} := \setsuch{\alpha \in \Pi}{\alpha(X) = 0}.
\end{equation}
The parabolic subalgebras and subgroups of type~$\Pi_{X}$ can then be very conveniently rewritten in terms of~$X$, as follows.
\begin{remark}
The set~$\Pi_X$ actually encodes the ``type'' of~$X$ with respect to the adjoint representation.
\end{remark}
\begin{definition}
For every $X \in \mathfrak{a}^+$, we define:
\begin{itemize}
\item $\mathfrak{p}_{X}^+$ and $\mathfrak{p}_{X}^-$ the parabolic subalgebras of type~$X$, and~$\mathfrak{l}_{X}$ their intersection:
\[\mathfrak{p}_{X}^+ := \mathfrak{l} \oplus \bigoplus_{\alpha(X) \geq 0} \mathfrak{g}^\alpha;\]
\[\mathfrak{p}_{X}^- := \mathfrak{l} \oplus \bigoplus_{\alpha(X) \leq 0} \mathfrak{g}^\alpha;\]
\[\mathfrak{l}_{X} := \mathfrak{l} \oplus \bigoplus_{\alpha(X) = 0} \mathfrak{g}^\alpha.\]

\item $P_{X}^+$ and $P_{X}^-$ the corresponding parabolic subgroups, and~$L_{X}$ their intersection:
\[P_{X}^+ := N_G(\mathfrak{p}_{X}^+);\]
\[P_{X}^- := N_G(\mathfrak{p}_{X}^-);\]
\[L_{X} := P_{X}^+ \cap P_{X}^-.\]
\end{itemize}
\end{definition}

An object closely related to these parabolic subgroups (see formula~\eqref{eq:Bruhat_type_X_0}, the Bruhat decomposition for parabolic subgroups) is the stabilizer of~$X$ in the Weyl group:
\begin{definition}
For any $X \in \mathfrak{a}^{+}$, we set
\[W_{X} := \setsuch{w \in W}{wX = X}.\]
\end{definition}
\begin{remark}
The group~$W_X$ is also closely related to the set~$\Pi_X$. Indeed, it follows immediately that a simple restricted root~$\alpha$ belongs to~$\Pi_X$ if and only if the corresponding reflection~$s_\alpha$ belongs to~$W_X$. Conversely, it is well-known (Chevalley's lemma, see \eg \cite{Kna96}, Proposition~2.72) that these reflections actually generate the group~$W_X$.

Thus $W_{X}$~is actually the same thing as~$W_{\Pi_X}$ (\ie the group~$W_{\Pi'}$ as defined in~\eqref{eq:WPi'_definition}, with $\Pi' = \Pi_X$).
\end{remark}
\begin{example}
To help understand the conventions we are taking, here are the extreme cases:
\begin{enumerate}
\item If $X$ lies in the open Weyl chamber~$\mathfrak{a}^{++}$, then:
\begin{itemize}
\item $P^+_X = P^+$ is the minimal parabolic subgroup; $P^-_X = P^-$; $L_X = L$;
\item $\Pi_X = \emptyset$;
\item $W_X = \{\Id\}$.
\end{itemize}
\item If $X=0$, then:
\begin{itemize}
\item $P^+_X = P^-_X = L_X = G$;
\item $\Pi_X = \Pi$;
\item $W_X = W$.
\end{itemize}
\end{enumerate}
\end{example}

\subsection{Extreme vectors}
\label{sec:extreme}

Besides $W_X$, we are also interested in the group
\begin{equation}
W_{\rho, X} := \setsuch{w \in W}{w X \text{ has the same type as } X},
\end{equation}
which is the stabilizer of~$X$ ``up to type''. It obviously contains~$W_X$. The goal of this subsection is to show that in every equivalence class, we can actually choose~$X$ in such a way that both groups coincide.
\begin{example}
In Example~\ref{equivalence_class_examples}.3 ($G = \SO^+(3,2)$ acting on~$V = \mathbb{R}^5$), the group $W_{\rho, X}$ corresponding to any generic~$X$ is a two-element group. If we take~$X$ to be generic not only with respect to~$\rho$ but also with respect to the adjoint representation (in other terms if $X$~is in an open Weyl chamber), then the group~$W_X$ is trivial. If however we take as~$X$ any element of the diagonal wall of the Weyl chamber, we have indeed $W_{X} = W_{\rho, X}$.
\end{example}

\begin{definition}
We call an element $X \in \mathfrak{a}^{+}$ \emph{extreme} if $W_X = W_{\rho, X}$, \ie if it satisfies the following property:
\[\forall w \in W,\quad w X \text{ has the same type as } X \iff w X = X.\]
\end{definition}

\begin{proposition}
\label{change_of_X_0}
For every generic $X \in \mathfrak{a}^{+}$, there exists a generic $X' \in \mathfrak{a}^{+}$ that has the same type as~$X$ and that is extreme.

If moreover $X$ is symmetric, then $X'$~is still symmetric.
\end{proposition}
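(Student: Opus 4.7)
The plan is to define $X'$ as the barycenter of the $W_{\rho,X}$-orbit of $X$:
\[X' := \frac{1}{|W_{\rho,X}|} \sum_{w \in W_{\rho,X}} wX.\]
Since $\mathfrak{a}_{\rho,X}$ is a convex open cone containing every $wX$ (by the very definition of $W_{\rho,X}$), the convex combination $X'$ also lies in $\mathfrak{a}_{\rho,X}$, so $X'$ is generic, has the same type as $X$, and satisfies $W_{\rho,X'} = W_{\rho,X}$. Moreover $X'$ is fixed by every element of $W_{\rho,X}$ by construction. Therefore, once we verify $X' \in \mathfrak{a}^+$, the chain $W_{\rho,X} \subset W_{X'} \subset W_{\rho,X'} = W_{\rho,X}$ will force $W_{X'} = W_{\rho,X}$, that is, $X'$ will be extreme. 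The main obstacle is thus showing $X' \in \mathfrak{a}^+$.

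The crux would be the following key lemma: $W_{\rho,X}$ is a standard parabolic, namely $W_{\rho,X} = W_{\Pi'}$ where $\Pi' := \{\alpha \in \Pi : s_\alpha \in W_{\rho,X}\}$. The inclusion $W_{\Pi'} \subset W_{\rho,X}$ is tautological. For the converse, I would first make the preliminary observation that $s_\gamma \in W_{\rho,X}$ whenever $H_\gamma := \ker \gamma$ meets $\mathfrak{a}_{\rho,X}$: picking $P$ in the intersection, $\gamma(P) = 0$ implies $(s_\gamma\lambda)(P) = \lambda(P)$ for every weight $\lambda \in \Omega$, so the sign function $\lambda \mapsto \sgn(\lambda(X)) = \sgn(\lambda(P))$ is $s_\gamma$-invariant, which is exactly the characterization of $W_{\rho,X}$. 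Then I would prove $W_{\rho,X} \subset W_{\Pi'}$ by induction on the Coxeter length $\ell(w)$ of $w \in W_{\rho,X}$: if $wX = X$ then $w \in W_X = W_{\Pi_X} \subset W_{\Pi'}$; otherwise $wX \notin \mathfrak{a}^+$ by uniqueness of the dominant representative of a $W$-orbit, so some $\alpha \in \Pi$ satisfies $\alpha(wX) < 0$. This forces $w^{-1}\alpha$ to be a negative root, so $\ell(s_\alpha w) = \ell(w) - 1$. On the other hand, the straight segment from $X$ to $wX$ lies in $\mathfrak{a}_{\rho,X}$ by convexity and meets $H_\alpha$ (since $\alpha(X) \geq 0$ and $\alpha(wX) < 0$), so $H_\alpha \cap \mathfrak{a}_{\rho,X} \neq \emptyset$ and the preliminary observation gives $s_\alpha \in W_{\rho,X}$, \ie $\alpha \in \Pi'$. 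Then $s_\alpha w \in W_{\rho,X}$ is strictly shorter, and the induction hypothesis concludes. This induction is the technical heart of the proof.

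Once the key lemma is in place, checking $X' \in \mathfrak{a}^+$ is routine: for $\beta \in \Pi'$, $\beta(X') = 0$ since $X'$ lies in the fixed subspace of $W_{\Pi'}$; for $\beta \in \Pi \setminus \Pi'$, the standard fact that $W_{\Pi'}$ permutes the positive roots outside the subsystem generated by $\Pi'$ ensures $w^{-1}\beta \in \Sigma^+$ for every $w \in W_{\Pi'}$, so $\beta(X') = \frac{1}{|W_{\Pi'}|} \sum_w (w^{-1}\beta)(X) \geq 0$ term by term. For the symmetry assertion, assume $-w_0 X = X$; using that $w_0$ is an involution, a direct sign computation gives, for $Y \in \mathfrak{a}_{\rho,X}$ and any weight $\lambda$, $\sgn(\lambda(-w_0 Y)) = -\sgn((w_0\lambda)(Y)) = -\sgn((w_0\lambda)(X)) = -\sgn(-\lambda(X)) = \sgn(\lambda(X))$, so $-w_0$ preserves $\mathfrak{a}_{\rho,X}$. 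Hence $w_0$ normalizes $W_{\rho,X}$, since $(w_0 w w_0^{-1})X = -w_0(wX) \in \mathfrak{a}_{\rho,X}$ whenever $wX \in \mathfrak{a}_{\rho,X}$, and reindexing the sum defining $X'$ by the involution $w \mapsto w_0 w w_0^{-1}$ yields $-w_0 X' = X'$.
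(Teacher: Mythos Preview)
Your proof is correct and starts from the same averaging construction as the paper, but the argument that $X' \in \mathfrak{a}^+$ takes a genuinely different route.

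The paper first observes (essentially your chain argument, which incidentally does not need $X' \in \mathfrak{a}^+$ to run) that $W_{X'} = W_{\rho,X}$, and then uses this \emph{extremeness} directly to check $\alpha(X') \geq 0$ for each $\alpha \in \Pi$: if $s_\alpha X' \neq X'$, extremeness supplies a weight $\lambda$ with $\lambda(X') > 0$ and $s_\alpha(\lambda)(X') < 0$; the difference $\lambda - s_\alpha(\lambda)$ is a multiple of $\alpha$ that is positive on the whole equivalence class $\mathfrak{a}_{\rho,X}$, so $\alpha$ never vanishes there, and connectedness plus $\alpha(X) \geq 0$ gives $\alpha(X') > 0$. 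This is short and avoids any induction.

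Your approach instead proves the stronger structural fact that $W_{\rho,X}$ is a standard parabolic subgroup $W_{\Pi'}$, via a length-induction together with the geometric observation that $s_\gamma \in W_{\rho,X}$ whenever $\ker\gamma$ meets $\mathfrak{a}_{\rho,X}$. Once this is established, $X' \in \mathfrak{a}^+$ follows from the classical fact that $W_{\Pi'}$ permutes the positive roots outside the subsystem generated by $\Pi'$. Your intermediate lemma is of independent interest (it explains exactly which facets of the Weyl chamber touch the equivalence class, cf.\ the remark following Proposition~\ref{change_of_X_0}), at the cost of a somewhat longer argument. The symmetry part is handled essentially the same way in both proofs: one shows $-w_0$ preserves $\mathfrak{a}_{\rho,X}$, hence $w_0$ normalizes $W_{\rho,X}$, and reindexes the sum.
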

\begin{remark}
The following statement will never be used in the paper (so we leave it without proof), but might help to understand what is going on: for every generic~$X$, we have
\[\mathfrak{a}_{\rho, X} = W_{\rho, X} \mathfrak{a}^+_{\rho, X} = W_{X'} \mathfrak{a}^+_{\rho, X}.\]
Also, it can be shown that a representative $X'$ of a given equivalence class $\mathfrak{a}^+_{\rho, X}$ is extreme if and only if it lies in every wall of the Weyl chamber that ``touches''~$\mathfrak{a}^+_{\rho, X}$ (or, equivalently, passes through~$\mathfrak{a}_{\rho, X}$), hence the term ``extreme''.
\end{remark}
\begin{proof}
To construct an element that has the same type as~$X$ but has the whole group~$W_{\rho, X}$ as stabilizer, we simply average over the action of this group: we set
\begin{equation}
X' = \sum_{w \in W_{\rho, X}} w X.
\end{equation}
(As multiplication by positive scalars does not change anything, we have written it as a sum rather than an average for ease of manipulation.) Then obviously:
\begin{itemize}
\item By definition every~$w X$ for~$w \in W_{\rho, X}$ has the same type as~$X$; since the equivalence class~$\mathfrak{a}_{\rho, X}$ is a convex cone, their sum~$X'$ also has the same type as~$X$.
\item In particular $X'$~is generic.
\item By construction whenever $w X$~has the same type as~$X$, we have~$w X' = X'$; conversely if $w$~fixes $X'$, then $w X$~has the same type as~$w X' = X'$ which has the same type as~$X$. So $X'$~is extreme.
\end{itemize}
Let us now show that $X' \in \mathfrak{a}^+$, \ie that for every $\alpha \in \Pi$, we have $\alpha(X') \geq 0$:
\begin{itemize}
\item If $s_\alpha X' = X'$, then obviously $\alpha(X') = 0$.
\item Otherwise, since~$X'$ is extreme, it follows that $s_\alpha X'$ does not even have the same type as~$X'$. Since $X'$~is generic, this means that there exists a restricted weight~$\lambda$ of~$\rho$ such that
\begin{equation}
\label{eq:transgressing_weight}
\begin{cases}
\lambda(X') > 0, \\
s_\alpha(\lambda)(X') < 0.
\end{cases}
\end{equation}
By definition, the same inequalities then hold for any~$Y$ with the same type as~$X'$ (or as~$X$):
\[\begin{cases}
\lambda(Y) > 0, \\
s_\alpha(\lambda)(Y) < 0.
\end{cases}\]
In particular the form $\lambda - s_\alpha(\lambda)$, which is a multiple of~$\alpha$, takes a positive value on every such~$Y$; hence $\alpha$ never vanishes on the equivalence class~$\mathfrak{a}_{\rho, X}$. By hypothesis~$X \in \mathfrak{a}^+$, so~$\alpha(X) \geq 0$. Since $\mathfrak{a}_{\rho, X}$~is connected, we conclude that~$\alpha(X') > 0$.
\end{itemize}
Finally, assume that $X$~is symmetric, \ie~$-w_0(X) = X$. Then since $w_0$~belongs to the Weyl group, it induces a permutation on~$\Omega$, hence we have:
\begin{equation}
w_0 \Omega^\subg_X = \Omega^\subg_{w_0 X} = \Omega^\subg_{-X} = \Omega^\subl_X,
\end{equation}
so that $w_0$ swaps the sets $\Omega^\subg_X$ and~$\Omega^\subl_X$. Now by definition we have
\begin{equation}
W_{\rho, X} = \Stab_W(\Omega^\subg_X) \cap \Stab_W(\Omega^\subl_X),
\end{equation}
hence $w_0$~normalizes~$W_{\rho, X}$. Obviously the map~$X \mapsto -X$ commutes with everything, so $-w_0$ also normalizes~$W_{\rho, X}$. We conclude that
\begin{align}
-w_0(X') &= \sum_{w \in W_{\rho, X}} -w_0 (w (X)) \nonumber \\
         &= \sum_{w' \in W_{\rho, X}} w' (-w_0 (X)) \nonumber \\
         &= X',
\end{align}
so that $X'$~is still symmetric.
\end{proof}
\begin{remark}
\label{classification_of_sets_Pi_X}
In practice, it can be shown that if $G$ is simple, the set $\Pi_X$ for extreme, symmetric, generic~$X$ can actually only be one of the following:
\begin{enumerate}[label=(\alph*)]
\item empty;
\item the set of long simple restricted roots;
\item the whole set~$\Pi$.
\end{enumerate}
Case~(a) accounts for the vast majority of representations. Case~(b) obviously only occurs when the restricted root system has a non-simply-laced Dynkin diagram ($G_2$, $F_4$, $B_n$, $C_n$ or~$BC_n$), and then only occurs in finitely many representations of each group. Case~(c) only occurs in trivial situations, namely when either~$\dim \mathfrak{a} = 0$ (\ie the group~$G$ is compact) or the representation is trivial.

The proof of this fact mostly relies on the following two observations:
\begin{itemize}
\item As soon as $\Omega$~is large enough to include some simple restricted root~$\alpha$, no set~$\Pi_{X'}$ may contain~$\alpha$. Indeed in that case, $\alpha(X')$~never vanishes for generic~$X'$.
\item The Weyl group acts transitively on the set of restricted roots of the same length; so as soon as $\Omega$~contains one restricted root of a given length, it contains all of them.
\end{itemize}
\end{remark}

For the remainder of the paper, we fix some symmetric generic vector~$X_0$ in the closed dominant Weyl chamber~$\mathfrak{a}^{+}$ that is extreme.

\section{Dynamics of maps of type~$X_0$}
\label{sec:dynamics_X_0}

Now we take an element $g$ in the affine group~$\rho(G) \ltimes V$ such that the Jordan projection of its linear part has the same type as~$X_0$. The goal of this section is to understand the dynamics of~$g$ acting on the affine space corresponding to~$V$, in particular its ``dynamical spaces'' defined in Subsection~\ref{sec:dynamical_spaces_definition}. There is a lot of parallelism between this section and Section~2 in~\cite{Smi14}.

In Subsection~\ref{sec:reference_dynamical_spaces}, we introduce the dynamical subspaces of~$X_0$. We also show that the stabilizers in~$G$ of those subspaces (except for the neutral one) are precisely the parabolic subgroups introduced in Subsection~\ref{sec:parabolics}.

In Subsection~\ref{sec:affine}, we introduce some formalism that reduces the study of the affine space~$V_{\Aff}$ corresponding to~$V$ to the study of a vector space called~$A$. We also introduce affine equivalents of linear notions defined previously.

In Subsection~\ref{sec:dynamical_spaces_definition}, we define the linear and affine dynamical subspaces associated to an element of the affine group~$\rho(G) \ltimes V$. This is very similar to Section~2.1 in~\cite{Smi14}.

In Subsection~\ref{sec:algebraic}, we give a description of the dynamical subspaces of an element ${g \in \rho(G) \ltimes V}$ whose Jordan projection has the same type as~$X_0$.

In Subsection~\ref{sec:quasi-translations}, we show that the action of any such element on its affine neutral space is a ``quasi-translation'', and explain what that means. This is a generalization of Section~2.4 in~\cite{Smi14}.

In Subsection~\ref{sec:canonical}, we introduce a family of canonical identifications between different affine neutral spaces, and use them to define the ``Margulis invariant'' for any such element~$g$, which is a vector measuring its translation part along a subspace of its affine neutral space. This is a generalization of Section~2.5 in~\cite{Smi14}.

\subsection{Reference dynamical spaces}
\label{sec:reference_dynamical_spaces}

Recall that~$X_0$ is some generic, symmetric, extreme vector in the closed dominant Weyl chamber~$\mathfrak{a}^{+}$, chosen once and for all.

\begin{definition}
\label{reference_dynamical_spaces}
We define the following subspaces of~$V$:
\begin{itemize}
\item $V^\subg_0 := \bigoplus_{\lambda(X_0) > 0} V^\lambda$,\quad the \emph{reference expanding space};
\item $V^\subl_0 := \bigoplus_{\lambda(X_0) < 0} V^\lambda$,\quad the \emph{reference contracting space};
\item $V^\sube_0 := \bigoplus_{\lambda(X_0) = 0} V^\lambda$,\quad the \emph{reference neutral space};
\item $V^\subge_0 := \bigoplus_{\lambda(X_0) \geq 0} V^\lambda$,\quad the \emph{reference noncontracting space};
\item $V^\suble_0 := \bigoplus_{\lambda(X_0) \leq 0} V^\lambda$,\quad the \emph{reference nonexpanding space}.
\end{itemize}
In other terms, $V^\subge_0$~is the direct sum of all restricted weight spaces corresponding to weights in~$\Omega^\subge_{X_0}$, and similarly for the other spaces.
\end{definition}
Clearly these are precisely the dynamical spaces (see Subsection~\ref{sec:dynamical_spaces_definition}) associated to the map~$\exp(X_0)$ (acting on~$V$ by~$\rho$).
\begin{remark}
Note that since~$X_0$ is generic, $V^\sube_0$~is actually just the zero restricted weight space:
\[V^\sube_0 = V^0;\]
moreover by Assumption~\ref{zero_is_a_weight}, zero \emph{is} a restricted weight, so this space is nontrivial.
\end{remark}
\begin{example}~
\begin{enumerate}
\item For $G = \SO^+(p, q)$ acting on~$V = \mathbb{R}^{p+q}$ (where~$p \geq q$), there is only one generic type. The spaces $V^\subg_0$ and~$V^\subl_0$ are some maximal totally isotropic subspaces (transverse to each other), $V^\subge_0$ and~$V^\suble_0$ are their respective orthogonal complements, and~$V^\sube_0$ is the $(p-q)$-dimensional space orthogonal to both~$V^\subg_0$ and~$V^\subl_0$.
\item If $G$~is any semisimple real Lie group acting on~$V = \mathfrak{g}$ (its Lie algebra) by the adjoint representation, then the reference noncontracting space~$\mathfrak{g}^\subge_0$ is obviously equal to~$\mathfrak{p}^+_{X_0}$. There is once again only one generic type, given by any~$X_0 \in \mathfrak{a}^{++}$; we then have~$\Pi_{X_0} = \emptyset$, so that $\mathfrak{p}^+_{X_0} = \mathfrak{p}^+$~is actually the (reference) \emph{minimal} parabolic subalgebra. We have similar identities for the other dynamical spaces, namely:
\begin{align*}
\mathfrak{g}^\subge_0 &= \mathfrak{p}^+; & \mathfrak{g}^\subg_0 &= \mathfrak{n}^+; \\
\mathfrak{g}^\suble_0 &= \mathfrak{p}^-; & \mathfrak{g}^\subl_0 &= \mathfrak{n}^-; \\
\mathfrak{g}^\sube_0 &= \mathfrak{l}. & &
\end{align*}
\end{enumerate}
\end{example}
Let us now understand what happens when we apply an element of~$G$ to one of those subspaces. The motivation for this, as well as the explanation of the term ``reference subspace'', comes from Corollary~\ref{dynamical_spaces_description}.
\begin{proposition}
\label{stabg=stabge}
We have:
\begin{hypothenum}
\item $\Stab_W(V^\subge_0) = \Stab_W(V^\subg_0) = \Stab_W(V^\suble_0) = \Stab_W(V^\subl_0) = W_{X_0}$.
\item $\Stab_G(V^\subge_0) = \Stab_G(V^\subg_0) = P_{X_0}^+$.
\item $\Stab_G(V^\suble_0) = \Stab_G(V^\subl_0) = P_{X_0}^-$.
\end{hypothenum}
\end{proposition}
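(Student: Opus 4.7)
The plan is to prove (i) first, as it feeds directly into (ii) and (iii). The starting observation is that for any representative $\tilde{w} \in N_G(A)$ of $w \in W$, one has $\tilde{w}(V^\lambda) = V^{w\lambda}$, and that this action on weight spaces is independent of the choice of representative because $L = Z_G(A)$ preserves each $V^\lambda$ individually (for any $l \in L$, $a \in A$, $X \in \mathfrak{a}$ with $a = \exp X$, and $v \in V^\lambda$, we have $a \cdot l(v) = l \cdot a(v) = e^{\lambda(X)} l(v)$). So $\tilde{w}$ stabilizes $V^\subg_0$ iff $w$ permutes $\Omega^\subg_{X_0}$, i.e., $\Omega^\subg_{wX_0} = \Omega^\subg_{X_0}$. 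Since $X_0$ is generic and $W$ permutes $\Omega$ while fixing $0$, $wX_0$ is still generic, so this set equality forces $wX_0$ to have the same type as $X_0$, i.e., $w \in W_{\rho,X_0}$. The extreme hypothesis then yields $w \in W_{X_0}$. The case of $V^\subge_0$ is identical once one notes $\Omega^\subge_{X_0} = \Omega^\subg_{X_0} \sqcup \{0\}$ with $0$ Weyl-fixed; the negative cases are symmetric.

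For the forward inclusion of (ii), namely $P^+_{X_0} \subseteq \Stab_G(V^\subg_0) \cap \Stab_G(V^\subge_0)$, I would invoke the standard shift property $\mathfrak{g}^\alpha \cdot V^\lambda \subseteq V^{\alpha+\lambda}$ together with the fact that $\mathfrak{l}$ preserves each weight space. Since for $\alpha(X_0) \geq 0$ and $\lambda(X_0) > 0$ (resp.\ $\lambda(X_0) \geq 0$) one has $(\alpha+\lambda)(X_0) > 0$ (resp.\ $\geq 0$), both $V^\subge_0$ and $V^\subg_0$ are infinitesimally preserved by $\mathfrak{p}^+_{X_0}$. Exponentiating covers the identity component of $P^+_{X_0}$, and the remaining components are absorbed because $P^+_{X_0}$ is generated by $\exp(\mathfrak{p}^+_{X_0})$ together with the disconnected part of $L \subseteq P^+_{X_0}$, which preserves each weight space.

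For the reverse inclusion I would use the Bruhat decomposition $G = \bigsqcup_{w \in W} P^+ \tilde{w} P^+$. Since $X_0 \in \mathfrak{a}^+$ is dominant, every positive root satisfies $\alpha(X_0) \geq 0$, so $\mathfrak{p}^+ \subseteq \mathfrak{p}^+_{X_0}$, hence $P^+ \subseteq P^+_{X_0}$; in particular $P^+$ stabilizes both $V^\subg_0$ and $V^\subge_0$. Writing $g = p_1 \tilde{w} p_2 \in \Stab_G(V^\subg_0)$ and absorbing $p_1, p_2$, one deduces that $\tilde{w}$ stabilizes $V^\subg_0$, so by (i), $w \in W_{X_0}$, i.e., $wX_0 = X_0$. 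Consequently $(w\alpha)(X_0) = \alpha(w^{-1}X_0) = \alpha(X_0)$ for every restricted root $\alpha$, so $\Ad(\tilde{w})$ preserves $\bigoplus_{\alpha(X_0) \geq 0} \mathfrak{g}^\alpha$; combined with the fact that $\tilde{w} \in N_G(A)$ normalizes $\mathfrak{l} = Z_\mathfrak{g}(\mathfrak{a})$, this gives $\tilde{w} \in N_G(\mathfrak{p}^+_{X_0}) = P^+_{X_0}$, hence $g \in P^+ \cdot P^+_{X_0} \cdot P^+ = P^+_{X_0}$. Part (iii) follows by the mirror argument (replace $+$ by $-$ and $\geq$ by $\leq$ throughout, using $P^- \subseteq P^-_{X_0}$ and the mirror decomposition $G = \bigsqcup_w P^- \tilde{w} P^-$).

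The main obstacle I foresee is the bookkeeping that distinguishes $V^\subg_0$ from $V^\subge_0$; this is handled by the observation (from genericity of $X_0$ together with Assumption \ref{zero_is_a_weight}) that $\Omega^\sube_{X_0} = \{0\}$ is a single Weyl-fixed weight, collapsing the two stabilizer computations. A secondary subtlety is justifying the generator description of $P^+_{X_0} = N_G(\mathfrak{p}^+_{X_0})$ used in the forward inclusion when $M$ is disconnected, but this follows from the standard Langlands decomposition of parabolic subgroups.
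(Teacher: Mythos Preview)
Your proof is correct and follows essentially the same route as the paper: part~(i) via genericity plus the extreme hypothesis, and parts~(ii)--(iii) via the Bruhat decomposition to reduce to~(i). The only cosmetic difference is in the final identification: the paper phrases the conclusion of~(ii) as $\Stab_G(V^\subge_0) = P^+ W_{X_0} P^+$ and then invokes the Bruhat decomposition of the parabolic $P^+_{X_0}$ itself (noting this amounts to rerunning the same argument in the adjoint representation), whereas you verify directly that any $\tilde{w}$ with $w \in W_{X_0}$ normalizes $\mathfrak{p}^+_{X_0}$---which is exactly the content of that Bruhat identity unpacked.
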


\begin{remark}
\label{W_action_makes_sense}
Note that every restricted weight space is invariant by $Z_G(A) = L$: indeed, take some $\lambda \in \mathfrak{a}^*$, $v \in V^\lambda$, $l \in L$, $X \in \mathfrak{a}$; then we have:
\begin{equation}
X \cdot l(v) = l(\Ad (l^{-1}) (X) \cdot v) = l(X \cdot v) = \lambda(X)l(v).
\end{equation}
Moreover, the group $N_G(A)$ permutes these spaces. So if we have a direct sum of several restricted weight spaces, it makes sense to talk about its image by an element of~$W$; and we have the obvious identity
\begin{equation}
\forall w \in W, \;\forall \lambda \in \mathfrak{a}^*,\quad w V^\lambda = V^{w \lambda}.
\end{equation}
\end{remark}

\begin{proof}[Proof of Proposition~\ref{stabg=stabge}]~
\begin{hypothenum}
\item First note that since $X_0$~is generic, the only restricted weight that vanishes on~$X_0$ is the zero weight, so we have indeed
\[\Stab_W(\Omega^\subge_0) = \Stab_W(\Omega^\subg_0) = \Stab_W(\Omega^\suble_0) = \Stab_W(\Omega^\subl_0),\]
hence
\[\Stab_W(V^\subge_0) = \Stab_W(V^\subg_0) = \Stab_W(V^\suble_0) = \Stab_W(V^\subl_0).\]
Moreover, this group is obviously included in $W_{\rho, X_0} = \Stab_W(\mathfrak{a}_{\rho, X_0})$, which is also equal to~$W_{X_0}$ since $X_0$ is extreme. Conversely, let~$w \in W_{X_0}$; then $X_0$~is fixed by~$w$, and so is (say) the set~$\Omega^\subge_{X_0}$ of restricted weights nonnegative on~$X_0$. It follows that $\Stab_W(V^\subge_0)$ contains~$W_{X_0}$.

\item We first show that both $\Stab_G V^\subg_0$ and $\Stab_G V^\subge_0$ contain the group $P^+$. Indeed:
\begin{itemize}
\item The group $L$ stabilizes every restricted weight space $V^\lambda$, as noted in Remark~\ref{W_action_makes_sense} above.
\item Let $\alpha$ be a positive restricted root and $\lambda$ a restricted weight such that the value~$\lambda(X_0)$ is positive (resp. nonnegative). Then clearly we have
\[\mathfrak{g}^\alpha \cdot V^\lambda \subset V^{\lambda+\alpha},\]
and $(\lambda+\alpha)(X_0)$ is still positive (resp. nonnegative). Hence $\mathfrak{n}^+$ stabilizes $V^\subg_0$ and $V^\subge_0$.
\item The statement follows as $P^+ = L\exp(\mathfrak{n}^+)$.
\end{itemize}
Now take any element $g \in G$. Let us apply the Bruhat decomposition: we may write
\[g = p_1wp_2,\]
with $p_1, p_2$ some elements of the minimal parabolic subgroup $P^+$ and $w$ some element of the restricted Weyl group $W$ (see \eg \cite{Kna96}, Theorem 7.40). (Technically we need to replace~$w \in W = N_G(A)/Z_G(A)$ by some representative~$\tilde{w} \in N_G(A)$; but by the remark preceding this proof, we may ignore this distinction.) From the statement that we just proved it immediately follows that
\begin{equation}
\Stab_G(V^\subge_0) = \Stab_G(V^\subg_0) = P^+ \Stab_W(V^\subge_0) P^+ = P^+ W_{X_0} P^+.
\end{equation}
On the other hand, we have the Bruhat decomposition for parabolic subgroups:
\begin{equation}
\label{eq:Bruhat_type_X_0}
P_{X_0}^+ := \Stab_G(\mathfrak{p}_{X_0}^+) = P^+ W_{X_0} P^+.
\end{equation}
This can be shown by applying a similar reasoning to the adjoint representation: indeed in that case the space~$V^\subge_0$ corresponding to the same~$X_0$ is just $\mathfrak{p}_{X_0}^+$. (There is just a small difficulty due to the fact that $X_0$~is~not, in general, generic with respect to the adjoint representation.)

The conclusion follows.

\item Replacing $P^+$ and~$P_{X_0}^+$ respectively by $P^-$ and~$P_{X_0}^-$, the same reasoning applies. \qedhere
\end{hypothenum}
\end{proof}

\subsection{Extended affine space}
\label{sec:affine}
Let $V_{\Aff}$ be an affine space whose underlying vector space is~$V$.

\begin{definition}[Extended affine space] We choose once and for all a point~$p_0$ in~$V_{\Aff}$ which we take as an origin; we call $\mathbb{R} p_0$ the one-dimensional vector space formally generated by this point, and we set $A := V \oplus \mathbb{R} p_0$ the \emph{extended affine space} corresponding to $V$. (We hope that $A$,~the extended affine space, and $A$,~the~group corresponding to the Cartan space, occur in sufficiently different contexts that the reader will not confuse them.) Then $V_{\Aff}$ is the affine hyperplane ``at height~1'' of this space, and $V$ is the corresponding vector hyperplane:
\[
V        = V \times \{0\} \subset V \times \mathbb{R} p_0; \qquad
V_{\Aff} = V \times \{1\} \subset V \times \mathbb{R} p_0.
\]
\end{definition}
\begin{definition}[Linear and affine group]
Any affine map $g$ with linear part $\ell(g)$ and translation vector $v$, defined on $V_{\Aff}$ by
\[g: x \mapsto \ell(g)(x)+v,\]
can be extended in a unique way to a linear map defined on $A$, given by the matrix
\[\begin{pmatrix} \ell(g) & v \\ 0 & 1 \end{pmatrix}.\]

From now on, we identify the abstract group~$G$ with the group $\rho(G) \subset \GL(V)$, and the corresponding affine group~$G \ltimes V$ with a subgroup of~$\GL(A)$.
\end{definition}
\begin{definition}[Affine subspaces]
We define an \emph{extended affine subspace} of $A$ to be a vector subspace of $A$ not contained in $V$. For every~$k$, there is a one-to-one correspondence between $k+1$-dimensional extended affine subspaces of~$A$ and $k$-dimensional affine subspaces of~$V_{\Aff}$. For any extended affine subspace of~$A$ denoted by~$A_1$ (or~$A_2$, $A'$ and so on), we denote by~$V_1$ (or~$V_2$, $V'$ and so on) the space~$A \cap V$ (which is the linear part of the corresponding affine space $A \cap V_{\Aff}$).
\end{definition}
\begin{definition}[Translations]
By abuse of terminology, elements of the normal subgroup $V \lhd G \ltimes V$ will still be called \emph{translations}, even though we shall see them mostly as endomorphisms of $A$ (so that they are formally transvections). For any vector $v \in V$, we denote by~$\tau_{v}$ the corresponding translation.
\end{definition}
\begin{definition}[Reference affine dynamical spaces]
We now give a name for (the vector extensions of) the affine subspaces of $V_{\Aff}$ parallel respectively to $V^\subge_0$,~$V^\suble_0$ and $V^\sube_0$ and passing through the origin: we set
\begin{align*}
A^\subge_0 &:= V^\subge_0 \oplus \mathbb{R} p_0,\quad \text{ the \emph{reference affine noncontracting space};} \\
A^\suble_0 &:= V^\suble_0 \oplus \mathbb{R} p_0,\quad \text{ the \emph{reference affine nonexpanding space};} \\
A^\sube_0 &:= V^\sube_0 \oplus \mathbb{R} p_0,\quad \text{ the \emph{reference affine neutral space}.}
\end{align*}
These are obviously the affine dynamical spaces (see next subsection) corresponding to the map~$\exp(X_0)$, seen as an element of~$G \ltimes V$ by identifying~$G$ with the stabilizer of~$p_0$ in~$G \ltimes V$.
\end{definition}
\begin{definition}[Affine Jordan projection]
Finally, we extend the notion of Jordan projection to the whole group~$G \ltimes V$, by setting
\[\forall g \in G \ltimes V,\quad \jordan(g) := \jordan(\ell(g)).\]
\end{definition}
\needspace{\baselineskip}
\begin{remark}~
\begin{enumerate}
\item It is tempting to try to define an ``affine Jordan decomposition'', by observing that any affine map~$g \in G \ltimes V$ may be written as~$g = \tau_v g_h g_e g_u$, with $g_h$ (resp. $g_e$, $g_u$) conjugate in~$G \ltimes V$ to an element of~$A$ (resp. of~$K$, of~$N^+$) and~$v$~some element of~$V$. Unfortunately, we can neither require that $\tau_v$~commute with the other three factors, nor (as erroneously claimed in the author's previous paper~\cite{Smi14}) determine~$v$ in a unique fashion. The trouble comes from unipotent elements; to understand the problem, examine the affine transformation
\[g: \begin{pmatrix}x \\ y\end{pmatrix} \mapsto
\begin{pmatrix}1 & 1 \\ 0 & 1\end{pmatrix} \begin{pmatrix}x \\ y\end{pmatrix}
+ \begin{pmatrix}0 \\ 1\end{pmatrix}.\]
So we must be a little more careful; see the proof of Proposition~\ref{Asubge_is_ampa} for a more detailed study of conjugacy classes in~$G \ltimes V$.
\item We do not extend similarly the Cartan projection to~$G \ltimes V$, for the following reason. While eigenvalues of an element of~$G \ltimes V$ depend only on the eigenvalues of its linear part, the same statement does not hold for its singular values.
\end{enumerate}
\end{remark}

\subsection{Definition of dynamical spaces}
\label{sec:dynamical_spaces_definition}

For every $g \in G \ltimes V$, we define its \emph{linear dynamical spaces} as follows:
\begin{itemize}
\item $V^\subg_g$, the \emph{expanding space} associated to~$g$:

the largest vector subspace of~$V$ stable by~$g$ such that all eigenvalues~$\lambda$ of the restriction of~$g$ to that subspace satisfy $|\lambda| > 1$;
\item $V^\subl_g$, the \emph{contracting space} associated to~$g$:

the same thing with $|\lambda| < 1$;
\item $V^\sube_g$, the \emph{neutral space} associated to~$g$:

the same thing with $|\lambda| = 1$;
\item $V^\subge_g$, the \emph{noncontracting space} associated to~$g$:

the same thing with $|\lambda| \geq 1$;
\item $V^\suble_g$, the \emph{nonexpanding space} associated to~$g$:

the same thing with $|\lambda| \leq 1$.
\end{itemize}
Equivalently, $V^\subg_g$ is the direct sum of all the generalized eigenspaces~$E^\lambda$ of~$g$ associated to eigenvalues~$\lambda$ of modulus larger than~$1$ (defined as $E^\lambda = \ker (g - \lambda \Id)^n$ where~$n = \dim V$), and similarly for the four other spaces. We then obviously have
\begin{equation}
\label{eq:dynamical_spaces_decomposition}
V =
\lefteqn{\overbrace{\phantom{
  V^\subg_g \oplus V^\sube_g
}}^{\displaystyle V^\subge_g}}
  V^\subg_g \oplus \underbrace{
  V^\sube_g \oplus V^\subl_g
}_{\displaystyle V^\suble_g}.
\end{equation}
Also note that the restriction of~$g$ from~$A$ to~$V$ is just its linear part, so that the linear dynamic subspaces of~$g$ only depend on~$\ell(g)$.

For every $g \in G \ltimes V$, we define its \emph{affine dynamical subspaces}:
\begin{itemize}
\item $A^\subge_g$, the \emph{affine noncontracting space} associated to~$g$,
\item $A^\suble_g$, the \emph{affine nonexpanding space} associated to~$g$,
\item and $A^\sube_g$, the \emph{affine neutral space} associated to~$g$,
\end{itemize}
in the same way as the linear dynamical subspaces, but with~$V$ replaced everywhere by~$A$.

\begin{remark}~
\begin{itemize}
\item Note that if we defined in the same way~$A^\subg_g$ (resp.~$A^\subl_g$), it would actually be contained in~$V$ and so just be equal to~$V^\subg_g$ (resp.~$V^\subl_g$). Indeed an element of~$G \ltimes V$ can never act on a vector in $A \setminus V$ (\ie an element of~$A$ with a nonzero component along~$\mathbb{R} p_0$) with an eigenvalue other than~$1$.
\item Thus the affine analog of the decomposition~\eqref{eq:dynamical_spaces_decomposition} is now:
\begin{equation}
\label{eq:affine_dynamical_spaces_decomposition}
A =
\lefteqn{\overbrace{\phantom{
  V^\subg_g \oplus A^\sube_g
}}^{\displaystyle A^\subge_g}}
  V^\subg_g \oplus \underbrace{
  A^\sube_g \oplus V^\subl_g
}_{\displaystyle A^\suble_g}
\end{equation}
(pay attention to the distribution of $A$'s and $V$'s).
\item From this identity, it immediately follows that neither $A^\sube_g$, $A^\subge_g$ nor~$A^\suble_g$ are contained in~$V$.
\item Finally, it is obvious that the intersections of these three spaces with~$V$ are respectively  $V^\sube_g$, $V^\subge_g$ and~$V^\suble_g$. Thus this notation is consistent with the convention outlined above.
\end{itemize}
\end{remark}
In purely affine terms, these spaces may be understood as follows:
\begin{itemize}
\item $A^\sube_g \cap V_{\Aff}$ is the unique $g$-invariant affine space parallel to~$V^\sube_g$ (the ``axis'' of~$g$);
\item $A^\subge_g \cap V_{\Aff}$ is the unique affine space parallel to~$V^\subge_g$ and containing~$A^\sube_g \cap V_{\Aff}$, and similarly for~$A^\suble_g \cap V_{\Aff}$.
\end{itemize}

\subsection{Description of dynamical spaces}
\label{sec:algebraic}

We shall now characterize the dynamical subspaces of those elements of~$G \ltimes V$ that satisfy the following property.

\begin{definition}
\label{type_X0_definition}
We say that an element~$g \in G \ltimes V$ is \emph{of type~$X_0$} if $\jordan(g)$~has the same type as~$X_0$, \ie if
\[\jordan(g) \in \mathfrak{a}_{\rho, X_0}.\]
\end{definition}
\needspace{\baselineskip}
\begin{example}~
\begin{enumerate}
\item For $G = \SO^+(p, q)$ acting on~$V = \mathbb{R}^{p+q}$ (where $p \geq q$), there is only one generic type. For every~$g \in G$, we have
\begin{equation}
\dim V^\subg_g = \dim V^\subl_g \leq q.
\end{equation}
An element $g \in G$ is of generic type if and only if equality is attained. Such elements have been called \emph{pseudohyperbolic} in the previous literature (\cite{AMS02, Smi13}).
\item If $G$ is any semisimple real Lie group acting on~$V = \mathfrak{g}$ (its Lie algebra) by the adjoint representation, there is only one generic type and an element $g \in G$ is of that type if and only if $\jordan(g) \in \mathfrak{a}^{++}$. Such elements are called \emph{$\mathbb{R}$-regular} or (particularly in~\cite{BenQui}) \emph{loxodromic}.
\end{enumerate}
\end{example}

Here is a partial description of the dynamical spaces of an element of type~$X_0$.
\begin{proposition}
\label{Asubge_is_ampa}
Let $g \in G \ltimes V$ be a map of type~$X_0$. In that case:
\begin{hypothenum}
\item There exists a map~$\phi \in G \ltimes V$, called a \emph{canonizing map} for~$g$, such that
\[\begin{cases}
\phi(A^{\subge}_g) = A^{\subge}_0; \\
\phi(A^{\suble}_g) = A^{\suble}_0.
\end{cases}\]
\item The space~$V^\subg_g$ is uniquely determined by~$A^\subge_g$. The space~$V^\subl_g$ is uniquely determined by~$A^\suble_g$.
\end{hypothenum}
\end{proposition}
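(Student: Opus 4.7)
The plan is to construct $\phi$ in two stages: a conjugation by an element $k \in G$ to align the linear dynamical spaces of $g$ with the reference ones, followed by a translation $\tau_w$ to bring the affine axis onto $A^\sube_0$.

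For stage 1, write $\ell(g) = \ell_h \ell_e \ell_u$ for the Jordan decomposition of the linear part. By definition of the Jordan projection there exists $k \in G$ with $k \ell_h k^{-1} = \exp(\jordan(g))$. The elliptic and unipotent parts commute with $\ell_h$, so their $k$-conjugates commute with $\exp(\jordan(g))$ and hence preserve every restricted weight space $V^\lambda$. On each $V^\lambda$ these conjugates contribute only modulus-one eigenvalues (one being conjugate to an orthogonal map, the other unipotent), so the restriction of $k \ell(g) k^{-1}$ to $V^\lambda$ has all its eigenvalues of modulus $e^{\lambda(\jordan(g))}$. Since $\jordan(g)$ lies in $\mathfrak{a}_{\rho, X_0}$, the sign of $\lambda(\jordan(g))$ agrees with that of $\lambda(X_0)$ for every weight $\lambda$; therefore the linear dynamical spaces of $kgk^{-1}$ coincide with the reference ones, in particular $V^\subg_{kgk^{-1}} = V^\subg_0$ and $V^\sube_{kgk^{-1}} = V^\sube_0$.

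For stage 2, the affine neutral space $A^\sube_{kgk^{-1}}$ is parallel to $V^\sube_0$, so it has the form $V^\sube_0 \oplus \mathbb{R}(v_0, 1)$ for some $v_0 \in V$ (unique modulo $V^\sube_0$). Pick $w \in V$ with $v_0 + w \in V^\sube_0$; then the translation $\tau_w$, which acts trivially on $V \subset A$ and hence does not change the linear dynamical spaces, sends $(v_0, 1)$ into $A^\sube_0$, giving $\tau_w(A^\sube_{kgk^{-1}}) = A^\sube_0$. Setting $\phi := \tau_w k$ and using the decomposition~\eqref{eq:affine_dynamical_spaces_decomposition}, we deduce
\[\phi(A^\subge_g) = \phi(V^\subg_g) \oplus \phi(A^\sube_g) = V^\subg_0 \oplus A^\sube_0 = A^\subge_0,\]
and likewise $\phi(A^\suble_g) = A^\suble_0$, which proves (i).

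For (ii), observe that the $\phi$ above also satisfies $\phi(V^\subg_g) = V^\subg_0$: on $V$ it acts as $k$, and $k$ maps dynamical spaces to dynamical spaces by functoriality. To show that $\phi^{-1}(V^\subg_0)$ depends only on $A^\subge_g$, let $\phi'$ be another element of $G \ltimes V$ with $\phi'(A^\subge_g) = A^\subge_0$; then $h := \phi'\phi^{-1}$ stabilizes $A^\subge_0$. Writing $h = \tau_u m$ with $m \in G$, restriction to $V$ yields $m(V^\subge_0) = V^\subge_0$, so $m \in \Stab_G(V^\subge_0)$; but by Proposition~\ref{stabg=stabge} this stabilizer equals $P^+_{X_0} = \Stab_G(V^\subg_0)$, hence $h$ preserves $V^\subg_0$ and $(\phi')^{-1}(V^\subg_0) = \phi^{-1}(V^\subg_0) = V^\subg_g$. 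The claim for $V^\subl_g$ follows by the symmetric argument applied to $A^\suble_g$. The main technical subtlety will be in stage 1: tracking that the conjugate elliptic and unipotent parts genuinely contribute only modulus-one eigenvalues on each $V^\lambda$, which requires that commutativity with $\ell_h$ survives conjugation and that the representation-agnostic notions of elliptic and unipotent behave correctly under $\rho$.
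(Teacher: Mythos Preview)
Your argument is correct and follows essentially the same two-step strategy as the paper (a linear conjugation to align the linear dynamical spaces, then a translation to bring the affine axis onto $A^\sube_0$); the paper simply performs the translation first and the linear conjugation second, but the content is the same, and your proof of (ii) via Proposition~\ref{stabg=stabge} is identical to the paper's. One small inaccuracy in stage~1: commuting with $\exp(\jordan(g))$ only guarantees that the $k$-conjugates of $\ell_e,\ell_u$ preserve each \emph{eigenspace} of $\exp(\jordan(g))$, namely $\bigoplus_{\lambda(\jordan(g))=c} V^\lambda$, not each individual $V^\lambda$ (distinct weights may coincide on $\jordan(g)$ when $\jordan(g)$ lies on a wall); however, since these eigenspaces still group according to the sign of $\lambda(X_0)$, your conclusion $V^\subg_{kgk^{-1}}=V^\subg_0$ etc.\ is unaffected.
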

(Compare this with Claim 2.5 in~\cite{Smi14}.)
\begin{proof} \mbox{ }
\begin{hypothenum}
\item
\begin{itemize}
\item We start with the obvious decomposition
\begin{equation}
g = \tau_v \ell(g),
\end{equation}
where $\ell(g) \in G$~is the linear part of~$g$ (seen as an element of~$G \ltimes V$ by identifying~$G$ with the stabilizer of the origin~$p_0$) and $v \in V$~is its translation part. We then observe that we may rewrite this as
\begin{equation}
g = \tau_{v'} \tau_w^{-1} \ell(g) \tau_w
\end{equation}
for some $w \in V$, where $v'$ is now actually an element of $V^\sube_g$. Indeed, for any translation vector~$v \in V$ and linear map~$f \in G$, we have
\begin{equation}
f \tau_v = \tau_{f(v)} f.
\end{equation}
The statement then follows from the fact that the map induced by ${\ell(g) - \Id}$ on $V^\subg_g \oplus V^\subl_g$ does not have~$0$ as an eigenvalue, hence is surjective. (In fact, this argument shows that we could even require~$v'$ to lie in the actual characteristic space corresponding to the eigenvalue~$1$.)

\item Now let $\ell(g) =: g_h g_e g_u$ be the Jordan decomposition of~$\ell(g)$, so that
\begin{equation}
\tau_w g \tau_w^{-1} = \tau_{v'} g_h g_e g_u;
\end{equation}
let $\phi_\ell \in G$ be any map that conjugates $g_h$ to~$\exp(\jordan(g))$, \ie such that $\phi_\ell g_h \phi_\ell^{-1} = \exp(\jordan(g))$; and let $\phi := \phi_\ell \tau_w$.

Calling $g' := \phi g \phi^{-1}$ and $\tau_{v''}$, $g'_e$, $g'_u$ the respective conjugates of the maps $\tau_{v'}$,~$g_e$,~$g_u$ by~$\phi_\ell$ (so that $v'' = \phi_\ell(v')$), we then have
\begin{equation}
g' = \tau_{v''} \exp(\jordan(g)) g'_e g'_u,
\end{equation}
where $g'_e \in G$ is elliptic, $g'_u \in G$ is unipotent, both of them commute with $\exp(\jordan(g))$, and $v'' \in V^\sube_{g'}$.

As already seen in the proof of Proposition~\ref{eigenvalues_and_singular_values_characterization}, $g'_e$ and $g'_u$ have all eigenvalues of modulus~$1$ and commute with $\exp(\jordan(g))$. Hence the linear dynamical spaces of~$g'$ coincide with those of~$\exp(\jordan(g))$.

Now since $\exp(\jordan(g)) \in G$ fixes~$p_0$, the space~$A^{\sube}_{\exp(\jordan(g))}$ is equal to $V^{\sube}_{\exp(\jordan(g))} \oplus \mathbb{R} p_0$; and since $v'' \in V^\sube_{g'}$, that space is still invariant by~$g'$. It follows that we have
\begin{equation}
A^{\sube}_{g'} = A^{\sube}_{\exp(\jordan(g))} = V^{\sube}_{\exp(\jordan(g))} \oplus \mathbb{R} p_0.
\end{equation}
By taking the direct sum with~$V^\subg$ and with~$V^\subl$, we deduce that all the affine dynamical spaces of~$g'$ coincide with those of~$\exp(\jordan(g))$.

Now since $g$~is of type~$X_0$, by definition, $\jordan(g)$~is a vector in~$\mathfrak{a}$ that has the same type as~$X_0$. It follows that the affine dynamical subspaces of~$\exp(\jordan(g))$ coincide with those of~$\exp(X_0)$, which are the reference subspaces. We conclude that
\[\begin{cases}
A^{\subge}_{g'} = A^{\subge}_0; \\
A^{\suble}_{g'} = A^{\suble}_0.
\end{cases}\]
Since obviously $A^{\subge}_{g'} = \phi(A^{\subge}_g)$ and similarly for~$A^{\suble}$, the conclusion follows.
\end{itemize}

\item Suppose that $g_1$ and~$g_2$ are two maps of type~$X_0$ such that $A^\subge_{g_1} = A^\subge_{g_2}$. Define $g'_1 = \phi_1 g_1 \phi_1^{-1}$ and $g'_2 = \phi_2 g_2 \phi_2^{-1}$ as in the previous point; then we have
\[A^\subge_{g_{1,2^{\vphantom{x}}}} = \phi_1^{-1}(A^\subge_0) = \phi_2^{-1}(A^\subge_0).\]
In other terms, the transition map $\phi_2 \circ \phi_1^{-1}$ stabilizes~$A^\subge_0$.

Clearly the linear part of $\phi_2 \circ \phi_1^{-1}$ then stabilizes $V^\subge_0$. It follows from Proposition~\ref{stabg=stabge} that it also stabilizes $V^\subg_0$. Since the latter space is contained in $V$, the translation part of~$\phi_2 \circ \phi_1^{-1}$ acts trivially on it; so the affine map~$\phi_2 \circ \phi_1^{-1}$ itself also stabilizes~$V^\subg_0$.

Now obviously we also have $V^\subg_{g_1} = \phi_1^{-1}(V^\subg_{g'_1})$, and similarly for $g_2$ and~$\phi_2$. But it also follows from the previous point that
\[V^\subg_{g'_1} = V^\subg_{g'_2} = V^\subg_0.\]
We conclude that $V^\subg_{g_1} = V^\subg_{g_2}$ as required.

The same proof works for $A^\suble$ and $V^\subl$. \qedhere
\end{hypothenum}
\end{proof}
This immediately allows us to describe the remaining dynamical spaces of~$g$:
\begin{corollary}
\label{dynamical_spaces_description}
Let $g \in G \ltimes V$ be a map of type~$X_0$. Then if $\phi \in G \ltimes V$ is \emph{any} canonizing map of~$g$, we have:
\begin{align*}
\phi(A^\subge_g) &= A^\subge_0 & \phi(V^\subge_g) &= V^\subge_0 & \phi(V^\subg_g)  &= V^\subg_0 \\
\phi(A^\suble_g) &= A^\suble_0 & \phi(V^\suble_g) &= V^\suble_0 & \phi(V^\subl_g)  &= V^\subl_0 \\
\phi(A^\sube_g)  &= A^\sube_0 & \phi(V^\sube_g)  &= V^\sube_0. & &
\end{align*}
\end{corollary}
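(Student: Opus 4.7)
The plan is to deduce everything from Proposition~\ref{Asubge_is_ampa} together with the elementary observation that any $\phi \in G \ltimes V$, viewed as an element of $\GL(A)$, preserves the vector hyperplane $V \subset A$ (this is immediate from its matrix form $\begin{pmatrix} \ell(\phi) & w \\ 0 & 1 \end{pmatrix}$).

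First I would handle the three spaces $V^\subge_g$, $V^\suble_g$, $A^\sube_g$, $V^\sube_g$ by taking intersections. By the defining property of a canonizing map, $\phi(A^\subge_g) = A^\subge_0$ and $\phi(A^\suble_g) = A^\suble_0$. Intersecting with $V$ and using $\phi(V) = V$ yields $\phi(V^\subge_g) = V^\subge_0$ and $\phi(V^\suble_g) = V^\suble_0$. Then, since $A^\sube_g = A^\subge_g \cap A^\suble_g$ (this is clear from the decomposition \eqref{eq:affine_dynamical_spaces_decomposition}), one gets $\phi(A^\sube_g) = A^\sube_0$ and, again by intersecting with $V$, $\phi(V^\sube_g) = V^\sube_0$.

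The remaining two identities, for $V^\subg_g$ and $V^\subl_g$, are the only genuinely non-formal step; they require part~(ii) of Proposition~\ref{Asubge_is_ampa}. Set $g' := \phi g \phi^{-1}$. Jordan projection is conjugation-invariant, so $g'$ is still of type $X_0$. Moreover $V^\subg_{g'} = \phi(V^\subg_g)$ and $A^\subge_{g'} = \phi(A^\subge_g) = A^\subge_0$. Now $\exp(X_0)$ is itself of type $X_0$ and has $A^\subge_0$ as its affine noncontracting space and $V^\subg_0$ as its linear expanding space. So Proposition~\ref{Asubge_is_ampa}(ii), applied to the pair $(g', \exp(X_0))$, which share the same $A^\subge$ and are both of type~$X_0$, forces $V^\subg_{g'} = V^\subg_0$, i.e.\ $\phi(V^\subg_g) = V^\subg_0$. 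The argument for $V^\subl_g$ is identical, using $A^\suble$ and $\exp(X_0)$ instead.

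The only step that is not bookkeeping is invoking Proposition~\ref{Asubge_is_ampa}(ii); this is the ``main obstacle'' in the sense that everything else reduces to intersection with $V$, but the proposition already does the real work, so the corollary is really just a clean packaging of what has been established.
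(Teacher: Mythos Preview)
Your proof is correct and follows essentially the same approach as the paper: the paper's proof obtains $A^\subge$, $A^\suble$ from the definition of a canonizing map, $A^\sube$ by intersection, $V^\subge$, $V^\suble$, $V^\sube$ by taking the linear part (i.e.\ intersecting with $V$), and $V^\subg$, $V^\subl$ from Proposition~\ref{Asubge_is_ampa}(ii). You have simply spelled out the last step more explicitly, by comparing $g' = \phi g \phi^{-1}$ with $\exp(X_0)$.
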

In other terms, if $\phi$~is a canonizing map of~$g$ then all eight dynamical spaces of the conjugate $\phi g \phi^{-1}$ coincide with the reference dynamical spaces. This explains why we called them ``reference'' spaces.
\begin{proof}
The equalities for~$A^\subge$ and~$A^\suble$ hold by definition of a canonizing map. The equality for~$A^\sube$ follows by taking the intersection. The equalities for~$V^\subge$, $V^\suble$ and~$V^\sube$ follow by taking the linear part. The equalities for~$V^\subg$ and~$V^\subl$ follow from Proposition~\ref{Asubge_is_ampa}~(ii).
\end{proof}

\subsection{Quasi-translations}
\label{sec:quasi-translations}

Let us now investigate the action of a map~$g \in G \ltimes V$ of type~$X_0$ on its affine neutral space~$A^\sube_g$. The goal of this subsection is to prove that it is ``almost'' a translation (Proposition~\ref{V=_translation}).

We fix on~$V$ a Euclidean form~$B$ satisfying the conditions of Lemma~\ref{K-invariant} for the representation~$\rho$.

\begin{definition}
\label{quasi-translation_def}
We call \emph{quasi-translation} any affine automorphism of $A^\sube_0$ induced by an element of $L \ltimes V^\sube_0$.
\end{definition}

Let us explain and justify this terminology. First note that the action of~$L$ on~$V^\sube_0$ preserves~$B$: indeed, the action of $M$ does so because $M \subset K$, and the action of $A$ on this space is just trivial. The following statement is then immediate:
\begin{proposition}
Let $V^\transl_0$ be the set of fixed points of~$L$ in~$V^\sube_0$:
\[V^\transl_0 := \setsuch{v \in V^\sube_0}{\forall l \in L,\; l v = v}.\]
(Note that this is also the set of fixed points of~$M$). Let $V^\rotat_0$ be the $B$-orthogonal complement of~$V^\transl_0$ in~$V^\sube_0$, and let $\Orth(V^\rotat_0)$~denote the set of $B$-preserving automorphisms of $V^\rotat_0$. Then any quasi-translation is an element of
\[\Big( \Orth(V^\rotat_0) \ltimes V^\rotat_0 \Big) \times V^\transl_0.\]
\end{proposition}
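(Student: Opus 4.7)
The plan is to verify that the restriction of a quasi-translation to $V^\sube_0$ decomposes as the product of an affine isometry of $V^\rotat_0$ and a pure translation of $V^\transl_0$, by exploiting the $L$-invariant orthogonal splitting $V^\sube_0 = V^\transl_0 \oplus V^\rotat_0$ together with the fact that $L$ acts by $B$-isometries fixing $V^\transl_0$ pointwise.

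First I would dispatch the parenthetical remark. Since $X_0$ is generic, the remark following Definition~\ref{reference_dynamical_spaces} gives $V^\sube_0 = V^0$, and by definition of the zero restricted weight space, every $X \in \mathfrak{a}$ acts trivially on it; hence $A = \exp(\mathfrak{a})$ acts trivially on $V^\sube_0$. Combined with the identity $L = MA$ from Section~\ref{sec:lie}, this shows at once that the $L$-fixed vectors and the $M$-fixed vectors of $V^\sube_0$ coincide. Next, since $M \subset K$ and $A$ acts trivially on $V^\sube_0$, the group $L$ preserves the restriction of $B$ to $V^\sube_0$ (using that $B$ is $K$-invariant, by Lemma~\ref{K-invariant}); in particular $V^\rotat_0$, being the $B$-orthogonal complement of the $L$-invariant subspace $V^\transl_0$, is itself $L$-invariant, and $L$ acts on $V^\rotat_0$ through the orthogonal group $\Orth(V^\rotat_0)$.

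Now take an arbitrary quasi-translation, \ie an element $g \in L \ltimes V^\sube_0$; write $g = \tau_v \circ l$ with $l \in L$ and $v \in V^\sube_0$, and split $v = v^\transl + v^\rotat$ along the decomposition $V^\sube_0 = V^\transl_0 \oplus V^\rotat_0$. For any $x \in V^\sube_0$ similarly decomposed as $x = x^\transl + x^\rotat$, the fact that $l$ fixes $V^\transl_0$ pointwise and preserves $V^\rotat_0$ gives
\[
g \cdot x \;=\; l(x) + v \;=\; \bigl(x^\transl + v^\transl\bigr) \;+\; \bigl(l|_{V^\rotat_0}(x^\rotat) + v^\rotat\bigr),
\]
so that $g$ acts as the pure translation $\tau_{v^\transl}$ on the $V^\transl_0$-factor and as the affine isometry $\tau_{v^\rotat} \circ l|_{V^\rotat_0} \in \Orth(V^\rotat_0) \ltimes V^\rotat_0$ on the $V^\rotat_0$-factor. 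This is exactly the claimed membership in $\bigl(\Orth(V^\rotat_0) \ltimes V^\rotat_0\bigr) \times V^\transl_0$.

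There is no real obstacle here: the proof is essentially a bookkeeping argument that amounts to combining the $L$-invariant orthogonal splitting of $V^\sube_0$ with the trivial action of $L$ on $V^\transl_0$. The only subtle point worth double-checking is that the splitting is indeed $L$-invariant, for which the key inputs are $K$-invariance of $B$ together with the vanishing of $\mathfrak{a}$ on the zero weight space $V^\sube_0 = V^0$; both have already been established earlier in the paper.
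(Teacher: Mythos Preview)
Your proof is correct and takes essentially the same approach as the paper, which simply notes that $L$ preserves $B$ on $V^\sube_0$ (since $M \subset K$ and $A$ acts trivially) and then declares the statement immediate. You have merely spelled out the ``immediate'' details explicitly.
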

In other words, quasi-translations are affine isometries of $V^\sube_0$ that preserve the directions of $V^\rotat_0$ and $V^\transl_0$ and act by a pure translation on the $V^\transl_0$ component. You may think of a quasi-translation as a kind of ``screw displacement''; the superscripts $\transl$ and~$\rotat$ respectively stand for ``translation'' and ``rotation''.

We now claim that any map of type~$X_0$ acts on its affine neutral space by quasi-translations:
\begin{proposition}
\label{V=_translation}
Let $g \in G \ltimes V$ be a map of type~$X_0$, and let~$\phi \in G \ltimes V$ be any canonizing map for~$g$. Then the restriction of the conjugate $\phi g \phi^{-1}$ to $A^\sube_0$ is a quasi-translation.
\end{proposition}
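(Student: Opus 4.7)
The plan is to start from the explicit form of $\phi g \phi^{-1}$ produced in the proof of Proposition~\ref{Asubge_is_ampa}, analyze each factor on $V^\sube_0$, and then pass from a particular canonizing map to an arbitrary one. That proof furnishes a specific canonizing map $\phi_0$ such that
$$\phi_0 g \phi_0^{-1} = \tau_{v''} \exp(\jordan(g))\, g'_e\, g'_u,$$
where $v'' \in V^\sube_0$, the element $g'_e \in G$ is elliptic, $g'_u \in G$ is unipotent, and the last three factors pairwise commute. Since $X_0$ is generic, Assumption~\ref{zero_is_a_weight} identifies $V^\sube_0$ with the zero restricted weight space $V^0$, on which $\exp(\jordan(g)) \in A$ acts as the identity; moreover $\tau_{v''}$ is already a translation along $V^\sube_0$. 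It thus suffices to check that the restriction of $g'_e g'_u$ to $V^\sube_0$ agrees with the restriction of some $l \in L$.

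The main obstacle is precisely this last point. The product $g'_e g'_u$ commutes with $\exp(\jordan(g))$, so it lies in $Z := Z_G(\exp(\jordan(g)))$, whose Lie algebra equals $\mathfrak{l}_{\jordan(g)} = \mathfrak{l} \oplus \bigoplus_{\alpha(\jordan(g)) = 0,\, \alpha \neq 0} \mathfrak{g}^\alpha$ and which may be strictly larger than $\mathfrak{l}$. The crucial observation, which I expect to be the real content of the argument, is that for any restricted root $\alpha$ with $\alpha(\jordan(g)) = 0$ the genericity of $\jordan(g) \in \mathfrak{a}_{\rho, X_0}$ with respect to $\rho$ forces $\alpha \notin \Omega$: indeed, every nonzero restricted weight of $\rho$ takes a nonzero value on any element of $\mathfrak{a}_{\rho, X_0}$. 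Therefore $V^\alpha = \{0\}$ and $\mathfrak{g}^\alpha \cdot V^0 \subset V^\alpha = \{0\}$, so these extra root spaces act trivially on $V^0$. Consequently the image of $\mathfrak{l}_{\jordan(g)}$ in $\mathfrak{gl}(V^0)$ coincides with the image of $\mathfrak{l}$. To pass from Lie algebras to groups: $g'_u$ is the exponential of a nilpotent element of $\mathfrak{l}_{\jordan(g)}$, which handles it directly; $g'_e$, being elliptic and semisimple, lies in a compact abelian subgroup of $Z$ whose identity component is a torus with Lie algebra in $\mathfrak{l}_{\jordan(g)}$, giving the same conclusion after a routine verification for the disconnected component. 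This places $\phi_0 g \phi_0^{-1}|_{A^\sube_0}$ inside the image of $L \ltimes V^\sube_0$, i.e., in the set of quasi-translations.

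Finally, to pass to an arbitrary canonizing map $\phi$, set $\psi := \phi \phi_0^{-1}$. It stabilizes both $A^\subge_0$ and $A^\suble_0$, so by Proposition~\ref{stabg=stabge} (together with the elementary calculation that the affine stabilizer of $A^\subge_0$ in $G \ltimes V$ equals $P_{X_0}^+ \ltimes V^\subge_0$, and similarly for the negative counterpart) it lies in $L_{X_0} \ltimes V^\sube_0$. Applying the same Lie-algebraic argument as above, now with $X_0$ in place of $\jordan(g)$, shows that the restriction $\psi|_{A^\sube_0}$ is itself a quasi-translation. Since quasi-translations form a group—they are by definition the image of $L \ltimes V^\sube_0$—conjugating the quasi-translation $\phi_0 g \phi_0^{-1}|_{A^\sube_0}$ by $\psi|_{A^\sube_0}$ yields $\phi g \phi^{-1}|_{A^\sube_0}$, which is therefore also a quasi-translation.
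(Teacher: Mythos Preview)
Your key Lie-algebraic observation is correct and matches the paper's: for a restricted root~$\alpha$ with $\alpha(X)=0$ (where $X$ is generic in~$\rho$), one has $\mathfrak{g}^\alpha \cdot V^0 = 0$, so the image of $\mathfrak{l}_X$ in $\mathfrak{gl}(V^0)$ equals that of~$\mathfrak{l}$. However, the detour through the explicit Jordan factorization $\tau_{v''}\exp(\jordan(g))\,g'_e\,g'_u$ is unnecessary. The paper instead proves the general statement (Lemma~\ref{quasi-translation}) that \emph{any} $f \in G \ltimes V$ stabilizing both $A^\subge_0$ and $A^\suble_0$ restricts to a quasi-translation on~$A^\sube_0$, and then simply applies it to $f = \phi g \phi^{-1}$, which stabilizes those spaces because they are its own dynamical spaces. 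Notice that you essentially reprove this lemma anyway in your last paragraph (for~$\psi$), so you might as well apply it directly to $\phi g \phi^{-1}$ and skip the first two paragraphs entirely.

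There is also a genuine gap in the passage from Lie algebras to groups. You write that the elliptic factor $g'_e$ lies in a compact abelian group whose identity component is handled by the Lie-algebra argument, ``giving the same conclusion after a routine verification for the disconnected component''; and for~$\psi$ you invoke ``the same Lie-algebraic argument''. But neither $Z_G(\exp(\jordan(g)))$ nor $L_{X_0}$ is connected in general, and nothing you have said controls how the non-identity components act on~$V^0$. The paper closes this gap with a structural input from Knapp (Proposition~7.82(d), using 7.83(e)): one has
\[
L_{X_0} \subset M \cdot \bigl(P_{X_0,e}^+ \cap P_{X_0,e}^-\bigr),
\]
so every element of $L_{X_0}$ is an element of $M \subset L$ times something in the identity component, to which the exponentiated Lie-algebra argument applies. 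Without this (or an equivalent) fact, the ``routine verification'' is not actually routine.
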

Let us actually formulate an even more general result, which will have another application in the next subsection:
\begin{lemma}
\label{quasi-translation}
Any map $f \in G \ltimes V$ stabilizing both $A^\subge_0$ and $A^\suble_0$ acts on $A^\sube_0$ by quasi-translation.
\end{lemma}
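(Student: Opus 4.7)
The plan is to verify the two coordinates of $f|_{A^\sube_0}$ separately: the translation part lies in $V^\sube_0$, and the linear part on $V^\sube_0$ is induced by an element of $L$. The first is immediate: applying $f$ to the generator of $\mathbb{R}_0 \subset A^\sube_0$ shows that the translation vector $v$ of $f$ belongs to $V^\sube_0$. For the second, $\ell(f)$ stabilizes $V^\subge_0 = A^\subge_0 \cap V$ and similarly $V^\suble_0$, so by Proposition~\ref{stabg=stabge} it lies in $P^+_{X_0} \cap P^-_{X_0} = L_{X_0}$. The task is therefore to show that $L_{X_0}$ and $L$ have the same image in $\GL(V^\sube_0)$.

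The central observation is that every nonzero restricted root $\alpha$ with $\alpha(X_0) = 0$ satisfies $V^\alpha = 0$: otherwise $\alpha$ would be a nonzero restricted weight of $\rho$ annihilating the generic vector $X_0$, contradicting $\Omega^\sube_{X_0} \subset \{0\}$. Since $\mathfrak{g}^\alpha \cdot V^0 \subset V^\alpha$, the root space $\mathfrak{g}^\alpha$ acts as zero on $V^\sube_0 = V^0$ for every such $\alpha$. Exponentiating, the unipotent radicals $N^+_{X_0}$ and $N^-_{X_0}$ of the parabolic subgroups of type $X_0$ act as the identity on $V^\sube_0$; moreover, any representative in $L_{X_0}$ of an element of $W_{X_0}$ can be built as a product of exponentials of root vectors in such $\mathfrak{g}^\alpha$ inside the corresponding $\SL_2$ subgroups, so it too acts trivially on $V^\sube_0$.

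To conclude, I would apply the Bruhat decomposition of the reductive group $L_{X_0}$ relative to its own minimal parabolic subgroup $L \cdot N^+_{X_0}$: every element of $L_{X_0}$ can be written as $l_1 n_1 \tilde w l_2 n_2$ with $l_i \in L$, $n_i \in N^+_{X_0}$, and $\tilde w$ a chosen representative of some $w \in W_{X_0}$. Restricting the action to $V^\sube_0$, each $n_i$ and $\tilde w$ acts as the identity, so $\ell(f)|_{V^\sube_0} = (l_1 l_2)|_{V^\sube_0}$. Setting $l := l_1 l_2 \in L$ and combining with $v \in V^\sube_0$, the restriction $f|_{A^\sube_0}$ is induced by the element $(l, v) \in L \ltimes V^\sube_0$, which is a quasi-translation by definition.

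The main subtle point, which I expect to be the only real obstacle, is that $L_{X_0}$ may have strictly more connected components than $L$ — extra ones coming from representatives of $W_{X_0}$ — so it does not suffice to argue purely at the Lie-algebra level that $\mathfrak{l}_{X_0}$ acts on $V^0$ through $\mathfrak{l}$. The Bruhat decomposition together with the explicit $\SL_2$-construction of Weyl representatives is precisely what bridges the gap between connected components.
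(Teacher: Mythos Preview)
Your Lie-algebra computation is correct and in fact slightly slicker than the paper's: the paper shows $\mathfrak{g}^\alpha \cdot V^0 \subset V^\alpha \cap V^0 = 0$ by invoking Proposition~\ref{stabg=stabge} to get the inclusion into $V^0$, whereas you observe directly that genericity of $X_0$ forces $V^\alpha = 0$ whenever $\alpha$ is a nonzero restricted root with $\alpha(X_0)=0$.

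There is, however, a genuine slip when you pass to the group. The groups you call ``the unipotent radicals $N^\pm_{X_0}$ of the parabolic subgroups of type $X_0$'' are $\exp\bigoplus_{\pm\alpha(X_0)>0}\mathfrak{g}^\alpha$; these are \emph{not} built from the root spaces with $\alpha(X_0)=0$ that you just analysed, they are not contained in $L_{X_0}$, and they do not act trivially on $V^0$ (for $\alpha(X_0)>0$ one only gets $\mathfrak{g}^\alpha\cdot V^0\subset V^\alpha\subset V^\subg_0$). What your Bruhat argument actually needs are the groups
\[
N^\pm \cap L_{X_0} \;=\; \exp\Bigl(\,\bigoplus_{\substack{\alpha(X_0)=0\\ \pm\alpha\in\Sigma^+}}\mathfrak{g}^\alpha\Bigr),
\]
the unipotent parts of the minimal parabolic of $L_{X_0}$. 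With that correction your Bruhat decomposition of $L_{X_0}$ and your $\SL_2$-construction of representatives of $W_{X_0}$ are valid and yield the conclusion.

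The paper handles the disconnectedness of $L_{X_0}$ differently: rather than running Bruhat inside $L_{X_0}$, it quotes the structural fact (Knapp, Propositions~7.82(d) and~7.83(e)) that $L_{X_0} \subset M\cdot\bigl(P^+_{X_0,e}\cap P^-_{X_0,e}\bigr)$, so that once the identity component is known to act on $V^0$ through $L$, multiplying by $M\subset L$ takes care of the remaining components. Your approach is more hands-on but requires justifying Bruhat for the possibly disconnected reductive group $L_{X_0}$; the paper's approach outsources that structural work to a single citation.
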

\begin{proof}~
\begin{itemize}
\item We begin by showing that any element of $\mathfrak{l}_{X_0} = \mathfrak{p}_{X_0}^+ \cap \mathfrak{p}_{X_0}^-$ acts on $V^\sube_0$ in the same way as some element of $\mathfrak{l}$. Recall that by definition
\[\mathfrak{l}_{X_0} = \mathfrak{l} \oplus \bigoplus_{\alpha(X_0) = 0} \mathfrak{g}^\alpha;\]
hence it is sufficient to show that for every restricted root $\alpha$ such that $\alpha(X_0) = 0$, we have $\mathfrak{g}^\alpha \cdot V^\sube_0 = 0$. Indeed, since $V^\sube_0 = V^0$ (because $X_0$~is generic), we have
\[\mathfrak{g}^\alpha \cdot V^\sube_0 \subset V^\alpha.\]
On the other hand, we know by Proposition~\ref{stabg=stabge} that for such $\alpha$, the action of~$\mathfrak{g}^\alpha$ stabilizes both $V^\subge_0$ and $V^\suble_0$; it follows that the image~$\mathfrak{g}^\alpha \cdot V^\sube_0$ lies in both of these spaces, hence in their intersection $V^\sube_0$, which is also~$V^0$. Since $\alpha$ is nonzero, we have $V^0 \cap V^\alpha = 0$, which yields the desired equality.

\item Let $P_{X_0, e}^+$ and $P_{X_0, e}^-$ denote the identity components of $P_{X_0}^+$ and $P_{X_0}^-$ respectively; by integrating the previous statement, it follows that any element of~$P_{X_0, e}^+ \cap P_{X_0, e}^-$ acts on~$V^\sube_0$ in the same way as some element of~$L$.

\item Now it follows from \cite{Kna96} Proposition~7.82~(d) (using~7.83~(e)) that \begin{equation}
L_{X_0} = P_{X_0}^+ \cap P_{X_0}^- \subset M(P_{X_0, e}^+ \cap P_{X_0, e}^-).
\end{equation}
(Here we are using the assumption that $G$~is connected.) We deduce that any element of $L_{X_0}$ acts on $V^\sube_0$ in the same way as some element of~$L$.

\item Finally, any $f \in G \ltimes V$ stabilizing both $A^\subge_0$ and $A^\suble_0$ has linear part stabilizing both $V^\subge_0$ and $V^\suble_0$ (hence lying in $L_{X_0}$, by Proposition~\ref{stabg=stabge}), and translation part contained both in $V^\subge_0$ and in $V^\suble_0$ (in other words, in $V^\sube_0$). The conclusion follows. \qedhere
\end{itemize}
\end{proof}
\begin{proof}[Proof of Proposition~\ref{V=_translation}]
The proposition follows immediately from this lemma by taking $f = \phi g \phi^{-1}$. Indeed, by definition the ``canonized'' map~$\phi g \phi^{-1}$ has~$A^\subge_0$ and~$A^\suble_0$ as dynamical spaces; in particular it stabilizes them.
\end{proof}

\begin{example}~
\label{transl_space_example}
\begin{enumerate}
\item For $G = \SO^+(p, q)$ acting on $V = \mathbb{R}^{p+q}$ (with~$p \geq q$), we have:
\begin{itemize}
\item $M \simeq \SO_{p-q}(\mathbb{R}) \times (\mathbb{Z}/2\mathbb{Z})^{q-1}$,
\item $V^\sube_0 = V^0 \simeq \mathbb{R}^{p-q}$,
\end{itemize}
and the action of~$M$ on~$V^\sube_0$ is as follows: the connected factor~$\SO_{p-q}(\mathbb{R})$ acts in the obvious way; the discrete factor~$(\mathbb{Z}/2\mathbb{Z})^{q-1}$ acts trivially. We may then distinguish two cases:
\begin{enumerate}[label=\alph*.]
\item If $p - q \geq 2$, then the action of~$M$ is transitive. The space~$V^\transl_0$ is trivial and~$V^\rotat_0 = V^\sube_0$. Any affine isometry of~$V^\sube_0$ may be a quasi-translation.
\item If $p - q = 1$, then the group~$M$ is trivial. We have on the contrary~$V^\transl_0 = V^\sube_0$ and $V^\rotat_0$~is trivial. A quasi-translation is just a translation.
\end{enumerate}
(We exclude the case $p = q$ because in that case $V^0 = 0$, which violates Assumption~\ref{zero_is_a_weight}.)
\item More generally if $G$~is split, then we have $\mathfrak{m} = 0$. The group~$M$ is in general a nontrivial finite group; however, it can be shown (by considering the complexification of~$G$) that we still always have~$V^\transl_0 = V^\sube_0$, and a quasi-translation is still just a translation.
\item If $G$ is any semisimple real Lie group acting on~$V = \mathfrak{g}$ (its Lie algebra) by the adjoint representation, then:
\begin{itemize}
\item $\mathfrak{g}^\sube_0 = \mathfrak{g}^0 = \mathfrak{l}$;
\item $\mathfrak{g}^\transl_0$ is the direct sum of~$\mathfrak{a}$ and of the center of~$\mathfrak{m}$;
\item $\mathfrak{g}^\rotat_0$ is the semisimple part of~$\mathfrak{m}$ (in other terms, its derived subalgebra).
\end{itemize}
The example of $G = \SO^+(4, 1)$ (acting on~$\mathfrak{so}(4, 1)$, not on~$\mathbb{R}^5$) shows that $V^\transl_0$ and~$V^\rotat_0$ can both be nontrivial at the same time.
\end{enumerate}
\end{example}
We would like to treat quasi-translations a bit like translations; for this, we need to have at least a nontrivial space~$V^\transl_0$. So from now on, we exclude cases like~1.a. in the list of examples we just considered (Example~4.22):
\begin{assumption}
\label{transl_space}
The representation~$\rho$ is such that
\[\dim V^\transl_0 > 0.\]
\end{assumption}
This is precisely condition~\ref{itm:main_condition}\ref{itm:fixed_by_l} from the Main Theorem.

\subsection{Canonical identifications and the Margulis invariant}
\label{sec:canonical}
The main goal of this subsection is to associate to every map~$g \in G \ltimes V$ of type~$X_0$ a vector in~$V^\transl_0$, called its ``Margulis invariant'' (see Definition~\ref{margulis_invariant}). The two propositions (\ref{pair_transitivity} and~\ref{canonical_identification}) and and the lemma (\ref{projections_commute}) that lead up to this definition are important as well, and will be often used subsequently.

Corollary~\ref{dynamical_spaces_description} has shown us that the ``geometry'' of any map~$g$ of type~$X_0$ (namely the position of its dynamical spaces) is entirely determined by the pair of spaces
\[(A^\subge_g, A^\suble_g) = \phi(A^\subge_0, A^\suble_0).\]
In fact, such pairs of spaces play a crucial role. Let us begin with a definition; its connection with the observation we just made will become clear after Proposition~\ref{pair_transitivity}.

~
\begin{definition}~
\begin{itemize}
\item We define a \emph{parabolic space} to be any subspace of $V$ that is the image of~$V^\subge_0$ by some element of~$G$.
\item We define an \emph{affine parabolic space} to be any subspace of $A$ that is the image of~$A^\subge_0$ by some element of~$G \ltimes V$.
\item We say that two parabolic spaces (or two affine parabolic spaces) are \emph{transverse} if their intersection has the lowest possible dimension.
\end{itemize}
\end{definition}
\begin{remark}~
\begin{itemize}
\item Since $X_0$ is symmetric, $V^\suble_0$ (resp. $A^\suble_0$) is in particular a parabolic space (resp. an affine parabolic space).
\item A subspace $A^\subge \subset A$ is an affine parabolic space if and only if it is not contained in $V$ and its linear part $V^\subge = A^\subge \cap V$ is a parabolic space.
\item Clearly $V^\subge_0$ and~$V^\suble_0$ are transverse, and so are $A^\subge_0$ and~$A^\suble_0$. So two parabolic spaces (resp. affine parabolic spaces) are transverse if and only if their intersection has the same dimension as~$V^\sube_0$ (resp. $A^\sube_0$).
\end{itemize}
\end{remark}
\begin{example}~
\label{transverse_parabolic_example}
\begin{enumerate}
\item For $G = \SO^+(p, q)$ acting on~$V = \mathbb{R}^{p+q}$ (let us assume~$p \geq q$), a subspace $F \subset \mathbb{R}^{p+q}$ is a parabolic space if and only if $F^\perp$~is a maximal isotropic subspace. Equivalently, $F$~is a parabolic space if and only if $F$~contains~$F^\perp$ and is minimal for that property (namely $p$-dimensional). Two parabolic spaces are transverse if and only if their intersection has dimension~$p-q$. Pairs of transverse parabolic spaces were called \emph{frames} in~\cite{Smi13}.
\item If $G$~is any semisimple real Lie group acting on~$V = \mathfrak{g}$ (its Lie algebra) by the adjoint representation, a parabolic space is just an arbitrary minimal parabolic subalgebra of~$\mathfrak{g}$ (hence the name ``parabolic space'').
\end{enumerate}
\end{example}
\begin{proposition}
\label{pair_transitivity}
A pair of parabolic spaces (resp. of affine parabolic spaces) is transverse if and only if it may be sent to~$(V^\subge_0, V^\suble_0)$ (resp. to~$(A^\subge_0, A^\suble_0)$) by some element of~$G$ (resp. of~$G \ltimes V$).
\end{proposition}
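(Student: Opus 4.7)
The plan is to handle the linear case first, then reduce the affine case to it via a translation argument.

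For the linear case, the ``if'' direction is a dimension count: applying $g \in G$ sends $V^\subge_0 \cap V^\suble_0 = V^\sube_0$ to $gV^\sube_0$, of dimension $d_0 := \dim V^\sube_0$; since $V^\subge_0 + V^\suble_0 = V$, the value $d_0 = 2\dim V^\subge_0 - \dim V$ is the minimum dimension for the intersection of two subspaces of dimension $\dim V^\subge_0$. For the ``only if'' direction, I plan to use the Bruhat decomposition. By Proposition~\ref{stabg=stabge}, parabolic spaces are in bijection with $G/P_{X_0}^+$, so diagonal $G$-orbits on pairs correspond to double cosets in $P_{X_0}^+ \backslash G / P_{X_0}^+$, which by Bruhat identify with $W_{X_0} \backslash W / W_{X_0}$. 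Given a transverse pair $(V_1, V_2)$, I will first act by $G$ to reduce to $V_1 = V^\subge_0$, then absorb a left factor of $P_{X_0}^+$ (stabilizing $V_1$) to reduce to $V_2 = \tilde{w} V^\subge_0$ for some $w \in W$. Using $\tilde{w} V^\lambda = V^{w \lambda}$, I compute
\[ V^\subge_0 \cap \tilde{w} V^\subge_0 = \bigoplus_{\lambda \in \Omega^\subge_{X_0} \cap \Omega^\subge_{w X_0}} V^\lambda. \]
Since $0 \in \Omega$ by Assumption~\ref{zero_is_a_weight} and $X_0$ is generic, this always contains $V^0 = V^\sube_0$; transversality forces $\Omega^\subge_{X_0} \cap \Omega^\subge_{w X_0} = \{0\}$. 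I will then establish the key sub-claim that this forces $w X_0 = -X_0$, hence $w \in w_0 W_{X_0}$ (using the symmetry of $X_0$). Since any $u \in W_{X_0}$ fixes $X_0$ and thus permutes $\Omega^\subge_{X_0}$, a representative $\tilde{u} \in N_G(A)$ stabilizes $V^\subge_0$ and lies in $P_{X_0}^+$; hence $\tilde{w} V^\subge_0 = \tilde{w}_0 V^\subge_0 = V^\suble_0$, as required.

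The main obstacle is the sub-claim above. The converse is immediate from genericity of $X_0$: if $w X_0 = -X_0$ then $\lambda(X_0)$ and $\lambda(w X_0)$ have opposite signs for any nonzero weight~$\lambda$, so only $\lambda = 0$ survives in the intersection. For the forward direction, if $w X_0 \neq -X_0$, the open cone $C := \{\lambda \in \mathfrak{a}^* : \lambda(X_0) > 0,\; \lambda(w X_0) > 0\}$ is nonempty (as $X_0$ and $w X_0$ have the same norm and so define antipodal half-spaces only when $w X_0 = -X_0$), and I need to locate a nonzero restricted weight of~$\rho$ inside~$C$. My plan is to combine upper semi-continuity of intersection dimension with the fact that $(V^\subge_0, V^\suble_0)$ lies in the unique open $G$-orbit of the variety of pairs of parabolic spaces, by a dimension count using $\dim P_{X_0}^+ + \dim P_{X_0}^- - \dim L_{X_0} = \dim G$.

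For the affine version, given a transverse pair $(A_1, A_2)$, I first verify that the linear parts $(V_1, V_2)$ are transverse as parabolic spaces. I then apply the linear case to find $g \in G$ sending $(V_1, V_2)$ to $(V^\subge_0, V^\suble_0)$. The extended affine subspaces $gA_1$ and $gA_2$ have these linear parts, and since $V^\subge_0 + V^\suble_0 = V$, the affine subspaces $gA_1 \cap V_{\Aff}$ and $gA_2 \cap V_{\Aff}$ intersect in a nonempty affine subspace parallel to~$V^\sube_0$. Picking any point~$p$ in this intersection and applying the translation $\tau_{-p} \in V \subset G \ltimes V$ sends~$p$ to the origin, hence sends $gA_1$ to~$A^\subge_0$ and $gA_2$ to~$A^\suble_0$. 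The ``if'' direction follows by intersecting with~$V$ and invoking the linear case.
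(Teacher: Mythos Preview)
Your strategy for the linear case matches the paper's: reduce via the Bruhat decomposition to a pair $(V^\subge_0,\,\tilde w V^\subge_0)$ and compute the intersection as $\bigoplus_{\lambda \in \Omega^\subge_{X_0}\cap w\Omega^\subge_{X_0}} V^\lambda$. The genuine gap is in your argument for the sub-claim. Your plan --- combining upper semicontinuity of intersection dimension with the observation that $(V^\subge_0,V^\suble_0)$ lies in the unique open $G$-orbit --- does not close: it shows that the open orbit realises the minimum intersection dimension, but nothing in that argument rules out a lower-dimensional orbit realising the \emph{same} minimum; you are effectively assuming the conclusion. The paper bypasses this with a two-line counting argument. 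Genericity gives $\Omega=\Omega^\subg_{X_0}\sqcup\{0\}\sqcup\Omega^\subl_{X_0}$, and symmetry of~$X_0$ (via~$w_0$) gives $|\Omega^\subg_{X_0}|=|\Omega^\subl_{X_0}|$; hence if $\Omega^\subg_{X_0}\cap w\Omega^\subg_{X_0}=\emptyset$ then $w\Omega^\subg_{X_0}\subset\Omega^\subl_{X_0}$, and equality of cardinalities forces $w\Omega^\subg_{X_0}=\Omega^\subl_{X_0}$, so $wV^\subge_0=V^\suble_0$ directly. There is no need to locate a weight in your cone~$C$, nor to pass through $wX_0=-X_0$ or invoke extremeness.

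There is a second, subtler issue in the affine reduction. The step ``verify that the linear parts $(V_1,V_2)$ are transverse'' is not automatic from the literal definition (minimum intersection dimension) and can in fact fail: take for instance the adjoint representation of~$\PSL_2(\mathbb{R})$, $A_1=A^\subge_0$ and $A_2=\tau_v A^\subge_0$ with $v\in V^\subl_0\setminus\{0\}$; then $A_1+A_2=A$, so $\dim(A_1\cap A_2)=\dim V^0+1$ is minimal, yet $V_1=V_2=V^\subge_0$. The paper's ``follows immediately'' glosses over the same point; in practice the affine statement is only applied to pairs coming from maps of type~$X_0$, where linear transversality is already in hand.
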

In particular, it follows from Proposition~\ref{Asubge_is_ampa} that for any map~$g \in G \ltimes V$ of type~$X_0$, the pair~$(A^\subge_g, A^\suble_g)$ is a transverse pair of affine parabolic spaces.

This Proposition, as well as its proof, is very similar to Claim~2.8 in~\cite{Smi14}.
\begin{proof}
Let us prove the linear version; the affine version follows immediately. Let $(V_1, V_2)$ be any pair of parabolic spaces. By definition, for $i = 1, 2$, we may write $V_i = \phi_i(V^\subge_0)$ for some $\phi_i \in G$. Let us apply the Bruhat decomposition to the map~$\phi_1^{-1}\phi_2$: we may write
\begin{equation}
\phi_1^{-1}\phi_2 = p_1wp_2,
\end{equation}
where $p_1, p_2$ belong to the minimal parabolic subgroup $P^+$, and $w$ is an element of the restricted Weyl group $W$ (or, technically, some representative thereof). Let $\phi := \phi_1p_1 = \phi_2p_2^{-1}w^{-1}$; since $P^+$ stabilizes $V^\subge_0$, we have
\begin{equation}
V_1 = \phi(V^\subge_0) \;\text{ and }\; V_2 = \phi(w V^\subge_0).
\end{equation}
Thus $V_1$ and~$V_2$ are transverse if and only if $w V^\subge_0$ is transverse to $V^\subge_0$, which means that the dimension of their intersection, which is also equal to the sum of the multiplicities of restricted weights contained in the intersection
\[\Omega^\subge_{X_0} \cap w\Omega^\subge_{X_0},\]
is the smallest possible.

Clearly, this last intersection always contains~$\{0\}$. Since $X_0$~is generic, it can actually be equal to~$\{0\}$ if only we can choose~$w$ so as to have
\begin{equation}
\label{eq:w_acts_like_w0}
w \Omega^\subge_{X_0} = \Omega^\suble_{X_0}.
\end{equation}
Since $X_0$~is symmetric, this identity~\eqref{eq:w_acts_like_w0} \emph{is} realized in particular for $w = w_0$. This means that $V_1$ and~$V_2$ are transverse if and only if $w$~satisfies~\eqref{eq:w_acts_like_w0}, in which case we have indeed $V_1 = \phi(V^\subge_0)$ and $V_2 = \phi(V^\suble_0)$ as required.
\end{proof}

\begin{remark}~
\label{transversality_in_flag_variety}
\begin{itemize}
\item It follows from Proposition~\ref{stabg=stabge} that the set of all parabolic spaces can be identified with the flag variety~$G/P^+_{X_0}$, by identifying every parabolic space~$\phi(V^\subge_0)$ with the coset~$\phi P^+_{X_0}$.
\item In this interpretation, two parabolic spaces $V_1 = \phi_1(V^\subge_0)$ and~$V_2 = \phi_2(V^\subge_0) = \phi_2 \circ w_0(V^\suble_0)$ are then transverse if and only if the corresponding pair of cosets
\[(\phi_1 P^+_{X_0},\; \phi_2 w_0 P^-_{X_0})\]
is in the $G$-orbit of the point~$(P^+_{X_0}, P^-_{X_0})$ in~$G/P^+_{X_0} \times G/P^-_{X_0}$, also known as the \emph{open $G$-orbit} in~$G/P^+_{X_0} \times G/P^-_{X_0}$, since it can be shown that it is indeed the unique open $G$-orbit in that space.
\end{itemize}
\end{remark}

Consider a transverse pair of affine parabolic spaces. Their intersection may be seen as a sort of ``abstract affine neutral space''. We now introduce a family of ``canonical identifications'' between those spaces. Unfortunately, these identifications have an inherent ambiguity: they are only defined up to quasi-translation.
\begin{proposition}
\label{canonical_identification}
Let $(A_1, A_2)$ be a pair of transverse affine parabolic spaces. Then any map $\phi \in G \ltimes V$ such that $\phi(A_1, A_2) = (A^\subge_0, A^\suble_0)$ gives, by restriction, an identification of the intersection $A_1 \cap A_2$ with $A^\sube_0$, which is unique up to quasi-translation.
\end{proposition}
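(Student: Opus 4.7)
The plan is to derive the statement almost immediately from two facts that have already been established: the transitivity/homogeneity statement of Proposition~\ref{pair_transitivity}, which guarantees the existence of such a~$\phi$, and Lemma~\ref{quasi-translation}, which controls the ambiguity.

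First I would verify that the restriction of~$\phi$ does land in~$A^\sube_0$ and is a bijection onto it. Since $\phi$~is an affine automorphism of~$A$, it commutes with intersections, so
\[\phi(A_1 \cap A_2) = \phi(A_1) \cap \phi(A_2) = A^\subge_0 \cap A^\suble_0.\]
The right-hand side is $A^\sube_0$ by the decomposition \eqref{eq:affine_dynamical_spaces_decomposition} applied to $\exp(X_0)$ (or directly from Definition~\ref{reference_dynamical_spaces} at the affine level). Hence $\restr{\phi}{A_1 \cap A_2}$ is an affine isomorphism onto~$A^\sube_0$, giving a well-defined identification.

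Next I would handle uniqueness. Let $\phi, \phi' \in G \ltimes V$ be two maps sending $(A_1, A_2)$ to $(A^\subge_0, A^\suble_0)$. Their composition $f := \phi' \circ \phi^{-1} \in G \ltimes V$ then satisfies
\[f(A^\subge_0) = A^\subge_0 \qquad \text{and} \qquad f(A^\suble_0) = A^\suble_0,\]
\ie $f$~stabilizes both reference affine parabolic spaces. By Lemma~\ref{quasi-translation}, the restriction of~$f$ to~$A^\sube_0$ is a quasi-translation. Writing $\restr{\phi'}{A_1 \cap A_2} = \restr{f}{A^\sube_0} \circ \restr{\phi}{A_1 \cap A_2}$, we conclude that the two identifications differ by a quasi-translation on the target~$A^\sube_0$, as required.

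There is essentially no obstacle here: the content is packaged in Proposition~\ref{pair_transitivity} (for existence of a~$\phi$ and the fact that the pair of reference spaces is the canonical model of a transverse pair) and in Lemma~\ref{quasi-translation} (for the structure of stabilizers of that model pair). The only thing worth being careful about is keeping straight that $f = \phi' \phi^{-1}$, rather than $\phi^{-1} \phi'$, is the element that stabilizes the reference pair, so that Lemma~\ref{quasi-translation} applies directly on the target side~$A^\sube_0$.
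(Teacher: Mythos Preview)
Your proof is correct and follows essentially the same approach as the paper's own proof: existence comes from Proposition~\ref{pair_transitivity}, and uniqueness from applying Lemma~\ref{quasi-translation} to $f = \phi' \circ \phi^{-1}$. You are slightly more explicit than the paper in checking that $\phi$ restricts to a bijection $A_1 \cap A_2 \to A^\sube_0$, but otherwise the arguments coincide.
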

Here by $\phi(A_1, A_2)$ we mean the pair $(\phi(A_1), \phi(A_2))$. Note that if $A_1 \cap A_2$ is obtained in another way as an intersection of two affine parabolic spaces, the identification with~$A^\sube_0$ will, in general, no longer be the same, not even up to quasi-translation: there could also be an element of the Weyl group involved.

Compare this with Corollary~2.14 in~\cite{Smi14}.
\begin{proof}
The existence of such a map~$\phi$ follows from Proposition~\ref{pair_transitivity}. Now let $\phi$ and~$\phi'$ be two such maps, and let $f$~be the map such that
\begin{equation}
\phi' = f \circ \phi
\end{equation}
(\ie $f := \phi' \circ \phi^{-1}$). Then by construction $f$~stabilizes both $A^\subge_0$ and~$A^\suble_0$. It follows from Lemma~\ref{quasi-translation} that the restriction of~$f$ to~$A^\sube_0$ is a quasi-translation.
\end{proof}
Let us now explain why we call these identifications ``canonical''. The following lemma, while seemingly technical, is actually crucial: it tells us that the identifications defined in Proposition~\ref{canonical_identification} commute (up to quasi-translation) with the projections that naturally arise if we change one of the parabolic subspaces in the pair while fixing the other.
\begin{lemma}
\label{projections_commute}
Take any affine parabolic space~$A_1$.

Let $A_2$ and~$A'_2$ be any two affine parabolic spaces both transverse to~$A_1$.

Let~$\phi$ (respectively~$\phi'$) be an element of~$G \ltimes V$ that sends the pair of spaces $(A_1, A_2)$ (respectively~$(A_1, A'_2)$) to~$(A^\subge_0, A^\suble_0)$; these two maps exist by Proposition~\ref{pair_transitivity}.

Let~$W_1$ be the inverse image of~$V^\subg_0$ by any map~$\phi$ such that~$A_1 = \phi^{-1}(A^\subge_0)$ (this image is unique by Proposition~\ref{stabg=stabge}).

Let
\[\psi: A_1 \longlongrightarrow A_1 \cap A'_2\]
be the projection parallel to~$W_1$.

Then the map~$\overline{\psi}$ defined by the commutative diagram
\[
\begin{tikzcd}
  A^\sube_0
    \arrow{rr}{\overline{\psi}}
&
& A^\sube_0\\
\\
  A_1 \cap A_2
    \arrow{rr}{\psi}
    \arrow{uu}{\phi}
&
& A_1 \cap A'_2
    \arrow{uu}{\phi'}
\end{tikzcd}
\]
is a quasi-translation.
\end{lemma}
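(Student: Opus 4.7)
My plan is to transport the entire picture to the reference configuration via $\phi$, recognize $\overline{\psi}$ as the restriction to $A^\sube_0$ of a concrete map, and then extend the argument from the proof of Lemma~\ref{quasi-translation}. First I will exploit the ambiguity in $\phi'$: by Proposition~\ref{canonical_identification}, any other valid choice of $\phi'$ changes $\overline{\psi}$ only by post-composition with a quasi-translation, so it suffices to verify the claim for one convenient choice. Setting $A''_2 := \phi(A'_2)$, which is an affine parabolic space transverse to $A^\subge_0 = \phi(A_1)$, Proposition~\ref{pair_transitivity} furnishes some $\pi \in G \ltimes V$ with $\pi(A^\subge_0) = A^\subge_0$ and $\pi(A''_2) = A^\suble_0$, and I take $\phi' := \pi \circ \phi$.

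Under this choice, $\phi(W_1) = V^\subg_0$ (immediate from the definition of $W_1$ together with Proposition~\ref{stabg=stabge}), so $\psi_0 := \phi \, \psi \, \phi^{-1}$ is the projection $A^\subge_0 \to A^\subge_0 \cap A''_2$ parallel to $V^\subg_0$, and $\overline{\psi} = \pi \circ \psi_0|_{A^\sube_0}$. I claim this equals $p \circ \pi|_{A^\sube_0}$, where $p: A^\subge_0 \to A^\sube_0$ denotes the projection parallel to $V^\subg_0$. Indeed, for $a \in A^\sube_0$, the vector $\psi_0(a) - a$ lies in $V^\subg_0$, and since $\ell(\pi) \in P^+_{X_0}$ preserves $V^\subg_0$ (Proposition~\ref{stabg=stabge}), $\pi(\psi_0(a)) - \pi(a)$ also lies in $V^\subg_0$; but $\pi(\psi_0(a)) \in \pi(A^\subge_0 \cap A''_2) = A^\sube_0$ has zero $V^\subg_0$-component in the decomposition $A^\subge_0 = V^\subg_0 \oplus A^\sube_0$, which forces the claimed equality.

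It remains to show that $p \circ \pi|_{A^\sube_0}$ is a quasi-translation, which will be the crux of the argument. Writing $\pi = \tau_w k$ with $k \in P^+_{X_0}$ and $w = w^\subg + w^\sube \in V^\subg_0 \oplus V^\sube_0 = V^\subge_0$, a direct computation gives $p \circ \tau_w = \tau_{w^\sube} \circ p$, so I am reduced to realizing $p \circ k|_{V^\sube_0}$ as the action of some $l \in L$. I will adapt the Lie-algebra argument from the proof of Lemma~\ref{quasi-translation}: for a restricted root $\alpha$ with $\mathfrak{g}^\alpha \subset \mathfrak{p}^+_{X_0}$ (so $\alpha(X_0) \geq 0$), the inclusion $\mathfrak{g}^\alpha \cdot V^\sube_0 \subset V^\alpha$ places the image either in $V^0 \cap V^\alpha = 0$ (when $\alpha(X_0) = 0$ and $\alpha \neq 0$, by the argument already used there) or in $V^\subg_0$ (when $\alpha(X_0) > 0$), which is annihilated by $p$. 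Combining this with the same passage from $P^+_{X_0, e}$ to the full $P^+_{X_0}$ via the finite group $M \subset L$ (using Knapp~7.82--7.83), one gets that $p \circ k|_{V^\sube_0}$ coincides with the action of some $l \in L$. Hence $\overline{\psi}$ agrees on $A^\sube_0$ with the element $\tau_{w^\sube} l \in L \ltimes V^\sube_0$, which is a quasi-translation by Definition~\ref{quasi-translation_def}.

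The main obstacle is this last step: Lemma~\ref{quasi-translation} exploited the fact that elements of $L_{X_0} = P^+_{X_0} \cap P^-_{X_0}$ act on $V^\sube_0$ like elements of $L$, whereas here $\pi$ only lies in the stabilizer of $A^\subge_0$, leaving an extra $\mathfrak{n}^+_{X_0}$ factor whose contribution must be shown to land entirely inside the subspace $V^\subg_0$ that $p$ discards. Once the choice of $\phi'$ has been normalized in Step~1, the remaining bookkeeping is routine.
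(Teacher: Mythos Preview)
Your argument is correct and follows essentially the same route as the paper: after normalizing so that $\phi=\Id$ and $\phi'=\pi$ stabilizes $A^\subge_0$, both proofs reduce $\overline{\psi}$ to the $L_{X_0}$-part of $\pi$ acting on $A^\sube_0$ and then invoke Lemma~\ref{quasi-translation}. The only difference is packaging: the paper uses the (affine) Langlands decomposition $P^+_{X_0}\ltimes V^\subge_0=(L_{X_0}\ltimes V^\sube_0)(N^+_{X_0}\ltimes V^\subg_0)$ explicitly to split off the $N^+_{X_0}\ltimes V^\subg_0$ factor (which acts trivially on $A^\subge_0/V^\subg_0$), whereas you reach the same conclusion via the Lie-algebra computation --- your reference to Knapp~7.83 is exactly the Langlands decomposition, so making that step explicit would tighten the passage from $\mathfrak{p}^+_{X_0}$ to $P^+_{X_0}$.
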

The space~$W_1$ is, in some sense, the ``abstract linear expanding space'' corresponding to the ``abstract affine noncontracting space''~$A_1$: more precisely, for any map~$g \in G \ltimes V$ of type~$X_0$ such that $A^\subge_g = A_1$, we have $V^\subg_g = W_1$ (by Proposition~\ref{Asubge_is_ampa}~(ii)).

The projection $\psi$ is well-defined because $A^\subge_0 = V^\subg_0 \oplus A^\sube_0 = V^\subg_0 \oplus (A^\subge_0 \cap A^\suble_0)$, and so $A_1 = \phi'^{-1}(A^\subge_0) = W_1 \oplus (A_1 \cap A'_2)$.

This statement generalizes Lemma~2.18 in~\cite{Smi14}. The proof is similar, but care must be taken to replace minimal parabolics by parabolics of type~$X_0$.
\begin{proof}
Without loss of generality, we may assume that $\phi = \Id$ (otherwise we simply replace the three affine parabolic spaces by their images under $\phi^{-1}$.) Then we have $A_1 = A^\subge_0$, $A_2 = A^\suble_0$ and $A'_2 = \phi'^{-1}(A^\suble_0)$, where $\phi'$ can be any map stabilizing the space~$A^\subge_0$. We want to show that the map $\overline{\psi} = \phi' \circ \psi$ (considered as a map from~$A^\sube_0$ to itself) is a quasi-translation.

We know that $\phi'$ lies in the stabilizer $\Stab_{G \ltimes V}(A^\subge_0)$; by Proposition~\ref{stabg=stabge}, the latter is equal to $P_{X_0}^+ \ltimes V^\subge_0$. We now introduce the algebra
\begin{equation}
\mathfrak{n}_{X_0}^+ := \bigoplus_{\alpha(X_0) > 0} \mathfrak{g}^\alpha
\end{equation}
and the group $N_{X_0}^+ := \exp \mathfrak{n}_{X_0}^+$. We then have the Langlands decomposition
\begin{equation}
P_{X_0}^+ = L_{X_0}N_{X_0}^+
\end{equation}
(see \eg \cite{Kna96}, Proposition 7.83). Since $L_{X_0}$ stabilizes $V^\subg_0$, this generalizes to the ``affine Langlands decomposition''
\begin{equation}
P_{X_0}^+ \ltimes V^\subge_0 = (L_{X_0} \ltimes V^\sube_0)(N_{X_0}^+ \ltimes V^\subg_0).
\end{equation}
Thus we may write $\phi' = l \circ n$ with $l \in L_{X_0} \ltimes V^\sube_0$ and $n \in N_{X_0}^+ \ltimes V^\subg_0$.

We shall use the following fact: every element $n$ of the group $N_{X_0}^+ \ltimes V^\subg_0$ stabilizes the space $V^\subg_0$ and induces the identity map on the quotient space $A^\subge_0 / V^\subg_0$. Indeed, when the element~$n$ lies in~$N_{X_0}^+$, since $N_{X_0}^+$~is connected, this follows from the fact that $\mathfrak{n}_{X_0}^+ \cdot V^\subge_0 \subset V^\subg_0$ (which, in turn, follows from the obvious fact that if $\lambda(X_0) \geq 0$ and~$\alpha(X_0) > 0$, then~$(\lambda + \alpha)(X_0) > 0$). When $n$ is a pure translation by a vector of $V^\subg_0$, this is obvious.

By definition, $\psi$ also stabilizes $V^\subg_0$ and induces the identity on $A^\subge_0 / V^\subg_0$; hence so does the map $n \circ \psi$. But we also know that $n \circ \psi$ is defined on $A_1 \cap A_2 = A^\sube_0$, and sends it onto
\[n \circ \psi(A_1 \cap A_2) = n(A_1 \cap A'_2) = l^{-1}(A^\sube_0) = A^\sube_0.\]
Hence the map $n \circ \psi$ is the identity on $A^\sube_0$. It follows that $\overline{\psi} = \phi' \circ \psi = l \circ n \circ \psi = l$ (in restriction to $A^\sube_0$); by Lemma~\ref{quasi-translation}, $\overline{\psi}$ is a quasi-translation as required.
\end{proof}

Now let $g$~be a map of type~$X_0$. We already know that it acts on its neutral affine space by quasi-translation; now the canonical identifications we have just introduced allow us to compare the actions of different elements on their respective neutral affine spaces, as if they were both acting on the same space~$A^\sube_0$. However there is a catch: since the identifications are only canonical up to quasi-translation, we lose information about the rotation part; only the translation part along~$V^\transl_0$ remains.

Formally, we make the following definition. Let $\pi_\transl$~denote the projection from~$V^\sube_0$ onto~$V^\transl_0$ parallel to~$V^\rotat_0$.
\begin{definition}
\label{margulis_invariant}
Let $g \in G \ltimes V$ be a map of type~$X_0$. Take any point $x$ in the affine space $A^{\sube}_{g} \cap V_{\Aff}$ and any map $\phi \in G$ such that $\phi(V^{\subge}_{g}, V^{\suble}_{g}) = (V^{\subge}_0, V^{\suble}_0)$. Then the vector
\[M(g) := \pi_\transl(\phi(g(x)-x)) \in V^\transl_0.\]
is called the \emph{Margulis invariant} of $g$.
\end{definition}
This vector does not depend on the choice of~$x$ or~$\phi$: indeed, composing~$\phi$ with a quasi-translation does not change the $V^\transl_0$-component of the image. See Proposition~2.16 in~\cite{Smi14} for a detailed proof of this claim (for $V = \mathfrak{g}$).

\section{Quantitative properties}
\label{sec:quantitative}
In this section, we define and study two important quantitative properties of maps of type~$X_0$:
\begin{itemize}
\item $C$-non-degeneracy, which means that the geometry of the map is not too close to a degenerate case;
\item and contraction strength, which measures the extent to which the map~$g$ is ``much more contracting'' on its contracting space than on its affine nonexpanding space.
\end{itemize}

In Subsection~\ref{sec:metric}, we define these and several other quantitative properties. Several definitions coincide with those from Section~2.6 in~\cite{Smi14} or generalize them.

In the very short Subsection~\ref{sec:affine_to_linear} (which is a straightforward generalization of Section~2.7 from~\cite{Smi14}), we compare these properties for an affine map and its linear part.

In Subsection~\ref{sec:proximal_product}, we define analogous quantitative properties for proximal maps, and relate properties of a product of a (sufficiently contracting and nondegenerate) pair of proximal maps to the properties of the factors. This is almost the same thing as Section~3.1 in~\cite{Smi14}, but with one additional result.

\subsection{Definitions}
\label{sec:metric}

We endow the extended affine space~$A$ with a Euclidean norm (written simply~$\| \cdot \|$) whose restriction to~$V$ coincides with the norm~$B$ defined in Lemma~\ref{K-invariant} and that makes $p_0$ orthogonal to~$V$. Then the subspaces $V^\subg_0$, $V^\subl_0$, $V^\rotat_0$, $V^\transl_0$ and $\mathbb{R} p_0$ are pairwise orthogonal, and the restriction of this norm to $V^\rotat_0$ is invariant by quasi-translations. For any linear map $g$ acting on~$A$, we write $\|g\| := \sup_{x \neq 0} \frac{\|g(x)\|}{\|x\|}$ its operator norm.

Consider a Euclidean space $E$ (for the moment, the reader may suppose that $E = A$; later we will also need the case $E = \ext^p A$ for some integer $p$). We introduce on the projective space $\mathbb{P}(E)$ a metric by setting, for every $\overline{x}, \overline{y} \in \mathbb{P}(E)$,
\begin{equation}
\alpha (\overline{x}, \overline{y}) := \arccos \frac{| \langle x, y \rangle |}{\|x\| \|y\|} \in \textstyle [0, \frac{\pi}{2}],
\end{equation}
where $x$ and $y$ are any vectors representing respectively $\overline{x}$ and $\overline{y}$ (obviously, the value does not depend on the choice of $x$ and $y$). This measures the angle between the lines $\overline{x}$ and $\overline{y}$. For shortness' sake, we will usually simply write $\alpha(x, y)$ with $x$ and $y$ some actual vectors in $E \setminus \{0\}$.

For any vector subspace $F \subset E$ and any radius $\eps > 0$, we shall denote the $\eps$-neighborhood of $F$ in $\mathbb{P}(E)$ by:
\begin{equation}
B_{\mathbb{P}}(F, \eps) := \setsuch{x \in \mathbb{P}(E)}{\alpha(x,\mathbb{P}(F)) < \eps}.
\end{equation}
(You may think of it as a kind of ``conical neighborhood''.)

Consider a metric space $(\mathcal{M}, \delta)$; let $X$ and $Y$ be two subsets of $\mathcal{M}$. We shall denote the ordinary, minimum distance between $X$ and $Y$ by
\begin{equation}
\delta(X, Y) := \inf_{x \in X} \inf_{y \in Y} \delta(x, y),
\end{equation}
as opposed to the Hausdorff distance, which we shall denote by
\begin{equation}
\delta^\mathrm{Haus}(X, Y) := \max\left( \sup_{x \in X} \delta\big(\{x\}, Y\big),\; \sup_{y \in Y} \delta\big(\{y\}, X\big) \right).
\end{equation}

Finally, we introduce the following notation. Let $X$ and $Y$ be two positive quantities, and $p_1, \ldots, p_k$ some parameters. Whenever we write
\[X \lesssim_{p_1, \ldots, p_k} Y,\]
we mean that there is a constant $K$, depending on nothing but $p_1, \ldots, p_k$, such that $X \leq KY$. (If we do not write any subscripts, this means of course that $K$ is an ``absolute'' constant --- or at least, that it does not depend on any ``local'' parameters; we consider the ``global'' parameters such as the choice of $G$ and of the Euclidean norms to be fixed once and for all.) Whenever we write
\[X \asymp_{p_1, \ldots, p_k} Y,\]
we mean that $X \lesssim_{p_1, \ldots, p_k} Y$ and $Y \lesssim_{p_1, \ldots, p_k} X$ at the same time.

\begin{definition}
\label{regular_definition}
Take a pair of affine parabolic spaces $(A_1, A_2)$. An \emph{optimal canonizing map} for this pair is a map $\phi \in G \ltimes V$ satisfying
\[\phi(A_1, A_2) = (A^\subge_0, A^\suble_0)\]
and minimizing the quantity $\max \left( \|\phi\|, \|\phi^{-1}\| \right)$. By Proposition~\ref{pair_transitivity} and a compactness argument, such a map exists if and only if $A_1$ and $A_2$ are transverse.

We define an \emph{optimal canonizing map} for a map $g \in G \ltimes V$ of type~$X_0$ to be an optimal canonizing map for the pair $(A^\subge_g, A^\suble_g)$.

Let $C \geq 1$. We say that a pair of affine parabolic spaces $(A_1, A_2)$ (resp. a map $g$ of type~$X_0$) is \emph{$C$-non-degenerate} if it has an optimal canonizing map $\phi$ such that
\[\left \|\phi \right\| \leq C \quad\textnormal{and}\quad \left \|\phi^{-1} \right\| \leq C.\]

Now take $g_1$, $g_2$ two maps of type~$X_0$ in $G \ltimes V$. We say that the pair $(g_1, g_2)$ is \emph{$C$-non-degenerate} if every one of the four possible pairs $(A^{\subge}_{g_i}, A^{\suble}_{g_j})$ is $C$-non-degenerate.
\end{definition}

The point of this definition is that there are a lot of calculations in which, when we treat a $C$-non-degenerate pair of spaces as if they were perpendicular, we err by no more than a (multiplicative) constant depending on $C$. The following result will often be useful:
\begin{lemma}
\label{bounded_norm_is_bilipschitz}
Let $C \geq 1$. Then any map $\phi \in \GL(E)$ such that $\|\phi^{\pm 1}\| \leq C$ induces a $C^2$-Lipschitz continuous map on $\mathbb{P}(E)$.
\end{lemma}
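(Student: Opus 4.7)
The plan is to reduce the bound to a two-dimensional computation. Given $\bar x, \bar y \in \mathbb{P}(E)$, set $P := \operatorname{span}(x, y)$. Both $\alpha(\bar x, \bar y)$ and $\alpha(\phi\bar x, \phi\bar y)$ are computed using only the inner products restricted to $P$ and to $\phi(P)$, respectively, so the Lipschitz estimate depends only on the behavior of the restriction $T := \phi|_P : P \to \phi(P)$ viewed as a linear isomorphism between two Euclidean 2-planes. For this $T$, I would first bound its singular values $\mu_1 \geq \mu_2 > 0$: the inequality $\|Tz\| \leq \|\phi\|\cdot \|z\| \leq C\|z\|$ for all $z \in P$ gives $\mu_1 \leq C$, while applying $\phi^{-1}$ to vectors of $\phi(P)$ yields $\mu_2 = 1/\|T^{-1}\| \geq 1/\|\phi^{-1}\| \geq 1/C$. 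Hence $\mu_1/\mu_2 \leq C^2$, and it will suffice to establish the following two-dimensional statement: any linear isomorphism $T$ between Euclidean 2-planes with singular values $\mu_1 \geq \mu_2$ induces a $(\mu_1/\mu_2)$-Lipschitz map between the corresponding projective lines.

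To prove the two-dimensional statement, I would apply the singular value decomposition to factor $T$ on either side by orthogonal maps through $\operatorname{Diag}(\mu_1, \mu_2)$. Orthogonal maps are isometries for $\alpha$, so the problem reduces to $T = \operatorname{Diag}(\mu_1, \mu_2)$ acting on $\mathbb{R}^2$. Parametrizing $\mathbb{P}(\mathbb{R}^2)$ by an angle $\theta \in \mathbb{R}/\pi\mathbb{Z}$, for which the quotient metric coincides with $\alpha$, the induced map lifts to the continuous increasing function $\psi(\theta) = \arctan\bigl((\mu_2/\mu_1)\tan\theta\bigr)$ on $\mathbb{R}$, which satisfies $\psi(\theta + \pi) = \psi(\theta) + \pi$. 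A direct computation gives
\[ \psi'(\theta) = \frac{\mu_1 \mu_2}{\mu_1^2 \cos^2\theta + \mu_2^2 \sin^2\theta}, \]
whose supremum $\mu_1/\mu_2$ is attained at $\theta = \pi/2$. Hence $\psi$ is $(\mu_1/\mu_2)$-Lipschitz on $\mathbb{R}$, and since it commutes with translation by $\pi$, the bound descends to the quotient metric on $\mathbb{R}/\pi\mathbb{Z}$.

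Combining the two steps gives $\alpha(\phi\bar x, \phi\bar y) \leq (\mu_1/\mu_2)\, \alpha(\bar x, \bar y) \leq C^2 \alpha(\bar x, \bar y)$, as claimed. There is no real obstacle here: the potentially delicate points are the verification that the Lipschitz constant of the lift $\psi$ descends to the quotient distance (which is automatic from $\pi$-periodicity) and the trigonometric optimization of $\psi'$, both of which are routine.
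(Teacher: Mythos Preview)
Your argument is correct. The reduction to a two-plane, the singular-value bound $\mu_1/\mu_2 \leq C^2$ for the restriction, and the explicit derivative computation on $\mathbb{R}/\pi\mathbb{Z}$ all go through as written; the only omitted triviality is the degenerate case $\bar x = \bar y$, where $P$ is a line and there is nothing to prove.

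The paper itself does not supply an argument for this lemma: it simply quotes it verbatim from Lemma~2.20 of \cite{Smi14}. So there is no ``paper's proof'' to compare against here, and your self-contained computation is a perfectly good substitute for the citation. If anything, your approach is slightly sharper than what is strictly needed: you exhibit the exact Lipschitz constant $\mu_1/\mu_2$ of the induced map on each projective line, whereas the statement only asks for the uniform bound~$C^2$.
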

This is exactly Lemma~2.20 from~\cite{Smi14}.

\begin{remark}
\label{unified_treatment}
The set of transverse pairs of extended affine spaces is characterized by two open conditions: there is of course transversality of the spaces, but also the requirement that each space not be contained in $V$. What we mean here by ``degeneracy'' is failure of one of these two conditions. Thus the property of a pair $(A_1, A_2)$ being $C$-non-degenerate actually encompasses two properties.

First, it implies that the spaces $A_1$ and $A_2$ are transversal in a quantitative way. More precisely, this means that some continuous function that would vanish if the spaces were not transversal is bounded below. An example of such a function is the smallest non identically vanishing of the ``principal angles'' defined in the proof of Lemma~\ref{regular_to_proximal}~(iv).

Second, it implies that both $A_1$ and $A_2$ are ``not too close'' to the space $V$ (in the same sense). In purely affine terms, this means that the affine spaces $A_1 \cap V_{\Aff}$ and $A_2 \cap V_{\Aff}$ contain points that are not too far from the origin.

Both conditions are necessary, and appeared in the previous literature (such as \cite{Mar87} and~\cite{AMS02}). However, they were initially treated separately. The idea of encompassing both in the same concept of ``$C$-non-degeneracy'' seems to have been first introduced in the author's previous paper~\cite{Smi14}.
\end{remark}

\begin{definition}
\label{gaps}
Let $g \in \GL(E)$, let $n = \dim E$, and let $p$ be an integer such that $1 \leq p < n$. Let $\lambda_1, \ldots, \lambda_n$ be the eigenvalues of~$g$ ordered by nondecreasing modulus. Then we define the \emph{$p$-th spectral gap} of~$g$ to be the quotient
\begin{equation}
\kappa_p(g) := \frac{|\lambda_{p+1}|}{|\lambda_{p}|}.
\end{equation}
Note that we chose the convention where the gap is a number \emph{smaller} than or equal to~$1$. 

When $E = A$, we will most often use the $p$-th spectral gap for~$p = \dim A^\subge_0$. In this case we will omit the index:
\begin{equation}
\kappa(g) := \kappa_{\dim A^\subge_0}(g).
\end{equation}

Also, we denote the \emph{spectral radius} of~$g$, \ie the largest modulus of any eigenvalue, by:
\begin{equation}
r(g) := |\lambda_1|.
\end{equation}
(The usual notation, $\rho(g)$, is already taken to mean ``$g$~in~the representation~$\rho$''.)
\end{definition}

\begin{definition}
\label{s_definition}
Let $s > 0$. For a map $g \in G \ltimes V$ of type~$X_0$, we say that $g$ is \emph{$s$-contracting} if we have:
\begin{equation}
\forall (x, y) \in V^{\subl}_{g} \times A^{\subge}_{g}, \quad
  \frac{\|g(x)\|}{\|x\|} \leq s\frac{\|g(y)\|}{\|y\|}.
\end{equation}
(Note that by Corollary~\ref{dynamical_spaces_description} the spaces $V^{\subl}_{g}$ and $A^{\subge}_{g}$ always have the same dimensions as $V^\subl_0$ and $A^\subge_0$ respectively, hence they are nonzero.)

We define the \emph{strength of contraction} of $g$ to be the smallest number $s(g)$ such that $g$ is $s(g)$-contracting. In other words, we have
\begin{equation}
s(g) =
\left\| \restr{g}     {V^{\subl}_{g}}  \right\|
\left\| \restr{g^{-1}}{A^{\subge}_{g}} \right\|.
\end{equation}
\end{definition}

\begin{remark}
\label{blabla_contraction_strength}
This strength of contraction~$s(g)$ is defined as a kind of ``mixed gap'': it measures the gap between \emph{singular} values of the restrictions of~$g$ to some sums of its \emph{eigen}spaces. It turns out that this definition is the most convenient for our purposes.

However, if the map~$g$ from the above definition is $C$-non-degenerate, then we may pretend that $s(g)$~is a ``purely singular'' gap, as long as we do not care about multiplicative constants. Indeed, let $g' = \phi g \phi^{-1}$, where $\phi$~is an optimal canonizing map for~$g$; then it is easy to see that we have
\begin{equation}
s(g) \asymp_C s(g').
\end{equation}
On the other hand, since $V^\subl_{g'} = V^\subl_0$ and $A^\subge_{g'} = A^\subge_0$ are orthogonal (by convention), every singular value of~$g'$ is either a singular value of~$\restr{g'}{V^\subl_0}$ or of~$\restr{g'}{A^\subge_0}$. It follows that~$s(g')$ is the quotient between two actual singular values of~$g'$, and two \emph{consecutive} singular values if~$s(g)$ is small enough. See the proof of Lemma~\ref{regular_definition}~(iii) for a more detailed discussion.
\end{remark}

\begin{remark}
\label{contraction_strength_grows}
The spectral gap and contraction strength are somewhat related. Take some affine map~$g \in G \ltimes V$ of type~$X_0$; then since the norm of any linear map is at least equal to its spectral radius, we obviously have
\begin{equation}
s(g) \geq \kappa(g).
\end{equation}
On the other hand, for any map $g \in G \ltimes V$, we have
\begin{equation}
\log s(g^N) = N \log \kappa(g) + \underset{N \to \infty}{\bigo}(\log N).
\end{equation}
If $g$~is of type~$X_0$, then $\kappa(g) < 1$, so that
\begin{equation}
s(g^N) \underset{N \to \infty}{\to} 0.
\end{equation}
\end{remark}

\subsection{Affine and linear case}
\label{sec:affine_to_linear}

For any map $f \in G \ltimes V$, we denote by $\ell(f)$ the linear part of $f$, seen as an element of~$G \ltimes V$ by identifying $G$ with the stabilizer of the ``origin'' $p_0$. In other words, for every vector~$(x, t) \in V \oplus \mathbb{R} p_0 = A$, we set
\begin{equation}
\ell(f)(x, t) = f(x, 0) + (0, t).
\end{equation}
(Seeing $G$ as a subgroup of $G \ltimes V$ allows us to avoid introducing new definitions of $C$-non-degeneracy and contraction strength for elements of $G$.)
\begin{lemma}
\label{affine_to_vector}
Let $C \geq 1$, and take any $C$-non-degenerate map $g$ (or $C$-non-degenerate pair of maps $(g, h)$) of type~$X_0$ in $G \ltimes V$. Then:
\begin{hypothenum}
\item The map $\ell(g)$ (resp. the pair $(\ell(g), \ell(h))$) is still $C$-non-degenerate;
\item We have $s(\ell(g)) \leq s(g)$;
\item Suppose that $s(g^{-1}) \leq 1$. Then we actually have $s(g) \asymp_C s(\ell(g)) \left\| \restr{g}{A^\sube_g} \right\|$.
\end{hypothenum}
\end{lemma}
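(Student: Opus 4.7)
For part (i), the natural candidate for an optimal canonizing map of $\ell(g)$ is $\ell(\phi)$, where $\phi$ is an optimal canonizing map for $g$. Since $\ell\colon G\ltimes V\to G$ is a group homomorphism and $\ell(\phi)\in G$ fixes $\mathbb{R}_0$ pointwise, Corollary~\ref{dynamical_spaces_description} gives $\ell(\phi)(V^\subge_g)=V^\subge_0$, hence $\ell(\phi)(A^\subge_{\ell(g)})=\ell(\phi)(V^\subge_g\oplus\mathbb{R}_0)=A^\subge_0$; the same argument handles $A^\suble_{\ell(g)}$. The bound $\|\ell(\phi)^{\pm1}\|\le\|\phi^{\pm1}\|$ is immediate from the block form $\phi=\bigl(\begin{smallmatrix}\ell(\phi)&v_\phi\\0&1\end{smallmatrix}\bigr)$ in the orthogonal decomposition $A=V\oplus\mathbb{R}_0$: dropping the translation block can only decrease the operator norm. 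The pair case follows by applying the argument to each of the four pairings $(A^\subge_{g_i},A^\suble_{g_j})$.

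For part (ii), the first factor matches on the nose: $g|_V=\ell(g)|_V$ and $V^\subl_{\ell(g)}=V^\subl_g$ give $\|g|_{V^\subl_g}\|=\|\ell(g)|_{V^\subl_{\ell(g)}}\|$. For the second factor, the orthogonal decomposition $A^\subge_{\ell(g)}=V^\subge_g\oplus\mathbb{R}_0$ with both summands $\ell(g)^{-1}$-invariant ($\ell(g)^{-1}$ acting trivially on $\mathbb{R}_0$) yields $\|\ell(g)^{-1}|_{A^\subge_{\ell(g)}}\|=\max(\|\ell(g)^{-1}|_{V^\subge_g}\|,1)$. The first term is $\le\|g^{-1}|_{A^\subge_g}\|$ since $g^{-1}$ and $\ell(g)^{-1}$ agree on $V^\subge_g\subset V$. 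For the constant~$1$, pick a generator $q$ of the orthogonal complement of $V^\subge_g$ inside $A^\subge_g$ with $\mathbb{R}_0$-component equal to~$1$; since $g$ preserves $\mathbb{R}_0$-components, one has $g(q)-q\in V^\subge_g$, so $g^{-1}(q)=q-\ell(g)^{-1}(g(q)-q)$ decomposes orthogonally and $\|g^{-1}(q)\|\ge\|q\|$.

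For part (iii), I would reduce to a canonized map: let $\phi$ be an optimal canonizing map for $g$, and set $g'=\phi g\phi^{-1}$. Since $\|\phi^{\pm1}\|\le C$, conjugation by $\phi$ distorts every relevant operator norm by at most a factor of $C^2$, so it suffices to prove $s(g')\asymp_C s(\ell(g'))\,\|g'|_{A^\sube_0}\|$ under the transferred hypothesis $s((g')^{-1})\lesssim_C1$. Working in the orthogonal decomposition $A=V^\subg_0\oplus V^\sube_0\oplus V^\subl_0\oplus\mathbb{R}_0$, the linear part of $g'$ is block-diagonal $\operatorname{diag}(a,b,c)$, and the invariance of $A^\subge_0$ and $A^\suble_0$ forces the translation vector to lie in $V^\subge_0\cap V^\suble_0=V^\sube_0$; call it $v'$. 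By Proposition~\ref{V=_translation}, $g'|_{A^\sube_0}$ is a quasi-translation, so $b$ is $B$-orthogonal on $V^\sube_0$ and $\|b\|=\|b^{-1}\|=1$. A routine block computation then gives
\[
s(g')=\|c\|\cdot\max\bigl(\|a^{-1}\|,\|(g')^{-1}|_{A^\sube_0}\|\bigr),\qquad
s(\ell(g'))=\|c\|\cdot\max\bigl(\|a^{-1}\|,1\bigr),
\]
together with $\|(g')^{-1}|_{A^\sube_0}\|\asymp\|g'|_{A^\sube_0}\|\asymp\max(1,\|v'\|)\ge1$, the last equivalence being the standard norm estimate for an affine isometry viewed as a linear map on the extended space.

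Finally, $s((g')^{-1})\ge\|(g')^{-1}|_{V^\subg_0}\|=\|a^{-1}\|$, and the hypothesis $s(g^{-1})\le1$ transfers to $s((g')^{-1})\lesssim_C 1$, yielding $\|a^{-1}\|\lesssim_C1$. This collapses both maxima above to their second argument, giving $s(g')\asymp_C \|c\|\cdot\|g'|_{A^\sube_0}\|$ and $s(\ell(g'))\asymp_C\|c\|$; dividing and transferring back to $g$ via conjugation by $\phi$ finishes the proof. The main obstacle is (iii), specifically the bookkeeping: one must verify $v'\in V^\sube_0$ from the double invariance, extract from the quasi-translation property that $\|b\|=\|b^{-1}\|=1$ (without this, a rogue $\|b^{-1}\|$ factor would enter $s(\ell(g'))$ and spoil the asymptotic), and use the hypothesis on $g^{-1}$ precisely to force the maxima to collapse.
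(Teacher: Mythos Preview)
Your proof is correct and follows what is essentially the only natural route: pass to linear parts for (i) and (ii) via the block form, and for (iii) conjugate to the canonical model where $\ell(g')$ becomes block-diagonal on $V^\subg_0 \oplus V^\sube_0 \oplus V^\subl_0$, then read off the norms. The paper defers to Lemma~2.25 of \cite{Smi14}, whose argument is the same as yours \emph{mutatis mutandis}; your identification of the key steps (that $v' \in V^\sube_0$ from the double invariance of $A^\subge_0$ and $A^\suble_0$, that $b$ is $B$-orthogonal via the quasi-translation Proposition~\ref{V=_translation}, and that the hypothesis $s(g^{-1}) \leq 1$ is used exactly to bound $\|a^{-1}\|$) is accurate.
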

\begin{proof} The proof is exactly the same as the proof of Lemma~2.25 in~\cite{Smi14}, \emph{mutatis mutandis}.
\end{proof}

\subsection{Proximal maps}
\label{sec:proximal_product}
Let $E$ be a Euclidean space. The goal of this section is to show Proposition~\ref{proximal_product}. We begin with a few definitions.

\begin{definition}
\label{proximal_definition}
Let $\gamma \in \GL(E)$; let $\lambda_1, \ldots, \lambda_n$ be its eigenvalues repeated according to multiplicity and ordered by nonincreasing modulus. We define the \emph{proximal spectral gap} of~$\gamma$ as its first spectral gap:
\[\tilde{\kappa}(\gamma) := \kappa_1(\gamma) = \frac{|\lambda_2|}{|\lambda_1|}.\]
We say that $\gamma$ is \emph{proximal} if $\tilde{\kappa}(\gamma) < 1$. We may then decompose $E$ into a direct sum of a line $E^s_\gamma$, called its \emph{attracting space}, and a hyperplane $E^u_\gamma$, called its \emph{repelling space}, both stable by $\gamma$ and such that:
\[\begin{cases}
\restr{\gamma}{E^s_\gamma} = \lambda_1 \Id; \\
\text{for every eigenvalue } \lambda \text{ of } \restr{\gamma}{E^u_\gamma},\; |\lambda| < |\lambda_1|.
\end{cases}\]
\end{definition}

\begin{definition}
Consider a line $E^s$ and a hyperplane $E^u$ of $E$, transverse to each other. An \emph{optimal canonizing map} for the pair $(E^s, E^u)$ is a map $\phi \in GL(E)$ satisfying
\[\phi(E^s) \perp \phi(E^u)\]
and minimizing the quantity $\max \left( \|\phi\|, \|\phi^{-1}\| \right)$.

We define an \emph{optimal canonizing map} for a proximal map $\gamma \in \GL(E)$ to be an optimal canonizing map for the pair $(E^s_\gamma, E^u_\gamma)$.

Let $C \geq 1$. We say that the pair formed by a line and a hyperplane $(E^s, E^u)$ (resp. that a proximal map $\gamma$) is \emph{$C$-non-degenerate} if it has an optimal canonizing map $\phi$ such that $\left \|\phi^{\pm 1} \right\| \leq C$.

Now take $\gamma_1, \gamma_2$ two proximal maps in $\GL(E)$. We say that the pair $(\gamma_1, \gamma_2)$ is \emph{$C$-non-degenerate} if every one of the four possible pairs $(E^s_{\gamma_i}, E^u_{\gamma_j})$ is $C$-non-degenerate.
\end{definition}

\begin{definition}
\label{s_tilde_definition}
Let $\gamma \in \GL(E)$ be a proximal map. We define the \emph{proximal strength of contraction} of $\gamma$ by
\[\tilde{s}(\gamma)
:= \frac{\left\| \restr{\gamma}{E^u_\gamma} \right\|}{\left\| \restr{\gamma}{E^s_\gamma} \right\|}
= \frac{\left\| \restr{\gamma}{E^u_\gamma} \right\|}{r(\gamma)}\]
(where $r(\gamma)$ is the spectral radius of~$\gamma$, equal to~$|\lambda_1|$ in the notations of the previous definition). We say that $\gamma$ is \emph{$\tilde{s}$-contracting} if $\tilde{s}(\gamma) \leq \tilde{s}$.
\end{definition}

Be careful that the meaning of some of these terms changes depending on the context: they mean different things for maps of type~$X_0$ (see Definitions~\ref{gaps} and~\ref{s_definition} above) and for proximal maps. We tried to at least keep the notations unambiguous: compare the definitions of $\tilde{s}$ and~$\tilde{\kappa}$ with those of $s$ and~$\kappa$.

In the following proposition, the notation~$\tilde{s}_{\ref{proximal_product}}(C)$ might puzzle the reader. This constant is in fact indexed by the number of the proposition where it appears, a convention that we will follow throughout the paper.

\begin{proposition}
\label{proximal_product}
For every $C \geq 1$, there is a positive constant $\tilde{s}_{\ref{proximal_product}}(C)$ with the following property. Take a $C$-non-degenerate pair of proximal maps $\gamma_1, \gamma_2$ in~$\GL(E)$, and suppose that both $\gamma_1$ and $\gamma_2$ are $\tilde{s}_{\ref{proximal_product}}(C)$-contracting. Then $\gamma_1 \gamma_2$~is proximal, and we have:
\begin{hypothenum}
\item $\alpha \left(E^s_{\gamma_1 \gamma_2},\; E^s_{\gamma_1} \right) \lesssim_C \tilde{s}(\gamma_1)$;
\item $\tilde{s}(\gamma_1 \gamma_2) \lesssim_C \tilde{s}(\gamma_1)\tilde{s}(\gamma_2)$.
\item $r(\gamma_1 \gamma_2) \asymp_C \|\gamma_1\| \|\gamma_2\|$.
\end{hypothenum}
\end{proposition}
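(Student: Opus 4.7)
The plan is to follow the strategy pioneered in \cite{Ben96}, adapted to the framework above. First I would \emph{canonize}: by replacing each $\gamma_i$ with $\phi_i\gamma_i\phi_i^{-1}$ where $\phi_i$ is an optimal canonizing map, and observing that conjugation by an operator of norm $\leq C$ perturbs every quantity in the statement by at most a factor depending on $C$, I may assume that the lines $E^s_{\gamma_i}$ are $B$-orthogonal to the hyperplanes $E^u_{\gamma_i}$ and that the four ``cross-angles'' $\alpha(E^s_{\gamma_i}, E^u_{\gamma_j})$ are bounded below by a positive constant depending only on~$C$. In this setting each $\gamma_i$ decomposes as $\gamma_i = r(\gamma_i)\pi^s_i + \gamma_i^u$, where $\pi^s_i$ is the orthogonal projection onto $E^s_{\gamma_i}$ and $\|\gamma_i^u\|\lesssim_C \tilde s(\gamma_i) r(\gamma_i)$; in particular $\|\gamma_i\|\asymp_C r(\gamma_i)$.

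The heart of the proof is the following two-step geometric estimate, valid provided $\tilde s_{\ref{proximal_product}}(C)$ is taken small enough relative to the above lower bound on cross-angles. For any nonzero $v\in E$ with $\alpha(v, E^u_{\gamma_2})\gtrsim_C 1$, I would first show that $\gamma_2(v)$ has norm $\asymp_C r(\gamma_2)\|v\|$ and lies within angle $\lesssim_C \tilde s(\gamma_2)$ of $E^s_{\gamma_2}$. Since $\alpha(E^s_{\gamma_2},E^u_{\gamma_1})\gtrsim_C 1$ by non-degeneracy, for $\tilde s$ small enough the vector $\gamma_2(v)$ still satisfies $\alpha(\gamma_2(v), E^u_{\gamma_1})\gtrsim_C 1$. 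Applying $\gamma_1$ then yields $\gamma_1\gamma_2(v)$ of norm $\asymp_C r(\gamma_1)r(\gamma_2)\|v\|$ and within angle $\lesssim_C \tilde s(\gamma_1)$ of $E^s_{\gamma_1}$. The key point is that the final angle bound depends only on $\tilde s(\gamma_1)$, not on $\tilde s(\gamma_2)$: this is because transversality of $E^s_{\gamma_2}$ to $E^u_{\gamma_1}$ ensures that the $E^s_{\gamma_1}$-component of $\gamma_2(v)$ is already of full size $\asymp_C r(\gamma_2)\|v\|$ before $\gamma_1$ is applied.

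This estimate implies that $\gamma_1\gamma_2$ sends some cone $B_{\mathbb{P}}(E^s_{\gamma_1},\eps)$ strictly into itself for $\eps$ of size comparable to $\tilde s(\gamma_1)$. A standard contraction argument on $\mathbb{P}(E)$---invoking Lemma~\ref{bounded_norm_is_bilipschitz} and the fact that $\gamma_1\gamma_2$ acts on this cone with Lipschitz factor $\lesssim_C \tilde s(\gamma_1)+\tilde s(\gamma_2)$---produces a unique fixed line in the cone, which must be $E^s_{\gamma_1\gamma_2}$; this simultaneously establishes proximality and statement~(i). For~(iii), applying the two-step estimate to any $v\in E^s_{\gamma_1\gamma_2}$ (which, being close to $E^s_{\gamma_1}$, is automatically transverse to $E^u_{\gamma_2}$) gives $r(\gamma_1\gamma_2)\|v\| = \|\gamma_1\gamma_2(v)\|\asymp_C r(\gamma_1)r(\gamma_2)\asymp_C \|\gamma_1\|\|\gamma_2\|$.

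Statement~(ii) I would obtain via the second exterior power. In the canonized setting the ratio of the two largest singular values of $\gamma_i$ is $\asymp_C \tilde s(\gamma_i)$, so $\|\ext^2\gamma_i\|\asymp_C \tilde s(\gamma_i)\|\gamma_i\|^2$. Submultiplicativity combined with~(iii) then yields
\[\frac{\|\ext^2(\gamma_1\gamma_2)\|}{\|\gamma_1\gamma_2\|^2}\;\lesssim_C\; \frac{\|\ext^2\gamma_1\|\,\|\ext^2\gamma_2\|}{r(\gamma_1\gamma_2)^2}\;\lesssim_C\; \tilde s(\gamma_1)\tilde s(\gamma_2),\]
and (i) together with (iii) shows that $\gamma_1\gamma_2$ is itself $O_C(1)$-non-degenerate as a proximal map, so that the left-hand side coincides with $\tilde s(\gamma_1\gamma_2)$ up to a constant depending on~$C$. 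The main obstacle throughout is the careful angle tracking in the two-step estimate: one must verify that the angular perturbation produced by $\gamma_2^u$ is genuinely \emph{damped} by the subsequent action of $\gamma_1$ (rather than merely additive), and that the intermediate image $\gamma_2(v)$ remains uniformly transverse to $E^u_{\gamma_1}$. This is precisely the role of the smallness threshold $\tilde s_{\ref{proximal_product}}(C)$ and of the full fourfold non-degeneracy hypothesis on the pair $(\gamma_1,\gamma_2)$.
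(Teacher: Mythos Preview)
Your overall strategy is sound and, for parts~(i) and~(iii), essentially matches the paper: the paper defers (i)--(ii) to \cite{Smi14} and proves~(iii) by exactly your two-step norm estimate applied to a vector in~$E^s_{\gamma_1\gamma_2}$ (using~(i) and the triangle inequality to ensure that vector stays away from~$E^u_{\gamma_2}$, then observing $\gamma_2(E^s_{\gamma_1\gamma_2}) = E^s_{\gamma_2\gamma_1}$ to run the argument again with roles swapped). Your exterior-power route to~(ii) is a genuinely different and rather elegant alternative to the direct estimate in~\cite{Smi14}.

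There is, however, one real gap. You claim that ``(i) together with~(iii) shows that $\gamma_1\gamma_2$ is itself $O_C(1)$-non-degenerate,'' and you need this to convert $\|\ext^2(\gamma_1\gamma_2)\|/r(\gamma_1\gamma_2)^2$ into a bound on $\tilde s(\gamma_1\gamma_2)$. But~(i) only locates $E^s_{\gamma_1\gamma_2}$; it says nothing about~$E^u_{\gamma_1\gamma_2}$, and~(iii) is a scalar estimate with no geometric content. Without control on the angle $\alpha(E^s_{\gamma_1\gamma_2},E^u_{\gamma_1\gamma_2})$, the inequality
\[
\tilde s(\gamma_1\gamma_2)\;\leq\;\frac{1}{\sin\alpha(E^s_{\gamma_1\gamma_2},E^u_{\gamma_1\gamma_2})}\cdot\frac{\|\ext^2(\gamma_1\gamma_2)\|}{r(\gamma_1\gamma_2)^2}
\]
is useless. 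The standard fix is to run your whole argument for the transposes: since $E^s_{\gamma^T}=(E^u_\gamma)^\perp$ and $E^u_{\gamma^T}=(E^s_\gamma)^\perp$, the pair $(\gamma_2^T,\gamma_1^T)$ inherits $C$-non-degeneracy and the same contraction bounds, and~(i) applied to $(\gamma_1\gamma_2)^T=\gamma_2^T\gamma_1^T$ places $E^u_{\gamma_1\gamma_2}$ within $O_C(\tilde s(\gamma_2))$ of~$E^u_{\gamma_2}$. Combined with your~(i) and the hypothesis $\alpha(E^s_{\gamma_1},E^u_{\gamma_2})\gtrsim_C 1$, this gives the missing non-degeneracy.

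A smaller point: your opening ``canonize each $\gamma_i$ by its own $\phi_i$'' is not a single conjugation of the product, so as written it does not reduce to the orthogonal case. Fortunately your decomposition $\gamma_i = r(\gamma_i)\pi^s_i + \gamma_i^u$ with $\|\gamma_i^u\|\lesssim_C \tilde s(\gamma_i)r(\gamma_i)$ already holds without canonizing, since the oblique projections $\pi^s_i,\pi^u_i$ have norm $\lesssim_C 1$ by the non-degeneracy hypothesis; so you can simply drop that paragraph.
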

Note that since we have $r(\gamma_1 \gamma_2) \leq \|\gamma_1 \gamma_2\| \leq \|\gamma_1\| \|\gamma_2\|$, what the last point really says is that \emph{all three} values have the same order of magnitude.

Similar results have appeared in the literature for a long time, \eg Lemma~5.7 in~\cite{AMS02}, Proposition~6.4 in~\cite{Ben96} or Lemma~2.2.2 in~\cite{Ben97}.

\begin{proof}
The first two points have already been proved in the author's previous paper: see Proposition~3.4 in~\cite{Smi14}. To prove~(iii), we start with the following observation. Let~$\eta = \frac{\pi}{2 C^2}$; then by Lemma~\ref{bounded_norm_is_bilipschitz} we have:
\[\alpha(E^s_{\gamma_1}, E^u_{\gamma_2}) \geq \eta.\]
On the other hand, we have already seen in the proof of Proposition~3.4 in~\cite{Smi14} that we have
\[E^s_{\gamma_1 \gamma_2} \in B(E^s_{\gamma_1}, \textstyle \frac{\eta}{3}).\]
The triangular inequality immediately gives us
\begin{equation}
\label{eq:gamma12gamma2separation}
\alpha \left( E^s_{\gamma_1 \gamma_2}, E^u_{\gamma_2} \right) \geq \frac{2 \eta}{3}.
\end{equation}
Take any nonzero $x \in E^s_{\gamma_1 \gamma_2}$. We are going to show the estimates
\begin{subequations}
\label{eq:norm_estimate}
  \begin{equation}
  \label{eq:norm_estimate_1}
    \frac{\|\gamma_2(x)\|}{\|x\|} \asymp_C \|\gamma_2\|;
  \end{equation}
  \begin{equation}
  \label{eq:norm_estimate_2}
    \frac{\|\gamma_1(\gamma_2(x))\|}{\|\gamma_2(x)\|} \asymp_C \|\gamma_1\|.
  \end{equation}
\end{subequations}
Since by definition, we have $\gamma_1(\gamma_2(x)) = \lambda x$ for some $\lambda \in \mathbb{R}$ having absolute value~$r(\gamma_1 \gamma_2)$, the estimate~(iii) follows by multiplying \eqref{eq:norm_estimate_1} and \eqref{eq:norm_estimate_2} together.

Let us first show~\eqref{eq:norm_estimate_1}. Let~$\phi$ be an optimal canonizing map for~$\gamma_2$; since~$\gamma_2$ is $C$-non-degenerate, we lose no generality by replacing $\gamma_2$ and~$x$ respectively by~$\gamma'_2 := \phi \gamma_2 \phi^{-1}$ and~$x' := \phi(x)$. Obviously we have:
\begin{equation}
\label{eq:gamma2x_majoration}
\|\gamma'_2(x')\| \leq \|\gamma'_2\|\|x'\|.
\end{equation}
To show the other inequality, let us decompose
\begin{equation}
x' =: \underbrace{x'_s}_{\in E^s_{\gamma'_2}}
   +  \underbrace{x'_u}_{\in E^u_{\gamma'_2}}.
\end{equation}
Then we have
\begin{equation}
\label{eq:gamma2x}
\|\gamma'_2(x')\| \geq \|\gamma'_2(x'_s)\| - \|\gamma'_2(x'_u)\|.
\end{equation}
For the first term, we have:
\begin{align}
\|\gamma'_2(x'_s)\| &= r(\gamma'_2) \cdot \|x'_s\| \nonumber \\
                    &= \|\gamma'_2\| \cdot
                       \sin \alpha \left( \phi(E^s_{\gamma_1 \gamma_2}), E^u_{\gamma'_2} \right) \cdot
                       \|x'\| \nonumber \\
                    &\geq \|\gamma'_2\| \cdot
                          \sin \frac{\alpha \left( E^s_{\gamma_1 \gamma_2}, E^u_{\gamma_2} \right)}{C^2} \cdot
                          \|x'\| & \text{ by Lemma~\ref{bounded_norm_is_bilipschitz}} \nonumber \\
                    &\geq \|\gamma'_2\| \cdot
                          \sin \frac{1}{C^2}\frac{2 \eta}{3} \cdot
                          \|x'\| & \text{ by~\eqref{eq:gamma12gamma2separation}.}
\end{align}
For the second term, we have:
\begin{align}
\|\gamma'_2(x'_u)\| &\leq \left\| \restr{\gamma'_2}{E^u_{\gamma'_2}} \right\| \|x'_u\| \nonumber \\
                    &\leq \left\| \restr{\gamma'_2}{E^u_{\gamma'_2}} \right\| \|x'\| \nonumber \\
                    &= \|\gamma'_2\| \; \tilde{s}(\gamma'_2) \; \|x'\| \nonumber \\
                    &\leq \|\gamma'_2\| \; C^2\tilde{s}(\gamma_2) \; \|x'\|.
\end{align}
Plugging those two estimates into~\eqref{eq:gamma2x}, we obtain
\begin{equation}
\|\gamma'_2(x')\| \geq \|\gamma'_2\| \left( \sin \frac{2 \eta}{3 C^2} - C^2\tilde{s}(\gamma_2) \right) \|x'\|.
\end{equation}
We may assume that $\tilde{s}(\gamma_2) \leq \frac{1}{2}\frac{1}{C^2} \sin \frac{2 \eta}{3 C^2}$. Since by construction $\eta$~depends only on~$C$, we conclude that
\begin{equation}
\label{eq:gamma2x_minoration}
\|\gamma'_2(x')\| \gtrsim_C \|\gamma'_2\|\|x'\|.
\end{equation}
Putting together \eqref{eq:gamma2x_majoration} and \eqref{eq:gamma2x_minoration}, we get \eqref{eq:norm_estimate_1} as required.

Now to show~\eqref{eq:norm_estimate_2}, simply notice that
\begin{equation}
\gamma_2(E^s_{\gamma_1 \gamma_2}) = E^s_{\gamma_2 \gamma_1}
\end{equation}
(since~$\gamma_2 \gamma_1$ is the conjugate of~$\gamma_1 \gamma_2$ by~$\gamma_2$), so that $\gamma_2(x) \in E^s_{\gamma_2 \gamma_1}$. Hence we may follow the same reasoning as for~\eqref{eq:norm_estimate_1}, simply exchanging the roles of $\gamma_1$ and~$\gamma_2$.
\end{proof}

\section{Additivity of Jordan projections}
\label{sec:jordan_additivity}

The goal of this section is to prove Proposition~\ref{regular_product_qualitative}, which says that the product of two sufficiently contracting maps of type~$X_0$ and in general position is still of type~$X_0$. As it is a purely linear property, we forget about translation parts and work exclusively in the linear group~$G$ for the duration of this section. We proceed in four stages.

We start with Proposition~\ref{type_X0_to_proximality}, which shows that if an element of~$G$ is of type~$X_0$ and strongly contracting in the default representation~$\rho$, it is proximal and strongly contracting in some of the fundamental representations $\rho_i$ defined in Proposition~\ref{fundamental_real_representation}.

We continue with Proposition~\ref{C-non-deg-in-Vi}, which relates $C$-non-degeneracy in~$V$ and $C'$-non-degeneracy in the spaces~$V_i$.

We then prove Proposition~\ref{jordan_additivity} (and a reformulated version, Corollary~\ref{jordan_additivity_reformulation}), which constrains the Jordan projection of~$gh$ in terms of the Cartan projections of~$g$ and~$h$.

Finally, we use Corollary~\ref{jordan_additivity_reformulation} to prove Proposition~\ref{regular_product_qualitative}.

\begin{proposition}
\label{type_X0_to_proximality}
For every~$C \geq 1$, there is a positive constant $s_{\ref{type_X0_to_proximality}}(C)$ with the following property.
Let $g \in G$ be a $C$-non-degenerate map of type~$X_0$ such that $s(g) \leq s_{\ref{type_X0_to_proximality}}(C)$. Then for every~$i \in \Pi \setminus \Pi_{X_0}$, the map~$\rho_i(g)$ is proximal and we have
\[\tilde{s}(\rho_i(g)) \lesssim_C s(g).\]
\end{proposition}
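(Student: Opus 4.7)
The plan is to reduce the assertion to an element of the Levi subgroup $L_{X_0}$ via a bounded conjugation, identify the attracting line and repelling hyperplane of $\rho_i(g)$ explicitly as the highest weight line of $\rho_i$ and its weight-space complement, and then read off the quantitative contraction estimate from the singular value structure.

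First, I verify proximality. By Proposition~\ref{eigenvalues_and_singular_values_characterization}(i), the eigenvalue moduli of $\rho_i(g)$ are $\{e^{\lambda(\jordan(g))} : \lambda \in \Omega(\rho_i)\}$ with multiplicities. The convex-hull argument at the end of the proof of Proposition~\ref{change_of_X_0} shows that any $Y \in \mathfrak{a}_{\rho, X_0}$ satisfies $\alpha_i(Y) > 0$ for every $i \in \Pi \setminus \Pi_{X_0}$; applied to $Y = \jordan(g)$ and combined with $\jordan(g) \in \mathfrak{a}^+$ and Lemma~\ref{fund_repr_other_weights}, this yields $\lambda(\jordan(g)) < n_i\varpi_i(\jordan(g))$ strictly for every $\lambda \neq n_i\varpi_i$. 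Since $n_i\varpi_i$ has multiplicity one (Proposition~\ref{fundamental_real_representation}), $\rho_i(g)$ is proximal with $r(\rho_i(g)) = e^{n_i\varpi_i(\jordan(g))}$.

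Next, I pass to the Levi. The $C$-non-degeneracy hypothesis provides an affine canonizing map $\phi \in G \ltimes V$ with $\|\phi^{\pm 1}\| \leq C$; its linear part $\phi_\ell \in G$ satisfies $\|\phi_\ell^{\pm 1}\| \leq C$ and sends $(V^\subge_g, V^\suble_g)$ to $(V^\subge_0, V^\suble_0)$. Setting $g' := \phi_\ell g \phi_\ell^{-1}$, Proposition~\ref{stabg=stabge} gives $g' \in P^+_{X_0} \cap P^-_{X_0} = L_{X_0}$. The decomposition $V_i = V_i^{n_i\varpi_i} \oplus V_i^{<n_i\varpi_i}$, where $V_i^{<n_i\varpi_i} := \bigoplus_{\lambda \neq n_i\varpi_i} V_i^\lambda$, is $L_{X_0}$-invariant: $\mathfrak{l}$ preserves each weight space, $\mathfrak{n}^+$ annihilates the highest weight line and preserves the complement, and for any root $\alpha$ with $\alpha(X_0) = 0$, Lemma~\ref{fund_repr_other_weights} rules out $\alpha + \lambda = n_i\varpi_i$ for $\lambda \neq n_i\varpi_i$ (otherwise $\alpha(X_0) \geq \alpha_i(X_0) > 0$). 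Since this decomposition is also $B_i$-orthogonal by Lemma~\ref{K-invariant}, $\rho_i(g')$ is block-diagonal, and combined with the previous step this identifies $E^s_{\rho_i(g')} = V_i^{n_i\varpi_i}$, $E^u_{\rho_i(g')} = V_i^{<n_i\varpi_i}$, and shows $\rho_i(g)$ is $\bigo_C(1)$-non-degenerate in the proximal sense.

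The main obstacle is the quantitative contraction estimate. From the block-diagonal form and Proposition~\ref{eigenvalues_and_singular_values_characterization}(ii), the singular values of $\restr{\rho_i(g')}{V_i^{<n_i\varpi_i}}$ form a subset of $\{e^{\lambda(\cartan(g'))} : \lambda \in \Omega(\rho_i)\}$ with one copy of $e^{n_i\varpi_i(\jordan(g'))}$ excised. Lemma~\ref{fund_repr_other_weights} together with $\cartan(g') \in \mathfrak{a}^+$ bounds each of the remaining singular values by $e^{(n_i\varpi_i - \alpha_i)(\cartan(g'))}$, yielding
\[
\tilde{s}(\rho_i(g')) \;\leq\; e^{n_i\varpi_i(\cartan(g')) - n_i\varpi_i(\jordan(g'))} \cdot e^{-\alpha_i(\cartan(g'))}.
\]
Concluding $\tilde{s}(\rho_i(g')) \lesssim_C s(g')$ requires two ingredients. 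First, the factor $e^{n_i\varpi_i(\cartan(g')) - n_i\varpi_i(\jordan(g'))}$ must be bounded in $C$: one shows that once $s(g)$ is sufficiently small the elliptic and unipotent parts of $g'$ cannot inflate $\|\rho_i(g')\|$ above $r(\rho_i(g'))$, forcing $e^{n_i\varpi_i(\cartan(g'))} \asymp_C e^{n_i\varpi_i(\jordan(g'))}$; this is where the threshold $s_{\ref{type_X0_to_proximality}}(C)$ is tuned. Second, $e^{-\alpha_i(\cartan(g'))}$ must be dominated by $s(g')$: rewriting $s(g')$ via the $B$-orthogonal decomposition $V = V^\subl_0 \oplus V^\sube_0 \oplus V^\subg_0$ expresses it as a ratio of singular values of $\rho(g')$, and Assumption~\ref{zero_is_a_weight} (so that $0 \in \Omega$ contributes a neutral singular value equal to $1$) combined with the extremal structure of $X_0$ relative to $\Omega$ lets one recognize $e^{-\alpha_i(\cartan(g'))}$ as such a ratio, controlled by $s(g')$.
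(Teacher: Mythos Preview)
Your overall route is the paper's own: conjugate by a canonizing map into $L_{X_0}$, identify the attracting line and repelling hyperplane of $\rho_i(g')$ as the highest-weight line and its $B_i$-orthogonal complement, and then compare singular values. The proximality argument and the identification of $E^s, E^u$ are fine (modulo the stray reference to $\mathfrak{n}^+$, which is not part of $\mathfrak{l}_{X_0}$; you only need root spaces with $\alpha(X_0) = 0$, and you should say a word about passing from $\mathfrak{l}_{X_0}$ to the possibly disconnected $L_{X_0}$ via $M$).

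The gap is in your quantitative step. You assert that the singular values of $\restr{\rho_i(g')}{V_i^{<n_i\varpi_i}}$ are the full list $\{e^{\lambda(\cartan(g'))}\}$ with the value $e^{n_i\varpi_i(\jordan(g'))}$ removed, and then bound the remaining maximum by $e^{(n_i\varpi_i - \alpha_i)(\cartan(g'))}$. But that bound is only valid if the removed value is the \emph{largest} element of the list, i.e.\ if $e^{n_i\varpi_i(\jordan(g'))} = e^{n_i\varpi_i(\cartan(g'))}$---and that is precisely the content of your factor~(a), so the displayed inequality already presupposes what you then set out to justify. Your proposed justification for~(a) (``the elliptic and unipotent parts cannot inflate $\|\rho_i(g')\|$'') is not a proof; in general the Cartan and Jordan projections differ, and nothing you have written so far controls the difference.

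The clean fix, and what the paper does, is Lemma~\ref{consistent_Cartan_decomposition}: since $g' \in L_{X_0}$ and $L_{X_0}$ is reductive with $K \cap L_{X_0}$ maximal compact, the Cartan decomposition $g' = k_1 \exp(\cartan(g')) k_2$ can be taken with $k_1, k_2 \in K \cap L_{X_0}$. Then all three factors preserve the block decomposition of $V_i$, and on the one-dimensional block $V_i^{n_i\varpi_i}$ the orthogonal factors act by $\pm 1$, so the singular value there is \emph{exactly} $e^{n_i\varpi_i(\cartan(g'))}$. Since that block is also $E^s$, this singular value equals $r(\rho_i(g'))$; hence your factor $e^{n_i\varpi_i(\cartan(g') - \jordan(g'))}$ is identically~$1$, not merely bounded, and no threshold-tuning is needed for~(a). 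This gives directly $\tilde{s}(\rho_i(g')) = e^{-\alpha_i(\cartan(g'))}$. Your sketch of~(b) is the right idea but too compressed to stand as a proof: the paper makes it precise via Lemma~\ref{contraction_strength_and_cartan_projection} together with the existence (from extremality of $X_0$) of two consecutive weights $\mu, \mu - \alpha_i \in \Omega$ lying on opposite sides of the hyperplane $\ker(\cdot(X_0))$.
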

\begin{remark}~
\begin{itemize}
\item Note that since all Euclidean norms on a finite-dimensional vector space are equivalent, this estimate makes sense even though we did not specify any norm on~$V_i$. In the course of the proof, we shall choose one that is convenient for us.
\item Recall that ``$i \in \Pi \setminus \Pi_{X_0}$'' is a notation shortcut for ``$i$ such that $\alpha_i \in \Pi \setminus \Pi_{X_0}$''.
\end{itemize}
\end{remark}
\begin{remark}
Note that we have excluded the indices~$i$ that lie in~$\Pi_{X_0}$. The latter should be thought of as a kind of ``exceptional set''; indeed, recall (Remark~\ref{classification_of_sets_Pi_X}) that it is often empty.
\end{remark}

To pave the way for proving the proposition, let us prove a few lemmas that lead to a relation between the contraction strength of an element of~$G$ and its Cartan projection.

\begin{lemma}
\label{de_part_et_d_autre}
For every $i \in \Pi \setminus \Pi_{X_0}$, we may find two restricted weights~$\lambda^\subge_i \in \Omega^\subge_{X_0}$ and~ $\lambda^\subl_i \in \Omega^\subl_{X_0}$ such that
\[\lambda^\subge_i - \lambda^\subl_i = \alpha_i.\]
\end{lemma}
(Recall that $\Omega^\subge_{X_0}$~is the set of restricted weights that take nonnegative values on~$X_0$, and~$\Omega^\subl_{X_0}$ is its complement in~$\Omega$.)
\begin{proof}
Fix some $i \in \Pi \setminus \Pi_{X_0}$. Since $X_0$~is extreme, $s_{\alpha_i}(X_0)$~then does not have the same type as~$X_0$. Since $X_0$~is generic, we may then find a restricted weight~$\lambda$ of~$\rho$ such that
\begin{equation}
\lambda(X_0) > 0 \quad\text{ and }\quad s_{\alpha_i}(\lambda)(X_0) < 0
\end{equation}
(we already made this observation in~\eqref{eq:transgressing_weight}). Since $\lambda$ is a restricted weight, by Proposition~\ref{restr_weight_lattice}, the number
\begin{equation}
n_\lambda := \frac{\langle \lambda, \alpha_i \rangle}{2\langle \alpha_i, \alpha_i \rangle}
\end{equation}
is an integer. We have, on the one hand:
\[n_\lambda \alpha_i (X_0) = \left( \lambda - s_{\alpha_i}(\lambda) \right) (X_0) > 0;\]
on the other hand, $\alpha_i(X_0) \geq 0$ (because $X_0 \in \mathfrak{a}^{+}$); hence $n_\lambda$~is positive.

By Proposition~\ref{convex_hull}, every element of the sequence
\[\lambda,\; \lambda - \alpha_i,\; \ldots,\; \lambda - n_\lambda \alpha_i\]
is a restricted weight of~$\rho$. We may then simply take $\lambda^\subge_i$ to be the last term of this sequence that still belongs to~$\Omega^\subge_{X_0}$, and take $\lambda^\subl_i := \lambda^\subge_i - \alpha_i$ to be the immediately following term of the sequence.
\end{proof}

\begin{lemma}[Cartan decomposition in~$L_{X_0}$]
\label{consistent_Cartan_decomposition}
Let $g \in L_{X_0}$. Then there exist two elements $k_1$ and~$k_2$ in $K \cap L_{X_0}$ and a unique element $\cartan_{X_0}(g) \in \mathfrak{a}^+_{\Pi_{X_0}}$ such that
\[g = k_1 \exp(\cartan_{X_0}(g)) k_2.\]
\end{lemma}
(Recall \eqref{eq:levi_weyl_chamber_definition} that $\mathfrak{a}^+_{\Pi_{X_0}} = \setsuch{X \in \mathfrak{a}}{\forall \alpha \in \Pi_{X_0},\; \alpha(X) \geq 0}$.)

\begin{proof}
By Proposition~7.82~(a) in~\cite{Kna96}, $L_{X_0}$ is the centralizer of the intersection of the kernels of simple roots in~$\Pi_{X_0}$:
\begin{equation}
L_{X_0} = Z_G \Big( \setsuch{X \in \mathfrak{a}}{\forall \alpha \in \Pi_{X_0},\; \alpha(X) = 0} \Big).
\end{equation}
By Proposition~7.25 in~\cite{Kna96}, it follows:
\begin{itemize}
\item that $L_{X_0}$~is reductive;
\item that $K \cap L_{X_0}$ is a maximal compact subgroup in~$L_{X_0}$.
\end{itemize}
Obviously $\mathfrak{a} \subset \mathfrak{l}_{X_0}$ is a Cartan subspace of~$\mathfrak{l}_{X_0}$, and $\mathfrak{a}^+_{\Pi_{X_0}}$ is a Weyl chamber for~$L_{X_0}$. So this result is just the Cartan decomposition in the reductive group~$L_{X_0}$ (see Theorem~7.39 in~\cite{Kna96}).
\end{proof}

\begin{lemma}
\label{contraction_strength_and_cartan_projection}
For every $C \geq 1$, there is a constant~$k_{\ref{contraction_strength_and_cartan_projection}}(C)$ with the following property. Let $g \in G$ be a $C$-non-degenerate map of type~$X_0$ such that $\log s(g) \leq -k_{\ref{contraction_strength_and_cartan_projection}}(C)$. Then we have
\[\min_{\lambda \in \Omega^\subge_{X_0}} \lambda(\cartan(g)) \;-\;
\max_{\lambda \in \Omega^\subl_{X_0}} \lambda(\cartan(g))
  \;\;\geq\;\;
- \log s(g) - k_{\ref{contraction_strength_and_cartan_projection}}(C).\]
\end{lemma}
Note that the first term on the left-hand side is certainly nonpositive, since $0 \in \Omega^\subge_{X_0}$.
\begin{proof}
Let us first focus on the particular case where $g$~satisfies
\[\begin{cases}
V^{\subge}_{g} = V^{\subge}_0; \\
V^{\suble}_{g} = V^{\suble}_0.
\end{cases}\]
In this case, we shall prove that the statement holds with $k_{\ref{contraction_strength_and_cartan_projection}}(C) = 0$. In fact, we shall even prove that in this case, if $\log s(g) \leq 0$, we actually have the equality
\begin{equation}
\label{eq:sg'_majoration}
\min_{\lambda \in \Omega^\subge_{X_0}} \lambda(\cartan(g)) -
\max_{\lambda \in \Omega^\subl_{X_0}} \lambda(\cartan(g))
  \;=\;
- \log s(g).
\end{equation}

By construction, obviously $g$ stabilizes $V^\subge_0$ and~$V^\suble_0$; hence (using Proposition~\ref{stabg=stabge}) it also stabilizes $V^\subl_0$, and we have
\begin{equation}
g \in P_{X_0}^+ \cap P_{X_0}^- = L_{X_0}.
\end{equation}
By Lemma~\ref{consistent_Cartan_decomposition}, we then have
\begin{equation}
\label{eq:LX0_Cartan_decomposition_of_g}
g = k_1 \exp(\cartan_{X_0}(g)) k_2
\end{equation}
with $k_1, k_2 \in K \cap L_{X_0}$. In particular both $k_1$ and~$k_2$ stabilize both $V^\subge_0$ and~$V^\subl_0$. Hence so does the $L_{X_0}$-Cartan projection~$\cartan_{X_0}(g)$, and we have
\begin{equation}
\begin{cases}
\left\| \restr{g}{V^\subge_0} \right\| = \left\| \restr{\exp(\cartan_{X_0}(g))}{V^\subge_0} \right\|; \vspace{2mm} \\
\left\| \restr{(g)^{-1}}{V^\subl_0} \right\| = \left\| \restr{\exp(\cartan_{X_0}(g))^{-1}}{V^\subl_0} \right\|.
\end{cases}
\end{equation}
Now we know that $\exp(\cartan_{X_0}(g))$ (seen in the default representation~$\rho$) is self-adjoint (by choice of the Euclidean structure~$B$), hence its singular values coincide with its eigenvalues. (Moreover $V^\subge_0$ and $V^\subl_0$ are orthogonal.) As $\exp(\cartan_{X_0}(g)) \in A$, obviously it acts on every restricted weight space~$V^\lambda$ with the eigenvalue
\[\exp(\lambda(\cartan_{X_0}(g))).\]
This almost gives us the identity we want, but with $\cartan_{X_0}(g)$ instead of $\cartan(g)$:
\begin{equation}
\label{eq:sg'_general_majoration}
\min_{\lambda \in \Omega^\subge_{X_0}} \lambda(\cartan_{X_0}(g)) -
\max_{\lambda \in \Omega^\subl_{X_0}} \lambda(\cartan_{X_0}(g))
  \;=\;
- \log s(g).
\end{equation}
Note that, in contrast to the identity~\eqref{eq:sg'_majoration} that we are trying to prove, this identity holds for all values of~$s(g)$. To conclude, it remains to show that if we assume $\log s(g) \leq 0$, then we actually have $\cartan_{X_0}(g) = \cartan(g)$.

Indeed if $\log s(g) \leq 0$, then the left-hand side of~\eqref{eq:sg'_general_majoration} must be nonnegative. By Lemma~\ref{de_part_et_d_autre}, it then follows that in particular, we have
\begin{equation}
\forall i \in \Pi \setminus \Pi_{X_0},\quad \lambda^\subge_i(\cartan_{X_0}(g)) \geq \lambda^\subl_i(\cartan_{X_0}(g)),
\end{equation}
hence
\begin{equation}
\forall i \in \Pi \setminus \Pi_{X_0},\quad \alpha_i(\cartan_{X_0}(g)) \geq 0.
\end{equation}
On the other hand we also have
\begin{equation}
\forall i \in \Pi_{X_0},\quad \alpha_i(\cartan_{X_0}(g)) \geq 0,
\end{equation}
since $\cartan_{X_0}(g) \in \mathfrak{a}^+_{\Pi_{X_0}}$ by construction. Joining both systems of inequalities, we obtain that
\[\cartan_{X_0}(g) \in \mathfrak{a}^+.\]
This shows that \eqref{eq:LX0_Cartan_decomposition_of_g} actually gives a Cartan decomposition of~$g$ in the whole group~$G$. By uniqueness of Cartan projection, we conclude that $\cartan_{X_0}(g) = \cartan(g)$ as desired.

Now let us deal with arbitrary~$g$. Let $\phi$ be an optimal canonizing map for~$g$, and let $g' = \phi g \phi^{-1}$. Then it is easy to see that we have
\[s(g') \asymp_C s(g)\]
(we already mentioned this in Remark~\ref{blabla_contraction_strength}), and the difference $\cartan(g') - \cartan(g)$ is bounded by a constant that depends only on~$C$. Taking a suitable value of~$k_{\ref{contraction_strength_and_cartan_projection}}(C)$, the general result for~$g$ then follows from the particular result applied to~$g'$.
\end{proof}

\begin{proof}[Proof of Proposition~\ref{type_X0_to_proximality}]~
Let~$s_{\ref{type_X0_to_proximality}}(C)$ be a positive constant small enough to satisfy all the constraints that will appear in the course of the proof. Let us fix $i \in \Pi \setminus \Pi_{X_0}$, and let $g \in G$ be a map satisfying the hypotheses. Let us prove the two estimates
\begin{equation}
\label{eq:kappa_estimate}
\tilde{\kappa}(\rho_i(g)) = \exp(\alpha_i(\jordan(g)))^{-1} \leq \kappa(g),
\end{equation}
which will show that~$\rho_i(g)$ is proximal; and then the two estimates
\begin{equation}
\label{eq:s_estimate}
\tilde{s}(\rho_i(g)) \asymp_C \exp(\alpha_i(\cartan(g)))^{-1} \lesssim_C s(g),
\end{equation}
whose combination completes the proof.

\begin{itemize}
\item Let us start with the right part of~\eqref{eq:kappa_estimate}. Lemma~\ref{de_part_et_d_autre} gives us two restricted weights $\lambda^\subge_i$ and~$\lambda^\subl_i$ of~$\rho$ such that:
\[\begin{cases}
\lambda^\subge_i (X_0) \geq 0; \\
\lambda^\subl_i (X_0) < 0,
\end{cases}\]
and whose difference is~$\alpha_i$. Now since~$g$ is of type~$X_0$, by definition, any restricted weight of~$\rho$ has the same sign when evaluated at~$\jordan(g)$ or at~$X_0$. Thus we also have
\[\begin{cases}
\lambda^\subge_i (\jordan(g)) \geq 0; \\
\lambda^\subl_i (\jordan(g)) < 0.
\end{cases}\]

From Proposition~\ref{eigenvalues_and_singular_values_characterization}, it then follows that
\[\kappa(g) \geq \exp(\alpha_i(\jordan(g)))^{-1}\]
as desired.

\item Similarly we may establish the right part of~\eqref{eq:s_estimate}. By using once again the restricted weights $\lambda^\subge_i$ and~$\lambda^\subl_i$ given by Lemma~\ref{de_part_et_d_autre}, it follows from Lemma~\ref{contraction_strength_and_cartan_projection} that
\[\alpha_i(\cartan(g)) \geq - \log s(g) - k_{\ref{contraction_strength_and_cartan_projection}}(C),\]
provided we take $s_{\ref{type_X0_to_proximality}}(C) \leq \exp(-k_{\ref{contraction_strength_and_cartan_projection}}(C))$. By negating both sides and exponentiating, the desired estimate follows immediately.

\item Let us now prove the left part of~\eqref{eq:kappa_estimate}. By Proposition~\ref{eigenvalues_and_singular_values_characterization}~(i), the list of the moduli of the eigenvalues of~$\rho_i(g)$ is precisely
\[\left( e^{\lambda_i^j(\jordan(g))} \right)_{1 \leq j \leq d_i},\]
where $d_i$ is the dimension of~$V_i$ and~$(\lambda_i^j)_{1 \leq j \leq d_i}$ is the list of restricted weights of~$\rho_i$ repeated according to their multiplicity.

Up to reordering that list, we may suppose that
\[\lambda_i^1 = n_i \varpi_i\]
is the highest restricted weight of~$\rho_i$. We may also suppose that
\[\lambda_i^2 = n_i \varpi_i - \alpha_i;\]
indeed it is also a restricted weight of~$\rho_i$ by Lemma~\ref{fund_repr_other_weights}~(i). Now take any~$j > 2$. Since by hypothesis, the restricted weight~$n_i \varpi_i$ has multiplicity~$1$, we have $\lambda_i^j \neq \lambda_i^1$. By Lemma~\ref{fund_repr_other_weights}~(ii), it follows that this restricted weight has the form
\[\lambda_i^j = n_i \varpi_i - \alpha_i - \sum_{i' = 1}^r c_{i'} \alpha_{i'},\]
with $c_{i'} \geq 0$ for every index~$i'$.

Finally, since by definition $\jordan(g)$ lies in~$\mathfrak{a}^{+}$, for every index~$i'$ we have ${\alpha_{i'}(\jordan(g)) \geq 0}$. It follows that for every~$j > 2$, we have
\begin{equation}
\label{eq:highest_eigenvalue}
\lambda_i^1(\jordan(g)) \geq \lambda_i^2(\jordan(g)) \geq \lambda_i^j(\jordan(g)).
\end{equation}
In other words, among the moduli of the eigenvalues of~$\rho_i(g)$, the largest is
\[\exp(\lambda_i^1(\jordan(g))) = \exp(n_i \varpi_i(\jordan(g))),\]
and the second largest is
\[\exp(\lambda_i^2(\jordan(g))) = \exp(n_i \varpi_i(\jordan(g)) - \alpha_i(\jordan(g))).\]

It follows that
\[\tilde{\kappa}(\rho_i(g)) = \exp(\alpha_i(\jordan(g)))^{-1}\]
as desired.

\item Let us finish with the left part of~\eqref{eq:s_estimate}. We start with the following observation: for every~$C \geq 1$, the set
\begin{equation}
\setsuch{\phi \in G}{\|\phi\| \leq C,\; \|\phi^{-1}\| \leq C}
\end{equation}
is compact. It follows that the continuous map
\begin{equation}
\phi \mapsto \max \Big( \left\|\rho_i(\phi\vphantom{^{-1}})\right\|,\; \left\|\rho_i(\phi^{-1})\right\| \Big)
\end{equation}
is bounded on that set, by some constant~$C'_i$ that depends only on~$C$ (and on the choice of a norm on~$V_i$, to be made soon). Let~$\phi$ be the optimal canonizing map of~$g$, and let~$g' = \phi g \phi^{-1}$; then we get
\begin{equation}
\label{eq:srhoig_srhoig'}
\tilde{s}(\rho_i(g)) \asymp_C \tilde{s}(\rho_i(g')).
\end{equation}

Now let us choose, on the space~$V_i$ where the representation~$\rho_i$ acts, a $K$-invariant Euclidean form~$B_i$ such that all the restricted weight spaces for~$\rho_i$ are pairwise $B_i$\nobreakdash-\hspace{0pt}orthogonal (this is possible by Lemma~\ref{K-invariant} applied to~$\rho_i$). Then $\tilde{s}(\rho_i(g'))$ is simply the quotient of the two largest singular values of~$\rho_i(g')$. By Proposition~\ref{eigenvalues_and_singular_values_characterization}~(ii) (giving the singular values of an element of~$G$ in a given representation) and by a calculation analogous to the previous point, we have
\begin{equation}
\label{eq:srhoig'_calculation}
\tilde{s}(\rho_i(g')) = \exp(\alpha_i(\cartan(g)))^{-1}.
\end{equation}
The desired estimate follows by combining \eqref{eq:srhoig_srhoig'} with~\eqref{eq:srhoig'_calculation}. \qedhere
\end{itemize}
\end{proof}

\begin{proposition}
\label{C-non-deg-in-Vi}
Let $(g_1, g_2)$ be a $C$-non-degenerate pair of elements of~$G$ of type~$X_0$. Then for every~$i \in \Pi \setminus \Pi_{X_0}$, the pair~$(\rho_i(g_1), \rho_i(g_2))$ is a $C'_i$-non-degenerate pair of proximal maps in~$\GL(V_i)$, where $C'_i$~is some constant that depends only on~$C$ and~$i$.
\end{proposition}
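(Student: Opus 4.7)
The plan is to transport $C$-non-degeneracy from the representation on $V$ to that on $V_i$ by applying $\rho_i$ to the linear part of an optimal canonizing map, the key ingredient being a clean $G$-equivariant description of the attracting line and repelling hyperplane of $\rho_i(g)$ for any $g$ of type $X_0$. I would fix $(a, b) \in \{1, 2\}^2$, let $\phi_{ab} \in G \ltimes V$ be an optimal canonizing map for $(A^{\subge}_{g_a}, A^{\suble}_{g_b})$, so that $\|\phi_{ab}^{\pm 1}\|_A \leq C$, and observe that its linear part $\ell(\phi_{ab}) \in G$ sends $V^{\subge}_{g_a}$ to $V^{\subge}_0$ and $V^{\suble}_{g_b}$ to $V^{\suble}_0$ (by intersecting the relations $\phi_{ab}(A^{\subge/\suble}_\bullet) = A^{\subge/\suble}_0$ with the invariant subspace $V$), and moreover satisfies $\|\ell(\phi_{ab})^{\pm 1}\|_V \leq C$ by direct restriction.

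The heart of the argument is to show that the assignments
\[V^{\subge}_g \longmapsto E^s_{\rho_i(g)}, \qquad V^{\suble}_g \longmapsto E^u_{\rho_i(g)}\]
are well-defined $G$-equivariant maps. I would establish this by proving that $P^+_{X_0}$ stabilizes the highest restricted weight line $V_i^{n_i \varpi_i}$ and, symmetrically, that $P^-_{X_0}$ stabilizes the hyperplane $H_i := \bigoplus_{\lambda \neq n_i \varpi_i} V_i^\lambda$. At the Lie algebra level, this reduces to checking that $\mathfrak{g}^\alpha \cdot V_i^{n_i \varpi_i} = 0$ for every root $\alpha$ appearing in $\mathfrak{p}^+_{X_0}$ outside $\mathfrak{l}$: for $\alpha \in \Sigma^+$ this is automatic from the highest-weight property, while for $\alpha = -\beta$ with $\beta \in \Sigma^+$ and $\beta(X_0) = 0$ one observes that $\alpha_i(X_0) > 0$ (extremality of $X_0$ combined with $i \in \Pi \setminus \Pi_{X_0}$, exactly as in the proof of Proposition~\ref{type_X0_to_proximality}) forces the $\alpha_i$-coefficient of $\beta$ to vanish, and then Lemma~\ref{fund_repr_other_weights} forces $V_i^{n_i \varpi_i - \beta} = 0$. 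A symmetric computation handles $P^-_{X_0}$ acting on $H_i$.

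Once this equivariance is in place, proximality of each $\rho_i(g_k)$ follows from the same eigenvalue computation as in Proposition~\ref{type_X0_to_proximality}, since $n_i \varpi_i$ has multiplicity one and $\alpha_i(\jordan(g_k)) > 0$; no contraction hypothesis on $g_k$ is needed. The map $\Phi_{ab} := \rho_i(\ell(\phi_{ab})) \in \GL(V_i)$ then sends $(E^s_{\rho_i(g_a)}, E^u_{\rho_i(g_b)})$ to $(V_i^{n_i \varpi_i}, H_i)$, which are mutually orthogonal for the $K$-invariant form $B_i$ supplied by Lemma~\ref{K-invariant} applied to $\rho_i$. By continuity of $\rho_i$ together with compactness of $\{\psi \in G : \|\psi^{\pm 1}\|_V \leq C\}$, there is a constant $C'_i$ depending only on $C$ and $i$ such that $\|\Phi_{ab}^{\pm 1}\|_{V_i} \leq C'_i$. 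Hence $\Phi_{ab}$ is a canonizing map of bounded norm; applying this to each of the four choices of $(a, b)$ yields the $C'_i$-non-degeneracy of $(\rho_i(g_1), \rho_i(g_2))$.

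The main obstacle is the equivariance claim; all other steps are continuity, compactness, and routine bookkeeping for linear parts. The delicate point is the negative-root case $\alpha = -\beta$ with $\beta(X_0) = 0$, where the vanishing of $\mathfrak{g}^{-\beta} \cdot V_i^{n_i \varpi_i}$ does not follow from the highest-weight property and instead depends on the fine structure of weights of $\rho_i$ given by Lemma~\ref{fund_repr_other_weights}.
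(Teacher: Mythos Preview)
Your proposal is correct and follows essentially the same route as the paper: the core ingredient---that $P^+_{X_0}$ stabilizes the highest-weight line $V_i^{n_i\varpi_i}$ and $P^-_{X_0}$ stabilizes the complementary hyperplane---is exactly the paper's Lemma~\ref{stabilizer_Es_Eu}, and your equivariance claim is the content of Lemma~\ref{PS_to_PVi_and_PVistar}. The only difference is cosmetic: in the final step the paper bounds the angle $\alpha(\Phi_i^s(V_1),\Phi_i^u(V_2))$ from below by compactness of the set of $C$-non-degenerate pairs of parabolic spaces, whereas you directly exhibit $\rho_i(\ell(\phi_{ab}))$ as a canonizing map and bound its norm by compactness of $\{\psi\in G:\|\psi^{\pm1}\|\leq C\}$ (the same compactness argument the paper already uses inside Proposition~\ref{type_X0_to_proximality}). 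Your packaging is arguably more direct, since it names the canonizing map explicitly rather than going through the flag variety.
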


Before proving this proposition, we need a couple of lemmas. 

\begin{lemma} Let $i \in \Pi \setminus \Pi_{X_0}$.
\label{stabilizer_Es_Eu}
\begin{hypothenum}
\item The restricted weight space~$V^{n_i \varpi_i}_i$ is stable by~$\rho_i(P_{X_0}^+)$.
\item The direct sum of all restricted weight spaces~$V^\lambda_i$ with~$\lambda \neq n_i \varpi_i$ is stable by~$\rho_i(P_{X_0}^-)$.
\end{hypothenum}
\end{lemma}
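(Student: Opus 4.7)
The plan is to follow the Bruhat-decomposition strategy used in the proof of Proposition~\ref{stabg=stabge}. By the parabolic Bruhat decomposition~\eqref{eq:Bruhat_type_X_0} we have $P_{X_0}^\pm = P^\pm W_{X_0} P^\pm$, so it is enough to check stability of each subspace separately under the minimal parabolic $P^\pm$ and under the Weyl subgroup $W_{X_0}$. The Weyl contribution is immediate: by Chevalley's lemma $W_{X_0}$ is generated by the reflections $s_{\alpha_j}$ with $j \in \Pi_{X_0}$, and the defining relation of $\varpi_i$ gives $s_{\alpha_j}(\varpi_i) = \varpi_i$ whenever $j \neq i$, which holds in particular for all $j \in \Pi_{X_0}$ since $i \in \Pi \setminus \Pi_{X_0}$. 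Combined with the formula $w V_i^\lambda = V_i^{w\lambda}$, this shows that $W_{X_0}$ fixes the line $V_i^{n_i\varpi_i}$ and permutes the remaining weight spaces among themselves, so also stabilizes their sum, covering the Weyl part of both~(i) and~(ii).

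For the $P^\pm$ step I work on the Lie algebra, using $\mathfrak{p}^\pm = \mathfrak{l} \oplus \mathfrak{n}^\pm$. The Levi $\mathfrak{l} = \mathfrak{a} \oplus \mathfrak{m}$ commutes with $\mathfrak{a}$ and therefore preserves each restricted weight space individually, which takes care of both subspaces. The nilpotent radical acts via $\mathfrak{g}^\alpha \cdot V_i^\lambda \subset V_i^{\lambda+\alpha}$, so everything reduces to two weight-combinatorial checks. For part~(i), I must show that $n_i\varpi_i + \alpha$ is not a restricted weight of $\rho_i$ for any nonzero $\alpha \in \Sigma^+$; this is immediate from the fact that $n_i\varpi_i$ is the highest restricted weight, and also follows directly from Lemma~\ref{fund_repr_other_weights}, since the coefficient of $\alpha_i$ in $\alpha$ (expanded on simple roots) is $\geq 0$ while that lemma forces any weight other than $n_i\varpi_i$ to have that coefficient $\leq -1$ relative to $n_i\varpi_i$. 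For part~(ii), I must show that $\lambda + \alpha \neq n_i\varpi_i$ whenever $\alpha \in \Sigma^-$ and $\lambda \neq n_i\varpi_i$ is a weight of $\rho_i$; Lemma~\ref{fund_repr_other_weights} gives $\lambda = n_i\varpi_i - \alpha_i - \sum c_j \alpha_j$ with $c_j \geq 0$, so the coefficient of $\alpha_i$ in $n_i\varpi_i - (\lambda + \alpha)$ is at least $1$ (the $-\alpha$ contribution only increases it further), ruling out equality. Exponentiating yields stability under $N^\pm$, and combined with $L$ this gives stability under $P^\pm$.

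The overall structure of the argument mirrors precisely the one already developed for $V^\subge_0$ and $V^\suble_0$ in Proposition~\ref{stabg=stabge}; the only ingredient genuinely specific to the fundamental representations $\rho_i$ is the combinatorial fact, recorded in Lemma~\ref{fund_repr_other_weights}, that the coefficient of $\alpha_i$ ``jumps by at least one'' between the highest weight and all other weights of $\rho_i$. Accordingly I do not expect a serious obstacle: the Bruhat reduction and the Levi/unipotent split are standard, the Weyl-group step uses only properties of fundamental restricted weights already recorded in Section~\ref{sec:restricted_weights}, and the two weight-combinatorial checks are direct applications of Lemma~\ref{fund_repr_other_weights}.
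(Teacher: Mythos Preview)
Your argument is correct and complete, but it takes a genuinely different route from the paper's own proof. The paper does \emph{not} invoke the Bruhat decomposition $P_{X_0}^\pm = P^\pm W_{X_0} P^\pm$; instead it works directly at the Lie-algebra level with $\mathfrak{p}_{X_0}^+ = \mathfrak{l} \oplus \bigoplus_{\beta(X_0) \geq 0} \mathfrak{g}^\beta$. For each root $\beta$ with $\beta(X_0) \geq 0$, it observes that the coefficient $c_{\alpha_i}$ in the expansion $\beta = \sum c_\alpha \alpha$ must be nonnegative (using that $i \in \Pi \setminus \Pi_{X_0}$), and then contrasts this with Lemma~\ref{fund_repr_other_weights} to conclude $V_i^{n_i\varpi_i + \beta} = 0$. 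The passage to the full group is then handled via $P_{X_0}^+ = M P_{X_0,e}^+$.

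Your approach has the virtue of mirroring Proposition~\ref{stabg=stabge} more transparently and of making the Weyl-group symmetry $W_{X_0} \cdot \varpi_i = \varpi_i$ explicit; it also sidesteps having to analyze roots $\beta$ with $\beta(X_0) = 0$ that are not in~$\mathfrak{l}$. The paper's approach is shorter in that it dispenses with the separate $W_{X_0}$ step entirely, handling everything through the root-space decomposition of $\mathfrak{p}_{X_0}^+$. Both arguments ultimately rest on the same combinatorial fact from Lemma~\ref{fund_repr_other_weights}, namely that the $\alpha_i$-coefficient drops by at least one when passing from the highest weight to any other weight of~$\rho_i$.
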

\begin{proof}~
\begin{hypothenum}
\item Let us first prove that this space is stable by~$\mathfrak{p}_{X_0}^+$. By definition, we have:
\[\mathfrak{p}_{X_0}^+ = \mathfrak{l} \oplus \bigoplus_{\beta(X_0) \geq 0} \mathfrak{g}^\beta;\]
Since $\mathfrak{l}$ centralizes~$\mathfrak{a}$, it preserves the restricted weight space decomposition; so clearly $\mathfrak{l}$ stabilizes~$V^{n_i \varpi_i}_i$.

Now let $\beta$~be a root such that~$\beta(X_0) \geq 0$; let us write
\[\beta = \sum_{\alpha \in \Pi} c_\alpha \alpha.\]
By definition of the set~$\Pi_{X_0}$, we then have
\begin{equation}
c_\alpha \geq 0 \quad\text{for } \alpha \in \Pi \setminus \Pi_{X_0}.
\end{equation}
Now we know that
\[\mathfrak{g}^\beta \cdot V^{n_i \varpi_i}_i \subset V^{n_i \varpi_i + \beta}_i.\]
The latter space is actually zero. Indeed, otherwise, $n_i \varpi_i + \beta$ would have to be a restricted root. But from Lemma~\ref{fund_repr_other_weights}, we know that this would imply
\[c_{\alpha_i} \leq -1,\]
which contradicts the inequality above, since~$i$ (or, technically, the root~$\alpha_i$) is in~${\Pi \setminus \Pi_{X_0}}$. It follows that for every~$\beta$ such that~$\beta(X_0) \geq 0$, the space~$V^{n_i \varpi_i}_i$ is stable by~$\mathfrak{g}^\beta$; we conclude that it is stable by~$\mathfrak{p}_{X_0}^+$.

By integration, we deduce that this space is also stable by~$P_{X_0, e}^+$. Now we know (it follows from \cite{Kna96}, Proposition~7.82~(d)) that $P_{X_0}^+ = MP_{X_0, e}^+$. Since~$M$ centralizes~$\mathfrak{a}$, it preserves the restricted weight space decomposition, so it stabilizes~$V^{n_i \varpi_i}_i$. We conclude that~$P_{X_0}^+$ stabilizes~$V^{n_i \varpi_i}_i$.

\item The proof is completely analogous. \qedhere
\end{hypothenum}
\end{proof}

In the following lemma, we denote by~$\mathcal{PS}$ the set of all parabolic spaces of~$V$; we also identify the projective space~$\mathbb{P}(V_i)$ with the set of vector lines in~$V_i$ and the projective space~$\mathbb{P}(V^*_i)$ with the set of vector hyperplanes of~$V_i$.
\begin{remark}
Recall (Remark~\ref{transversality_in_flag_variety}) that by Proposition~\ref{stabg=stabge}, the manifold~$\mathcal{PS}$ is diffeomorphic to~$G/P^+_{X_0}$ (in a $G$-equivariant way).
\end{remark}
\begin{lemma}~
\label{PS_to_PVi_and_PVistar}
\begin{hypothenum}
\item For every $i \in \Pi \setminus \Pi_{X_0}$, there exists a unique pair of continuous maps
\[\Phi_i^s: \mathcal{PS} \to \mathbb{P}(V_i)
\quad\text{ and }\quad
\Phi_i^u: \mathcal{PS} \to \mathbb{P}(V^*_i)\]
such that for every map~$g \in G$ of type~$X_0$, we have
\[\begin{cases}
E^s_{\rho_i(g)} = \Phi_i^s(V^\subge_g); \\
E^u_{\rho_i(g)} = \Phi_i^u(V^\suble_g).
\end{cases}\]
\item Moreover, these maps have the following property: whenever $V_1, V_2 \in \mathcal{PS}$ are transverse, we have $\Phi_i^s(V_1) \not\in \Phi_i^u(V_2).$
\end{hypothenum}
\end{lemma}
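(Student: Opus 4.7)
The plan is to construct $\Phi_i^s$ and $\Phi_i^u$ explicitly using Lemma~\ref{stabilizer_Es_Eu}, and then to verify properties (i) and (ii) by tracking the Jordan decomposition and applying Proposition~\ref{pair_transitivity}.

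First I would define the maps. Since $n_i\varpi_i$ has multiplicity $1$ by Proposition~\ref{fundamental_real_representation}, the weight space $L_i := V_i^{n_i\varpi_i}$ is a line; let $H_i := \bigoplus_{\lambda \neq n_i\varpi_i} V_i^\lambda$ be the complementary hyperplane. Any $V_1 \in \mathcal{PS}$ can be written $V_1 = \phi(V^\subge_0)$ for some $\phi \in G$, and I would set
\[
\Phi_i^s(V_1) := \rho_i(\phi)(L_i) \in \mathbb{P}(V_i).
\]
This is well-defined: if $V_1 = \phi'(V^\subge_0)$ as well, then $\phi^{-1}\phi' \in \Stab_G(V^\subge_0) = P_{X_0}^+$, which stabilizes $L_i$ by Lemma~\ref{stabilizer_Es_Eu}(i). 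Since $X_0$ is symmetric we have $V^\suble_0 = \tilde{w}_0(V^\subge_0)$, so $\mathcal{PS}$ is also the $G$-orbit of $V^\suble_0$; I would define analogously
\[
\Phi_i^u(V_2) := \rho_i(\psi)(H_i) \in \mathbb{P}(V_i^*) \qquad\text{when } V_2 = \psi(V^\suble_0),
\]
which is well-defined by Lemma~\ref{stabilizer_Es_Eu}(ii). Continuity of both maps is immediate from the universal property of the quotient topology, since $\phi \mapsto \rho_i(\phi)(L_i)$ and $\psi \mapsto \rho_i(\psi)(H_i)$ are continuous on $G$ and factor through $\mathcal{PS} \cong G/P_{X_0}^{\pm}$.

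For property (i), take $g$ of type $X_0$ with Jordan decomposition $g = g_h g_e g_u$, and choose $\phi_0 \in G$ with $g_h = \phi_0 \exp(\jordan(g)) \phi_0^{-1}$. The elements $g_e$ and $g_u$ have all eigenvalues of modulus one and commute with $g_h$, so the dynamical spaces of $g$ coincide with those of $g_h$; using that $\jordan(g)$ has the same type as $X_0$, this gives $V^\subge_g = \phi_0(V^\subge_0)$ and $V^\suble_g = \phi_0(V^\suble_0)$, so that $\Phi_i^s(V^\subge_g) = \rho_i(\phi_0)(L_i)$ and $\Phi_i^u(V^\suble_g) = \rho_i(\phi_0)(H_i)$. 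Now $\rho_i(g_h)$ stabilizes $\rho_i(\phi_0)(L_i)$ (acting by the scalar $e^{n_i\varpi_i(\jordan(g))}$) and stabilizes the complementary $\rho_i(\phi_0)(H_i)$; by \eqref{eq:highest_eigenvalue} in the proof of Proposition~\ref{type_X0_to_proximality} (where extremality of $X_0$ was used to produce $\alpha_i(\jordan(g)) > 0$), all eigenvalues of $\rho_i(g_h)$ on the latter have strictly smaller modulus. Since $\rho_i(g_e)$ is elliptic and $\rho_i(g_u)$ is unipotent, both commuting with $\rho_i(g_h)$, they preserve this decomposition and act on $\rho_i(\phi_0)(L_i)$ by a norm-one scalar and by the identity respectively. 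It follows that $\rho_i(g)$ is proximal with $E^s_{\rho_i(g)} = \rho_i(\phi_0)(L_i) = \Phi_i^s(V^\subge_g)$ and $E^u_{\rho_i(g)} = \rho_i(\phi_0)(H_i) = \Phi_i^u(V^\suble_g)$.

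Property (ii) is then straightforward: by Proposition~\ref{pair_transitivity}, transversality of $V_1, V_2$ yields $\phi \in G$ with $\phi(V_1) = V^\subge_0$ and $\phi(V_2) = V^\suble_0$, so
\[
\Phi_i^s(V_1) = \rho_i(\phi^{-1})(L_i) \quad\text{and}\quad \Phi_i^u(V_2) = \rho_i(\phi^{-1})(H_i),
\]
which are transverse because $L_i \cap H_i = 0$. I expect the only real obstacle to be the Jordan-decomposition bookkeeping in step (i), ensuring the attracting line and repelling hyperplane of $\rho_i(g)$ are genuinely the conjugates of $L_i$ and $H_i$ by the same $\rho_i(\phi_0)$; the rest is direct application of Lemma~\ref{stabilizer_Es_Eu}, Proposition~\ref{pair_transitivity}, and the computations already carried out in Proposition~\ref{type_X0_to_proximality}.
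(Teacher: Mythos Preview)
Your proof is correct and follows essentially the same approach as the paper: define $\Phi_i^s$ and $\Phi_i^u$ via the $G$-action on the highest weight line $L_i$ and its complementary hyperplane $H_i$, check well-definedness using Lemma~\ref{stabilizer_Es_Eu} together with $\Stab_G(V^\subge_0)=P_{X_0}^+$, deduce property~(i) from the eigenvalue ranking~\eqref{eq:highest_eigenvalue} and the behavior of dynamical spaces under conjugation, and deduce property~(ii) from Proposition~\ref{pair_transitivity}. Your treatment of step~(i) is somewhat more explicit about the Jordan-decomposition bookkeeping than the paper's, but the substance is identical.
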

\begin{proof}
Take any $g$ of type~$X_0$; then from the inequality~\eqref{eq:highest_eigenvalue} ranking the values of different restricted weights of~$\rho_i$ evaluated at~$\jordan(g)$, we deduce that we have
\begin{equation}
\begin{cases}
E^s_{\rho_i(\exp(\jordan(g)))} = V^{n_i \varpi_i}_i; \\
E^u_{\rho_i(\exp(\jordan(g)))} = \bigoplus_{\lambda \neq n_i \varpi_i} V^\lambda_i.
\end{cases}
\end{equation}
Now take any $\phi \in G$; applying the defining identities of the maps $\Phi_i^{s, u}$ to the conjugate $\phi \exp(\jordan(g)) \phi^{-1}$, we deduce that these two maps, if they exist, must necessarily satisfy
\begin{equation}
\begin{cases}
\Phi_i^s(\phi(V^\subge_0)) = \rho_i(\phi) \left( V^{n_i \varpi_i}_i \right); \\
\Phi_i^u(\phi(V^\suble_0)) = \rho_i(\phi) \left( \bigoplus_{\lambda \neq n_i \varpi_i} V^\lambda_i \right).
\end{cases}
\end{equation}
We may take this as a definition of $\Phi_i^s$ and~$\Phi_i^u$; it remains to check that it is not ambiguous. Clearly it is enough to check that whenever some $\phi \in G$ stabilizes the space~$V^\subge_0$ (resp.~$V^\suble_0$), it also stabilizes the line~$V^{n_i \varpi_i}_i$ (resp. hyperplane~$\bigoplus_{\lambda \neq n_i \varpi_i} V^\lambda_i$). Since~$i \in \Pi \setminus \Pi_{X_0}$, this follows from Lemma~\ref{stabilizer_Es_Eu} and Proposition~\ref{stabg=stabge}. That the maps $\Phi_i^s$ and $\Phi_i^u$ thus defined are continuous is then obvious.

As for property~(ii), it now follows from Proposition~\ref{pair_transitivity}, which says that $G$~acts transitively on the set of transverse pairs of parabolic spaces.
\end{proof}

\begin{proof}[Proof of Proposition~\ref{C-non-deg-in-Vi}]
Let us fix some $i \in \Pi \setminus \Pi_{X_0}$ and some $C \geq 1$. Then the set of $C$-non-degenerate pairs of parabolic spaces is compact. On the other hand, the function
\[(V_1, V_2) \mapsto \alpha(\Phi_i^s(V_1), \Phi_i^u(V_2))\]
is continuous, and (by Lemma~\ref{PS_to_PVi_and_PVistar}~(ii)) takes positive values on that set. Hence it is bounded below. So there is a constant~$C'_i \geq 1$, depending only on~$C$, such that whenever a pair $(V_1, V_2)$ of parabolic spaces is $C$-non-degenerate, the pair $(\Phi_i^s(V_1), \Phi_i^u(V_2))$ is $C'_i$-non-degenerate.

The conclusion then follows by Lemma~\ref{PS_to_PVi_and_PVistar}~(i).
\end{proof}

\begin{proposition}
\label{jordan_additivity}
For every $C \geq 1$, there are positive constants $s_{\ref{jordan_additivity}}(C)$ and $k_{\ref{jordan_additivity}}(C)$ with the following property. Take any $C$-non-degenerate pair $(g, h)$ of elements of~$G$ of type~$X_0$ such that $s(g) \leq s_{\ref{jordan_additivity}}(C)$ and $s(h) \leq s_{\ref{jordan_additivity}}(C)$. Then we have:
\begin{hypothenum}
\item $\forall i \in \mathrlap{\Pi,}\qquad\qquad\quad
\varpi_i \left( \jordan(gh) - \cartan(g) - \cartan(h) \right) \leq 0$;
\item $\forall i \in \mathrlap{\Pi \setminus \Pi_{X_0},}\qquad\qquad\quad
\varpi_i \left( \jordan(gh) - \cartan(g) - \cartan(h) \right) \geq - k_{\ref{jordan_additivity}}(C)$.
\end{hypothenum}
\end{proposition}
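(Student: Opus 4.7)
My plan is to obtain both inequalities by pushing everything through the fundamental representations~$\rho_i$ constructed in Proposition~\ref{fundamental_real_representation}, exploiting the fact that each~$\rho_i$ has highest restricted weight~$n_i\varpi_i$ of multiplicity~$1$. The key observation is that, by Proposition~\ref{eigenvalues_and_singular_values_characterization} together with the fact that the restricted weights of~$\rho_i$ are in the convex hull of the Weyl orbit of~$n_i\varpi_i$ (Proposition~\ref{convex_hull} and Proposition~\ref{convex_hull_and_inequalities}), for any~$g'\in G$ the spectral radius of~$\rho_i(g')$ equals~$\exp(n_i\varpi_i(\jordan(g')))$ and its operator norm (with respect to a $K$-invariant Euclidean form on~$V_i$ making the restricted weight spaces orthogonal) equals~$\exp(n_i\varpi_i(\cartan(g')))$.

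For part~(i), I would use no hypothesis at all beyond these identities. Since the spectral radius of any linear endomorphism is bounded above by its operator norm, applying this to~$\rho_i(gh)$ gives
\[
\exp\bigl(n_i\varpi_i(\jordan(gh))\bigr)
   = r(\rho_i(gh))
   \leq \|\rho_i(g)\| \|\rho_i(h)\|
   = \exp\bigl(n_i\varpi_i(\cartan(g)+\cartan(h))\bigr).
\]
Taking logarithms and dividing by the positive integer~$n_i$ yields~(i) for every~$i\in\Pi$.

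For part~(ii), I would apply Proposition~\ref{proximal_product}~(iii) to the pair~$(\rho_i(g),\rho_i(h))$ when~$i\in\Pi\setminus\Pi_{X_0}$. To do this I need to check the three hypotheses of that proposition. Proximality and the estimate~$\tilde s(\rho_i(\bullet))\lesssim_C s(\bullet)$ come from Proposition~\ref{type_X0_to_proximality}, provided $s(g),s(h)\leq s_{\ref{type_X0_to_proximality}}(C)$. The $C'_i$-non-degeneracy of the pair comes from Proposition~\ref{C-non-deg-in-Vi}. So choosing $s_{\ref{jordan_additivity}}(C)$ small enough (less than $s_{\ref{type_X0_to_proximality}}(C)$ and small enough so that the resulting~$\tilde s(\rho_i(g))$, $\tilde s(\rho_i(h))$ are below~$\tilde s_{\ref{proximal_product}}(C'_i)$ for every $i\in\Pi\setminus\Pi_{X_0}$), Proposition~\ref{proximal_product}~(iii) gives
\[
r(\rho_i(gh)) \asymp_C \|\rho_i(g)\| \|\rho_i(h)\|,
\]
i.e.\ $n_i\varpi_i(\jordan(gh)-\cartan(g)-\cartan(h))$ is bounded below by a constant depending only on~$C$ and~$i$. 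Since $\Pi\setminus\Pi_{X_0}$ is a finite set (of cardinality depending only on~$G$ and~$X_0$, which are fixed), taking the worst such constant produces the desired uniform~$\eps_{\ref{jordan_additivity}}(C)$.

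I do not expect any serious obstacle: the difficulty is really hidden in the preceding propositions. The only things requiring mild care are the bookkeeping of constants (in particular, verifying that~$C'_i$ from Proposition~\ref{C-non-deg-in-Vi} and~$\tilde s_{\ref{proximal_product}}(C'_i)$ give bounds that depend only on~$C$, which is fine because the set~$\Pi\setminus\Pi_{X_0}$ is finite and fixed once~$X_0$~is fixed), and verifying the identification of~$r(\rho_i(gh))$ with~$\exp(n_i\varpi_i(\jordan(gh)))$ (this uses that $\jordan(gh)\in\mathfrak{a}^+$ and that~$n_i\varpi_i$ is the highest restricted weight). Part~(i) does not require any of the contraction or non-degeneracy hypotheses, a fact worth emphasizing.
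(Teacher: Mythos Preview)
Your proposal is correct and follows essentially the same route as the paper's own proof: both parts are obtained by pushing through the fundamental representations~$\rho_i$, using the identifications $r(\rho_i(g'))=\exp(n_i\varpi_i(\jordan(g')))$ and $\|\rho_i(g')\|=\exp(n_i\varpi_i(\cartan(g')))$, with~(i) coming from the submultiplicativity of the norm and~(ii) from Proposition~\ref{proximal_product}~(iii) via Propositions~\ref{type_X0_to_proximality} and~\ref{C-non-deg-in-Vi}. Your remark that~(i) needs no contraction or non-degeneracy hypothesis is also made (implicitly) in the paper.
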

See Figure~\ref{fig:trapezoid_picture} for a picture explaining both this proposition and the corollary below.
\begin{remark}
Though we shall not use it, a very important particular case is $g = h$. We then obviously have $\jordan(gh) = 2\jordan(g)$ and $\cartan(g) + \cartan(h) = 2\cartan(g)$, so that the inequalities~(i) and~(ii) give a relationship between the Cartan and Jordan projections of a $C$-non-degenerate, sufficiently contracting map of type~$X_0$.
\end{remark}
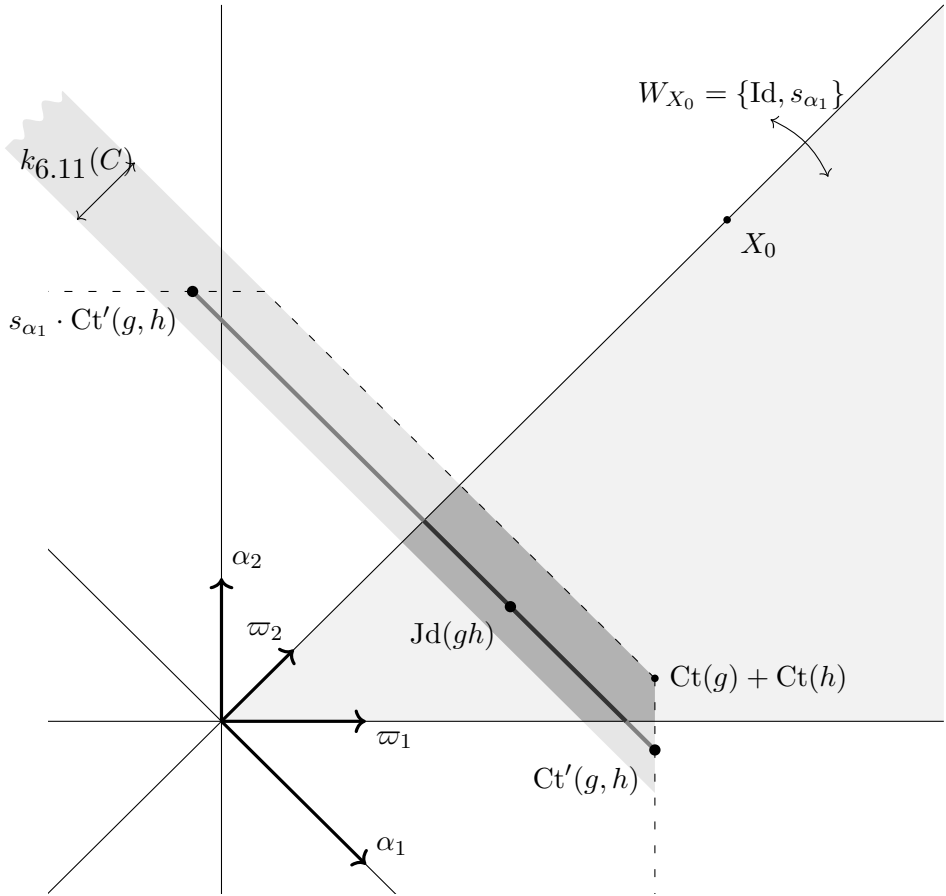
\begin{figure}
\[\begin{tikzpicture}[scale=1.9]
\fill[black!5!white] (0,0) -- (5,0) -- (5,5) -- (0,0);
\fill[black!10!white] (3,0.3) -- (-1.1,4.4) decorate[decoration=snake] {-- (-1.5,4)} -- (3,-0.5) -- (3,0.3);
\fill[black!30!white] (3,0.3) -- (1.65,1.65) -- (1.25,1.25) -- (2.5,0) -- (3,0) -- (3,0.3);
\draw[<->] (-0.6,3.9) -- (-1,3.5);
\node at (-1,3.9) {$k_{\ref{jordan_additivity}}(C)$};
\begin{scope}
  \clip (-1.2, -1.2) rectangle (5, 5);
  \draw (-5, 0) -- (5, 0);
  \draw (-5, -5) -- (5, 5);
  \draw (0, -5) -- (0, 5);
  \draw (5, -5) -- (-5, 5);
  \draw[loosely dashed] (3, 0.3) -- (3, -5);
  \draw[loosely dashed] (0.3, 3) -- (-5, 3);
\end{scope}
\draw[loosely dashed] (3, 0.3) -- (0.3, 3);
\draw[ultra thick, black!50!white] (3, -0.2) -- (-0.2, 3);
\draw[ultra thick, black!80!white] (2.8, 0) -- (1.4, 1.4);
\node[circle, fill, inner sep=1pt, label=right:$\cartan(g) + \cartan(h)$] at (3,0.3) {};
\node[circle, fill, inner sep=1.5pt, label=below left:${\cartan'(g,h)}$] at (3,-0.2) {};
\node[circle, fill, inner sep=1.5pt, label=below left:$\jordan(gh)$] at (2,0.8) {};
\node[circle, fill, inner sep=1.5pt, label=below left:${s_{\alpha_1} \cdot \cartan'(g,h)}$] at (-0.2,3) {};
\node[circle, fill, inner sep=1pt, label=below right:${X_0}$] at (3.5,3.5) {};
\draw[<->] (3.8,4.2) to[bend left = 20] (4.2,3.8);
\node[above] at (3.6,4.2) {$W_{X_0} = \{\Id, s_{\alpha_1}\}$};
\draw[very thick, ->] (0,0) -- (0,1);
\node[above right] at (0,1) {$\alpha_2$};
\draw[very thick, ->] (0,0) -- (0.5,0.5);
\node[above left] at (0.5,0.5) {$\varpi_2$};
\draw[very thick, ->] (0,0) -- (1,0);
\node[below right] at (1,0) {$\varpi_1$};
\draw[very thick, ->] (0,0) -- (1,-1);
\node[above right] at (1,-1) {$\alpha_1$};
\end{tikzpicture}\]
\caption{
This picture represents the situation of Example~\ref{equivalence_class_examples}.3, namely $G = \SO^+(3,2)$ acting on~$\mathbb{R}^5$. We have chosen a generic, symmetric, extreme vector~$X_0$. The set~$\Pi_{X_0}$ is then~$\{\alpha_1\}$ (or~$\{1\}$ with the usual abuse of notations), and the group~$W_{X_0}$ is generated by the single reflection~$s_{\alpha_1}$. Proposition~\ref{jordan_additivity} states that $\jordan(gh)$ lies in the shaded ``infinite trapezoid''. Corollary~\ref{jordan_additivity_reformulation} states that it lies on the thick line segment. In any case it lies by definition in the dominant open Weyl chamber (the shaded sector).
}
\label{fig:trapezoid_picture}
\end{figure}

Before proving the proposition, let us give a more palatable (though slightly weaker) reformulation.
\begin{corollary}
\label{jordan_additivity_reformulation}
For every $C \geq 1$, there exists a positive constant $k_{\ref{jordan_additivity_reformulation}}(C)$ with the following property. For any pair $(g, h)$ satisfying the hypotheses of Proposition~\ref{jordan_additivity}, we have
\begin{equation}
\label{eq:cartan_in_jordan_convex_hull}
\jordan(gh) \in \Conv \Big( W_{X_0} \cdot \cartan'(g, h) \Big),
\end{equation}
where $\Conv$ denotes the convex hull and $\cartan'(g, h)$~is some vector in~$\mathfrak{a}$ satisfying
\begin{equation}
\label{eq:ct'gh_ctg_cth}
\|\cartan'(g, h) - \cartan(g) - \cartan(h)\| \leq k_{\ref{jordan_additivity_reformulation}}(C).
\end{equation}
\end{corollary}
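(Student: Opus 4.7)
My plan is to reduce the corollary to Proposition~\ref{convex_hull_and_inequalities} applied with $\Pi' = \Pi_{X_0}$ (recall that $W_{X_0} = W_{\Pi_{X_0}}$). Concretely, I would define the candidate
\[\cartan'(g,h) := \cartan(g) + \cartan(h) - \sum_{i \in \Pi \setminus \Pi_{X_0}} d_i H_i,\]
where $(H_i)_{i \in \Pi}$ is the basis of~$\mathfrak{a}$ dual to the basis~$(\varpi_i)_{i \in \Pi}$ of~$\mathfrak{a}^*$, and the coefficients are the "defects"
\[d_i := \varpi_i\bigl(\cartan(g) + \cartan(h) - \jordan(gh)\bigr).\]
Proposition~\ref{jordan_additivity} says exactly that $0 \leq d_i \leq \eps_{\ref{jordan_additivity}}(C)$ for every $i \in \Pi \setminus \Pi_{X_0}$, so setting $\eps_{\ref{jordan_additivity_reformulation}}(C) := \eps_{\ref{jordan_additivity}}(C) \sum_{i \in \Pi \setminus \Pi_{X_0}} \|H_i\|$ will give the distance bound~\eqref{eq:ct'gh_ctg_cth}.

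To apply Proposition~\ref{convex_hull_and_inequalities} with $X := \cartan'(g,h)$ and $Y := \jordan(gh)$, I need to check three things. The containment $Y \in \mathfrak{a}^+_{\Pi_{X_0}}$ is immediate since $\jordan(gh) \in \mathfrak{a}^+$. The fundamental-weight inequalities are built into the construction: by duality $\varpi_j(\cartan'(g,h)) = \varpi_j(\cartan(g) + \cartan(h)) - d_j$ for $j \in \Pi \setminus \Pi_{X_0}$, which equals $\varpi_j(\jordan(gh))$ by the definition of~$d_j$; and $\varpi_j(\cartan'(g,h)) = \varpi_j(\cartan(g) + \cartan(h)) \geq \varpi_j(\jordan(gh))$ for $j \in \Pi_{X_0}$ by part~(i) of Proposition~\ref{jordan_additivity}.

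The one nontrivial verification is $X = \cartan'(g,h) \in \mathfrak{a}^+_{\Pi_{X_0}}$, i.e.\ that $\alpha_j(\cartan'(g,h)) \geq 0$ for every $\alpha_j \in \Pi_{X_0}$. Expanding,
\[\alpha_j(\cartan'(g,h)) = \alpha_j(\cartan(g) + \cartan(h)) - \sum_{i \in \Pi \setminus \Pi_{X_0}} d_i\, \alpha_j(H_i).\]
The first summand is non-negative since $\cartan(g), \cartan(h) \in \mathfrak{a}^+$. For the remaining sum, the key observation is that all indices $i$ in the sum satisfy $i \neq j$, and for $i \neq j$ the number $\alpha_j(H_i)$ is (up to the $\alpha'$ versus $\alpha$ rescaling that enters the definition of~$\varpi_i$) an off-diagonal entry of the Cartan matrix of the restricted root system, hence non-positive by the defining property of a set of simple roots. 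Combined with $d_i \geq 0$, this makes the correction term non-negative as well, so $\alpha_j(\cartan'(g,h)) \geq 0$ as required.

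The hardest step, and the one I would verify most carefully, is this last sign check: I want to make sure the elementary fact that $\alpha_j(H_i) \leq 0$ for $i \neq j$ carries over cleanly when the root system is non-reduced (type~$BC$), where the prefactor~$\alpha'_i$ differs from~$\alpha_i$; this is only a scalar renormalization and does not affect the sign, but it is worth stating explicitly. Once (a)--(c) are all in place, Proposition~\ref{convex_hull_and_inequalities} directly yields $\jordan(gh) \in \Conv(W_{\Pi_{X_0}} \cdot \cartan'(g,h))$, which is the desired inclusion~\eqref{eq:cartan_in_jordan_convex_hull}.
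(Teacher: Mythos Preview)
Your proposal is correct and follows essentially the same approach as the paper's proof: the paper defines $\cartan'(g,h)$ by the same linear system (equivalently your explicit formula in the dual basis $(H_i)$), establishes the distance bound from Proposition~\ref{jordan_additivity}, checks $\alpha_j(\cartan'(g,h)) \geq 0$ for $j \in \Pi_{X_0}$ via the same off-diagonal Cartan-matrix sign argument (including the $\alpha'_i$ caveat), and concludes by invoking Proposition~\ref{convex_hull_and_inequalities} with $\Pi' = \Pi_{X_0}$.
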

In fact, we can already give an explicit expression for this vector~$\cartan'(g,h)$:
\begin{definition}
\label{cartan_prime_definition}
We define~$\cartan'(g, h)$ to be the unique solution of the linear system
\begin{equation}
\label{eq:defining_system}
\begin{cases}
\forall i \in \mathrlap{\Pi \setminus \Pi_{X_0},}\qquad\qquad\quad
\varpi_i(\cartan'(g, h)) = \varpi_i(\jordan(gh)); \\
\forall i \in \mathrlap{\Pi_{X_0},}\qquad\qquad\quad
\varpi_i(\cartan'(g, h)) = \varpi_i(\cartan(g) + \cartan(h)).
\end{cases}
\end{equation}
(This works since $(\varpi_i)_{i \in \Pi}$ is a basis of~$\mathfrak{a}^*$.)
\end{definition}
\begin{remark}
Note that the vector~$\cartan'(g, h)$ might \emph{not} lie in the closed dominant Weyl chamber~$\mathfrak{a}^{+}$ (even though it is very close to the vector~$\cartan(g) + \cartan(h)$ which does).
\end{remark}
It remains to check that the vector~$\cartan'(g, h)$ thus defined satisfies indeed the required conditions.
\begin{proof}[Proof of Corollary~\ref{jordan_additivity_reformulation}.]
The estimate~\eqref{eq:ct'gh_ctg_cth} immediately follows from the inequalities of Proposition~\ref{jordan_additivity}. On the other hand, we may now rewrite Proposition~\ref{jordan_additivity} without the epsilons: combining Proposition~\ref{jordan_additivity}~(i) with the definition of~$\cartan'(g, h)$, we get
\begin{equation}
\label{eq:almost_convex_hull}
\begin{cases}
\forall i \in \mathrlap{\Pi,}\qquad\qquad\quad
  \varpi_i \left( \jordan(gh) - \cartan'(g, h) \right) \leq 0; \\
\forall i \in \mathrlap{\Pi \setminus \Pi_{X_0},}\qquad\qquad\quad
  \varpi_i \left( \jordan(gh) - \cartan'(g, h) \right) = 0.
\end{cases}
\end{equation}
Let us now show the inequalities
\begin{equation}
\label{eq:Weyl_subgroup_domain}
\forall i \in \Pi_{X_0},\quad \alpha_i(\cartan'(g, h)) \geq 0.
\end{equation}

Let $(H_i)_{i \in \Pi}$ be the basis of~$\mathfrak{a}$ dual to~$(\varpi_i)_{i \in \Pi}$, \ie the unique basis such that the identity
\begin{equation}
\label{eq:dual_basis}
\varpi_i \left( \sum_{j \in \Pi} c_j H_j \right) = c_i
\end{equation}
holds for any~$i \in \Pi$ and any tuple $(c_j) \in \mathbb{R}^\Pi$. By definition of the fundamental restricted weights $\varpi_i$, it then follows that we also have the identity
\begin{equation}
\forall i \in \Pi,\; \forall \lambda \in \mathfrak{a}^*,\quad \lambda(H_i) = \frac{2 \langle \lambda, \alpha'_i \rangle}{\|\alpha'_i\|^2}.
\end{equation}
By decomposing the vector~$\cartan(g) + \cartan(h) - \cartan'(g, h)$ in the basis~$(H_i)_{i \in \Pi}$ and by plugging the formula~\eqref{eq:dual_basis} into the second line of the defining system~\eqref{eq:defining_system}, we find that we may write
\begin{equation}
\cartan'(g, h) = \cartan(g) + \cartan(h) - \sum_{j \in \Pi \setminus \Pi_{X_0}} c_j H_j;
\end{equation}
by combining the first line of the defining system~\eqref{eq:defining_system} with Proposition~\ref{jordan_additivity}~(i), we also obtain that~$c_j \geq 0$ for every~$j \in \Pi \setminus \Pi_{X_0}$.

Finally, take any index $i \in \Pi_{X_0}$. Then we have
\begin{align}
\alpha_i \left( \sum_{j \in \Pi \setminus \Pi_{X_0}} c_j H_j \right)
  &= \sum_{j \in \Pi \setminus \Pi_{X_0}} c_j \alpha_i(H_j) \nonumber \\
  &= \sum_{j \in \Pi \setminus \Pi_{X_0}} c_j \frac{2 \langle \alpha_i, \alpha'_j \rangle}{\|\alpha'_j\|^2} \nonumber \\
  &\leq 0:
\end{align}
indeed since $j$~varies in~$\Pi \setminus \Pi_{X_0}$ and $i \in \Pi_{X_0}$, we have $i \neq j$ hence $\langle \alpha_i, \alpha_j \rangle \leq 0$; and $\alpha'_j$~is by construction a positive multiple of~$\alpha_j$. We conclude that
\[\alpha_i(\cartan'(g, h)) \geq \alpha_i(\cartan(g)) + \alpha_i(\cartan(h)) \geq 0\]
(since $\cartan(g), \cartan(h) \in \mathfrak{a}^{+}$), which gives us~\eqref{eq:Weyl_subgroup_domain}.

Now the system of inequalities~\eqref{eq:Weyl_subgroup_domain} is equivalent to saying that
\begin{equation}
\cartan'(g, h) \in \mathfrak{a}^{+}_{X_0},
\end{equation}
where $\mathfrak{a}^{+}_{X_0}$ is a fundamental domain for the action of the Weyl subgroup~$W_{X_0}$ on~$\mathfrak{a}$, more specifically the one that contains the dominant Weyl chamber~$\mathfrak{a}^{+}$. The statement~\eqref{eq:cartan_in_jordan_convex_hull} then follows from this and from~\eqref{eq:almost_convex_hull}, by applying Proposition~\ref{convex_hull_and_inequalities} which characterizes convex hulls of orbits of~$W_{X_0}$.
\end{proof}

\begin{proof}[Proof of Proposition~\ref{jordan_additivity}]
Let $i \in \Pi$. We know (see \eqref{eq:highest_eigenvalue} above) that for any vector~$X \in \mathfrak{a}^{+}$, the number~$n_i \varpi_i(X)$ is the largest eigenvalue of $\rho_i(X)$. From Proposition~\ref{eigenvalues_and_singular_values_characterization}, it then follows that:
\begin{equation}
\begin{cases}
n_i \varpi_i(\cartan(g)) = \log \|\rho_i(g)\|; \\
n_i \varpi_i(\jordan(g)) = \log r(\rho_i(g))
\end{cases}
\end{equation}
(recall that $r$~denotes the spectral radius).

\begin{hypothenum}
\item is straightforward from here: indeed,
\begin{align}
n_i \varpi_i \Big( \jordan(gh) \Big) &= \log r(\rho_i(gh)) \nonumber \\
                                  &\leq \log \|\rho_i(gh)\| \nonumber \\
                                  &\leq \log \|\rho_i(g)\| \|\rho_i(h)\| \nonumber \\
                                  &= n_i \varpi_i \Big( \cartan(g) + \cartan(h) \Big).
\end{align}

\item Assume that $i \in \Pi \setminus \Pi_{X_0}$. By Proposition~\ref{type_X0_to_proximality}, we know that the maps $\rho_i(g)$ and~$\rho_i(h)$ are proximal. By Proposition~\ref{C-non-deg-in-Vi}, they form a $C'_i$-non-degenerate pair, for some~$C'_i$ that depends only on~$C$. By Proposition~\ref{type_X0_to_proximality}, if we take~$s_{\ref{jordan_additivity}}(C)$ small enough, we may then assume that both $\rho_i(g)$ and~$\rho_i(h)$ are $\tilde{s}_{\ref{proximal_product}}(C'_i)$-contracting. We may then apply Proposition~\ref{proximal_product}~(iii) to these two maps: we get
\[r(\rho_i(g)\rho_i(h)) \asymp_C \|\rho_i(g)\| \|\rho_i(h)\|.\]
Now from Proposition~\ref{eigenvalues_and_singular_values_characterization}, it follows that we have:
\[\begin{cases}
r(\rho_i(gh)) = \exp(n_i \varpi_i(\jordan(gh))); \\
\|\rho_i(g)\| = \exp(n_i \varpi_i(\cartan(g))); \\
\|\rho_i(h)\| = \exp(n_i \varpi_i(\cartan(h))).
\end{cases}\]
Taking the logarithm, we deduce that there exists $\eps_i(C)$ such that for sufficiently contracting $g$ and~$h$, we have
\begin{equation}
n_i \varpi_i \Big( \jordan(gh) - \cartan(g) - \cartan(h) \Big) \in [-\eps_i(C),\; \eps_i(C)].
\end{equation}
Taking
\begin{equation}
k_{\ref{jordan_additivity}}(C) := \max_{i \in \Pi \setminus \Pi_{X_0}} \frac{1}{n_i} \eps_i(C),
\end{equation}
the conclusion follows. \qedhere
\end{hypothenum}
\end{proof}

\begin{remark}
\label{comparison_with_Benoist}
Corollary~\ref{jordan_additivity_reformulation} generalizes a result given by Benoist in~\cite{Ben97}. More specifically, by taking together Lemma~4.1 and Lemma~4.5.2 from that paper, we obtain that under suitable conditions, the vector
\[\jordan(gh) - \cartan(g) - \cartan(h)\]
(which is $\lambda(gh) - \mu(g) - \mu(h)$ in Benoist's notations) is bounded. This seems to be stronger than our result; but in fact, it also relies on stronger assumptions. More precisely, there are two possible ways to interpret Benoist's result in the context of our paper:
\begin{itemize}
\item Either we may take his set~$\theta$ to be our~$\Pi \setminus \Pi_{X_0}$. In that case, \cite{Ben97} uses the additional assumption that $g$ and~$h$ are ``of type~$\theta$'', which is very restrictive: it means that their Jordan projections must lie in the intersection of the kernels of all roots in~$\Pi_{X_0}$
(which is also the space of fixed points of~$W_{X_0}$). To control the Cartan projections of $g$ and~$h$, \cite{Ben97} uses the assumption that $g$ and~$h$ actually belong to a whole Zariski-dense subgroup of~$G$, all of whose elements are of type~$\theta$. As Benoist remarks in the second paragraph of Remark~3.2.1 in~\cite{Ben97}, the latter assumption only makes sense for $p$-adic groups; in the case of real groups which is of interest to us, as shown in the appendix of~\cite{BenLab}, this is actually impossible unless $\theta = \Pi$.

\item Or we may take~$\theta$ to be the whole set~$\Pi$. But in that case, \cite{Ben97} needs the assumption that $g$ and~$h$ are proximal (and in general position) in \emph{all} representations~$\rho_i$, which is stronger than the hypotheses we have made.
\end{itemize}
\end{remark}

\begin{proposition}
\label{regular_product_qualitative}
For every $C \geq 1$, there is a positive constant $s_{\ref{regular_product_qualitative}}(C) \leq 1$ with the following property. Take any $C$-non-degenerate pair $(g, h)$ of maps of type~$X_0$ in~$G$ such that $s(g^{\pm 1}) \leq s_{\ref{regular_product_qualitative}}(C)$ and $s(h^{\pm 1}) \leq s_{\ref{regular_product_qualitative}}(C)$. Then $gh$ is still of type~$X_0$.
\end{proposition}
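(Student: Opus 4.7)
The plan is to combine Corollary~\ref{jordan_additivity_reformulation} (which controls $\jordan(gh)$ by $\cartan(g)+\cartan(h)$ up to a bounded error and a convex hull over $W_{X_0}$) with Lemma~\ref{contraction_strength_and_cartan_projection} (which converts smallness of $s(g^{\pm 1})$ into large values of $\mu(\cartan(g))$ for all $\mu\in\Omega$). Recall that being of type $X_0$ means that for every $\lambda\in\Omega$, the sign of $\lambda(\jordan(gh))$ equals the sign of $\lambda(X_0)$, so we only need to produce these sign estimates.

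First I would record two structural observations. (1) By Proposition~\ref{stabg=stabge}(i), $\Stab_W(V^\subge_0)=W_{X_0}$, and combined with the identity $wV^\mu=V^{w\mu}$ this says that $W_{X_0}$ permutes both $\Omega^\subg_{X_0}$ and $\Omega^\subl_{X_0}$. (2) Because $X_0$ is symmetric, $w_0$ interchanges $\Omega^\subg_{X_0}$ and $\Omega^\subl_{X_0}$, and $g^{-1}$ is still of type $X_0$ with $\cartan(g^{-1})=-w_0\cartan(g)$. Moreover, an optimal canonizing map for $g^{-1}$ can be obtained from one for $g$ by composing with a fixed representative of $w_0$ in $K$, so $g^{-1}$ is $C'$-non-degenerate with $C'$ depending only on $C$. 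The same holds for $h^{-1}$.

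Applying Lemma~\ref{contraction_strength_and_cartan_projection} to $g$ and using $0\in\Omega^\subge_{X_0}$ (which forces the first term of its LHS to be $\leq 0$) gives
\[
\max_{\mu\in\Omega^\subl_{X_0}}\mu(\cartan(g))\;\leq\;\log s(g)+\eps_{\ref{contraction_strength_and_cartan_projection}}(C).
\]
Applying the same lemma to $g^{-1}$, substituting $\cartan(g^{-1})=-w_0\cartan(g)$, and using that $w_0^{-1}$ sends $\Omega^\subl_{X_0}$ to $\Omega^\subg_{X_0}$ yields
\[
\min_{\nu\in\Omega^\subg_{X_0}}\nu(\cartan(g))\;\geq\;-\log s(g^{-1})-\eps_{\ref{contraction_strength_and_cartan_projection}}(C').
\]
The analogous bounds hold for $h$. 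Summing and using $\|\cartan'(g,h)-\cartan(g)-\cartan(h)\|\leq\eps_{\ref{jordan_additivity_reformulation}}(C)$ together with the boundedness of $\|\mu\|$ over the finite set $\Omega$, I obtain, provided $s_{\ref{regular_product_qualitative}}(C)$ is chosen small enough,
\[
\mu(\cartan'(g,h))<0\;\;\text{for all }\mu\in\Omega^\subl_{X_0},\qquad \nu(\cartan'(g,h))>0\;\;\text{for all }\nu\in\Omega^\subg_{X_0}.
\]

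Finally, Corollary~\ref{jordan_additivity_reformulation} (whose hypotheses are satisfied once $s_{\ref{regular_product_qualitative}}(C)\leq s_{\ref{jordan_additivity}}(C)$) writes $\jordan(gh)=\sum_{w\in W_{X_0}} t_w\, w\cdot\cartan'(g,h)$ with $t_w\geq 0$, $\sum t_w=1$. For any $\lambda\in\Omega^\subg_{X_0}$,
\[
\lambda(\jordan(gh))\;=\;\sum_{w\in W_{X_0}} t_w\,(w^{-1}\lambda)(\cartan'(g,h)),
\]
and by observation~(1), every $w^{-1}\lambda$ still lies in $\Omega^\subg_{X_0}$, so every term is positive by the estimate above, hence $\lambda(\jordan(gh))>0=\operatorname{sign}(\lambda(X_0))$. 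The same argument with $\Omega^\subl_{X_0}$ in place of $\Omega^\subg_{X_0}$ gives $\lambda(\jordan(gh))<0$ for $\lambda\in\Omega^\subl_{X_0}$. This shows $\jordan(gh)\in\mathfrak{a}_{\rho,X_0}$, completing the proof. The only real obstacle is the verification that $g^{-1}$ inherits $C'$-non-degeneracy and type $X_0$ from $g$, which is the unavoidable place where the symmetry hypothesis $-w_0(X_0)=X_0$ is used.
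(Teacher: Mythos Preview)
Your proof is correct and follows essentially the same strategy as the paper: use Lemma~\ref{contraction_strength_and_cartan_projection} to force the right sign of every nonzero restricted weight on $\cartan'(g,h)$, then transfer these signs to $\jordan(gh)$ via the convex-hull formula of Corollary~\ref{jordan_additivity_reformulation} and the $W_{X_0}$-invariance of $\Omega^\subg_{X_0}$ and $\Omega^\subl_{X_0}$.

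There is one organizational difference worth noting. You handle $\Omega^\subg_{X_0}$ and $\Omega^\subl_{X_0}$ symmetrically, by applying Lemma~\ref{contraction_strength_and_cartan_projection} to each of $g,g^{-1},h,h^{-1}$ and invoking Corollary~\ref{jordan_additivity_reformulation} only once, for the pair $(g,h)$. The paper instead treats only $\Omega^\subl_{X_0}$ directly and then obtains the $\Omega^\subg_{X_0}$ case by applying the whole argument (including Corollary~\ref{jordan_additivity_reformulation}) a second time to the pair $(h^{-1},g^{-1})$, using $\jordan((gh)^{-1})=-w_0\jordan(gh)$. Both routes hinge on the symmetry $-w_0(X_0)=X_0$ at exactly the same point; your version is slightly more economical in that it calls the additivity corollary only once, while the paper's version avoids the (easy) verification that $g^{-1}$ is individually $C'$-non-degenerate. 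Neither difference is substantive.
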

\begin{proof}
Let $C \geq 1$, and let $(g, h)$ be a $C$-non-degenerate pair of maps in $G \ltimes V$ of type~$X_0$, such that
\[s(g^{\pm 1}) \leq s_{\ref{regular_product_qualitative}}(C) \quad\text{ and }\quad s(h^{\pm 1}) \leq s_{\ref{regular_product_qualitative}}(C)\]
for some positive constant $s_{\ref{regular_product_qualitative}}(C)$ to be specified later.

Lemma~\ref{contraction_strength_and_cartan_projection} then gives us
\begin{align}
\max_{\lambda \in \Omega^\subl_{X_0}} \lambda(\cartan(g))
  &\;\leq\; \log s(g) + k_{\ref{contraction_strength_and_cartan_projection}}(C) +
\min_{\lambda \in \Omega^\subge_{X_0}} \lambda(\cartan(g)) \nonumber \\
  &\;\leq\; \log s(g) + k_{\ref{contraction_strength_and_cartan_projection}}(C).
\end{align}
(Indeed by Assumption~\ref{zero_is_a_weight}, $\lambda = 0$ is a restricted weight that is certainly contained in~$\Omega^\subge_{X_0}$, so the minimum above is nonpositive.)

Taking $s_{\ref{regular_product_qualitative}}(C)$ small enough, we may assume that
\begin{equation}
\label{eq:ljg_estimate}
\forall \lambda \in \Omega^\subl_{X_0},\quad
\lambda(\cartan(g)) < -\frac{1}{2} \left( \max_{\lambda \in \Omega} \|\lambda\| \right) k_{\ref{jordan_additivity_reformulation}}(C).
\end{equation}
Of course a similar estimate holds for~$h$:
\begin{equation}
\label{eq:ljh_estimate}
\forall \lambda \in \Omega^\subl_{X_0},\quad
\lambda(\cartan(h)) < -\frac{1}{2} \left( \max_{\lambda \in \Omega} \|\lambda\| \right) k_{\ref{jordan_additivity_reformulation}}(C).
\end{equation}

Now let $\lambda$ be any restricted weight that does not vanish on~$X_0$. We distinguish two cases:
\begin{itemize}
\item Suppose that $\lambda(X_0) < 0$. Recall Corollary~\ref{jordan_additivity_reformulation}; for the key vector~$\cartan'(g, h)$ that it involves, we will use the value given by Definition~\ref{cartan_prime_definition}. Then on the one hand, we deduce from~\eqref{eq:ct'gh_ctg_cth} that:
\begin{align}
\label{eq:ljgh-ljg-ljh_estimate}
|\lambda(\cartan'(g, h)) - \lambda(\cartan(g)) - \lambda(\cartan(h))|
  &\;\leq\; \|\lambda\|\|\cartan'(g, h) - \cartan(g) - \cartan(h)\| \nonumber \\
  &\;\leq\; \left( \max_{\lambda \in \Omega} \|\lambda\| \right) k_{\ref{jordan_additivity_reformulation}}(C).
\end{align}
Adding together the three estimates \eqref{eq:ljg_estimate}, \eqref{eq:ljh_estimate} and \eqref{eq:ljgh-ljg-ljh_estimate}, we get
\begin{equation}
\label{eq:lambda_cartan_gh_less0}
\lambda(\cartan'(g, h)) < 0;
\end{equation}
and this is true for \emph{any}~$\lambda \in \Omega^\subl_{X_0}$.

On the other hand, we have~\eqref{eq:cartan_in_jordan_convex_hull} which says that
\[\jordan(gh) \in \Conv(W_{X_0} \cdot \cartan'(g, h)).\]
Now since $\Omega^\subl_{X_0}$ is stable by~$W_{X_0}$, it follows from \eqref{eq:lambda_cartan_gh_less0} that we still have
\[\lambda(w(\cartan'(g, h))) = w^{-1}(\lambda)(\cartan'(g, h)) < 0\]
for any $w \in W_{X_0}$. Thus $\lambda$ takes negative values on every point of the orbit ${W_{X_0} \cdot \cartan'(g, h)}$; hence it also takes negative values on every point of its convex hull. In particular, we have
\begin{equation}
\lambda(\jordan(gh)) < 0.
\end{equation}

\item Suppose that $\lambda(X_0) > 0$. Since the set of restricted weights~$\Omega$ is invariant by~$W$, the form $w_0(\lambda)$ is still a restricted weight; since by hypothesis $X_0$~is symmetric (\ie $-w_0(X_0) = X_0$), we then have
\[w_0(\lambda)(X_0) < 0.\]
We may thus apply the previous point to the weight~$w_0(\lambda)$ and to the map~$(gh)^{-1} = h^{-1}g^{-1}$ (since $g^{-1}$ and~$h^{-1}$ verify the same hypotheses as $g$ and~$h$); this gives us
\[w_0(\lambda)(\jordan((gh)^{-1})) < 0.\]
Since $\jordan((gh)^{-1}) = -w_0(\jordan(gh))$, we conclude that
\begin{equation}
\lambda(\jordan(gh)) > 0.
\end{equation}
\end{itemize}
We conclude that $gh$ is indeed of type~$X_0$.
\end{proof}

\begin{remark}
If we assume that both $g$ and~$g^{-1}$ are sufficiently contracting, then clearly Lemma~\ref{contraction_strength_and_cartan_projection} implies that~$\cartan'(g, g)$ and then~$\cartan(g)$ also has the same type as~$X_0$. Conversely, we may show (by a version of Lemma~\ref{contraction_strength_and_cartan_projection} with the inequality going both ways) that if $\cartan(g)$ has the same type as~$X_0$ and is ``far enough'' from the borders of~$\mathfrak{a}_{\rho, X_0}$, then $g$ and~$g^{-1}$ are strongly contracting.
\end{remark}

\section{Products of maps of type~$X_0$}
\label{sec:quantitative_properties_of_products}

%
%
The goal of this section is to prove Proposition~\ref{regular_product}, which not only says that a product of a $C$-non-degenerate, sufficiently contracting pair of maps of type~$X_0$ is itself of type~$X_0$, but allows us to control the geometry and contraction strength of the product. To do this, we proceed almost exactly as in Section~3.2 in~\cite{Smi14}: we reduce the problem to Proposition~\ref{proximal_product}, by considering the action of~$G \ltimes V$ on a suitable exterior power $\ext^{p} A$ (rather than on the spaces~$V_i$ as in the previous section).

There is however one crucial difference from~\cite{Smi14}: while it is still true that when $g$~is of type~$X_0$, its exterior power~$\ext^p g$ is proximal, the converse no longer holds. Filling that gap is what the whole previous section was about.

\begin{remark}
The reader might wonder why we did not (developing upon the final remark from the previous section) prove an additivity theorem for Cartan projections similar to Proposition~\ref{jordan_additivity}, and use it to estimate $s(gh)$ in terms of $s(g^{\pm 1})$ and~$s(h^{\pm 1})$. Since we need to study the action on the spaces~$V_i$ anyway, this would seemingly allow us to forgo the additional introduction of~$\ext^p A$.

The reason is that this approach only works for \emph{linear} maps $g$ and~$h$: for $g \in G \ltimes V$, the Cartan projection is only defined for~$\ell(g)$ and only gives information about the singular values of~$\ell(g)$, not those of~$g$. So while possible, this approach would force us, on the other hand, to abandon the unified treatment of quantitative properties of affine maps (as outlined in Remark~\ref{unified_treatment}).
\end{remark}

We introduce the integers:
\begin{align}
p &:= \dim A^\subge_0 = \dim V^\subge_0 + 1; \nonumber \\
q &:= \dim V^\subl_0; \\
d &:= \dim A = \dim V + 1 = q+p. \nonumber
\end{align}
For every $g \in G \ltimes V$, we may define its exterior power $\ext^p g: \ext^p A \to \ext^p A$. The Euclidean structure of $A$ induces in a canonical way a Euclidean structure on $\ext^p A$.

\begin{lemma} \mbox{ }
\label{regular_to_proximal}
\begin{hypothenum}
\item Let $g \in G \ltimes V$ be a map of type~$X_0$. Then $\ext^{p} g$ is proximal, and the attracting (resp. repelling) space of $\ext^{p} g$ depends on nothing but $A^{\subge}_{g}$ (resp. $V^{\subl}_{g}$):
\[\begin{cases}
E^s_{\ext^{p} g} = \ext^{p} A^{\subge}_{g} \\
E^u_{\ext^{p} g} = \setsuch{x \in \ext^{p} A}
                                        {x \wedge \ext^{q} V^{\subl}_{g} = 0}.
\end{cases}\]
\item For every $C \geq 1$, whenever $(g_1, g_2)$ is a $C$-non-degenerate pair of maps of type~$X_0$, $(\ext^p g_1, \ext^p g_2)$ is a $C^p$-non-degenerate pair of proximal maps.
\item For every $C \geq 1$, for every $C$-non-degenerate map $g \in G \ltimes V$ of type~$X_0$, we have
\begin{equation}
s(g) \lesssim_C \tilde{s}(\ext^{p} g).
\end{equation}
If in addition $s(g) \leq 1$, we have
\begin{equation}
s(g) \asymp_C \tilde{s}(\ext^{p} g).
\end{equation}
(Recall the Definitions~\ref{s_definition} and~\ref{s_tilde_definition} of the two different notions of ``contraction strength'' $s(g)$ and~$\tilde{s}(\gamma)$, respectively.)
\item For any two $p$-dimensional subspaces $A_1$ and $A_2$ of $A$, we have
\[\alpha^\mathrm{Haus}(A_1, A_2)
\;\asymp\; \alpha \left( \ext^{p} A_1,\; \ext^{p} A_2 \right).\]
\end{hypothenum}
\end{lemma}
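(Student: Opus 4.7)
The guiding idea throughout is to canonize: for each statement, use an optimal canonizing map $\phi$ (Proposition~\ref{Asubge_is_ampa} and Definition~\ref{regular_definition}) to reduce to the situation where the relevant dynamical spaces coincide with the reference ones $A^\subge_0$ and $V^\subl_0$, which are $B$-orthogonal by construction. Since $\|\ext^{p}\phi\|\leq \|\phi\|^{p}$ and likewise for $\phi^{-1}$, replacing $g$ by $\phi g \phi^{-1}$ costs only a multiplicative factor depending on $C$.

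For part~(i), I would decompose $A = A^\subge_g \oplus V^\subl_g$ and correspondingly
\[\ext^p A \;=\; \bigoplus_{i+j=p} \ext^{i} A^\subge_g \otimes \ext^{j} V^\subl_g.\]
Since $g$ is of type $X_0$, every eigenvalue of $\restr{g}{A^\subge_g}$ has modulus~$\geq 1$ and every eigenvalue of $\restr{g}{V^\subl_g}$ has modulus $<1$, so the summand $j=0$ (which is the line $\ext^p A^\subge_g$) corresponds to an eigenvalue of $\ext^p g$ whose modulus is strictly larger than any eigenvalue coming from $j\geq 1$. This gives proximality and identifies $E^s_{\ext^p g} = \ext^p A^\subge_g$. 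The description of $E^u_{\ext^p g}$ is then a direct linear-algebra computation: wedging with a generator of the line $\ext^q V^\subl_g$ kills exactly the subspace complementary to $\ext^p A^\subge_g$ inside the direct-sum decomposition above.

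For part~(ii), given an optimal canonizing map $\phi$ for the pair $(A^\subge_{g_i}, A^\suble_{g_j})$, Proposition~\ref{Asubge_is_ampa}~(ii) forces $\phi(V^\subl_{g_j}) = V^\subl_0$, so $\ext^p\phi$ simultaneously sends $E^s_{\ext^p g_i}$ to $\ext^p A^\subge_0$ and $E^u_{\ext^p g_j}$ to the $B$-orthogonal complement of $\ext^p A^\subge_0$; this is a canonizing map for the proximal pair with operator norm $\leq C^p$. Part~(iii) is then a singular-value bookkeeping: after canonization, $A^\subge_0 \perp V^\subl_0$, so the singular values of $\ext^p g'$ are products of $p$ singular values of $g'$, with one taken from each block. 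The top singular value is the product $\sigma_1\cdots\sigma_p$ of all $p$ singular values on $A^\subge_0$; the second largest (assuming $s(g)\leq 1$, so that replacing the smallest $\sigma_p$ by the largest $\tau_1$ is the optimal swap) is $\sigma_1\cdots\sigma_{p-1}\tau_1$, giving $\tilde s(\ext^p g') = \tau_1/\sigma_p = s(g')$. Without $s(g)\leq 1$ we only keep the direction $s(g)\lesssim_C \tilde s(\ext^p g)$ because the ``optimal swap'' may not minimize the ratio.

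Part~(iv) is the classical comparison between the Hausdorff angle and the Plücker angle via the principal angles $0\leq \theta_1\leq\cdots\leq\theta_p\leq \tfrac{\pi}{2}$ between two $p$-planes. In a suitably adapted orthonormal basis, $\alpha^\mathrm{Haus}(A_1,A_2) = \theta_p$ while $\cos\alpha(\ext^p A_1,\ext^p A_2) = \prod_{i=1}^{p}\cos\theta_i$. Hence
\[\sin^2 \theta_p \;\leq\; 1 - \prod_{i=1}^p \cos^2 \theta_i \;\leq\; \sum_{i=1}^p \sin^2\theta_i \;\leq\; p\sin^2 \theta_p,\]
yielding $\sin\theta_p \leq \sin\alpha(\ext^p A_1,\ext^p A_2) \leq \sqrt{p}\sin\theta_p$ and therefore $\theta_p \asymp \alpha(\ext^p A_1,\ext^p A_2)$. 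The main obstacle is really concentrated in part~(iii): one has to be careful that the claimed equivalence of ``mixed'' contraction strengths only holds in the contracting regime $s(g)\leq 1$, since otherwise the location of the second singular value of $\ext^p g$ shifts and breaks the swap argument. Everything else is bookkeeping on top of the canonization step.
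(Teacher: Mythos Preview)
Your proof is correct and follows the same approach as the paper: part~(i) is handled exactly as the paper does, via the eigenvalues of $\ext^p g$ being $p$-fold products of eigenvalues of $g$, and for parts~(ii)--(iv) the paper simply cites Lemma~3.8 of~\cite{Smi14}, whose arguments are precisely the canonize-then-compute-with-singular-values and principal-angle computations you spell out. One small wording slip: in~(iii) the singular values of $\ext^p g'$ are products of $p$ \emph{distinct} singular values of $g'$, not ``one taken from each block''---but your actual computation (top value $\sigma_1\cdots\sigma_p$, second value $\sigma_1\cdots\sigma_{p-1}\tau_1$ when $s(g')\leq 1$) is correct regardless.
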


This is similar to Lemma~3.8 in~\cite{Smi14}, except for point~(i) which here is weaker than there.

\begin{proof} For (i), let $g \in G \ltimes V$ be a map of type~$X_0$. Let $\lambda_{1}, \ldots, \lambda_{d}$ be the eigenvalues of $g$ (acting on $A$) counted with multiplicity and ordered by nondecreasing modulus; then $|\lambda_{q+1}| = 1$ and $|\lambda_{q}| < 1$. On the other hand, we know that the eigenvalues of~$\ext^{p} g$ counted with multiplicity are exactly the products of the form $\lambda_{i_1}\cdots\lambda_{i_{p}}$, where $1 \leq i_1 < \cdots < i_{p} \leq d$. As the two largest of them (by modulus) are $\lambda_{q+1} \cdots \lambda_{d}$ and $\lambda_{q}\lambda_{q+2} \cdots \lambda_{d}$, it follows that $\ext^{p} g$ is proximal.

%
%
As for the expression of $E^s$ and $E^u$, it follows immediately by considering a basis that trigonalizes~$g$.

For (ii), (iii) and~(iv), the proof is exactly the same as for the corresponding points in Lemma~3.8 in~\cite{Smi14}, \emph{mutatis mutandis}.
\end{proof}

We also need the following technical lemma, which generalizes Lemma~3.9 in~\cite{Smi14}:

\begin{lemma}
\label{continuity_of_non_degeneracy}
There is a constant $\eps > 0$ with the following property. Let $A_1, A_2$ be any two affine parabolic spaces such that
\[\begin{cases}
\alpha^\mathrm{Haus}(A_1, A^\subge_0) \leq \eps \\
\alpha^\mathrm{Haus}(A_2, A^\suble_0) \leq \eps.
\end{cases}\]
Then they form a $2$-non-degenerate pair.
\end{lemma}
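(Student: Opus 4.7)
The strategy is to exploit the fact that the identity map is itself a canonizing map for the reference pair $(A^\subge_0, A^\suble_0)$, witnessing $1$-non-degeneracy, and to produce canonizing maps for nearby pairs by a submersion / local section argument. If those canonizing maps are close enough to $\Id$ in operator norm, they will satisfy $\|\phi^{\pm 1}\| \leq 2$, as required.

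\smallskip

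\noindent\textbf{Main construction.} Let $\mathcal{P}$ denote the set of affine parabolic spaces of $A$. By Proposition~\ref{stabg=stabge} (together with the obvious statement for translations) we have $\Stab_{G \ltimes V}(A^\subge_0) = P^+_{X_0} \ltimes V^\subge_0$, so $\mathcal{P}$ is a homogeneous space $\mathcal{P} = (G \ltimes V) / (P^+_{X_0} \ltimes V^\subge_0)$ and in particular a smooth manifold. Consider the smooth orbit map
\[
\Psi : G \ltimes V \longlongrightarrow \mathcal{P} \times \mathcal{P},
\qquad \phi \longmapsto \bigl( \phi^{-1}(A^\subge_0),\; \phi^{-1}(A^\suble_0) \bigr).
\]
By Proposition~\ref{pair_transitivity}, the image of $\Psi$ is exactly the set of transverse pairs of affine parabolic spaces, and the fibre over $(A^\subge_0, A^\suble_0)$ is the intersection of stabilizers $L_{X_0} \ltimes V^\sube_0$. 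Thus $\Psi$ is a submersion from $G \ltimes V$ onto the transverse-pair orbit.

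\smallskip

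\noindent\textbf{Local section and conclusion.} By the local section property of submersions, there exists an open neighborhood $\mathcal{U}$ of $(A^\subge_0, A^\suble_0)$ in the transverse-pair orbit and a continuous local section $\sigma : \mathcal{U} \to G \ltimes V$ with $\sigma(A^\subge_0, A^\suble_0) = \Id$ and $\Psi \circ \sigma = \Id_\mathcal{U}$. By construction, for every $(A_1, A_2) \in \mathcal{U}$ the map $\sigma(A_1, A_2)$ satisfies $\sigma(A_1, A_2)(A_1, A_2) = (A^\subge_0, A^\suble_0)$, so it is a canonizing map for $(A_1, A_2)$. It remains only to translate Hausdorff-closeness in the $\alpha$ metric into closeness in the manifold topology on $\mathcal{P}$: this is the standard fact that, on the Grassmannian of subspaces of a fixed dimension in a Euclidean space, convergence in Hausdorff distance of projectivizations coincides with the quotient-manifold topology. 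Consequently, for $\eps$ small enough, any pair $(A_1, A_2)$ with $\alpha^\mathrm{Haus}(A_1, A^\subge_0) \leq \eps$ and $\alpha^\mathrm{Haus}(A_2, A^\suble_0) \leq \eps$ lies in $\mathcal{U}$, and continuity of $\sigma$ at $(A^\subge_0, A^\suble_0)$ yields
\[
\max \bigl( \|\sigma(A_1, A_2)\|,\; \|\sigma(A_1, A_2)^{-1}\| \bigr) \xrightarrow[\eps \to 0]{} 1.
\]
Choosing $\eps$ so that the left-hand side is at most $2$ gives the announced $2$-non-degeneracy.

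\smallskip

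\noindent\textbf{Main obstacle.} There is no deep difficulty; the only minor subtlety is the topological bookkeeping identifying Hausdorff closeness of affine parabolic spaces (in the $\alpha$ metric on $\mathbb{P}(A)$) with closeness in $\mathcal{P}$ viewed as a homogeneous space, but this is a standard property of Grassmannians and requires no additional ideas beyond those already used elsewhere in the paper.
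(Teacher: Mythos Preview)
Your proof is correct and follows the natural continuity/local-section approach that the paper implicitly invokes by referring to Lemma~3.9 of \cite{Smi14}: exploit that $\Id$ canonizes the reference pair, and use smoothness of the orbit map to produce canonizing maps close to $\Id$ for nearby pairs. There is nothing substantive to add.
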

(Of course the constant~2 is arbitrary; we could replace it by any number larger than~1.)
\begin{proof}
The proof is exactly the same as the proof of Lemma~3.9 in~\cite{Smi14}, \emph{mutatis mutandis}.
\end{proof}

\begin{proposition}
\label{regular_product}
For every $C \geq 1$, there is a positive constant $s_{\ref{regular_product}}(C) \leq 1$ with the following property. Take any $C$-non-degenerate pair $(g, h)$ of maps of type~$X_0$ in~$G \ltimes V$; suppose that we have $s(g^{\pm 1}) \leq s_{\ref{regular_product}}(C)$ and $s(h^{\pm 1}) \leq s_{\ref{regular_product}}(C)$. Then $gh$ is of type~$X_0$, $2C$-non-degenerate, and we have:
\begin{hypothenum}
\item $\begin{cases}
\alpha^\mathrm{Haus} \left(A^{\subge}_{gh},\; A^{\subge}_{g} \right) \lesssim_C s(g) \vspace{1mm} \\
\alpha^\mathrm{Haus} \left(A^{\suble}_{gh},\; A^{\suble}_{h} \right) \lesssim_C s(h^{-1})
\end{cases}$;
\item $s(gh) \lesssim_C s(g)s(h)$.
\end{hypothenum}
\end{proposition}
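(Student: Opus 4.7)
The overall plan follows the strategy of Proposition~3.10 in~\cite{Smi14}: reduce all quantitative statements to Proposition~\ref{proximal_product} by passing to $\ext^p$ and using Lemma~\ref{regular_to_proximal} as the translation device. The one genuinely new step, specific to an arbitrary~$\rho$, is establishing that $gh$ is of type~$X_0$; this no longer follows from $\ext^p(gh)$ being proximal, and must be extracted from Proposition~\ref{regular_product_qualitative}. Concretely, by Lemma~\ref{affine_to_vector}(i)-(ii) the pair $(\ell(g), \ell(h))$ is still $C$-non-degenerate with $s(\ell(g)^{\pm 1}) \leq s(g^{\pm 1})$, and similarly for~$h$, so if $s_{\ref{regular_product}}(C) \leq s_{\ref{regular_product_qualitative}}(C)$ then Proposition~\ref{regular_product_qualitative} applied to the linear parts gives that $\ell(gh)=\ell(g)\ell(h)$, and hence $gh$ itself, is of type~$X_0$.

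For the first half of~(i), I would apply Proposition~\ref{proximal_product} to $(\ext^p g, \ext^p h)$. By Lemma~\ref{regular_to_proximal}(ii) this is a $C^p$-non-degenerate pair of proximal maps; by Lemma~\ref{regular_to_proximal}(iii), after shrinking $s_{\ref{regular_product}}(C)$ so that $\tilde{s}(\ext^p g), \tilde{s}(\ext^p h) \leq \tilde{s}_{\ref{proximal_product}}(C^p)$, Proposition~\ref{proximal_product}(i) yields
\[\alpha \bigl( E^s_{\ext^p(gh)},\ E^s_{\ext^p g} \bigr) \lesssim_C \tilde{s}(\ext^p g) \lesssim_C s(g).\]
Since by Lemma~\ref{regular_to_proximal}(i) we have $E^s_{\ext^p g}=\ext^p A^\subge_g$ and $E^s_{\ext^p(gh)}=\ext^p A^\subge_{gh}$, Lemma~\ref{regular_to_proximal}(iv) converts this into $\alpha^\mathrm{Haus}(A^\subge_{gh}, A^\subge_g) \lesssim_C s(g)$. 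The second half of~(i) is obtained by symmetry: since $X_0$ is symmetric, a representative of $w_0$ in $K \cap N_G(A)$ is an orthogonal map of $V$ swapping $V^\subge_0 \leftrightarrow V^\suble_0$, so $C$-non-degeneracy of a pair of affine parabolic spaces is invariant under transposition. Hence $(h^{-1}, g^{-1})$ is again a $C$-non-degenerate pair of maps of type~$X_0$ satisfying the contraction hypotheses; applying the argument just given to it and using $A^\suble_f = A^\subge_{f^{-1}}$ and $(gh)^{-1}=h^{-1}g^{-1}$ yields the desired estimate.

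For the $2C$-non-degeneracy of~$gh$, the $C$-non-degenerate pair $(A^\subge_g, A^\suble_h)$ has an optimal canonizing map $\phi$ with $\|\phi^{\pm 1}\| \leq C$; by~(i) and Lemma~\ref{bounded_norm_is_bilipschitz}, after shrinking $s_{\ref{regular_product}}(C)$ further the images $\phi(A^\subge_{gh})$ and $\phi(A^\suble_{gh})$ fall within the neighborhoods of $A^\subge_0$ and $A^\suble_0$ required by Lemma~\ref{continuity_of_non_degeneracy}, producing a canonizing map $\psi$ for the image pair with $\|\psi^{\pm 1}\| \leq 2$; the composite $\psi \circ \phi$ then witnesses the $2C$-non-degeneracy of $(A^\subge_{gh}, A^\suble_{gh})$. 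Finally, for~(ii), Proposition~\ref{proximal_product}(ii) applied to the same $(\ext^p g, \ext^p h)$ gives $\tilde{s}(\ext^p(gh)) \lesssim_C \tilde{s}(\ext^p g)\tilde{s}(\ext^p h) \lesssim_C s(g)s(h)$, and Lemma~\ref{regular_to_proximal}(iii) applied to the now $2C$-non-degenerate map~$gh$ turns this into $s(gh) \lesssim_C s(g)s(h)$. The principal bookkeeping challenge will be choosing $s_{\ref{regular_product}}(C)$ small enough to satisfy the several successive $C$-dependent smallness conditions simultaneously; none of the individual steps is difficult, but they must be performed in the right order so that the hypotheses of each cited result are in place.
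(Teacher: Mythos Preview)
Your proposal is correct and follows essentially the same approach as the paper's proof: first invoke Proposition~\ref{regular_product_qualitative} (via Lemma~\ref{affine_to_vector}) to secure that $gh$ is of type~$X_0$, then pass to $\ext^p$ and apply Proposition~\ref{proximal_product} together with Lemma~\ref{regular_to_proximal} to obtain (i), the $2C$-non-degeneracy (via Lemma~\ref{continuity_of_non_degeneracy}), and (ii), in that order. Your justification of the second line of~(i) via the orthogonal representative of~$w_0$ (showing that $C$-non-degeneracy is symmetric in the two spaces) is in fact more explicit than the paper, which simply asserts that one applies Proposition~\ref{proximal_product} to $\gamma_2^{-1}\gamma_1^{-1}$ without spelling out why the hypotheses transfer.
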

(This generalizes Proposition~3.6 in~\cite{Smi14}.)

Before giving the proof, let us first formulate a particular case:

\begin{corollary}
\label{vector_spaces_estimate}
Under the same hypotheses, we have
\[\begin{cases}
\alpha^\mathrm{Haus} \left(V^{\subge}_{gh},\; V^{\subge}_{g} \right) \lesssim_C s(\ell(g)) \vspace{1mm} \\
\alpha^\mathrm{Haus} \left(V^{\suble}_{gh},\; V^{\suble}_{h} \right) \lesssim_C s(\ell(h)^{-1}).
\end{cases}\]
\end{corollary}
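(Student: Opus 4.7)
The plan is to apply Proposition~\ref{regular_product} not to $(g,h)$ but to the pair of their linear parts $(\ell(g),\ell(h))$, viewed as elements of~$G \hookrightarrow G \ltimes V$ via the identification of~$G$ with the stabilizer of the origin~$\mathbb{R}_0$. Lemma~\ref{affine_to_vector}~(i) ensures that $(\ell(g),\ell(h))$ is still $C$-non-degenerate; Lemma~\ref{affine_to_vector}~(ii), applied both to~$g$ and to~$g^{-1}$ (and using that $\ell(g^{-1})=\ell(g)^{-1}$, so~$s(\ell(g)^{-1})=s(\ell(g^{-1}))\leq s(g^{-1})$), ensures that~$s(\ell(g)^{\pm 1})$ and~$s(\ell(h)^{\pm 1})$ are~$\leq s_{\ref{regular_product}}(C)$. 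Hence Proposition~\ref{regular_product} applies and yields that $\ell(gh)=\ell(g)\ell(h)$ is of type~$X_0$, together with
\[
\alpha^{\mathrm{Haus}}\!\left(A^{\subge}_{\ell(gh)},A^{\subge}_{\ell(g)}\right)\lesssim_C s(\ell(g)),\qquad
\alpha^{\mathrm{Haus}}\!\left(A^{\suble}_{\ell(gh)},A^{\suble}_{\ell(h)}\right)\lesssim_C s(\ell(h)^{-1}).
\]

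Next I would translate these into statements about the vector dynamical spaces. Any element of~$G \subset G \ltimes V$ fixes~$\mathbb{R}_0$, so~$\mathbb{R}_0$ sits inside the affine neutral space of each of $\ell(g),\ell(h),\ell(gh)$. Since moreover the vector dynamical spaces of a map in~$G \ltimes V$ depend only on its linear part, we get
\[
A^{\subge}_{\ell(g)}=V^{\subge}_{g}\oplus\mathbb{R}_0,\qquad A^{\subge}_{\ell(gh)}=V^{\subge}_{gh}\oplus\mathbb{R}_0,
\]
and similarly for~$A^{\suble}_{\ell(h)}$ and~$A^{\suble}_{\ell(gh)}$.

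Finally I would use the $B$-orthogonality of~$\mathbb{R}_0$ and~$V$ to compare Hausdorff distances. For any two vector subspaces $V_1,V_2\subset V$, any $v\in V_1$ and any~$w=w_V+w_0\in V_2\oplus\mathbb{R}_0$ with $w_V\in V_2$, $w_0\in\mathbb{R}_0$, one has $\langle v,w\rangle=\langle v,w_V\rangle$ and~$\|w\|\geq\|w_V\|$, whence $\alpha(v,w)\geq\alpha(v,w_V)$. Taking infima shows $\alpha(v,V_2\oplus\mathbb{R}_0)=\alpha(v,V_2)$, and by symmetry
\[
\alpha^{\mathrm{Haus}}(V_1,V_2)\;\leq\;\alpha^{\mathrm{Haus}}(V_1\oplus\mathbb{R}_0,\;V_2\oplus\mathbb{R}_0).
\]
Applying this to $(V^{\subge}_{g},V^{\subge}_{gh})$ and to $(V^{\suble}_{h},V^{\suble}_{gh})$, and combining with the estimates obtained from Proposition~\ref{regular_product}, yields the two bounds stated in the corollary.

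There is no real obstacle here: the only points that require any care are verifying that passing from $(g,h)$ to $(\ell(g),\ell(h))$ preserves both quantitative hypotheses (which is exactly the content of Lemma~\ref{affine_to_vector}), and the elementary observation above that the projective Hausdorff distance between two subspaces of~$V$ is dominated by the Hausdorff distance after adjoining the orthogonal line~$\mathbb{R}_0$ to each.
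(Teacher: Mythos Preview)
Your proof is correct and follows essentially the same approach as the paper: apply Proposition~\ref{regular_product} to the linear parts $(\ell(g),\ell(h))$, using Lemma~\ref{affine_to_vector} to transfer the non-degeneracy and contraction hypotheses, and then read off the estimate on the $V^{\subge}$ and $V^{\suble}$ spaces from the resulting estimate on the $A^{\subge}$ and $A^{\suble}$ spaces. The paper simply cites Lemma~\ref{affine_to_vector} and refers back to Corollary~3.7 of~\cite{Smi14} for the details, which are exactly the ones you have written out (including the elementary observation that adjoining the common orthogonal line $\mathbb{R}_0$ to both subspaces does not decrease the projective Hausdorff distance).
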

\begin{proof}
This follows from Lemma~\ref{affine_to_vector}. The proof is the same as for Corollary~3.7 in~\cite{Smi14}.
\end{proof}

\begin{proof}[Proof of Proposition~\ref{regular_product}]
Let us fix some positive constant $s_{\ref{regular_product}}(C)$, small enough to satisfy all the constraints that will appear in the course of the proof. Let $(g, h)$ be a pair of maps satisfying the hypotheses.

First note that by Lemma~\ref{affine_to_vector}, we have
\begin{equation}
s(\ell(g)^{\pm 1}) \leq s(g^{\pm 1}) \leq s_{\ref{regular_product}}(C)
\end{equation}
and similarly for~$h$. If we take $s_{\ref{regular_product}}(C) \leq s_{\ref{regular_product_qualitative}}(C)$, then Proposition~\ref{regular_product_qualitative} tells us that~$\ell(gh)$, hence~$gh$ (indeed the Jordan projection depends only on the linear part), is of type~$X_0$.

The remaining part of the proof works exactly like the proof of Proposition~3.6 in~\cite{Smi14}, namely by applying Proposition~\ref{proximal_product} to the maps $\gamma_1 = \ext^{p} g$ and~${\gamma_2 = \ext^{p} h}$. Taking into account the central position occupied in the paper by the proposition we are currently proving, let us reproduce these details nevertheless. Let us check that $\gamma_1$ and~$\gamma_2$ satisfy the required hypotheses:
\begin{itemize}
\item By Lemma~\ref{regular_to_proximal}~(i), $\gamma_1$ and $\gamma_2$ are proximal.
\item By Lemma~\ref{regular_to_proximal}~(ii), the pair $(\gamma_1, \gamma_2)$ is $C^p$-non-degenerate.
\item Since we have supposed $s_{\ref{regular_product}}(C) \leq 1$, it follows by Lemma~\ref{regular_to_proximal}~(iii) that $\tilde{s}(\gamma_1) \lesssim_C s(g)$ and $\tilde{s}(\gamma_2) \lesssim_C s(h)$. If we choose $s_{\ref{regular_product}}(C)$ sufficiently small, then $\gamma_1$ and $\gamma_2$ are $\tilde{s}_{\ref{proximal_product}}(C^p)$-contracting, \ie sufficiently contracting to apply Proposition~\ref{proximal_product}.
\end{itemize}
Thus we may apply Proposition~\ref{proximal_product}. It remains to deduce from its conclusions the conclusions of Proposition~\ref{regular_product}.
\begin{itemize}
\item We already know that $gh$ is of type~$X_0$.
\item From Proposition~\ref{proximal_product}~(i), using Lemma~\ref{regular_to_proximal} (i), (iii) and~(iv), we get
\[\alpha^\mathrm{Haus} \left(A^{\subge}_{gh},\; A^{\subge}_{g} \right)
  \lesssim_C s(g),\]
which shows the first line of Proposition~\ref{regular_product} (i).
\item By applying Proposition~\ref{proximal_product} to $\gamma_2^{-1} \gamma_1^{-1}$ instead of $\gamma_1 \gamma_2$, we get in the same way the second line of Proposition~\ref{regular_product} (i).
\item Let $\phi$ be an optimal canonizing map for the pair $(A^{\subge}_{g}, A^{\suble}_{h})$. By hypothesis, we have $\left \|\phi^{\pm 1} \right\| \leq C$. But if we take $s_{\ref{regular_product}}(C)$ sufficiently small, the two inequalities that we have just shown, together with Lemma~\ref{continuity_of_non_degeneracy}, allow us to find a map $\phi'$ with $\|\phi'\| \leq 2$, $\|{\phi'}^{-1}\| \leq 2$ and
\[\phi' \circ \phi (A^{\subge}_{gh}, A^{\suble}_{gh}) = (A^\subge_0, A^\suble_0).\]
It follows that the composition map $gh$ is $2C$-non-degenerate.
\item The last inequality, namely Proposition~\ref{regular_product}~(ii), now is deduced from Proposition~\ref{proximal_product}~(ii) by using Lemma~\ref{regular_to_proximal}~(iii). \qedhere
\end{itemize}
\end{proof}

\section{Additivity of Margulis invariants}
\label{sec:additivity}

Proposition~\ref{invariant_additivity} below is the key ingredient of the proof of the Main Theorem. It explains how the Margulis invariant behaves under group operations (inverse and composition).

The first point is easy to prove, but still important. It is a generalization of Proposition~4.1~(i) in~\cite{Smi14}; as the general case is slightly harder, we have now given more details.

The proof of the second point occupies the remainder of this section. We prove it by reducing it successively to Lemma~\ref{gghg=} (which is proved using the technical lemma~\ref{C-bounded}), then to Lemma~\ref{close_to_identity}. The proof follows very closely that of Proposition~4.2~(ii) in~\cite{Smi14}, and we have actually omitted the proofs of Lemmas~\ref{C-bounded} and~\ref{close_to_identity}. We did repeat the proof of the proposition itself (to help the reader figure out precisely what is to be changed), as well as the proof of Lemma~\ref{gghg=} (to clear up a small confusion in the original proof: see Remark~\ref{transvection_screwup}).

\begin{proposition} \mbox{ }
\label{invariant_additivity}
\begin{hypothenum}
\item For every map $g \in G \ltimes V$ of type~$X_0$, we have
\[M(g^{-1}) = -w_0(M(g)).\]
\item For every $C \geq 1$, there are positive constants $s_{\ref{invariant_additivity}}(C) \leq 1$ and $k_{\ref{invariant_additivity}}(C)$ with the following property. Let $g, h \in G \ltimes V$ be a $C$-non-degenerate pair of maps of type~$X_0$, with $g^{\pm 1}$ and $h^{\pm 1}$ all $s_{\ref{invariant_additivity}}(C)$-contracting. Then $gh$ is of type~$X_0$, and we have:
\[\|M(gh) - M(g) - M(h)\| \leq k_{\ref{invariant_additivity}}(C).\]
\end{hypothenum}
\end{proposition}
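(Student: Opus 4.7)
\medskip

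\noindent\textbf{Plan for part (i).} The idea is purely algebraic. Since $X_0$ is symmetric, $-w_0(\jordan(g)) = \jordan(g^{-1})$ still has the same type as $X_0$, so $g^{-1}$ is of type $X_0$. The dynamical spaces are swapped: $A^\subge_{g^{-1}} = A^\suble_g$ and $A^\suble_{g^{-1}} = A^\subge_g$ (and likewise $V^\sube_{g^{-1}} = V^\sube_g$). Now fix a canonizing map $\phi$ for $g$, and pick a representative $\tilde{w}_0 \in N_G(A)$ of $w_0$. Since $\tilde{w}_0$ exchanges $V^\subge_0$ and $V^\suble_0$ (which translates via a choice of origin into a map exchanging $A^\subge_0$ and $A^\suble_0$), the composition $\tilde{w}_0\phi$ is a canonizing map for $g^{-1}$. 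Writing $g' := \phi g \phi^{-1}$, Proposition~\ref{V=_translation} says $g'|_{A^\sube_0}$ is a quasi-translation whose $V^\transl_0$-component is $M(g)$; hence $g'^{-1}|_{A^\sube_0}$ is a quasi-translation with $V^\transl_0$-component $-M(g)$. Conjugating by $\tilde{w}_0$ and projecting, we obtain $M(g^{-1}) = \tilde{w}_0(-M(g)) = -w_0(M(g))$; the final equality uses that $\tilde{w}_0$ normalizes $L$, so it preserves $V^\transl_0$ and its action there depends only on the coset $w_0 \in N_G(A)/Z_G(A)$.

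\medskip

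\noindent\textbf{Plan for part (ii): overall strategy.} By Proposition~\ref{regular_product}, for $s_{\ref{invariant_additivity}}(C)$ small enough, $gh$ is of type $X_0$, is $2C$-non-degenerate, and its affine dynamical spaces satisfy
\[
\alpha^\mathrm{Haus}\!\bigl(A^\subge_{gh},A^\subge_g\bigr) \lesssim_C s(g), \qquad \alpha^\mathrm{Haus}\!\bigl(A^\suble_{gh},A^\suble_h\bigr) \lesssim_C s(h^{-1}).
\]
The natural pair to canonize is $(A^\subge_g, A^\suble_h)$, which is still $O(C)$-non-degenerate; let $\phi$ be an optimal canonizing map for it. Set $\tilde g := \phi g\phi^{-1}$ and $\tilde h := \phi h \phi^{-1}$. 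Then $A^\subge_{\tilde g} = A^\subge_0$ and $A^\suble_{\tilde h} = A^\suble_0$, while $A^\suble_{\tilde g}$ and $A^\subge_{\tilde h}$ are merely \emph{close} (in the Hausdorff distance) to $A^\suble_0$ and $A^\subge_0$ respectively, with error $\lesssim_C s(g) + s(h)$. The plan is to compute $M(gh)$ by evaluating the action of $\widetilde{gh} = \tilde g \tilde h$ on a point of its affine neutral space, pushed inside $A^\sube_0$ by the projection along $V^\subg_0$ (or along $V^\subl_0$), and then to compare it to the sum of analogous expressions for $\tilde g$ and $\tilde h$.

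\medskip

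\noindent\textbf{Key steps.} The computation proceeds as in Section~4 of~\cite{Smi14}. First, Lemma~\ref{C-bounded} provides a pair of canonizing maps for $g$ and $h$, obtained by post-composing $\phi$ with maps close to identity, whose norms stay uniformly bounded; this allows us to replace $\tilde g$ and $\tilde h$ by conjugates $\tilde g_0, \tilde h_0$ whose \emph{all} dynamical spaces coincide with the reference ones, at the cost of pre- and post-composing with two maps $\psi_g, \psi_h \in G\ltimes V$ that are close to the identity (Lemma~\ref{close_to_identity}). The point is that $\tilde g_0$ and $\tilde h_0$ restrict to genuine quasi-translations on $A^\sube_0$, with $V^\transl_0$-components $M(g)$ and $M(h)$ respectively, so their composition has $V^\transl_0$-component $M(g) + M(h)$ (quasi-translations form a group and act trivially on $V^\transl_0$ by rotations). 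Lemma~\ref{gghg=} identifies the Margulis invariant of $gh$ with the $V^\transl_0$-component of this composition, up to an error controlled by $\|\psi_g - \Id\| + \|\psi_h - \Id\| \lesssim_C s(g) + s(h)$ combined with the boundedness of the relevant linear pieces. Absorbing all error terms into a constant $\eps_{\ref{invariant_additivity}}(C)$ (possible since $s(g), s(h) \leq s_{\ref{invariant_additivity}}(C) \leq 1$) gives the statement.

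\medskip

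\noindent\textbf{Main obstacle.} The delicate point is to keep track of the rotation part of the quasi-translations: small discrepancies in the geometry of the affine neutral spaces (measured by Hausdorff distance $\lesssim_C s(g) + s(h)$) translate, after composition, into potentially \emph{non-small} shifts in the $V^\rotat_0$ direction. The saving grace is the projection onto $V^\transl_0$: because the discrepancies between $A^\sube_{gh}$ and $A^\sube_0$ are realized by maps whose linear parts lie in $L_{X_0}$ (by Proposition~\ref{stabg=stabge} and Lemma~\ref{quasi-translation}), hence preserve the decomposition $V^\sube_0 = V^\transl_0 \oplus V^\rotat_0$, the $V^\transl_0$-component of the translation part is unaffected. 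Making this rigorous — essentially, that composition, projection and canonical identification commute up to quasi-translation — is the content of Lemma~\ref{projections_commute} used at the critical step. The rest of the argument is bookkeeping, carried out in close parallel with Section~4 of~\cite{Smi14}.
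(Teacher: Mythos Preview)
Your overall approach for both parts matches the paper's, and part~(i) is essentially right; the only omission there is that to get $M(g^{-1}) = \pi_\transl\bigl(\tilde{w}_0(-v)\bigr) = -\tilde{w}_0\bigl(\pi_\transl(v)\bigr)$ you need $\tilde{w}_0$ to commute with $\pi_\transl$, which requires it to preserve not only $V^\transl_0$ but also $V^\rotat_0$. The paper handles this by observing $N_G(A) \subset KA$, so $\tilde{w}_0$ acts orthogonally on $V^\sube_0 = V^0$ and hence preserves the orthogonal complement $V^\rotat_0$ of $V^\transl_0$.

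For part~(ii) your sketch is in the right spirit and invokes the right lemmas, but your ``Main obstacle'' paragraph misidentifies the mechanism. The projections that relate $A^\sube_{gh}$, $A^\sube_{g,gh}$, $A^\sube_g$ (the maps $\overline{P_1}, \overline{P_2}$ in the paper's diagram) do \emph{not} have linear parts in $L_{X_0}$, and they are \emph{not} quasi-translations; this is precisely why Lemma~\ref{close_to_identity} is needed. That lemma shows $\overline{P_1}$ is only \emph{approximately} a quasi-translation, with error $\lesssim_C s(\ell(g))$, and the crux of the argument (Lemma~\ref{gghg=}) is to show this error, when composed with $\restr{g}{A^\sube_g}$, stays bounded because $s(\ell(g))\cdot\|\restr{g}{A^\sube_g}\| \lesssim_C s(g) \leq 1$ (via Lemma~\ref{affine_to_vector}~(iii)). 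Your claim that the discrepancy maps automatically preserve the $V^\transl_0 \oplus V^\rotat_0$ decomposition short-circuits the actual difficulty. Relatedly, the paper does not canonize a single pair $(A^\subge_g, A^\suble_h)$; it builds a commutative diagram through the intermediate spaces $A^\sube_{hg}$, $A^\sube_{g,gh}$, $A^\sube_{hg,g}$ with separate canonizing maps $\phi_g, \phi_{gh}, \phi_{hg}, \phi_{g,gh}, \phi_{hg,g}$, and obtains an \emph{exact} splitting $M(gh) = M_{gh}(g) + M_{gh}(h)$ before estimating each summand.
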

\begin{remark}
\label{w0_action_works_again}
To justify the slight abuse of notations $w_0(M(g))$, recall Remark~\ref{w0_action_makes_sense}, that we may now reformulate as follows: $w_0$ induces a linear involution on~$V^\transl_0$ (which is the space of fixed points by~$L$), and this involution does not depend on the choice of a representative of~$w_0$ in~$G$.
\end{remark}

Let $C \geq 1$. We choose some positive constant $s_{\ref{invariant_additivity}}(C) \leq 1$, small enough to satisfy all the constraints that will appear in the course of the proof. For the remainder of this section, we fix $g, h \in G \ltimes V$ a $C$-non-degenerate pair of maps of type~$X_0$ such that $g^{\pm 1}$ and $h^{\pm 1}$ are $s_{\ref{invariant_additivity}}(C)$-contracting.

The following remark will be used throughout this section.
\begin{remark}
\label{ggh_gh_2C}
We may suppose that the pairs $(A^{\subge}_{gh}, A^{\suble}_{gh})$, $(A^{\subge}_{hg}, A^{\suble}_{hg})$, $(A^{\subge}_{g}, A^{\suble}_{gh})$ and $(A^{\subge}_{hg}, A^{\suble}_{g})$ are all $2C$-non-degenerate.
Indeed, recall that (by Proposition~\ref{regular_product}), we have
\[\begin{cases}
  \alpha^\mathrm{Haus} \left(A^{\subge}_{gh},\; A^{\subge}_{g} \right) \lesssim_C s(g) \vspace{1mm} \\
  \alpha^\mathrm{Haus} \left(A^{\suble}_{gh},\; A^{\suble}_{h} \right) \lesssim_C s(h^{-1})
\end{cases}\]
and similar inequalities with $g$ and $h$ interchanged. On the other hand, by hypothesis, $(A^{\subge}_{g}, A^{\suble}_{h})$ is $C$-non-degenerate. If we choose $s_{\ref{invariant_additivity}}(C)$ sufficiently small, these four statements then follow from Lemma~\ref{continuity_of_non_degeneracy}.
\end{remark}

\begin{proof}[Proof of Proposition~\ref{invariant_additivity}] \mbox{ }
\begin{hypothenum}
\item Let~$\phi$ be a canonizing map for~$g$. Since $V^{\subge}_{g^{-1}} = V^{\suble}_{g}$ and vice-versa (obviously) and since $V^\suble_0 = w_0 V^\subge_0$ and vice-versa (because $X_0$~is symmetric), it follows that $w_0 \phi$ is a canonizing map for~$g^{-1}$.

It remains to show that $w_0$ commutes with $\pi_{\transl}$. Indeed, it is well-known that the group~$W$, that we defined as the quotient~$N_G(A)/Z_G(A)$, is also equal to the quotient~$N_K(A)/Z_K(A)$ (see \cite{Kna96}, formulas (7.84a) and~(7.84b)); hence
\begin{equation}
N_G(A) = W Z_G(A) = W Z_K(A) A = N_K(A) A \subset K A.
\end{equation}
Let $\tilde{w}_0$ be any representative of~$w_0$ in~$N_G(A)$. We already know that both $V^\sube_0 = V^0$ (Remark~\ref{W_action_makes_sense}) and~$V^\transl_0$ (Remark~\ref{w0_action_works_again}) are invariant by~$\tilde{w}_0$. Now by definition the group~$A$ acts trivially on~$V^0$, and by construction $K$~acts on~$V^0$ by orthogonal transformations (indeed the Euclidean structure was chosen in accordance with Lemma~\ref{K-invariant}); hence~$V^\rotat_0$, which is the orthogonal complement of~$V^\transl_0$ in~$V^0$, is also invariant by~$\tilde{w}_0$.

The desired formula now immediately follows from the definition of the Margulis invariant.

\item The proof of this point is a straightforward generalization of the proof of Proposition~4.1~(ii) in~\cite{Smi14}.

If we take $s_{\ref{invariant_additivity}}(C) \leq s_{\ref{regular_product}}(C)$, then Proposition~\ref{regular_product} ensures that $gh$ is of type~$X_0$.

To estimate $M(gh)$, we decompose $gh: A^{\sube}_{gh} \to A^{\sube}_{gh}$ into a product of several maps.

\begin{itemize}
\item We begin by decomposing the product $gh$ into its factors. We have the commutative diagram
\begin{equation}
\begin{tikzcd}
  A^{\sube}_{gh}
& 
& 
& A^{\sube}_{hg}
    \arrow{lll}{g}
& 
& 
& A^{\sube}_{gh}
    \arrow{lll}{h}
    \arrow[bend right]{llllll}[swap]{gh}
\end{tikzcd}
\end{equation}
Indeed, since $hg$ is the conjugate of $gh$ by $h$ and vice-versa, we have $h(A^{\sube}_{gh}) = A^{\sube}_{hg}$ and $g(A^{\sube}_{hg}) = A^{\sube}_{gh}$.
\item Next we factor the map $g: A^{\sube}_{hg} \to A^{\sube}_{gh}$ through the map $g: A^{\sube}_{g} \to A^{\sube}_{g}$, which is better known to us. We have the commutative diagram
\begin{equation}
\begin{tikzcd}
  A^{\sube}_{gh}
    \arrow{rdd}[swap]{\pi_g}
&
&
& A^{\sube}_{hg}
    \arrow{lll}{g}
    \arrow{ldd}{\pi_g}
\\
\\
& A^{\sube}_{g}
& A^{\sube}_{g}
    \arrow{l}{g}
&
\end{tikzcd}
\end{equation}
where $\pi_g$ is the projection onto $A^{\sube}_{g}$ parallel to $V^{\subg}_{g} \oplus V^{\subl}_{g}$. (It commutes with $g$ because $A^{\sube}_{g}$, $V^{\subg}_{g}$ and $V^{\subl}_{g}$ are all invariant by $g$.)
\item Finally, we decompose again both arrows labelled $\pi_g$ on the last diagram into two factors. For any two maps $u$ and $v$ of type~$X_0$, we introduce the notation
\[A^{\sube}_{u, v} := A^{\subge}_{u} \cap A^{\suble}_{v}.\]
We call $P_1$ (resp. $P_2$) the projection onto $A^{\sube}_{g, gh}$ (resp. $A^{\sube}_{hg, g}$), still parallel to~$V^{\subg}_{g} \oplus V^{\subl}_{g}$. To justify this definition, we must check that $A^{\sube}_{g, gh}$ (and similarly~$A^{\sube}_{hg, g}$) is supplementary to $V^{\subg}_{g} \oplus V^{\subl}_{g}$. Indeed, by Remark~\ref{ggh_gh_2C}, $A^{\suble}_{gh}$ is transverse to $A^{\subge}_{g}$, hence (by Proposition~\ref{Asubge_is_ampa}~(ii)) supplementary to $V^{\subg}_{g}$; thus $A^{\subge}_{g} = V^{\subg}_{g} \oplus A^{\sube}_{g, gh}$ and $A = V^{\subl}_{g} \oplus A^{\subge}_{g} = V^{\subl}_{g} \oplus V^{\subg}_{g} \oplus A^{\sube}_{g, gh}$. Then we have the commutative diagrams
\begin{subequations}
\begin{equation}
\begin{tikzcd}
  A^{\sube}_{gh}
    \arrow[bend right]{rr}[swap]{\pi_g}
    \arrow{r}{P_1}
& A^{\sube}_{g, gh}
    \arrow{r}{\pi_g}
& A^{\sube}_{g}
\end{tikzcd}
\end{equation}
and
\begin{equation}
\begin{tikzcd}
  A^{\sube}_{hg}
    \arrow[bend right]{rr}[swap]{\pi_g}
    \arrow{r}{P_2}
& A^{\sube}_{hg, g}
    \arrow{r}{\pi_g}
& A^{\sube}_{g}
\end{tikzcd}
\end{equation}
\end{subequations}
\end{itemize}

The second and third step can be repeated with $h$ instead of $g$. The way to adapt the second step is straightforward; for the third step, we factor $\pi_h: A^{\sube}_{hg} \to A^{\sube}_{h}$ through $A^{\sube}_{h, hg}$ and $\pi_h: A^{\sube}_{gh} \to A^{\sube}_{h}$ through $A^{\sube}_{gh, h}$.

Combining these three decompositions, we get the lower half of Diagram~\ref{fig:largediagcomm1}. (We left out the expansion of $h$; we leave drawing the full diagram for especially patient readers.) Let us now interpret all these maps as endomorphisms of $A^\sube_0$. To do this, we choose some optimal canonizing maps
\[\phi_{g},\; \phi_{gh},\; \phi_{hg},\; \phi_{g, gh},\; \phi_{hg, g}\]
respectively of $g$, of $gh$, of $hg$, of the pair $(A^{\subge}_{g}, A^{\suble}_{gh})$ and of the pair $(A^{\subge}_{hg}, A^{\suble}_{g})$. This allows us to define $\overline{g_{gh}}$, $\overline{h_{gh}}$, $\overline{g_{g, gh}}$, $\overline{g_{\sube}}$, $\overline{P_1}$, $\overline{P_2}$, $\overline{\psi_1}$, $\overline{\psi_2}$ to be the maps that make the whole Diagram~\ref{fig:largediagcomm1} commutative.

\begin{figure}
\renewcommand{\figurename}{Diagram}
\[
\begin{tikzcd}
  A^\sube_0
    \arrow{rdd}[swap]{\overline{P_1}}
&
&
&
&
&
& A^\sube_0
    \arrow{ldd}{\overline{P_2}}
    \arrow{llllll}{\overline{g_{gh}}}
& A^\sube_0
    \arrow{l}{\overline{h_{gh}}}
\\
\\
&
  A^\sube_0
    \arrow{rdd}[swap]{\overline{\psi_1}}
&
&
&
& A^\sube_0
    \arrow{ldd}{\overline{\psi_2}}
    \arrow{llll}{\overline{g_{g, gh}}}
&
&
\\
\\
&
& A^\sube_0
&
& A^\sube_0
	\arrow{ll}{\overline{g_{\sube}}}
&
&
&
\\
\\
  A^{\sube}_{gh}
    \arrow{rdd}{P_1}
    \arrow[dashed]{uuuuuu}[pos=0.13]{\phi_{gh}}
&
&
&
&
&
& A^{\sube}_{hg}
    \arrow{ldd}[swap]{P_2}
    \arrow{llllll}{g}
    \arrow[dashed]{uuuuuu}[pos=0.13]{\phi_{hg}}
& A^{\sube}_{gh}
    \arrow{l}{h}
    \arrow[dashed]{uuuuuu}[pos=0.13]{\phi_{gh}}
\\
\\
&
  A^{\sube}_{g, gh}
    \arrow{rdd}{\pi_{g}}
    \arrow[dashed]{uuuuuu}{\phi_{g, gh}}
&
&
&
& A^{\sube}_{hg, g}
    \arrow{ldd}[swap]{\pi_{g}}
    \arrow[dashed]{uuuuuu}{\phi_{hg, g}}
&
&
\\
\\
&
& A^{\sube}_{g}
    \arrow[dashed]{uuuuuu}[pos=0.86]{\phi_{g}}
&
& A^{\sube}_{g}
	\arrow{ll}{g}
    \arrow[dashed]{uuuuuu}[pos=0.86]{\phi_{g}}
&
&
&
\end{tikzcd}
\]
\caption{}
\label{fig:largediagcomm1}
\end{figure}

Now let us define
\begin{equation}
\begin{cases}
M_{gh}(g) := \pi_{\transl}(\overline{g_{gh}}(x) - x) \\
M_{gh}(h) := \pi_{\transl}(\overline{h_{gh}}(x) - x)
\end{cases}
\end{equation}
for any $x \in V^\sube_{\Aff, 0}$, where $V^\sube_{\Aff, 0} := A^\sube_0 \cap V_{\Aff}$ is the affine space parallel to~$V^\sube_0$ and passing through the origin. Since $gh$ is the conjugate of $hg$ by~$g$ and vice-versa, the elements of $G \ltimes V$ (defined in an obvious way) whose restrictions to $A^\sube_0$ are $\overline{g_{gh}}$ and~$\overline{h_{gh}}$ stabilize the spaces $A^\subge_0$ and $A^\suble_0$. By Lemma~\ref{quasi-translation}, $\overline{g_{gh}}$ and~$\overline{h_{gh}}$ are thus quasi-translations. It follows that these values $M_{gh}(g)$ and~$M_{gh}(h)$ do not depend on the choice of $x$. Compare this to the definition of a Margulis invariant (Definition~\ref{margulis_invariant}): we have $M(gh) = \pi_{\transl}(\overline{g_{gh}} \circ \overline{h_{gh}}(x) - x)$ for any $x \in V^\sube_{\Aff, 0}$. It immediately follows that
\begin{equation}
M(gh) = M_{gh}(g) + M_{gh}(h).
\end{equation}
We may now estimate each of the two terms separately: if we show that $\|M_{gh}(g) - M(g)\| \lesssim_C 1$ and~${\|M_{gh}(h) - M(h)\| \lesssim_C 1}$, we are done. These two estimates follow immediately from Lemma~\ref{gghg=} below. (Note that while the vectors $M_{gh}(g)$ and $M_{gh}(h)$ are elements of $V^\transl_0$, the maps $\overline{g_{gh}}$ and $\overline{h_{gh}}$ are extended affine isometries acting on the whole subspace $A^\sube_0$.) \qedhere
\end{hypothenum}
\end{proof}
\begin{remark}
In contrast to actual Margulis invariants, the values $M_{gh}(g)$ and $M_{gh}(h)$ \emph{do} depend on our choice of canonizing maps. Choosing other canonizing maps would force us to subtract some constant from the former and add it to the latter.
\end{remark}

\begin{definition}
We shall say that a linear bijection $f$ between two subspaces of the extended affine space~$A$
is \emph{$K(C)$-bounded} if it is bounded by a constant depending only on $C$, that is, $\|f\| \lesssim_C 1$ and $\|f^{-1}\| \lesssim_C 1$. We say that two automorphisms $f_1, f_2$ of $A^\sube_0$ (depending somehow on $g$ and $h$) are \emph{$K(C)$-almost equivalent}, and we write $f_1 \approx_C f_2$, if they satisfy the condition
\[\|f_1 - \xi \circ f_2 \circ \xi'\| \lesssim_C 1\]
for some $K(C)$-bounded quasi-translations $\xi, \xi'$. This is indeed an equivalence relation.
\end{definition}

\begin{lemma}
\label{gghg=}
The maps $\overline{g_{gh}}$ and $\overline{h_{gh}}$ are $K(C)$-almost equivalent to $\overline{g_{\sube}}$ and $\overline{h_{\sube}}$, respectively.
\end{lemma}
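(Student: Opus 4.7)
The plan is to use the commutative Diagram~\ref{fig:largediagcomm1} to write $\overline{g_{gh}}$ as a composition that threads through $\overline{g_{\sube}}$, and then to show that each intermediate connecting map is either an exact quasi-translation or a $K(C)$-bounded perturbation of the identity. Concretely, commutativity of the top two squares in the diagram gives $\overline{g_{gh}} = \overline{P_1}^{-1} \circ \overline{g_{g,gh}} \circ \overline{P_2}$, and commutativity of the next two squares gives $\overline{g_{g,gh}} = \overline{\psi_1}^{-1} \circ \overline{g_{\sube}} \circ \overline{\psi_2}$. So it suffices to show that each of $\overline{\psi_1}, \overline{\psi_2}, \overline{P_1}, \overline{P_2}$ is $K(C)$-almost equivalent to the identity; composing, this yields $\overline{g_{gh}} \approx_C \overline{g_{\sube}}$.

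For $\overline{\psi_1}$ and $\overline{\psi_2}$: these arise from the projections $\pi_g : A^\sube_{g,gh} \to A^\sube_g$ and $\pi_g : A^\sube_{hg,g} \to A^\sube_g$ parallel to $V^\subg_g$. These projections are exactly of the form handled by Lemma~\ref{projections_commute}, taking $A_1 = A^\subge_g$ and letting $A_2, A'_2$ be the relevant two transverse affine parabolic partners among $A^\suble_g$, $A^\suble_{gh}$, $A^\suble_{hg}$ (whose transversality to $A^\subge_g$ is guaranteed by Remark~\ref{ggh_gh_2C}). Hence $\overline{\psi_1}$ and $\overline{\psi_2}$ are \emph{exact} quasi-translations, and the fact that they are moreover $K(C)$-bounded (so that $\overline{g_{\sube}} \approx_C \overline{\psi_1}^{-1} \circ \overline{g_{\sube}} \circ \overline{\psi_2}$) will be packaged as the content of Lemma~\ref{C-bounded}: since all canonizing maps $\phi_g, \phi_{g,gh}, \phi_{hg,g}, \phi_{gh}, \phi_{hg}$ are $K(C)$-bounded by Remark~\ref{ggh_gh_2C}, so are all the maps obtained by conjugating a bounded projection with them.

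For $\overline{P_1}$ and $\overline{P_2}$: the projections $P_1 : A^\sube_{gh} \to A^\sube_{g,gh}$ and $P_2 : A^\sube_{hg} \to A^\sube_{hg,g}$ (parallel to $V^\subg_g \oplus V^\subl_g$) do not fall directly under Lemma~\ref{projections_commute}. Instead, the idea is that they are \emph{close} to being restrictions of the identity. Indeed, $A^\sube_{gh}$ and $A^\sube_{g,gh}$ both sit inside $A^\suble_{gh}$, and Proposition~\ref{regular_product}~(i) tells us that $A^\subge_{gh}$ is $\lesssim_C s(g)$-close to $A^\subge_g$, so $A^\sube_{gh}$ is correspondingly close to $A^\sube_{g,gh}$; hence $P_1$ restricted to $A^\sube_{gh}$ differs from the canonical inclusion-then-identification by an amount bounded in terms of $s(g)$, hence bounded by $1$ (since $s_{\ref{invariant_additivity}}(C) \leq 1$). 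This quantitative statement is the content of Lemma~\ref{close_to_identity}; transporting it through the $K(C)$-bounded canonizing maps $\phi_{gh}, \phi_{g,gh}$ then shows $\overline{P_1}$ is $K(C)$-bounded and close to the identity, i.e.\ $K(C)$-almost equivalent to it. The same argument with $g$ and $h$ interchanged handles $\overline{P_2}$, and, by the analogous decomposition of $h : A^\sube_{gh} \to A^\sube_{hg}$, also $\overline{h_{gh}} \approx_C \overline{h_{\sube}}$.

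The main obstacle will be the bookkeeping in the $\overline{P_i}$ step: one must carefully verify that ``projection parallel to $V^\subg_g \oplus V^\subl_g$'' (kernel containing both the expanding and contracting directions of $g$) really does act close to the identity on $A^\sube_{gh}$, despite $A^\sube_{gh}$ being a priori tilted away from $A^\sube_{g,gh}$ in all of these directions. The quantitative non-degeneracy from Remark~\ref{ggh_gh_2C} and the closeness estimates from Proposition~\ref{regular_product}~(i) are precisely what make this work, but translating them into a bound of the form $\|\overline{P_1} - \mathrm{Id}\| \lesssim_C 1$ is where the technical weight of Lemma~\ref{close_to_identity} lies. Once that is in hand, the composition immediately gives the desired $K(C)$-almost equivalences.
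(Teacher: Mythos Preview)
Your decomposition is correct and your treatment of $\overline{\psi_1}, \overline{\psi_2}$ via Lemma~\ref{projections_commute} matches the paper's. The gap is in the $\overline{P_i}$ step. You reduce to showing each $\overline{P_i}$ is $K(C)$-almost equivalent to the identity and then ``compose''; but the relation $\approx_C$ does not propagate through composition with a map of unbounded norm, and $\overline{g_{g,gh}}$ (equivalently $\overline{g_{\sube}}$) has norm comparable to $\left\|\restr{g}{A^\sube_g}\right\|$, which is not $K(C)$-bounded. Concretely, if $\overline{P_i} = \xi_i + E_i$ with $\|E_i\| \lesssim_C 1$, then the error in $\overline{P_1}^{-1} \overline{g_{g,gh}} \overline{P_2}$ relative to $\xi_1^{-1} \overline{g_{g,gh}} \xi_2$ picks up a factor $\|\overline{g_{g,gh}}\|$, which you have no control over. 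Relatedly, you weaken the conclusion of Lemma~\ref{close_to_identity} from $\left\|\restr{(\overline{P_1} - \overline{P''_1})}{V^\sube_0}\right\| \lesssim_C s(\ell(g))$ to merely $\lesssim_C 1$; this throws away exactly the smallness needed.

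The paper repairs this with two extra moves. First, it composes the canonizing maps on the right with $\ell(\overline{g_{gh}})$ and $\ell(\overline{g_{g,gh}})$ so that the modified horizontal maps $\overline{g'_{gh}}$ and $\overline{g'_{g,gh}}$ become honest \emph{translations}; then, writing $\overline{g'_{gh}} = (\overline{P_1}^{-1}\overline{g'_{g,gh}}\overline{P_1})(\overline{P_1}^{-1}\overline{P'_2})$, the factor $\overline{P_1}^{-1}\overline{P'_2}$ is forced to be a translation as well, and being $K(C)$-bounded it is a $K(C)$-bounded quasi-translation that can be absorbed. Second, for the remaining conjugation $\overline{P_1}^{-1}\overline{g'_{g,gh}}\overline{P_1}$ versus $\overline{P''_1}^{-1}\overline{g'_{g,gh}}\overline{P''_1}$, the paper keeps the sharp bound $\lesssim_C s(\ell(g))$ from Lemma~\ref{close_to_identity} and multiplies it against $\|v\| \lesssim_C \left\|\restr{g}{A^\sube_g}\right\|$, invoking Lemma~\ref{affine_to_vector}~(iii) to get $s(\ell(g)) \left\|\restr{g}{A^\sube_g}\right\| \lesssim_C s(g) \leq 1$. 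Your sketch omits both the translation trick and this balancing, and without them the argument does not close.
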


To show this, we use the following property:
\begin{lemma}
\label{C-bounded}
All the non-horizontal (\ie vertical or diagonal) arrows in Diagram~\ref{fig:largediagcomm1} represent $K(C)$-bounded, bijective maps.
\end{lemma}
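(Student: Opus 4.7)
The plan is to split the non-horizontal arrows into three groups --- the dashed canonizing maps, the solid projections in the bottom half, and the barred arrows in the top half --- and treat each group uniformly.

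First, for the five dashed canonizing arrows, boundedness and bijectivity are immediate from non-degeneracy: the pair $(A^\subge_g, A^\suble_g)$ is $C$-non-degenerate by hypothesis, and the four pairs $(A^\subge_{gh}, A^\suble_{gh})$, $(A^\subge_{hg}, A^\suble_{hg})$, $(A^\subge_g, A^\suble_{gh})$, $(A^\subge_{hg}, A^\suble_g)$ are $2C$-non-degenerate by Remark~\ref{ggh_gh_2C}. So each dashed arrow and its inverse has norm at most $2C$.

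Second, for the solid projections $\pi_g, P_1, P_2$ (and their $h$-analogues) in the bottom half, I would use the following uniform trick. For each such arrow, choose an appropriate dashed canonizing map $\phi$ (already bounded above), and conjugate the projection by $\phi$. Because $\phi$ sends each relevant affine parabolic space to its reference $A^\subge_0$ or $A^\suble_0$, and (by Proposition~\ref{Asubge_is_ampa}~(ii)) sends the uniquely-determined linear subspaces $V^\subg_?$ and $V^\subl_?$ to $V^\subg_0$ and $V^\subl_0$ respectively, the conjugated projection has its image and kernel supported on the canonical subspaces $V^\subg_0, V^\sube_0, V^\subl_0, \mathbb{R}_0$; these are pairwise orthogonal by our choice of Euclidean structure, so the conjugated projection is orthogonal (norm $\leq 1$), and the original projection is bounded by $\|\phi\|\|\phi^{-1}\| \leq (2C)^2$. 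Concretely, use $\phi_g$ for $\pi_g$, $\phi_{g,gh}$ for $P_1$, $\phi_{hg,g}$ for $P_2$, and similarly for the $h$-analogues. Bijectivity of each restriction follows from a dimension count using that the source is supplementary to the kernel --- this is exactly the transversality verified in the paragraph defining $P_1$ and $P_2$.

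Third, the barred arrows $\overline{P_i}, \overline{\psi_i}$ in the upper half are, by the very definition of the commutative Diagram~\ref{fig:largediagcomm1}, conjugates of the bottom-half arrows by dashed canonizing maps; hence they are compositions of $K(C)$-bounded bijective maps, and thus $K(C)$-bounded bijective themselves.

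The main obstacle lies in the projections $P_1, P_2$: the kernel direction is $V^\subg_g \oplus V^\subl_g$, but $\phi_{g,gh}$ is a canonizing map for the pair $(A^\subge_g, A^\suble_{gh})$, so it sends $V^\subg_g \to V^\subg_0$ and $V^\subl_{gh} \to V^\subl_0$, not $V^\subl_g$ to $V^\subl_0$. The resolution is to observe that by Corollary~\ref{vector_spaces_estimate} (applied, in its second form, to the pair $(g,h)$), the space $V^\subl_{gh}$ is close in projective Hausdorff distance to $V^\subl_g$ when $s(g^{\pm 1}), s(h^{\pm 1})$ are sufficiently small; hence $\phi_{g,gh}(V^\subl_g)$ is close to $V^\subl_0$. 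Lemma~\ref{continuity_of_non_degeneracy}-style continuity then gives that after conjugation, the kernel makes a bounded angle with $V^\subg_0 \oplus V^\subl_0$, so the restricted projection is still $K(C)$-bounded and bijective.
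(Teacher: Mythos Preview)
Your overall scheme --- dashed arrows from non-degeneracy, solid projections by conjugating to the reference decomposition, barred arrows as compositions --- is the right one, and parts~1 and~3 are fine.  The gap is in your resolution of the obstacle for $P_1$ (and symmetrically $P_2$).

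You claim that Corollary~\ref{vector_spaces_estimate} gives $V^{\subl}_{gh}$ close to $V^{\subl}_g$.  It does not: its second line gives $V^{\suble}_{gh}$ close to $V^{\suble}_h$, hence $V^{\subl}_{gh}$ close to $V^{\subl}_h$.  Since $V^{\subl}_g$ and $V^{\subl}_h$ are in general unrelated, your conclusion that $\phi_{g,gh}(V^{\subl}_g)$ is close to $V^{\subl}_0$ is simply false.  (Concretely: $\phi_{g,gh}$ sends $V^{\subl}_{gh}$ to $V^{\subl}_0$, and $V^{\subl}_g$ is generically far from $V^{\subl}_{gh}$.)

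The fix is that you do not need closeness at all; you only need a \emph{bounded angle} between the kernel and the image, and this comes from a different source.  Observe that $A^{\sube}_{g,gh} \subset A^{\subge}_g$, so you can factor the full projection as: first project onto $A^{\subge}_g$ parallel to $V^{\subl}_g$, then (inside $A^{\subge}_g$) project onto $A^{\sube}_{g,gh}$ parallel to $V^{\subg}_g$.  The first factor becomes orthogonal after conjugating by $\phi_g$; the second becomes orthogonal after conjugating by $\phi_{g,gh}$.  Both conjugators are $\leq 2C$-bounded, so each factor --- and hence $P_1$ --- is $K(C)$-bounded.

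One further point you gloss over: bijectivity of the restriction $P_1|_{A^{\sube}_{gh}}$ together with boundedness of the ambient projection does \emph{not} automatically give a $K(C)$-bounded inverse.  You also need that the domain $A^{\sube}_{gh}$ makes a $K(C)$-bounded angle with the kernel $V^{\subg}_g \oplus V^{\subl}_g$.  The cleanest way is to note that $\pi_g \circ P_1 = \pi_g$ on $A^{\sube}_{gh}$, so $P_1 = (\pi_g|_{A^{\sube}_{g,gh}})^{-1} \circ (\pi_g|_{A^{\sube}_{gh}})$, and then bound each restricted $\pi_g$ and its inverse separately (using $\phi_g$ for the kernel direction and $\phi_{gh}$, resp.\ $\phi_{g,gh}$, for the source).
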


Note that Lemma~\ref{C-bounded} alone does not imply Lemma~\ref{gghg=}: indeed, while the maps $\overline{\psi}_1$ and $\overline{\psi}_2$ are quasi-translations by Lemma~\ref{projections_commute}, the maps $\overline{P}_1$ and $\overline{P}_2$ need not be. This issue will be addressed in Lemma~\ref{close_to_identity}. 

\begin{proof}[Proof of Lemma~\ref{C-bounded}]
The proof is exactly the same as the proof of Lemma~4.6 in~\cite{Smi14}, \emph{mutatis mutandis}.
\end{proof}

\begin{proof}[Proof of Lemma~\ref{gghg=}]
We shall concentrate on the estimate $\overline{g_{gh}} \approx_C \overline{g_{\sube}}$; the proof of the estimate $\overline{h_{gh}} \approx_C \overline{h_{\sube}}$ is analogous.

We now use Lemma~\ref{projections_commute} which shows that canonical identifications commute up to quasi-translation with suitable projections; it implies that the maps $\overline{\psi_1}$ and~$\overline{\psi_2}$ are quasi-translations. Hence $\overline{g_{g, gh}}$ is also a quasi-translation.

We would like to pretend that $\overline{g_{gh}}$ and $\overline{g_{g, gh}}$ are actually translations. To do that, we modify slightly the upper right-hand corner of Diagram~\ref{fig:largediagcomm1}. We set
\begin{equation}
\begin{cases}
\phi'_{hg} := \ell(\overline{g_{gh}}) \circ \phi_{hg} \\
\phi'_{hg, g} := \ell(\overline{g_{g, gh}}) \circ \phi_{hg, g},
\end{cases}
\end{equation}
where $\ell$ stands for the linear part as defined in Section~\ref{sec:affine_to_linear}, and we define $\overline{P'_2}$, $\overline{\psi'_2}$, $\overline{g'_{gh}}$,~$\overline{g'_{g, gh}}$ so as to make the new diagram commutative (see Diagram~\ref{fig:largediagcomm2}). The factors $\ell(\overline{g_{gh}})$ and~$\ell(\overline{g_{g, gh}})$ we introduced (the short horizontal arrows in Diagram~\ref{fig:largediagcomm2}) have norm~$1$: indeed, being quasi-translations of $A^\sube_0$ fixing $p_0$, they are orthogonal linear transformations (by Lemma~\ref{quasi-translation}). Thus Lemma~\ref{C-bounded} still holds for Diagram~\ref{fig:largediagcomm2}; but now, the modified maps $\overline{g'_{gh}}$ and $\overline{g'_{g, gh}}$ are translations by construction.

\begin{figure}[t]
\renewcommand{\figurename}{Diagram}
\[
\definecolor{mygray}{gray}{0.7}
\begin{tikzcd}
  A^\sube_0
    \arrow{rdd}[swap]{\overline{P_1}}
&
&
&
&
& A^\sube_0
    \arrow{ldd}[swap]{\overline{P'_2}}
    \arrow{lllll}{\overline{g'_{gh}}}
& {\color{mygray}A^\sube_0}
    \arrow[color=mygray]{ldd}[color=mygray]{\overline{P_2}}
    \arrow[dashed]{l}{\ell(\overline{g_{gh}})}
&
\\
\\
&
  A^\sube_0
    \arrow{rdd}[swap]{\overline{\psi_1}}
&
&
& A^\sube_0
    \arrow{dd}[swap]{\overline{\psi'_2}}
    \arrow{lll}{\overline{g'_{g, gh}}}
& {\color{mygray}A^\sube_0}
    \arrow[color=mygray]{ldd}[color=mygray]{\overline{\psi_2}}
    \arrow[dashed]{l}{\ell(\overline{g_{g, gh}})}
&
&
\\
\\
&
& A^\sube_0
&
& A^\sube_0
	\arrow{ll}{\overline{g_{\sube}}}
&
&
&
\\
\\
  A^{\sube}_{gh}
    \arrow{rdd}{P_1}
    \arrow[dashed]{uuuuuu}[pos=0.13]{\phi_{gh}}
&
&
&
&
&
& A^{\sube}_{hg}
    \arrow{ldd}[swap]{P_2}
    \arrow{llllll}{g}
    \arrow[dashed]{luuuuuu}[pos=0.17]{\phi'_{hg}}
    \arrow[dashed, color=mygray]{uuuuuu}[swap, pos=0.13, color=mygray]{\phi_{hg}}
&
\\
\\
&
  A^{\sube}_{g, gh}
    \arrow{rdd}{\pi_{g}}
    \arrow[dashed]{uuuuuu}{\phi_{g, gh}}
&
&
&
& A^{\sube}_{hg, g}
    \arrow{ldd}[swap]{\pi_{g}}
    \arrow[dashed]{luuuuuu}[swap, pos=0.45]{\phi'_{hg, g}}
    \arrow[dashed, color=mygray]{uuuuuu}[swap, color=mygray]{\phi_{hg, g}}
&
&
\\
\\
&
& A^{\sube}_{g}
    \arrow[dashed]{uuuuuu}[pos=0.86]{\phi_{g}}
&
& A^{\sube}_{g}
	\arrow{ll}{g}
    \arrow[dashed]{uuuuuu}[pos=0.86]{\phi_{g}}
&
&
&
\end{tikzcd}
\]
\caption{}
\label{fig:largediagcomm2}
\end{figure}

We may write:
\begin{equation}
\overline{g'_{gh}} = (\overline{P_1}^{-1} \circ \overline{g'_{g, gh}} \circ \overline{P_1}) \circ (\overline{P_1}^{-1} \circ \overline{P'_2}).
\end{equation}
Then, since $\overline{g'_{gh}}$ and $\overline{g'_{g, gh}}$ are translations, $\overline{P_1}^{-1} \circ \overline{P'_2}$ is also a translation. By Lemma~\ref{C-bounded} (applied to Diagram~\ref{fig:largediagcomm2}), it is the composition of two $K(C)$-bounded maps, hence $K(C)$-bounded. Thus we have
\begin{equation}
\overline{g'_{gh}} \approx_C \overline{P_1}^{-1} \circ \overline{g'_{g, gh}} \circ \overline{P_1}.
\end{equation}
Since $\ell(\overline{g_{gh}})$, $\ell(\overline{g_{g, gh}})$, $\overline{\psi_1}$ and $\overline{\psi_2}$ are $K(C)$-bounded quasi-translations, $\overline{g_{gh}}$ is $K(C)$-almost equivalent to $\overline{g'_{gh}}$ and $\overline{g_{\sube}}$ is $K(C)$-almost equivalent to $\overline{g'_{g, gh}}$. It remains to check that the map $\overline{g'_{g, gh}}$ is $K(C)$-almost equivalent to its conjugate $\overline{P_1}^{-1} \circ \overline{g'_{g, gh}} \circ \overline{P_1}$.

This follows from Lemma~\ref{close_to_identity} below. Indeed, let $\overline{P''_1}$ be the quasi-translation constructed in Lemma~\ref{close_to_identity}. Let $v \in V^\sube_0$ be the translation vector of $\overline{g'_{g, gh}}$, so that
\begin{equation}
\overline{g'_{g, gh}} =: \tau_v.
\end{equation}
Keep in mind that while we call the map $\tau_v$ a ``translation'', it is formally a transvection: its matrix in a suitable basis is $\bigl( \begin{smallmatrix} \Id & v \\ 0 & 1 \end{smallmatrix} \bigr)$. Then we have
\begin{align}
\left\| \overline{P_1}^{-1} \circ \overline{g'_{g, gh}} \circ \overline{P_1}
 - \overline{P''_1}^{-1} \circ \overline{g'_{g, gh}} \circ \overline{P''_1} \right\|
&= \left\| \tau_{\overline{P_1}^{-1}(v)} - \tau_{\overline{P''_1}^{-1}(v)} \right\| \nonumber \\
&= \left\| \overline{P_1}^{-1}(v) - \overline{P''_1}^{-1}(v) \right\| \nonumber \\
&\leq \left\| \restr{(\overline{P_1}^{-1} - \overline{P''_1}^{-1})}{V^\sube_0} \right\| \| v \|
\end{align}
(as $v \in V^\sube_0$).

\begin{remark}
\label{transvection_screwup}
While the corresponding calculation in~\cite{Smi14} does not technically contain any explicit falsehoods (the inequality just happens to be slightly weaker than what it should be), it implicitly relies on the false ``identity'' $\tau_u - \tau_v = \tau_{u-v}$. Here we have corrected this confusion.
\end{remark}

Now by Lemma~\ref{quasi-translation}, we know that the quasi-translation $\overline{P''_1}$ restricted to $V^\sube_0$ is a linear map preserving the Euclidean norm. We also know that the map $\rho \mapsto \rho^{-1}$ (defined on~$\GL(V^\sube_0)$) is Lipschitz-continuous on a neighborhood of the orthogonal group (which is compact). Finally, by Lemma~\ref{affine_to_vector}, $s(\ell(g))$ does not exceed $s(g)$ which is by hypothesis smaller than or equal to $s_{\ref{invariant_additivity}}(C)$. Taking $s_{\ref{invariant_additivity}}(C)$ small enough, we may deduce from Lemma~\ref{close_to_identity} that
\begin{equation}
\left \| \restr{(\overline{P_1}^{-1} - \overline{P''_1}^{-1})}{V^\sube_0} \right \| \lesssim_C s(\ell(g)).
\end{equation}
On the other hand, we have $\|v\| \leq \|\tau_v\| = \left\| \overline{g'_{g, gh}} \right\| \lesssim_C \left\| \restr{g}{A^\sube_g} \right\|$, since $\overline{g'_{g, gh}}$ is the composition of $\restr{g}{A^\sube_g}$ with several $K(C)$-bounded maps. It follows that
\begin{equation}
\left\| \overline{P_1}^{-1} \circ \overline{g'_{g, gh}} \circ \overline{P_1}
 - \overline{P''_1}^{-1} \circ \overline{g'_{g, gh}} \circ \overline{P''_1} \right\|
\lesssim_C s(\ell(g)) \left\|\restr{g}{A^\sube_g}\right\|.
\end{equation}
By Lemma~\ref{affine_to_vector} (iii), we have $s(\ell(g)) \left\|\restr{g}{A^\sube_g}\right\| \lesssim_C s(g)$; and we know that $s(g) \leq 1$. Finally we get
\begin{equation}
\left\| \overline{P_1}^{-1} \circ \overline{g'_{g, gh}} \circ \overline{P_1}
 - \overline{P''_1}^{-1} \circ \overline{g'_{g, gh}} \circ \overline{P''_1} \right\|
\lesssim_C 1.
\end{equation}
To complete the proof of Lemma~\ref{gghg=}, and hence also the proof of Proposition~\ref{invariant_additivity}, it remains only to prove Lemma~\ref{close_to_identity}.
\end{proof}

\begin{lemma}
\label{close_to_identity}
The linear part of the map $\overline{P_1}$ is ``almost'' a quasi-translation. More precisely, there is a quasi-translation $\overline{P''_1}$ such that
\[\left \| \restr{(\overline{P_1} - \overline{P''_1})}{V^\sube_0} \right \| \lesssim_C s(\ell(g)).\]
\end{lemma}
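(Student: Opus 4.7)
The plan is to build $\overline{P''_1}$ from a natural factorization of the ``comparison map'' $\Phi := \phi_{g,gh} \circ \phi_{gh}^{-1}$ between the two canonizing maps involved in the definition of $\overline{P_1}$, and then to reduce the required estimate to the Hausdorff bound of Corollary~\ref{vector_spaces_estimate} via a weight-space computation.

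Both canonizing maps send $A^{\suble}_{gh}$ to $A^{\suble}_0$, so $\Phi$ will lie in $\Stab_{G \ltimes V}(A^{\suble}_0) = P^-_{X_0} \ltimes V^{\suble}_0$ (by Proposition~\ref{stabg=stabge}) and will satisfy $\|\Phi^{\pm 1}\| \lesssim_C 1$. Using the Langlands decomposition $P^-_{X_0} = L_{X_0} N^-_{X_0}$ together with the splitting $V^{\suble}_0 = V^{\sube}_0 \oplus V^{\subl}_0$, I will exhibit $L_{X_0} \ltimes V^{\sube}_0$ and $N^-_{X_0} \ltimes V^{\subl}_0$ as a complementary pair of subgroups of $P^-_{X_0} \ltimes V^{\suble}_0$ (trivial intersection, surjective product). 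This will let me write $\Phi = \Phi_{qt} \cdot \Phi_\epsilon$ with $\Phi_{qt} \in L_{X_0} \ltimes V^{\sube}_0$ and $\Phi_\epsilon \in N^-_{X_0} \ltimes V^{\subl}_0$; a compactness argument applied to the (continuous) decomposition map will ensure that both factors are $K(C)$-bounded. By Lemma~\ref{quasi-translation}, the restriction $\Phi_{qt}|_{A^{\sube}_0}$ is then a quasi-translation, and I will set $\overline{P''_1} := \Phi_{qt}|_{A^{\sube}_0}$.

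Write $l := \ell(\Phi_{qt}) \in L_{X_0}$ and $n := \ell(\Phi_\epsilon) \in N^-_{X_0}$. Substituting $\phi_{gh}^{-1} = \phi_{g,gh}^{-1} \circ \Phi$ into the definition of $\overline{P_1}$ and unwinding, I will use that for any vector $v \in V^{\sube}_0$ the vector $\phi_{g,gh}^{-1}(l(v))$ lies in $V^{\sube}_{g,gh}$ (so $P_1$ leaves it fixed), while $l(v) = \Phi_{qt}(v) = \overline{P''_1}(v)$. This should yield the explicit formula
\[
(\overline{P_1} - \overline{P''_1})(v) = \bigl( \phi_{g,gh} \circ P_1 \circ \phi_{g,gh}^{-1} \bigr) \bigl( l \cdot (n - \Id)(v) \bigr).
\]
Since $\phi_{g,gh}^{\pm 1}$, $l$, and $P_1$ are all $K(C)$-bounded (for $P_1$, thanks to the $2C$-non-degeneracy of the pair $(A^{\subge}_g, A^{\suble}_{gh})$ from Remark~\ref{ggh_gh_2C}), this will reduce the problem to bounding $\|(n - \Id)|_{V^{\sube}_0}\|$.

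Two weight-space facts will finish the argument. First, the inclusion $\mathfrak{n}^-_{X_0} \cdot V^0 \subset V^{\subl}_0$ gives $(n - \Id)(V^{\sube}_0) \subset V^{\subl}_0$. Second, Corollary~\ref{vector_spaces_estimate} combined with Lemma~\ref{bounded_norm_is_bilipschitz} yields $\alpha^{\mathrm{Haus}}(\ell(\Phi)(V^{\subge}_0), V^{\subge}_0) \lesssim_C s(\ell(g))$; since $l \in L_{X_0}$ stabilizes $V^{\subge}_0$ and is $K(C)$-bounded, this transfers to $\alpha^{\mathrm{Haus}}(n(V^{\subge}_0), V^{\subge}_0) \lesssim_C s(\ell(g))$. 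For any $v \in V^{\sube}_0 \subset V^{\subge}_0$, the crucial orthogonality $V^{\subl}_0 \perp V^{\subge}_0$ (from the choice of Euclidean norm, Lemma~\ref{K-invariant}) will then give
\[
\|(n - \Id)(v)\| = \mathrm{dist}(n(v), V^{\subge}_0) \lesssim_C s(\ell(g)) \|v\|,
\]
as required. The main technical subtlety will be to set up the Langlands-type splitting in exactly the right way, so that the residual factor $\Phi_\epsilon$ visibly measures the failure of $\Phi$ to stabilize $A^{\subge}_0$; once this is done, the orthogonality of restricted weight spaces converts the Hausdorff bound directly into an operator-norm bound, without needing any general Lipschitz argument on the Grassmannian.
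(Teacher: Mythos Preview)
Your proposal is correct and follows essentially the approach the paper has in mind: the paper defers to Lemma~4.7 of~\cite{Smi14}, whose argument is precisely to factor the comparison map through the affine Langlands decomposition of $P^-_{X_0} \ltimes V^\suble_0$ (the mirror of the decomposition used in Lemma~\ref{projections_commute}), declare the $L_{X_0} \ltimes V^\sube_0$--factor to be the quasi-translation $\overline{P''_1}$, and then bound the unipotent remainder $n$ via Corollary~\ref{vector_spaces_estimate} and the orthogonality $V^\subl_0 \perp V^\subge_0$. The one place where you could be slightly more careful is the boundedness of $P_1$: in your displayed formula the argument $l(n-\Id)(v)$ lies in $V^\subl_0$, and hence $\phi_{g,gh}^{-1}$ sends it into $V^\subl_{gh}$, which is not the domain $A^\sube_{gh}$ on which Lemma~\ref{C-bounded} directly controls $P_1$; but the required bound on $P_1$ as a projection on all of $A$ follows at once from the $C$-non-degeneracy of $g$ together with the $2C$-non-degeneracy of $(A^\subge_g, A^\suble_{gh})$ in Remark~\ref{ggh_gh_2C}, exactly as you indicate.
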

Recall that $\ell(g)$ is the map with the same linear part as $g$, but with no translation part: see subsection~\ref{sec:affine_to_linear}. We use the double prime because the relationship between $\overline{P''_1}$ and $\overline{P_1}$ is not the same as the relationship between $\overline{P'_2}$ and $\overline{P_2}$.

\begin{proof}
The proof is exactly the same as the proof of Lemma~4.7 in~\cite{Smi14}, \emph{mutatis mutandis}.
\end{proof}

\section{Margulis invariants of words}
\label{sec:induction}

We have already studied how contraction strengths (Proposition~\ref{regular_product}) and Margulis invariants (Proposition~\ref{invariant_additivity}) behave when we take the product of two $C$-non-degenerate, sufficiently contracting maps of type~$X_0$. The goal of this section is to generalize these results to words of arbitrary length on a given set of generators. It is a straightforward generalization of Section~5 in~\cite{Smi14} (we slightly changed the notations).

\begin{definition}
\label{cyclically_reduced_definition}
Take $k$ generators $g_1, \ldots, g_k$. Consider a word $g = g_{i_1}^{\sigma_1} \cdots g_{i_l}^{\sigma_l}$ with length~$l \geq 1$ on these generators and their inverses (for every $m$ we have $1 \leq i_m \leq k$ and~$\sigma_m = \pm 1$). We say that $g$ is \emph{reduced} if for every $m$ such that $1 \leq m \leq l-1$, we have $(i_{m+1}, \sigma_{m+1}) \neq (i_m, -\sigma_m)$. We say that $g$ is \emph{cyclically reduced} if it is reduced and also satisfies $(i_1, \sigma_1) \neq (i_l, -\sigma_l)$.
\end{definition}

\begin{proposition}
\label{Schottky_group}
For every $C \geq 1$, there is a positive constant $s_{\ref{Schottky_group}}(C) \leq 1$ with the following property. Take any family of maps $g_1, \ldots, g_k \in G \ltimes V$ satisfying the following hypotheses:
\begin{enumerate}[label=\textnormal{(H\arabic*)}]
\item \label{itm:all_are_X0} Every $g_i$ is of type~$X_0$.
\item \label{itm:pairwise_C_non_deg} Any pair taken among the maps $\{g_1, \ldots, g_k, g_1^{-1}, \ldots, g_k^{-1}\}$ is $C$-non-degener\-ate, except of course if it has the form $(g_i, g_i^{-1})$ for some $i$.
\item \label{itm:all_are_contracting} For every $i$, we have $s(g_i) \leq s_{\ref{Schottky_group}}(C)$ and $s(g_i^{-1}) \leq s_{\ref{Schottky_group}}(C)$.
\end{enumerate}
Take any nonempty cyclically reduced word $g = g_{i_1}^{\sigma_1} \cdots g_{i_l}^{\sigma_l}$ (with $1 \leq i_m \leq k$, $\sigma_m = \pm 1$ for every $m$). Then $g$ is of type~$X_0$, $2C$-non-degenerate, and we have
\[\left\| M(g) - \sum_{m=1}^{l} M(g_{i_m}^{\sigma_m}) \right\| \leq l k_{\ref{invariant_additivity}}(2C)\]
(where $k_{\ref{invariant_additivity}}(2C)$ is the constant introduced in Proposition~\ref{invariant_additivity}).
\end{proposition}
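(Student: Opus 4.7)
The proof will proceed by strong induction on the length $l$ of the cyclically reduced word $g = g_{i_1}^{\sigma_1} \cdots g_{i_l}^{\sigma_l}$. First I would choose $s_{\ref{Schottky_group}}(C)$ small enough that the following hold at every level of the induction: the multiplicative control $s(gh) \lesssim_C s(g) s(h)$ from Proposition~\ref{regular_product}~(ii), iterated into a geometric series, keeps every intermediate contraction strength well below the thresholds $s_{\ref{regular_product}}(2C)$ and $s_{\ref{invariant_additivity}}(2C)$; and the Hausdorff estimate of Proposition~\ref{regular_product}~(i), telescoped letter by letter, keeps $\alpha^\mathrm{Haus}(A^\subge_g, A^\subge_{g_{i_1}^{\sigma_1}})$ and $\alpha^\mathrm{Haus}(A^\suble_g, A^\suble_{g_{i_l}^{\sigma_l}})$ small enough to apply (a suitably generalized form of) Lemma~\ref{continuity_of_non_degeneracy}.

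The base case $l = 1$ is trivial. For the inductive step with $l \geq 2$, I would write $g = g^L \cdot g^R$ with both factors themselves cyclically reduced of strictly smaller length. A combinatorial lemma guarantees that such a split always exists: assuming every split position $j$ is bad forces, by propagating the failure condition alternately from the left and from the right, that all letters in positions $2, 3, \ldots, l-1$ coincide with $g_{i_l}^{-\sigma_l}$, which eventually contradicts either the reducedness of $g$ at positions $1$--$2$ or the cyclic reducedness of $g$.

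To apply Propositions~\ref{regular_product} and~\ref{invariant_additivity} to the pair $(g^L, g^R)$, I need to verify that the pair is $2C$-non-degenerate and sufficiently contracting. Contraction is immediate from the induction. For the four pairs of affine parabolic spaces, the two self-pairs are handled directly by the inductive hypothesis, while the two cross pairs are $\alpha^\mathrm{Haus}$-close to pairs of generator dynamical spaces: specifically, $(A^\subge_{g^L}, A^\suble_{g^R})$ is close to $(A^\subge_{g_{i_1}^{\sigma_1}}, A^\suble_{g_{i_l}^{\sigma_l}})$, which is $C$-non-degenerate by cyclic reducedness of $g$ and hypothesis~\ref{itm:pairwise_C_non_deg}, while $(A^\subge_{g^R}, A^\suble_{g^L})$ is close to $(A^\subge_{g_{i_{j+1}}^{\sigma_{j+1}}}, A^\suble_{g_{i_j}^{\sigma_j}})$, which is $C$-non-degenerate by reducedness at position~$j$. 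A version of Lemma~\ref{continuity_of_non_degeneracy} in which $(A^\subge_0, A^\suble_0)$ is replaced by an arbitrary $C$-non-degenerate reference pair then yields $2C$-non-degeneracy of both cross pairs.

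Proposition~\ref{regular_product} now shows that $g$ is of type~$X_0$ with the desired contraction bound, and Proposition~\ref{invariant_additivity} gives $\|M(g) - M(g^L) - M(g^R)\| \leq \eps_{\ref{invariant_additivity}}(2C)$. Combining with the inductive Margulis estimates for $g^L$ and $g^R$ accumulates an error of at most $(l-1)\eps_{\ref{invariant_additivity}}(2C) \leq l \eps_{\ref{invariant_additivity}}(2C)$, as required. The main technical obstacle I anticipate is showing that $g$ is $2C$-non-degenerate rather than merely $4C$-non-degenerate (the latter being what a naive inductive use of Proposition~\ref{regular_product} would deliver, since one combines two already $2C$-non-degenerate halves): I would circumvent this by arguing separately at the end that $(A^\subge_g, A^\suble_g)$ is Hausdorff-close, via iterated Proposition~\ref{regular_product}~(i), to the $C$-non-degenerate generator pair $(A^\subge_{g_{i_1}^{\sigma_1}}, A^\suble_{g_{i_l}^{\sigma_l}})$, and invoking the generalized form of Lemma~\ref{continuity_of_non_degeneracy} directly to conclude $2C$-non-degeneracy of $g$ itself.
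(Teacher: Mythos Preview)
Your proposal is correct and follows essentially the same inductive strategy the paper indicates (it omits the details, referring to Proposition~5.2 in~\cite{Smi14}): strong induction on the word length, splitting a cyclically reduced word into two shorter cyclically reduced factors, and applying Propositions~\ref{regular_product} and~\ref{invariant_additivity} at the inductive step. Your combinatorial splitting lemma and your handling of the $2C$-versus-$4C$ non-degeneracy subtlety are exactly the points one must address, and your treatment of both is sound.
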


The proof proceeds by induction, with Proposition~\ref{regular_product} and Proposition~\ref{invariant_additivity} providing the induction step. However, there is a subtlety (already dealt with in~\cite{Smi14}). When we suppose that the pair $(g, h)$ is $C$-non-degenerate, we can only conclude that $gh$ is $2C$-non-degenerate; this would break the induction if we used a direct approach. To guarantee $2C$-non-degeneracy for all words, we must use the fact that the contraction strength of $g$ grows (technically the number $s(g)$ diminishes) exponentially with its length, so that the (Hausdorff) distance between $A^{\subge}_{g}$ and $A^{\subge}_{g_{i_1}^{\sigma_1}}$ is in fact a sum of exponentially diminishing increments and remains bounded. To take this into account, we prove by induction a series of slightly more complicated statements.

\begin{proof}
The proof is exactly the same as proof of Proposition~5.2 in~\cite{Smi14}, \emph{mutatis mutandis}.

Given the importance of this point, let us briefly recap the strategy of this proof. Let us fix $C \geq 1$, a positive constant $s_{\ref{Schottky_group}}(C) \leq 1$ to be determined in the course of the proof, and a family $g_1, \ldots, g_k$ satisfying the hypotheses \ref{itm:all_are_X0}, \ref{itm:pairwise_C_non_deg} and \ref{itm:all_are_contracting}. We show by induction on $l$ that whenever we take a nonempty cyclically reduced word $g = g_{i_1}^{\sigma_1} \cdots g_{i_l}^{\sigma_l}$, we have the following properties:
\begin{hypothenum}
\item The map $g$ is of type~$X_0$.
\item $\begin{cases}
       \alpha^\mathrm{Haus} \left(A^{\subge}_{g},\;
                                  A^{\subge}_{g_{i_1}^{\sigma_1}} \right)
          \lesssim_C 2 \left( 1 - 2^{-(l-1)} \right) s_{\ref{Schottky_group}}(C) \vspace{1mm} \\
       \alpha^\mathrm{Haus} \left(A^{\suble}_{g},\;
                                  A^{\suble}_{g_{i_l}^{\sigma_l}} \right)
          \lesssim_C 2 \left( 1 - 2^{-(l-1)} \right) s_{\ref{Schottky_group}}(C).
       \end{cases}$
\item $s(g) \leq 2^{-(l-1)} s_{\ref{Schottky_group}}(C)$.
\item $\displaystyle \left\| M(g) - \sum_{m=1}^{l} M(g_{i_m}^{\sigma_m}) \right\| \leq (l-1)k_{\ref{invariant_additivity}}(2C)$.
\item If $h = g_{i'_1}^{\sigma'_1} \cdots g_{i'_{l'}}^{\sigma'_{l'}}$ is another nonempty cyclically reduced word of length $l' \leq l$ such that $gh$ (or equivalently $hg$) is still cyclically reduced, the pair $(g, h)$ is $2C$-non-degenerate.
\end{hypothenum}
The proposition then follows from the properties (i), (iv) and (v). For the actual proof of these five statements, we refer the reader to the proof of Proposition~5.2 in~\cite{Smi14}.
\end{proof}

\section{Construction of the group}
\label{sec:construction}

Here we prove the Main Theorem. We closely follow Section~6 from~\cite{Smi14}, with only two substantial differences:
\begin{itemize}
\item While in the case of the adjoint representation, existence of a $-w_0$-invariant vector in~$V^\transl_0$ was automatic, here we must postulate it explicitly (Assumption~\ref{inverse_ok}).
\item Where we originally relied on Lemma~7.2 in~\cite{Ben96}, we now need the more general Lemma~4.3.a in~\cite{Ben97}.
\end{itemize}
In the next-to-last paragraph of the proof, we have also made more explicit the relationship between $s_{\textnormal{Main}}(C)$ and~$s_{\ref{Schottky_group}}(C)$.

Let us recall the outline of the proof. We begin by showing (Lemma~\ref{properly_discontinuous}) that if we take a group generated by a family of $C$-non-degenerate, sufficiently contracting maps of type~$X_0$ with suitable Margulis invariants, it satisfies all of the conclusions of the Main Theorem, except Zariski-density. We then exhibit such a group that is also Zariski-dense (and thus prove the Main Theorem).

The idea is to ensure that the Margulis invariants of all elements of the group remain close to some half-line. Obviously if $-w_0$ maps every element of~$V^\transl_0$ to its opposite, Proposition~\ref{invariant_additivity}~(i) makes this impossible. So we now exclude this case:
\begin{assumption}
\label{inverse_ok}
The representation~$\rho$ is such that the action of~$w_0$ on~$V^\transl_0$ is not trivial.
\end{assumption}
This is precisely condition~\ref{itm:main_condition} from the Main Theorem. More precisely, $V^\transl_0$ is the set of all vectors that satisfy~\ref{itm:main_condition}\ref{itm:fixed_by_l}, and what we say here is that some of them also satisfy~\ref{itm:main_condition}\ref{itm:not_fixed_by_w0}.

~
\begin{example}~
\label{inverse_ok_example}
\begin{enumerate}
\item Consider $G = \SO^+(p, q)$ acting on~$\mathbb{R}^{p+q}$ (with~$p \geq q$); we have already seen that the only case when $V^\transl_0 \neq 0$ is when $p - q = 1$ (see Example~\ref{transl_space_example}.1). So let $p = n+1$, $q=n$; then we may show that
\begin{equation}
\restr{w_0}{V^\transl_0} = (-1)^n \Id
\end{equation}
(this is essentially the content of Lemma~3.1 in~\cite{AMS02} or of Proposition~2.7 in~\cite{Smi13}). So $G = \SO^+(n+1, n)$ satisfies this assumption if and only if $n$~is odd.
\item If $G$ is any semisimple real Lie group acting on~$V = \mathfrak{g}$ (its Lie algebra) by the adjoint representation, then $\mathfrak{g}^\transl_0$~contains the Cartan subspace~$\mathfrak{a}$ (see Example~\ref{transl_space_example}.2), on which $w_0$~obviously acts nontrivially unless $\mathfrak{a}$ is itself trivial. So this assumption is satisfied whenever $G$~is noncompact.
\end{enumerate}
\end{example}

Thanks to Assumption~\ref{inverse_ok}, we choose once and for all some nonzero vector $M_C \in V^\transl_0$ that is a fixed point of $-w_0$ (which is possible since $w_0$~is an involution). This requirement still leaves us free to prescribe the norm of this vector; let us additionally assume that $\|M_C\| = 2k_{\ref{invariant_additivity}}(2C)$.
\begin{lemma}
\label{properly_discontinuous}
Take any family $g_1, \ldots, g_k \in G \ltimes V$ satisfying the hypotheses \ref{itm:all_are_X0},~\ref{itm:pairwise_C_non_deg} and~\ref{itm:all_are_contracting} from Proposition~\ref{Schottky_group}, and also the additional condition
\begin{enumerate}[label=\textnormal{(H\arabic*)},start=4]
\item \label{itm:prescribed_marg_inv} For every $i$, $M(g_i) = M_C$.
\end{enumerate}
Then the group generated by $g_1, \ldots, g_k$ is free (with $g_1, \ldots, g_k$ being a basis) and
acts properly discontinuously on the affine space~$V_{\Aff}$.
\end{lemma}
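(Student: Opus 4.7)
Here is my plan.

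\textbf{Paragraph 1 (Freeness via Margulis invariants).} I first show that the abstract free group on $k$ letters injects into $\Gamma$. Since $M_0$ was chosen to be a fixed point of $-w_0$, Proposition~\ref{invariant_additivity}(i) gives $M(g_i^{-1}) = -w_0(M_0) = M_0 = M(g_i)$. Thus for any nonempty cyclically reduced word $g = g_{i_1}^{\sigma_1} \cdots g_{i_l}^{\sigma_l}$, Proposition~\ref{Schottky_group} yields
\[\|M(g) - l M_0\| \leq l\eps_{\ref{invariant_additivity}}(2C) = \tfrac{l}{2}\|M_0\|,\]
so $\|M(g)\| \geq \tfrac{l}{2}\|M_0\| > 0$ and in particular $g \neq \Id$. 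A direct check shows that $M$ is conjugation-invariant in $G \ltimes V$ (if $\phi$ canonizes $g$, then $\phi u^{-1}$ canonizes $u g u^{-1}$, and unpacking Definition~\ref{margulis_invariant} shows the $V^\transl_0$-component of the displacement is unchanged). Since every nontrivial reduced word is conjugate to a cyclically reduced one, $\Gamma$ is free of rank~$k$.

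\textbf{Paragraph 2 (Displacement estimate).} For proper discontinuity, fix a compact $K \subset V_{\Aff}$ and suppose by contradiction that infinitely many distinct $\gamma_n \in \Gamma \setminus \{\Id\}$ satisfy $\gamma_n(x_n) \in K$ for some $x_n \in K$. By Proposition~\ref{Schottky_group} applied to the cyclic reduction of each~$\gamma_n$ and then transported by conjugation, every $\gamma_n$ is of type~$X_0$. Pick an optimal canonizing map $\phi_n$ of $\gamma_n$, with $\|\phi_n^{\pm 1}\| \leq 2C$ (the cyclic reduction being $2C$-non-degenerate, so is $\gamma_n$ after adjusting the constant). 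Set $\gamma'_n := \phi_n \gamma_n \phi_n^{-1}$. Since $\gamma'_n$ stabilizes each of $V^\subg_0$, $V^\subl_0$, $A^\sube_0$, and these spaces are orthogonal under our Euclidean structure, and since the restriction of $\gamma'_n$ to $A^\sube_0$ is a quasi-translation with $V^\transl_0$-component equal to $M(\gamma_n)$ (by Lemma~\ref{quasi-translation} and Definition~\ref{margulis_invariant}), the projection $\pi_\transl$ of any displacement $\gamma'_n(y) - y$ equals $M(\gamma_n)$. Applying this to $y_n := \phi_n(x_n)$ gives
\[\|M(\gamma_n)\| = \|\pi_\transl(\phi_n(\gamma_n(x_n) - x_n))\| \leq 2C \cdot \diam(K \cup \{0\}),\]
a uniform bound.

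\textbf{Paragraph 3 (Main obstacle: conjugates with the same core).} By Paragraph~1 and the conjugation-invariance of $M$, this uniform bound forces the cyclic reduction $v_n$ of each $\gamma_n$ to have bounded length, so the $v_n$ range over a finite set. The genuine difficulty is that $\gamma_n = u_n v_n u_n^{-1}$ can involve conjugators $u_n$ of unbounded length, producing infinitely many distinct $\gamma_n$ with the same Margulis invariant. To rule these out, one uses the generalization of Lemma~7.2 of \cite{Ben96} to elements of type~$X_0$, namely Lemma~4.3.a of \cite{Ben97}: it provides a displacement estimate which, combined with the fact that a long reduced word $u_n$ in our generators is strongly contracting with noncontracting space close to some parabolic determined by the first letter of~$u_n$, shows that $u_n^{-1}(K)$ is concentrated near a flag transverse to that of $v_n$; consequently $v_n(u_n^{-1}(K))$ separates from $u_n^{-1}(K)$ for all but finitely many $u_n$, yielding $\gamma_n K \cap K = \emptyset$. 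This step is the hardest and parallels Section~6 of \cite{Smi14} almost verbatim, with the single substitution of the Benoist lemma noted before the statement. Combining, only finitely many $\gamma_n$ can meet $K$, the contradiction we sought.
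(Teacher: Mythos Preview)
Your Paragraph~1 (freeness) is fine and matches the paper's intended argument: the choice $-w_0(M_0)=M_0$ gives $M(g_i^{\pm 1})=M_0$, and Proposition~\ref{Schottky_group} plus conjugation-invariance of~$M$ handles all nontrivial words.

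The proper-discontinuity argument, however, has a genuine gap. In Paragraph~2 you assert that every $\gamma_n$ is $2C$-non-degenerate ``after adjusting the constant'', and use this to bound $\|\phi_n^{\pm 1}\|$ uniformly. This is not justified and is in general false. If $\gamma_n = u_n v_n u_n^{-1}$ with $u_n$ a reduced word of length tending to infinity, then $A^\subge_{\gamma_n}=u_n(A^\subge_{v_n})$ and $A^\suble_{\gamma_n}=u_n(A^\suble_{v_n})$; since $u_n$ is a long product of strongly contracting generators, it collapses most of the relevant flag variety toward a single parabolic space, and both $u_n(A^\subge_{v_n})$ and $u_n(A^\suble_{v_n})$ accumulate near it. Thus the non-degeneracy constant of $\gamma_n$, and hence $\|\phi_n\|$, blows up, and your displacement inequality $\|M(\gamma_n)\|\leq 2C\cdot\operatorname{diam}(K\cup\{0\})$ collapses.

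Your Paragraph~3 recognises that long conjugators are the obstacle, but the proposed fix rests on a misreading of the paper. Lemma~4.3.a of~\cite{Ben97} is not a displacement estimate; it is an \emph{existence} result used in the proof of the Main Theorem (one page later) to produce generators $\gamma_1,\ldots,\gamma_k$ with Jordan projection in the correct cone and Zariski-dense image. It plays no role in Lemma~\ref{properly_discontinuous} itself. The paper's proof of Lemma~\ref{properly_discontinuous} is a direct transposition of Lemma~6.1 of~\cite{Smi14}, and the sole hint given is that the relevant tool is the \emph{orthogonal} projection
\[
\hat{\pi}_\transl: A \longrightarrow V^\transl_0 \oplus \mathbb{R}_0
\quad\text{parallel to } V^\rotat_0 \oplus V^\subg_0 \oplus V^\subl_0.
\]
The point of working with this fixed projection (rather than with a canonizing map for the whole word~$g$) is exactly that it lets you track the $V^\transl_0$-displacement letter by letter, applying your observation $\pi_\transl(\gamma'(y)-y)=M(\gamma)$ to each \emph{generator} (which is genuinely $C$-non-degenerate) rather than to the full word. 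This sidesteps the non-degeneracy blow-up for conjugates entirely and yields a lower bound on $\|\hat{\pi}_\transl(g(x))-\hat{\pi}_\transl(x)\|$ growing with the reduced word length of~$g$, from which proper discontinuity follows directly.
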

\begin{proof}
The proof is exactly the same as the proof of Lemma~6.1 in~\cite{Smi14}, \emph{mutatis mutandis}.

The (orthogonal) projection
\[\hat{\pi}_{\mathfrak{z}}: \hat{\mathfrak{g}} \to \mathfrak{z} \oplus \mathbb{R}^0
\quad\text{ parallel to } \mathfrak{d} \oplus \mathfrak{n}^+ \oplus \mathfrak{n}^-\]
now becomes the (orthogonal) projection
\[\hat{\pi}_{\transl}: A \to V^\transl_0 \oplus \mathbb{R}^0
\quad\text{ parallel to } V^\rotat_0 \oplus V^\subg_0 \oplus V^\subl_0.\]

(Let us just explicitly restate the proof that the group is free, as it is very short. The group is free simply because any nonempty reduced word on the $g_i^{\pm 1}$ is conjugate to some cyclically reduced word, which, by Proposition~\ref{Schottky_group}, is of type~$X_0$ and in particular different from the identity.)
\end{proof}

\begin{proof}[Proof of Main Theorem]
First note that all the assumptions we have made on~$\rho$ in the course of the paper were legitimate, in the sense that they follow from the hypotheses of the Main Theorem. Indeed:
\begin{itemize}
\item Assumption~\ref{inverse_ok} is just the condition~\ref{itm:main_condition};
\item Assumption~\ref{transl_space} is the weaker condition~\ref{itm:main_condition}\ref{itm:fixed_by_l};
\item Assumption~\ref{zero_is_a_weight} is an even weaker condition that follows from~\ref{itm:main_condition}\ref{itm:fixed_by_l} (see Remark~\ref{zero_weight_follows_from_hypotheses}); and
\item Assumption~\ref{no_swinging} is just the condition~\ref{itm:technical_condition}.
\end{itemize}

Once again, we use the same strategy as in the proof of the Main Theorem of~\cite{Smi14}. We find a positive constant $C \geq 1$ and a family of maps $g_1, \ldots, g_k$ in~$G \ltimes V$ (with $k \geq 2$) that satisfy the conditions \ref{itm:all_are_X0} through \ref{itm:prescribed_marg_inv} and whose linear parts generate a Zariski-dense subgroup of $G$, then we apply Lemma~\ref{properly_discontinuous}. We proceed in several stages.
\begin{itemize}
\item We begin by using a result of Benoist: we apply Lemma~4.3.a in~\cite{Ben97} to
\begin{itemize}
\item $\Gamma = G$;
\item $t = k+1$;
\item $\Omega_1 = \cdots = \Omega_k = \mathfrak{a}_{\rho, X_0} \cap \mathfrak{a}^{++}$.
\end{itemize}
This gives us, for any $k \geq 2$, a family of maps $\gamma_1, \ldots, \gamma_k \in G$ (which we shall see as elements of~$G \ltimes V$, by identifying~$G$ with the stabiliser of~$p_0$), such that:
\begin{hypothenum}
\item Every $\gamma_i$ is of type~$X_0$ (this is \ref{itm:all_are_X0}).
\item For any two indices $i$, $i'$ and signs $\sigma$, $\sigma'$ such that $(i', \sigma') \neq (i, -\sigma)$, the spaces $V^{\subge}_{\gamma_i^\sigma}$ and $V^{\suble}_{\gamma_{i'}^{\sigma'}}$ are transverse.
\item Any single $\gamma_i$ generates a Zariski-connected group.
\item All of the $\gamma_i$ generate together a Zariski-dense subgroup of $G$.
\end{hypothenum}

A comment about item~(i): we actually get not only that every $\gamma_i$~is of type~$X_0$, but also that every~$\gamma_i$ is $\mathbb{R}$-regular.

A comment about item~(ii): since we have taken Benoist's $\Gamma$ to be the whole group~$G$, we have $\theta = \Pi$, so that $Y_\Gamma$ is the complete flag variety~$G/P^+$. Benoist's conclusion can then be restated by saying that the pair of cosets
\[(\phi_g P^+,\; \phi_h P^-)\]
(where $\phi_g$ and~$\phi_h$ are respective canonizing maps of $g$ and~$h$ as defined by us in Proposition~\ref{Asubge_is_ampa}) is in the open $G$-orbit of~$G/P^+ \times G/P^-$. Once again it is actually stronger than our conclusion, which is equivalent to saying that the pair of cosets
\[(\phi_g P^+_{X_0},\; \phi_h P^-_{X_0})\]
is in the open $G$-orbit of~$G/P^+_{X_0} \times G/P^-_{X_0}$.

\item Clearly every pair of transverse spaces is $C$-non-degenerate for some finite~$C$; and here we have a finite number of such pairs. Hence if we choose some suitable value of $C$ (which we fix for the rest of this proof), the hypothesis~\ref{itm:pairwise_C_non_deg} becomes a direct consequence of the condition (ii) above.
\item From condition (iii) (Zariski-connectedness), it follows that any algebraic group containing some power $\gamma_i^N$ of some generator must actually contain the generator~$\gamma_i$ itself. This allows us to replace every $\gamma_i$ by some power $\gamma_i^N$ without sacrificing condition (iv) (Zariski-density). Clearly, conditions (i), (ii) and (iii) are then preserved as well. If we choose $N$ large enough, we may suppose (thanks to Remark~\ref{contraction_strength_grows}) that the numbers $s(\gamma_i^{\pm 1})$ are as small as we wish: this gives us \ref{itm:all_are_contracting}. In fact, we shall suppose that for every $i$, we have $s(\gamma_i^{\pm 1}) \leq s_{\textnormal{Main}}(C)$ for an even smaller constant $s_{\textnormal{Main}}(C)$, to be specified soon.
\item To satisfy \ref{itm:prescribed_marg_inv}, we replace the maps $\gamma_i$ by the maps
\begin{equation}
g_i := \tau_{\phi_i^{-1}(M_C)} \circ \gamma_i
\end{equation}
(for $1 \leq i \leq k$), where $\phi_i$ is a canonizing map for $\gamma_i$.

We need to check that this does not break the first three conditions. Indeed, for every $i$, we have $\gamma_i = \ell(g_i)$; even better, since the translation vector $\phi_i^{-1}(M_C)$ lies in the subspace $V^{\sube}_{\gamma_i}$ stable by $\gamma_i$, obviously the translation commutes with $\gamma_i$, hence $g_i$ has the same geometry as $\gamma_i$ (meaning that $A^{\subge}_{g_i} = A^{\subge}_{\gamma_i} = V^{\subge}_{\gamma_i} \oplus \mathbb{R} p_0$ and $A^{\suble}_{g_i} = A^{\suble}_{\gamma_i} = V^{\suble}_{\gamma_i} \oplus \mathbb{R} p_0$). Hence the $g_i$ still satisfy the hypotheses \ref{itm:all_are_X0} and \ref{itm:pairwise_C_non_deg}, but now we have $M(g_i) = M_C$ (this is \ref{itm:prescribed_marg_inv}). As for contraction strength, we have, by Lemma~\ref{affine_to_vector}:
\begin{align}
s(g_i) &\lesssim_C s(\gamma_i)\|\tau_{M_C}\| \nonumber \\
       &\leq\;\; s_{\textnormal{Main}}(C)\|\tau_{M_C}\|,
\end{align}
and similarly for $g_i^{-1}$. Recall that $\|M_C\| = 2k_{\ref{invariant_additivity}}(2C)$, hence $\|\tau_{M_C}\|$ depends only on $C$: in fact it is equal to the norm of the 2-by-2 matrix $\bigl( \begin{smallmatrix} 1 & \|M_C\| \\ 0 & 1 \end{smallmatrix} \bigr)$. It follows that if we choose
\begin{equation}
s_{\textnormal{Main}}(C) \leq s_{\ref{Schottky_group}}(C) \left\| \begin{matrix} 1 & 2k_{\ref{invariant_additivity}}(2C) \\ 0 & 1 \end{matrix} \right\|^{-1},
\end{equation}
then the hypothesis \ref{itm:all_are_contracting} is satisfied.

We conclude that the group generated by the elements $g_1, \ldots, g_k$ acts properly discontinuously (by Lemma~\ref{properly_discontinuous}), is free (by the same result), nonabelian (since $k \geq 2$), and has linear part Zariski-dense in $G$. \qedhere
\end{itemize}
\end{proof}

\bibliographystyle{alpha}
\bibliography{/home/ilia/Documents/Travaux_mathematiques/mybibliography.bib}

\noindent I. Smilga, Mathematics Department, Yale University, P.O. Box 208283, New Haven, CT 06520-8283, U.S.A. 

\noindent E-mail: \url{ilia.smilga@normalesup.org}
\end{document}